\definecolor{cadmiumgreen}{rgb}{0.0, 0.42, 0.24}
\definecolor{hotpink}{rgb}{1.0, 0.41, 0.71}
\definecolor{cgreen}{RGB}{0, 180, 100}
\newcommand{\defeq}{:=}
\newcommand{\abs}[1]{\left|#1\right|}
\newcommand{\eps}{\varepsilon}
\renewcommand{\leq}{\leqslant}
\renewcommand{\geq}{\geqslant}
\renewcommand{\phi}{\varphi}
\let\oldsum\sum
\renewcommand{\sum}{\oldsum\nolimits}
\newcommand{\ergp}{{\mathcal{E}}}
\newcommand{\erg}{{\ergp_D}}
\newcommand{\Velo}{\mathsf{V}}
\DeclareMathOperator*{\argmax}{arg\,max}
\renewcommand{\d}{\,\mathrm{d}}
\let\div\relax
\DeclareMathOperator{\div}{div}
\newcommand{\grad}[1]{\nabla #1}
\newcommand{\uS}{\mathcal{S}}
\newcommand{\id}{{\operatorname{Id}}}
\newcommand{\R}{\mathbb{R}}
\let\S\relax
\newcommand{\S}{{\cal S}}
\definecolor{cornflowerblue}{rgb}{0.39, 0.58, 0.93}
\definecolor{sky}{RGB}{117, 187, 253}
\newcommand{\sk}[1]{{
		#1}}
\newcommand{\dpr}[2]{#1\cdot #2}
\newcommand{\Id}{\id}
\newcommand{\ack}[1]{%
\section*{Acknowledgements}
#1
}
\numberwithin{equation}{section}
\newtheorem{thm}{Theorem}[section]
\newtheorem{prop}[thm]{Proposition}
\newtheorem{lemma}[thm]{Lemma}
\newtheorem{cor}[thm]{Corollary}
\newtheorem{rem}[thm]{Remark}
\crefname{rem}{Remark}{Remarks}
\crefname{thm}{Theorem}{Theorems}
\newcommand{\thistheoremname}{}
\newtheorem{genericthm}[thm]{\thistheoremname}
\newtheorem*{genericthm*}{\thistheoremname}
\newenvironment{namedthm*}[1]
  {\renewcommand{\thistheoremname}{#1}%
   \begin{genericthm*}}
  {\end{genericthm*}}
\theoremstyle{definition}
\newtheorem{example}[thm]{Example}
\newtheorem{definition}[thm]{Definition}
\newtheorem{assumption}{Assumption}
\pgfplotsset{compat=1.9}
\newtcolorbox{boxA}{
    boxrule = 1pt,
    colframe = black 
}
\let\blx@rerun@biber\relax
\begin{document}

\title{Analysis of mean-field models arising from self-attention dynamics in transformer architectures with layer normalization }
\author[1,2,*]{Martin Burger}
\author[1]{Samira Kabri}
\author[3]{Yury Korolev}
\author[1]{Tim Roith}
\author[1]{Lukas Weigand}

\affil[1]{Helmholtz Imaging, Deutsches Elektronen-Synchrotron DESY, Notkestr. 85, 22607 Hamburg, Germany}
\affil[2]{Fachbereich Mathematik, Universit\"at Hamburg, Bundesstr. 55, 20146 Hamburg, Germany}
\affil[3]{Department of Mathematical Sciences, University of Bath, Bath BA2 7AY, UK}

\affil[*]{Corresponding author: martin.burger@desy.de}



\maketitle

\begin{abstract}
The aim of this paper is to provide a mathematical analysis of 
transformer architectures using a self-attention mechanism with layer normalization. In particular, observed patterns in such architectures resembling either clusters or  uniform distributions pose a number of challenging mathematical questions. We focus on a special case that admits a gradient flow formulation in the spaces of probability measures on the unit sphere under a special metric, which allows us to give at least partial answers in a rigorous way. The arising mathematical problems resemble those recently studied in aggregation equations, but with additional challenges emerging from restricting the dynamics to the sphere and the particular form of the interaction energy.

We provide a rigorous framework for studying the gradient flow, which also suggests a possible metric geometry to study the general case (i.e. one that is not described by a gradient flow). We further analyze the stationary points of the induced self-attention dynamics. The latter are related to stationary points of the interaction energy in the Wasserstein geometry, and we further discuss energy minimizers and maximizers in different parameter settings.\vspace{1em}

\noindent\textbf{Keywords:} Transformer architectures, self-attention dynamics, gradient flows, interaction energies, stationary states
\end{abstract}

\section{Introduction}

Transformer architectures and the associated (self-)attention dynamics gained strong interest recently due to the success of artificial intelligence relying on them in several applications. Examples include large language models such as  GPT-4 \cite{achiam2023gpt}, multimodal large language models such as Vision Language Transformers~\cite{wu2023multimodal, fields2023vision}, text-to-image generation like Stable Diffusion \cite{esser2024scaling}, and protein folding with AlphaFold \cite{abramson2024accurate, jumper2021highly}, which won the Nobel prize in Chemistry in 2024.

The practical success of transformers and (self-)attention dynamics calls for developing detailed mathematical understanding which started recently in, e.g. \cite{vuckovic2020mathematical,sander2022sinkformers,geshkovski2023mathematical,calvello2024continuum,nguyen2024primal,wright2021transformers,criscitiello2024synchronization,alcalde2024clustering,geshkovski2024measure,kan2025ot,viswanathan2025geometry,abella2024asymptotic,alcalde2025exact}.

An interesting viewpoint on such dynamics is to interpret it as an interacting particle system \cite{lu2020understanding,dutta2021redesigning,sander2022sinkformers}, which allows for natural continuous-time and mean-field limits. The latter approach already provided valuable insights into feed-forward neural networks and their training dynamics (cf. \cite{chizat2018global,ding2021global}). In the context of transformers, this viewpoint also provides interesting (so far formal~\cite{geshkovski2023mathematical}) connections to gradient flows and the minimization of an interaction energy for the particle measures. The latter is a topic of great recent interest due to various applications in biology and social interactions. Indeed, the self-attention dynamics in transformers shares certain mathematical similarities with models used in opinion formation, which also exhibit similar emergence of clusters in certain cases \cite{hegselmann2002opinion,gomez2012bounded,piccoli2021generalized}. \color{black} In this work we focus on cluster formation in the infinite time horizon. However, we note that the formation of metastable states is of special interest. For the case of isotropic interaction, metastability was studied in \cite{bruno2024emergence,geshkovski2024dynamic}. \color{black}

 
In this paper we proceed upon the work in \cite{geshkovski2023mathematical} on analyzing transformer dynamics with layer normalization, focusing in particular on the case when the underlying dynamics has a gradient flow structure.  Indeed, the continuum limit of the self-attention dynamics leads to a Wasserstein-type gradient flow for probability measures on the unit sphere ${\cal S}$ of the form 
\begin{equation}
    \partial_t \mu_t = \nabla_{\cal S} \cdot (\mu_t m_{\mu_t} \nabla_{\cal S} {\cal E}'(\mu_t)),
\end{equation}
where $\nabla_{\cal S}$ and $\nabla_{\cal S}\cdot$ are the tangential gradient and divergence, respectively, and $m_\mu=\frac{1}{{\cal E}'(\mu)}$ is a non-local mobility.
The underlying energy in this case is of the form 
\begin{equation}\label{eq:energy}
    {\cal E}(\mu) =  \int_{\cal S} \int_{\cal S} e^{\dpr{x}{Dy}} \d \mu(x) \d \mu(y) ,
\end{equation}
with $D$ being a symmetric matrix and $\mathcal{E}'$ denotes its first variation. Since $D$ is symmetric and hence diagonalizable, we can equivalently assume that $D$ is a diagonal matrix, since we can use an orthogonal diagonalization and a corresponding transfer of variables to the eigenvectors, which leaves the unit ball unchanged. This will be used in several instances to simplify notation.   It also permits a more detailed study of stationary patterns, in particular minimizers and maximizers of the energy.

Compared to the existing literature on such gradient flows there are three distinct features that motivate our study, namely
\begin{itemize}
    \item restriction of the dynamics to the unit sphere (a consequence of the layer normalization);

    \item  non-local mobility (a consequence of the self-attention mechanism), which is related to but still distinctly different from other variations of Wasserstein gradient flows  studied recently (cf. \cite{burger2023covariance,duncan2023geometry,li2021hessian,lisini2012cahn});

    \item multiplicative coupling of states in the interaction energy, as opposed to commonly used interaction potentials depending only on the difference of the states (cf., e.g.~\cite{burger2008large,canizo2024discrete,carrillo2014global,carrillo2017geometry,shu2024wasserstein,simione2015existence}).
\end{itemize}

We make the gradient flow formally 
studied in \cite{geshkovski2023mathematical} rigorous, showing that  the transport distance with non-local mobilities is well defined, studying energy dissipation properties of the associated gradient flow, and describing the large-time behavior of the dynamics, specifically the convergence to stationary solutions, at least along subsequences. We further carry out a detailed study of energy minimizers and maximizers of ${\cal E}$ (extending the previously studied case of $D$ being a multiple of the identity) as well as stationary points of the energy in a Wasserstein setting, which we prove to be equivalent to stationary solutions of the dynamics. For the energy minimizers we obtain an interesting picture depending on the structure of $D$:
\begin{itemize}
    \item If there is a positive eigenvalue that is the eigenvalue of maximal absolute value, then a Dirac delta concentrated in the direction of a corresponding eigenvalue is a maximizer.

    \item If the smallest eigenvalue is negative, then only a Dirac delta concentrated in the direction of a corresponding eigenvalue is a minimizer.

    \item If the smallest eigenvalue is zero, then any measure concentrated on the nullspace of $D$ is a minimizer.

    \item Dirac deltas concentrated in directions of arbitrary eigenvectors are stationary points. We also find some convex combinations of Dirac deltas being stationary points.

    \item If the smallest eigenvalue is positive, we conjecture that the minimizer of the energy has full support on the unit sphere. To obtain some insight, we carry out a second-order asymptotic analysis of the minimizers for $D$ being a small perturbation of the identity.
\end{itemize}
We support our theoretical findings by several computational experiments and  investigate the cases when the energy minimizers or maximizers cannot be characterized explicitly. 

The rest of this work is organized as follows. In the remainder of the introduction, we recapitulate the simplified softmax transformer model introduced in \cite{sander2022sinkformers}, with additional layer normalization as considered in \cite{geshkovski2023mathematical}. In \cref{sec:gradientflow}, we provide a rigorous derivation of the gradient flow induced by the considered model. \cref{sec:energy-min-max,sec:stationary} are dedicated to characterizing optimizers, or, respectively, stationary points of the studied energy. We support our findings by numerical experiments in \cref{sec:numerics} and summarize our results in \cref{sec:conclusion}.

%
%



\subsection{Self-Attention}
%
%
    Transformer architectures \cite{vaswani2017attention} were  developed in the field of Natural Language Processing. Here an input is usually a sentence, which is decomposed into a sequence of tokens (e.g. words or syllables). Each token (possibly along with its position in the sentence) is represented as a vector in a high-dimensional vector space. Apart from a conventional feed-forward component, the main feature of a transformer layer is the so-called attention mechanism.  This mechanism implements the interaction between tokens and was first introduced in \cite{bahdanau2014neural} in the context of Neural Machine Translation as an alternative to encoder-decoder approaches whose performance often deteriorates for large input lengths due to the use of latent representations of fixed dimensions.\looseness=-1

Like \cite{geshkovski2023mathematical}, we will focus on a simple yet widely used form of attention, the so-called self-attention. It can be formalized as follows: consider an input sequence $X = [X_i]_{i = 1}^N \in \R^{N \times n}$, where each $X_i \in \R^{n}$ represents an $n$-dimensional token and $N$ denotes the number of tokens. The self-attention matrix $A \in \R^{N\times N}$ is given by 
\begin{align}\label{eq:attention_matrix}
    A_{ij} = \frac{\exp(X_i \cdot DX_j)}{\sum_{k = 1}^N \exp(X_i \cdot DX_k)},
\end{align}
where we assume $D \in \R^{n\times n}$ to be symmetric. The latter property does not necessarily hold  for learned parameters in transformer architectures, but we expect the symmetric part to determine the asymptotic behavior of the self-attention dynamics. Since the symmetry of $D$ allows one to interpret the dynamics as a gradient flow corresponding to a certain interaction energy, as observed in \cite{geshkovski2023mathematical}, it will allow us to analyze the asymptotic behavior for this subclass; the study of the general case is left for future research. \color{black}An important example for non-symmetric interaction is given by masked attenion, which can be used to model causality. We refer to \cite{castin2024howsmooth,castin2025unified,karagodin2024clustering} for a mean-field interpretation of such dynamics. \color{black}

By definition, the matrix $A$ is stochastic, i.e. each of its rows  is a probability vector. Roughly speaking, the attention matrix determines how strong a token is influenced by each other token. To determine \textit{how} tokens influence each other, another matrix $V \in  \R^{n\times n}$, called the value matrix, is used. The influence of $X_j$ on $X_i$ can then be written as $A_{ij}\,VX_j$ and the self-attention layer $\mathcal{A}: \R^{N \times n} \rightarrow \R^{N \times n}$ is given by
\begin{align}\label{eq:selfattention_discrete}
    \mathcal{A}(X) = \left[X_i + \sum_{j = 1}^{N} A_{ij}\, VX_j\right]_{i  = 1}^N.
\end{align}
For our purposes, we assume $V = D$ or $V = -D$ since in this case one can show that the particles move along a gradient flow. The general case is subject of future work.

\subsection{Normalization method}
The normalization of intermediate values is a common practice in  machine learning models. In the context of neural networks, so-called batch normalization \cite{ioffe15batch} is a popular method to prevent gradients from blowing up and thus to stabilize (and to improve) the training. Since this form of normalization uses information from the entire training batch, \cite{ba2016layernormalization} proposes layer normalization (LayerNorm), which translates the mean of an intermediate vector to zero and divides it by its standard deviation, and therefore does not depend on any other vector in the batch. While the original implementation of the transformer \cite{vaswani2017attention} uses LayerNorm, some of the more recent publications (e.g. Llama, \cite{touvron2023llamaopenefficientfoundation}) use a simplified version called Root Mean Square Layer Normalization (RMSNorm) proposed in \cite{zhang2019root}. Up to a multiplication with learned weights $[g_i]_{i = 1}^n$, called gain parameters, RMSNorm performs a projection onto the unit sphere $\mathcal{S}^{n-1}$ (where in the following we will suppress the superscript and simply write ${\cal S}$). More precisely, for $x \in \R^{n}$ we write
\begin{align*}
    \operatorname{RMSNorm}(x)_i = g_i\frac{x_i}{\|x\|_2},
\end{align*}
where in practice a division by zero is circumvented by adding a small value $\epsilon>0$ in to $\|x\|_2$. In our setting, we can assume the norm to be strictly positive as we consider the dynamics in continuous time. Following the setting of \cite{geshkovski2023mathematical}, we focus on RMSNorm with fixed gain parameters $g_i = 1$ for all $i = 1,\dots n$ and denote the projection onto the unit sphere for $x \in \R^n \setminus \{0\}$  by
\begin{align*}
    \Pi(x) = \frac{x}{\|x\|_2}.
\end{align*}

\subsection{Simplified transformer layer and time-continuous dynamics}
Combining the attention layer with a normalization layer, we arrive at the following update step
\begin{align*}
    X \leftarrow \Pi(\mathcal{A}(X)),
\end{align*}
where the projection is applied vector-wise to each row of $\mathcal{A}(X)$. For the sake of our analysis, we will deviate from typical practical implementations of transformers and  consider the architecture to be a composition of such layers which all share the same matrices $D$ and $V$ in \eqref{eq:attention_matrix} and \eqref{eq:selfattention_discrete}. In \cite{geshkovski2023mathematical}, it was  proposed to study the continuum limit of these updates. This approach has become a popular tool for analyzing Residual Neural Networks (ResNets) \cite{he2016resnets}: as discussed from various perspectives, e.g. in \cite{e2017dynamical, haber2017stable, chen2018neural, thorpe2023deep}, the skip connections (i.e. the residual components) of the ResNet architecture make it possible to interpret it as a forward Euler discretization of an ordinary differential equation. Introducing a time variable $t > 0$ and a small time increment $\Delta t > 0$, we get
\begin{align}\label{eq:expEuler}
    X_i(t + \Delta t) = \Pi\left(X_i(t) + \Delta t\,\sum_{j = 1}^{N} A_{ij}(t)\, VX_j(t)\right), \quad i = 1,\hdots, N.
\end{align}
At this point, the residual component is hidden in the attention layer and cannot easily be extracted since the projection is nonlinear. In the continuous time limit $\Delta t \rightarrow 0$, remembering that $\Pi(x) = x$ for any $x \in \mathcal{S}$, we arrive at the following system of differential equations
\begin{align}\label{eq:timederivativeproj}
    \dot{X_i}(t) = \left\langle \nabla_x \Pi(X_i(t)),  \sum_{j = 1}^{N} A_{ij}(t)\, VX_j(t)\right\rangle,
    \quad i = 1,\hdots, N,
\end{align}
where the spatial derivatives are understood as derivatives in $\R^n$.
With a simple computation one can further show that for any $x \in \mathcal{S}$ and $z \in \R^{n}$ it holds that
\begin{align*}
    \langle \nabla_x \Pi(x) , z \rangle = P_x^\perp(z), 
\end{align*}
where, following \cite{geshkovski2023mathematical}, we define $P_x^\perp(z) = z - \dpr{x}{z} \,x$. Substituting this into \eqref{eq:timederivativeproj}, we arrive at the following dynamics
\begin{subequations}
\label{eq:dynamics-particles}
    \begin{numcases}
        {}\dot{X_i}(t) = P_{X_i(t)}^\perp\left(\sum_{j = 1}^{N} A_{ij}(t)\, VX_j(t)\right), \label{eq:timederivativeorth}\\
        X_i(0) = X_{0,i} \in \mathcal{S}
    \end{numcases}
\end{subequations}
which serve as a starting point of \cite{geshkovski2023mathematical}.

\subsection{Interpretation as an evolution of measures}\label{sc:MeasInt}
Instead of studying the dynamics of distinct particles, \cite{geshkovski2023mathematical} propose to view~\eqref{eq:dynamics-particles} as an evolution of an empirical measure
\begin{align*}
    \mu_t = \frac{1}{N}\sum_{i = 1}^{N} \delta_{X_i(t)}.
\end{align*}
The right-hand side of \eqref{eq:timederivativeorth} can be understood as an integral with respect to $\mu_t$; for a generic probability measure $\mu$ this can be written as a measure-dependent velocity field
\begin{align} \label{eq:velocityfield}
    \mathsf{V}[\mu](x) = \frac{P^{\perp}_x\left(\int_{\mathcal{S}}e^{\dpr{x}{Dy}}\,Vy \,\d \mu(y)\right)}{\int_{\mathcal{S}}e^{\dpr{x}{Dy}}\,\d \mu(y)},
\end{align}
and \eqref{eq:timederivativeorth} turns into $\dot{X_i}(t) = \mathsf{V}[\mu_t](X_i(t))$. With this notion, we recover the weak continuity equation formulated in \cite{geshkovski2023mathematical}: for any test function $\phi \in C^1(\mathcal{S} \times [0,T])$ one has
\begin{align}\label{eq:conteq_empirical}
    \frac{d}{dt} \int_{\mathcal{S}} \phi(t,x) \,\d \mu_t(x) &= \frac{d}{dt} \frac{1}{N} \sum_{i = 1}^N  \phi(t,X_i(t)) \nonumber \\ &= \frac{1}{N} \sum_{i = 1}^N \partial_t \phi(t, X_i(t)) + \big\langle \nabla_x \phi(t,X_i(t)), \mathsf{V}[\mu_t](X_i(t))\big\rangle \\
    & = \int_{\mathcal{S}} \partial_t \phi(t,x) + \big\langle \nabla_x \phi(t,x), \mathsf{V}[\mu_t](x)\big\rangle \,\d \mu_t(x), \nonumber
\end{align}
where, in this case, the spatial derivatives of $\phi$ have to be understood as derivatives on $\mathcal{S}$.

Similarly, \cite{geshkovski2023mathematical} propose the interaction energy \eqref{eq:energy},
which for an empirical measure $\mu_t$
reduces to 
\begin{align*}
    \mathcal{E}(\mu_t) = \sum_{i,j = 1}^{N} e^{\dpr{X_i(t)}{DX_j(t)}}.
\end{align*}
In this discrete case, a straightforward application of the chain rule and a reordering of the terms yields
\begin{align*}
    \frac{d}{dt} \mathcal{E}(\mu_t) = 2\sum_{i= 1}^N \dpr{\left( \sum_{j = 1}^N e^{\dpr{X_i(t)}{DX_j(t)}} DX_j(t) \right)}{\dot{X_i}(t)}.
\end{align*}
Under our assumption that the value matrix is given by $V = \pm D$, we see that, up to an application of $P^{\perp}_{X_i(t)}$ and a division by $\sum_{j = 1}^N e^{\dpr{X_i(t)}{DX_j(t)}}$, the term in the brackets is given by $\dot{X_i}(t)$. Since $P_x^\perp(z) \cdot z = P_x^\perp(z) \cdot P_x^\perp(z)$ for any $x \in \mathcal{S}$, $z \in \R^n$, we have that
\begin{align*}
    \frac{d}{dt} {\cal E}(\mu_t) = \pm 2 \sum_{i= 1}^N \|\dot{X}_i(t)\|^2\,  \sum_{j = 1}^N e^{\dpr{X_i(t)}{DX_j(t)}} \substack{\geq\\\leq} 0,
\end{align*}
and hence the energy $\mathcal{E}$ increases ($V = D$) or decreases ($V = -D$) monotonously along the trajectory of $\mu_t$.
A formal derivation of the above formulae for general probability measures on smooth manifolds is provided in \cref{sec:gradientflow}. 


Let us mention that problems with similar energies as ${\cal E}$ have been studied in the past. 
The most prominent is an interaction energy with respect to  $D$ with a nonlocal interaction kernel depending on $x-y$. Choosing the kernel as Gaussian with covariance matrix $D^{-1}$ (which makes sense only if $D$ is positive definite) results into
\begin{align}\label{eq:inter}
{\cal E}^{\text{inter}}(\mu) = \int_{\cal S} \int_{\cal S} e^{-\frac{1}{2} (x-y) \cdot (D(x-y))} \d \mu(x) \d \mu(y).
\end{align}
For $D = \pm \operatorname{Id}$ the minimizers and maximizers of \eqref{eq:energy} and \eqref{eq:inter} are equivalent as $\mp\frac{1}{2} (x-y) \cdot (x-y) = \mp\frac{1}{2}(x \cdot x +  y \cdot y ) \pm x\cdot y = \mp 1 + \pm x\cdot y $ for all $x,y \in {\cal S}$. The important difference between \eqref{eq:energy} and \eqref{eq:inter} is the rotation-(in)variance of the interaction functions $e^{ x \cdot (Dy)}$ and $e^{-\frac{1}{2} (x-y) \cdot (D(x-y))}$. In the general case this is not true, but we shall use an analogy to the interaction energy to rewrite
$$ {\cal E}(\mu)=e^\lambda \int_{\cal S} \int_{\cal S} e^{-\frac{\lambda}2 |x|^2 -\frac{\lambda}2 |y|^2 + x \cdot Dy} \d \mu(x) \d \mu(y) = e^\lambda \int_{\cal S} \int_{\cal S} e^{-\frac{\lambda}2 |x-y|^2 + x \cdot ((D-\lambda \Id)y)} \d \mu(x) \d \mu(y) . $$

\subsection{Understanding $x \cdot Dy$ on the sphere}

For our further analysis it is crucial to understand the implications of restricting the problem to the unit sphere and the behavior of the bilinear form $x \cdot Dy$ on it.
For $D = \id$ it is clear that the minimizer of $f_y(x) = x\cdot Dy$ is given by $x = -y$ and the maximizer by $x = y$. This changes for a general $D$ and as a result the minimizer of the energy \eqref{eq:energy} is not given by the uniform distribution on $\S$ anymore.
For a diagonal matrix $D$ the maximizer/minimizer of $f_y$ for a fixed $y \in \S$ with $Dy \neq 0$ is given by $x_{\pm}  = \pm \frac{Dy}{\|Dy\|}$. Therefore, we know that $x \cdot Dy = 0$ if and only if $x \cdot x_\pm = 0$ (same for $>$ and $<$).
For $Dy = 0$, we already have $f_y(x) = 0$ for any $x \in \S$, i.e. each point is a minimizer, maximizer and orthogonal to $y$ w.r.t. $D$. 
A further consequence is that 
\begin{align*}
    \max_{x,y \in \cal\S} x\cdot Dy = \max_{y \in \cal\S} \frac{Dy\cdot Dy}{\|Dy\|} = \max_{y \in \cal\S} \|Dy\| = |\lambda|, 
\end{align*}  
where $\lambda$ denotes the eigenvalue of maximum absolute value of $D$.
\color{black}
We further note that all of the following results on minimizers/maximizers as well as stationary points of $\erg$ can be generalized to probability measures  concentrated on an ellipsoid instead of a sphere. To see this, we consider the ellipsoid
\begin{align*}
    C\mathcal{S} = \left\{x \in \R^n:\, \|C^{-1}x\| = 1\right\},
\end{align*} where $C\in \R^{n\times n}$ is invertible,
and the corresponding energy 
\begin{align*}
    \mathcal{E}_D^C(\mu) = \int_{C\mathcal{S}}\int_{C\mathcal{S}} e^{\dpr{x}{Dy}}\,\d\mu(x)\,\d\mu(y).
\end{align*}
Since $C$ is invertible, any measure $\mu$ is uniquely determined by the pushforward measure $\nu = C^{-1}_\#\mu$, as $\mu = C_\#\nu$. Thus, we can rewrite the energy as
\begin{align*}
     \mathcal{E}_D^C(\mu) = \int_{\mathcal{S}}\int_{\mathcal{S}} e^{\dpr{Cx}{DCy}}\,\d\nu(x)\,\d\nu(y) = \mathcal{E}_{C^TDC}(\nu),
\end{align*}
and equivalently optimize the energy $\mathcal{E}_{C^TDC}$ on the sphere. 
A special case that leads to measures  concentrated on an ellipsoid corresponds to RMSNorm normalization with non-vanishing gain parameters $g_i \neq 0$. In this case, the ellipsoid is given by $G\mathcal{S}$, where $G$ is a diagonal matrix with entries $[g_i]_{i = 1}^n$.
\color{black}
\section{Gradient flow}\label{sec:gradientflow}
As shown 
above, the particle dynamics can be \enquote{lifted} by  the use of empirical measures to the space of probability measures $\mathcal{P}(\mathcal{S})$ over the sphere. As mentioned in \cite[Remark 3.3]{geshkovski2023mathematical} for arbitrary probability measures the connection between the partial dynamics and a corresponding continuity equation can be made by a mean field limit approach.
Hence, instead of the particle dynamics one can study the continuity equation
\begin{align}
    \partial_t \mu+\div( \mathsf{V}[\mu] \mu)=0 &\quad \text{on } [0,T]\times \mathcal{S} ,  \label{eq:continuityequation} \\
    \mu_{|t=0}=\mu(0)& \quad \text{on }\mathcal{S} ,  \nonumber
\end{align}
with the velocity field given by \eqref{eq:velocityfield},
which holds in the sense of distributions. Note, that in this section we scale the energy by a factor of $1/2$ to be consistent with \cite{geshkovski2023mathematical}. It was remarked in \cite[Chapter  3.3]{geshkovski2023mathematical} that for $V=\pm D$ the energy 
$$\mathcal{E}(\mu)=\pm \frac{1}{2} \int_{\mathcal{S}}\int_{\mathcal{S}} e^{\dpr{x}{Dy}}\,\d\mu(x)\,\d\mu(y)$$
is monotone along these dynamics and the PDE~\eqref{eq:continuityequation} can be interpreted as a gradient flow for a modified optimal transport distance.
However, as the authors of~\cite{geshkovski2023mathematical} acknowledge, there is a gap in the literature that prevents them from making this observation rigorous.

In this section we aim to close this gap. We show that $\mathcal{P}(\mathcal{S})$ equipped with this new distance is a geodesic space with properties similar to the classical $2$-Wasserstein space and prove that solutions of \eqref{eq:continuityequation} are curves of maximal slope of $\mathcal{E}$ with respect to this distance and thus satisfy the energy dissipation equality
\begin{align*}
    \frac{\d}{\d t} \mathcal{E}(\mu_t)=- \int_\mathcal{S} \int_\mathcal{S} e^{\dpr{x}{Dy}}\d\mu_t(y)\,  |\Velo [\mu_t](x)|^2\d\mu_t(x) \quad \text{for a.e. }t.
\end{align*}
Lastly, we study the long-time behavior of the dynamics and show that subsequences of the flow converge to stationary points of the energy $\mathcal{E}$.

Let us mention that the basic analysis of this section related to the novel transport distance can be generalized in a rather straight-forward way to the more general case of $D$ being non-symmetric and can thus provide the basis for a future analysis of the non-gradient flow case with  $V$ arbitrary and $D$ non-symmetric.

\subsection{Continuity equation on manifolds}
Let $M$ be a compact $n$-dimensional Riemannian manifold without a boundary, e.g. the sphere $\mathcal{S}\subset \R^n$. 
The tangent bundle $TM= \sqcup_{x\in M} T_xM$ is given by the disjoint union of all tangent spaces of all $x\in M$. We denote by $\mathcal{P}(M)$ the space of Borel probability measures on $M$, equipped with the standard narrow topology (e.g. \cite[Chapter  5.1]{ambrosio2008gradient}). The symbol  $\rightharpoonup$ is used to indicate convergence in this topology. 
Let $I=(0,T)$ be an open interval, $\mu: t \rightarrow \mu_t \in \mathcal{P}(M)$ a narrowly continuous curve and $V: (x,t)\in M\times I \mapsto v_t(x) \in T M$ a Borel velocity field such that $\int_0^T \int_M  |v_t(x)|\d\mu_t\d t<\infty$.
The continuity equation holds in the sense of distributions if
\begin{align}\label{eq:CE}
    \int_{(0,T)} \int_M \partial_t \varphi(x,t)+ \langle \mathcal{D} \varphi(x,t)  ,v_t(x)\rangle\d\mu_t\d t=0,\quad \forall \varphi\in C_c^1(M\times(0,T)).
\end{align}
Here $\mathcal{D}$ denotes the differential on the manifold $M$. Sometimes we will use $\mathcal{D}_x$ to clarify with respect to which variable the differential is taken.
We define the set of solutions to the continuity equation as follows
\begin{align*}
    CE(0,T)\coloneqq \left\{(\mu,v): \quad
    \begin{gathered}
        \mu:I \mapsto \mathcal{P}(M) \text{ is narrowly continuous}, \\
        \int_0^T \int_M  |v_t(x)|\d\mu_t\d t<\infty,\\
        (\mu,v) \text{ satisfy the continuity equation}
    \end{gathered}
    \right\}.
\end{align*}
Further, we define $CE(0,T; \nu\rightarrow \eta)$ as the subset $(\mu, v)$  such that $\mu_0=\nu$, $\mu_T=\eta$. For more details, we refer to \cref{app:ContEq}.

\subsection{Distance}
To interpret \cref{eq:continuityequation} as a gradient flow on $\mathcal{P}(M)$ we need to modify the well-known dynamic formulation of the $2$-Wasserstein distance \cite{benamou2000computational} and introduce the following mobility 
\begin{align*}
    m_\mu(x)={\int_{M} K(x,y)\d\mu(y)}.
\end{align*}
With this, the modified transport distance between $\mu_0,\mu_1\in \mathcal{P}(M)$ is defined as follows (see  \cite[Section 3.4.2]{geshkovski2023mathematical})
\begin{align}\label{eq:OpTrDi}
    W_{m,2}^2(\mu_0,\mu_1)=\inf\left\{ \int_0^1 \int_{M} m_{\mu_t}(x)|v_t(x)|^2 \d\mu_t(x)\d t: (\mu,v)\in CE(0,1;\mu_0\rightarrow\mu_1) \right\}.
\end{align}
  
For $K \equiv 1$ we recover the classical $2$-Wasserstein distance. The dynamics \eqref{eq:continuityequation} corresponds to the kernel $K(x,y)=e^{\dpr{x}{Dy}}$, but for the sake of generality we carry out the analysis for a more general class of kernels $K$.

\begin{assumption}\label{ass:kernel}
 The kernel $K(x,y) \in C(M\times M)$ is continuous and there exists a constant $C>0$ such that $K(x,y)\geq C$ for all $x,y\in M$.
\end{assumption}

\begin{rem}
    The assumption that $K$ is bounded from below is vital for our analysis and covers the cases of interest in this paper. Nonetheless, it would be interesting to see whether this assumption can be relaxed. For example, instead of a compact manifold $M$, we could consider $\mathbb{R}^d$ as the underlying space and take $K$ to be a Gaussian or a bounded confidence kernel $K(x,y)=\mathsf{1}_{\abs{x-y}\leq 1}$ as studied in \cite{deffuant2000mixing}.
\end{rem}
As the next theorem shows the infimum in \eqref{eq:OpTrDi} is actually  attained by some $(\mu,v)\in CE(0,1;\mu_0\rightarrow\mu_1)$.
The proof can be found in \cref{app:Geo}.

\begin{thm}[Existence of minimizers]\label{eq:Geo}
For every pair $\mu_0,\mu_1 \in \mathcal{P}(M)$ with $W_{m,2}(\mu_0,\mu_1)<+\infty$ there exists a couple $(\mu,v)\in CE(0,1)$ such that 
\begin{align*}
    W_{m,2}^2(\mu_0,\mu_1)=\int_0^1 \int_M m_{\mu_t}(x) |v_t(x)|^2\d\mu_t(x)\d t.
\end{align*}
Furthermore, such minimizers can be equivalently characterized as those of 
\begin{align}\label{eq:GeoRep}
    W_{m,2}(\mu_0,\mu_t)=\inf\left\{ \int_0^T  \left(\int_M m_{\mu_t}(x) |v_t(x)|^2\d\mu_t(x)\right)^{\frac12}\d t\ : \, (\mu,v)\in CE(0,T; \mu_0\rightarrow \mu_T)\right\}
\end{align}
\end{thm}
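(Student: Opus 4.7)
The plan is to follow the direct method of the calculus of variations in the Benamou--Brenier style, adapted to the nonlocal mobility: take a minimizing sequence in $CE(0,1;\mu_0\rightarrow\mu_1)$, extract a convergent subsequence in appropriate weak topologies, and use lower semicontinuity of the action to pass to the limit. As a preparatory step, I would reformulate the action in terms of the momentum $\mathbf{E}_t=v_t\,\mu_t$, writing
\[
\mathcal{A}(\mu,\mathbf{E}) \defeq \int_0^1\! \int_M m_{\mu_t}(x)\,\frac{|\mathbf{E}_t|^2}{\mu_t}\d t,
\]
interpreted through the convex recession of $(\rho,E)\mapsto |E|^2/\rho$ on singular parts. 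In the variables $(\mu,\mathbf{E})$ both the continuity equation \eqref{eq:CE} and the action are convex, which makes weak-$\ast$ compactness arguments applicable.

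For compactness, \cref{ass:kernel} is decisive: since $K\geq C>0$, we have $m_{\mu_t}\geq C$ pointwise, so a uniform action bound gives $\int_0^1\!\int_M |v^n_t|^2\d\mu^n_t\d t \leq \mathcal{A}(\mu^n,\mathbf{E}^n)/C$. Cauchy--Schwarz then bounds the total variation of $\mathbf{E}^n$ on $M\times(0,1)$, yielding a weak-$\ast$ convergent subsequence $\mathbf{E}^n\weakto \mathbf{E}$ as vector-valued measures. Equicontinuity of $t\mapsto\mu^n_t$ with respect to the Kantorovich--Rubinstein metric follows from testing the continuity equation against Lipschitz functions and the preceding $L^2$-bound, so Arzel\`a--Ascoli provides a limit $\mu^n_t\weakto\mu_t$ uniformly in $t$. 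Linearity of \eqref{eq:CE} in $(\mu,\mathbf{E})$ then yields $(\mu,\mathbf{E})\in CE(0,1;\mu_0\rightarrow\mu_1)$.

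For lower semicontinuity the key observation, and the main obstacle, is the nonlocal nature of $m_\mu$. The rescue is that the map $\mu\mapsto m_\mu$ is continuous from narrow convergence into $C(M)$: since $K$ is continuous on the compact product $M\times M$, the family $\{K(x,\cdot)\}_{x\in M}$ is uniformly equicontinuous, so $\mu^n_t\weakto\mu_t$ implies $m_{\mu^n_t}\to m_{\mu_t}$ uniformly on $M$, and uniformly in $t$ by the Arzel\`a--Ascoli step above. This reduces the lower semicontinuity of $\mathcal{A}$ to the classical one for functionals of the form $(\rho,E)\mapsto \int \alpha\,|E|^2/\rho$ with continuous, strictly positive weight $\alpha$, which follows from joint convexity and one-homogeneity combined with standard lower semicontinuity results for convex functionals on measures (Buttazzo, or \cite[Chapter 2]{ambrosio2008gradient}). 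Combined with the trivial lower bound $\mathcal{A}\geq 0$, we conclude that a minimizer exists.

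Finally, the equivalence with the length formulation \eqref{eq:GeoRep} is obtained by a reparametrization argument: Cauchy--Schwarz gives $\bigl(\int_0^T L(t)\d t\bigr)^2 \leq T\int_0^T L(t)^2\d t$ for $L(t)^2 = \int_M m_{\mu_t}|v_t|^2\d\mu_t$, while equality holds after a time reparametrization making $L(t)$ constant in $t$; invariance of both the length integral and (after rescaling $v$ accordingly) the continuity equation under such reparametrizations lets one rescale to the unit interval, giving the asserted equivalence.
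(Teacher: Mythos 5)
Your proposal is correct in substance but follows a genuinely different route from the paper. You work in the Benamou--Brenier variables $(\mu,\mathbf{E})$ with $\mathbf{E}_t=v_t\mu_t$, obtain compactness of the momenta as vector-valued measures plus uniform (Arzel\`a--Ascoli) convergence of the curves, and then prove lower semicontinuity of the action directly, handling the nonlocal mobility by observing that $\mu\mapsto m_\mu$ maps narrow convergence to uniform convergence on the compact $M$. The paper instead avoids vector-valued measures on the tangent bundle altogether: it introduces a \emph{lifted flux} $J_t\in\mathcal{P}(TM\times M)$ (the extra $M$-factor carries the second measure in $m_{\mu_t}(x)=\int K(x,y)\d\mu_t(y)$, so the action becomes $\int K(\pi(v),y)|v|^2\d J_t$ with a \emph{fixed} continuous integrand), extracts a narrowly convergent subsequence of the lifted fluxes, disintegrates the limit, and recovers a genuine velocity field via the barycentric projection together with Jensen's inequality. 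Your approach buys a cleaner convexity/lsc argument and makes the role of \cref{ass:kernel} transparent; the paper's approach buys a clean measure-theoretic setting (probability measures on the manifold $TM\times M$ rather than tangent-vector-valued measures, whose definition on a manifold requires a word --- e.g.\ duality with continuous $1$-forms or an embedding --- that you should supply if you keep your formulation). Two minor points to tighten: (i) you need $\mathbf{E}\ll\mu\otimes\d t$ in the limit to recover $v$, which follows from finiteness of the limiting action via the recession function you allude to; (ii) in the reparametrization step the constant-speed substitution is singular where $L(t)=0$, so you should regularize as in the paper, replacing $L$ by $(\epsilon+L^2)^{1/2}$ and letting $\epsilon\to0$.
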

Using the theorem above it is easy to show that $W_{m,2}$ is a distance on $\mathcal{P}(M)$.
\begin{thm}\label{cor:Top}
The space $\mathcal{P}(M)$ equipped with $W_{m,2}$ is a complete metric space and its topology  is equivalent to the one induced by the $2$-Wasserstein distance which, since $M$ is compact, is equivalent to the topology of narrow convergence.
\end{thm}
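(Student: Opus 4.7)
The plan is to exploit Assumption~\ref{ass:kernel} and compactness of $M$ to sandwich $W_{m,2}$ between constant multiples of the classical $2$-Wasserstein distance $W_2$, from which both completeness and topological equivalence follow essentially for free.

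First, since $K\in C(M\times M)$ is continuous on a compact set and Assumption~\ref{ass:kernel} gives a positive lower bound, there exist constants $0<c\leq C<\infty$ with $c\leq K(x,y)\leq C$ for all $(x,y)\in M\times M$. Integrating against an arbitrary $\mu\in\mathcal{P}(M)$ yields the uniform bound $c\leq m_\mu(x)\leq C$ for every $x\in M$ and every $\mu\in\mathcal{P}(M)$. Inserting this inside the action functional of \eqref{eq:OpTrDi}, and recalling that the Benamou--Brenier formula for $W_2$ corresponds to the mobility $m\equiv 1$, I obtain the bilateral comparison
\begin{equation*}
c\,W_2^2(\mu_0,\mu_1)\;\leq\; W_{m,2}^2(\mu_0,\mu_1)\;\leq\; C\,W_2^2(\mu_0,\mu_1),\qquad \mu_0,\mu_1\in\mathcal{P}(M).
\end{equation*}
Since $M$ is compact, $W_2$ is finite on $\mathcal{P}(M)\times\mathcal{P}(M)$, and hence so is $W_{m,2}$.

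Next, I would verify the metric axioms for $W_{m,2}$. Non-negativity is obvious, and definiteness follows from the lower bound together with the fact that $W_2$ is already a metric. Symmetry follows by the time-reversal transformation $\tilde\mu_t:=\mu_{1-t}$, $\tilde v_t:=-v_{1-t}$, which maps $CE(0,1;\mu_0\to\mu_1)$ to $CE(0,1;\mu_1\to\mu_0)$ and preserves the action since $m_{\tilde\mu_t}$ depends only on the current measure. For the triangle inequality I would invoke the reparametrization-invariant length representation \eqref{eq:GeoRep} from Theorem~\ref{eq:Geo}: given $\mu_0,\mu_1,\mu_2$ and $\varepsilon>0$, pick admissible curves $(\mu^1,v^1)$ and $(\mu^2,v^2)$ connecting $\mu_0\to\mu_1$ and $\mu_1\to\mu_2$ whose lengths are within $\varepsilon$ of $W_{m,2}(\mu_0,\mu_1)$ and $W_{m,2}(\mu_1,\mu_2)$, respectively; reparametrize them to live on $[0,\tfrac12]$ and $[\tfrac12,1]$ and concatenate. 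The concatenated curve belongs to $CE(0,1;\mu_0\to\mu_2)$ and its length is the sum of the two constituent lengths, yielding $W_{m,2}(\mu_0,\mu_2)\leq W_{m,2}(\mu_0,\mu_1)+W_{m,2}(\mu_1,\mu_2)+2\varepsilon$.

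With the metric structure in place, topological equivalence and completeness are immediate from the bilateral comparison: a sequence is $W_{m,2}$-Cauchy (respectively, $W_{m,2}$-convergent) if and only if it is $W_2$-Cauchy (respectively, $W_2$-convergent). Completeness of $(\mathcal{P}(M),W_{m,2})$ thus reduces to the well-known completeness of $(\mathcal{P}(M),W_2)$, which on a compact manifold further coincides with the space of narrowly continuous probability measures. The only genuinely nontrivial step is the triangle inequality, and its difficulty is essentially outsourced to the length formula \eqref{eq:GeoRep}; the only care needed is to check that the concatenated curve produces a Borel velocity field with the required integrability, which is routine once one rescales $v$ under the reparametrization $t\mapsto 2t$ (and $t\mapsto 2t-1$).
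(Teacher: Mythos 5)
Your proposal is correct and follows essentially the same route as the paper: the two-sided bound $c\leq m_\mu\leq C$ coming from Assumption~\ref{ass:kernel} and compactness gives the comparison with $W_2$ (hence finiteness, topological equivalence, and completeness), symmetry comes from time reversal, and the triangle inequality is obtained from the length characterization \eqref{eq:GeoRep} together with gluing and reparametrization of curves. The only cosmetic difference is that you deduce definiteness from the sandwich inequality and the fact that $W_2$ is a metric, whereas the paper argues directly that the action bound forces $v_t=0$; both are valid.
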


\begin{proof}
First, we check that $W_{m,2}$ is a distance. Indeed, \emph{(i)}~symmetry follows from simply rescaling time by $\tilde{t}: t\in[0,T] \mapsto  T-t\in[0,T]$; \emph{(ii)}~definiteness: Since $m_{\mu_t}$ is bounded from below, $W_{m,2}(\mu,\nu)=0$ implies that $v_t=0$ for $\bm{\mu}$-a.e. $(p,t)\in M \times (0,T)$. Thus by  \cref{eq:BInt} $\mu=\nu$;  \emph{(iii)}~the triangle inequality follows from the characterization \cref{eq:GeoRep} and the gluing property from \cref{pro:Pro}.
To show the equivalence of the distances we observe that by \cref{ass:kernel}, $K(x,y)\geq C$ and since $M\times M$ is compact and $K(x,y)$ is continuous we can also find a $\tilde{C}$ such that $K(x,y)\leq \tilde{C}$. This implies that
\begin{align*}
    \frac{1}{C}W_2(\mu,\nu)\leq  W_{m,2}(\mu,\nu)\leq \tilde{C} W_2(\mu,\nu)<+\infty \quad \forall\mu,\nu\in\mathcal{P}(M)
\end{align*}
and the distances are equivalent. Since $(\mathcal{P}(M), W_{2})$ is complete, $(\mathcal{P}(M), W_{m,2})$ has to be complete as well.
\end{proof}

 Let us recall that in a general metric space $(X,d)$ a curve $\gamma: [0,T]\rightarrow X$ is called absolutely continuous if there exists a function  $m\in L^1(0,T)$ such that
\begin{align}\label{eq:AbsCont}
    d(\gamma_s,\gamma_r)\leq \int_s^r m(t)\d t  \quad \forall s,r\in [0,T] \text{ with }s\leq r.
\end{align}
For an absolutely continuous curve $\gamma(t)$ its metric derivative is defined by 
\begin{align*}
    |\dot{\gamma}|(t)\coloneqq \lim_{h\rightarrow 0} \frac{d(\gamma_{t+h},\gamma_t)}{h}
\end{align*}
and it exists for a.e. $t\in(0,T)$. It can be shown that $|\dot{\gamma}|$ is minimal in the sense that for all $m(t)$ satisfying \cref{eq:AbsCont}  it holds that $|\dot{\gamma}|(t)\leq m(t)$ for a.e. $t\in(0,T)$.
The next lemma, which is proven in \cref{app:AC}, characterizes absolutely continuous curves in $(\mathcal{P}(M),W_{m,2})$. 

\begin{lemma}\label{lm:AC}
Let $\mu_t$ be an absolutely continuous curve w.r.t. $W_{2,m}$. Then there exists a Borel velocity field $(v_t)_{t\in(0,T)}$ such that $(\mu,v)\in CE(0,T)$ and $$\left( \int_M m_{\mu_t}(x) |v_t(x)|^2\d\mu_t(x)\right)^{1/2}= |\dot{\mu}|(t)\quad \text{for a.e. }t\in(0,T).$$
Conversely, if  $(\mu,v)\in CE(0,T)$ and $\int_0^T\left( \int_M m_{\mu_t} |v_t|^2\d\mu_t\right)^{1/2}\d t<+\infty$ then $t\rightarrow\mu_t$ is absolutely continuous and 
$$ |\dot{\mu}|(t)\leq \left( \int_M m_{\mu_t}(x) |v_t(x)|^2\d\mu_t(x)\right)^{1/2}\quad \text{for a.e. }t\in(0,T).$$
\end{lemma}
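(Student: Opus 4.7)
The plan is to follow the template of the classical result on absolutely continuous curves in the $2$-Wasserstein space (e.g.\ \cite[Theorem 8.3.1]{ambrosio2008gradient}), with the mobility $m_{\mu_t}$ taking the role of a weight and the Benamou--Brenier-type formulation \cref{eq:GeoRep} from \cref{eq:Geo} as the key tool. By \cref{ass:kernel} and compactness of $M$, we have $C\leq K(x,y)\leq \tilde C$ and hence $C\leq m_{\mu}(x)\leq \tilde C$ for every $\mu\in\mathcal P(M)$ and $x\in M$, which lets us reuse compactness arguments from the classical theory. The converse direction is the easier half: given $(\mu,v)\in CE(0,T)$ with $\int_0^T(\int_M m_{\mu_t}|v_t|^2\d\mu_t)^{1/2}\d t<\infty$, I would fix $0\leq s<r\leq T$ and use the reparametrization $\tau\mapsto s+\tau(r-s)$ to produce a pair in $CE(0,1;\mu_s\to\mu_r)$ whose functional in \cref{eq:GeoRep} equals $\int_s^r(\int_M m_{\mu_t}|v_t|^2\d\mu_t)^{1/2}\d t$. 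This bounds $W_{m,2}(\mu_s,\mu_r)$ by that integral, so $\mu$ is absolutely continuous, and by the minimality of the metric derivative in \cref{eq:AbsCont} the pointwise bound $|\dot\mu|(t)\leq(\int_M m_{\mu_t}|v_t|^2\d\mu_t)^{1/2}$ holds a.e.

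For the direct direction, I would use a discretization--compactness scheme. Suppose $\mu_t$ is absolutely continuous, so $\int_0^T|\dot\mu|^2(t)\d t<\infty$. For each $n\in\mathbb N$ pick a partition $0=t_0^n<\cdots<t_{N_n}^n=T$ with mesh $h_n\to 0$ and, by \cref{eq:Geo}, select an optimal geodesic $(\mu^{n,k},v^{n,k})\in CE(t_k^n,t_{k+1}^n;\mu_{t_k^n}\to\mu_{t_{k+1}^n})$ on each subinterval. Their concatenation $(\mu^n,v^n)\in CE(0,T)$ has total action $\sum_k W_{m,2}(\mu_{t_k^n},\mu_{t_{k+1}^n})^2/(t_{k+1}^n-t_k^n)$, which by \cref{eq:AbsCont} and Cauchy--Schwarz on each subinterval is uniformly bounded by $\int_0^T|\dot\mu|^2(t)\d t$. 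Since $m_{\mu^n_t}\geq C$, the momentum measures $v^n_t\mu^n_t\otimes \d t$ then have uniformly bounded total variation on $TM\times(0,T)$; after extraction of a weakly-$*$ convergent subsequence, a standard disintegration argument (as in the classical case), combined with narrow continuity of $\mu_t$ and the density of the partition endpoints in $[0,T]$, identifies the limit with $v_t\mu_t\otimes \d t$ for a Borel vector field $v_t$, giving $(\mu,v)\in CE(0,T)$.

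The main obstacle is the lower semicontinuity of the weighted action $(\mu,v)\mapsto \int_0^T\int_M m_{\mu_t}(x)|v_t(x)|^2\d\mu_t\d t$ along the above convergence, because the weight $m_{\mu_t}$ itself depends on the evolving measure. I would combine the convexity-based Benamou--Brenier lsc, coming from convexity of $(\rho,J)\mapsto |J|^2/\rho$, with the observation that continuity of $K$ on the compact set $M\times M$ forces $m_{\mu^n_t}(x)\to m_{\mu_t}(x)$ uniformly in $x$ whenever $\mu^n_t\weakto\mu_t$ narrowly. This upgrades the functional to be genuinely lower semicontinuous along the chosen subsequence. Applying the lsc to the discretized approximations then yields $\int_0^T(\int_M m_{\mu_t}|v_t|^2\d\mu_t)^{1/2}\d t\leq \int_0^T|\dot\mu|(t)\d t$, which together with the converse direction forces a.e.\ equality of the two integrands and completes the proof.
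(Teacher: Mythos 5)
Your proposal is correct and follows essentially the same route as the paper's proof in \cref{app:AC}: the converse via the reparametrized Benamou--Brenier characterization \cref{eq:GeoRep}, and the direct part via piecewise geodesics on a refining partition, a compactness--disintegration argument, lower semicontinuity of the weighted action (with the mobility controlled through the uniform bounds $C\leq m_\mu\leq\tilde C$ and uniform convergence of $m_{\mu^n_t}$), and finally combining the integrated bound with the converse to force a.e.\ equality. The only technical difference is that the paper packages the compactness through the lifted flux $J_t\in\mathcal P(TM\times M)$ (as in \cref{lm:BEComp}) and extracts the velocity as the barycenter of the disintegration, rather than working directly with vector-valued momentum measures on the tangent bundle, but the substance of the argument is the same.
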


A metric space is called a length space if $$d(x,y)=\inf \int_0^1 |\dot\gamma|(t)\d t,$$
where the infimum is taken over all absolutely continuous curves $\gamma:[0,1]\rightarrow X$ with $\gamma(0)=x$ and $\gamma(1)=y$. If this infimum is obtained by a minimal curve, also called geodesic, we say that $(X,d)$ is a geodesic space. As it turns out, the minimal curves obtained in \cref{eq:Geo} are such geodesics. This can be immediately be deduced from \cref{eq:ConstGeo} and the definition of the metric velocity,
\begin{cor}
    The space $(\mathcal{P}(M), W_{m,2})$ is a geodesic space. 
\end{cor}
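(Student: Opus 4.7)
The plan is to exhibit, for any $\mu_0, \mu_1 \in \mathcal{P}(M)$, an absolutely continuous curve joining them whose length in the metric sense equals $W_{m,2}(\mu_0,\mu_1)$. The two cited ingredients do essentially all the work: \cref{eq:Geo} provides the action-minimizing pair $(\mu,v)\in CE(0,1;\mu_0\rightarrow\mu_1)$, and \cref{lm:AC} lets us interpret the action along it as an integral of the metric derivative.

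First, I would invoke \cref{eq:Geo} to obtain $(\mu,v)\in CE(0,1;\mu_0\rightarrow\mu_1)$ with
\begin{align*}
W_{m,2}^2(\mu_0,\mu_1)=\int_0^1\int_M m_{\mu_t}(x)|v_t(x)|^2\,\d\mu_t(x)\,\d t.
\end{align*}
By the second part of the same theorem, this pair is also a minimizer of the length functional \eqref{eq:GeoRep}; the two formulations agree precisely when the Cauchy--Schwarz bound
\begin{align*}
\int_0^1\Bigl(\int_M m_{\mu_t}|v_t|^2\,\d\mu_t\Bigr)^{1/2}\,\d t \;\leq\; \Bigl(\int_0^1\int_M m_{\mu_t}|v_t|^2\,\d\mu_t\,\d t\Bigr)^{1/2}
\end{align*}
is saturated, which forces $t\mapsto \int_M m_{\mu_t}|v_t|^2\,\d\mu_t$ to be constant (this is the content of the constant-speed reparameterization referenced in the excerpt, \cref{eq:ConstGeo}).

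Next, by the first part of \cref{lm:AC}, the curve $t\mapsto \mu_t$ is absolutely continuous with respect to $W_{m,2}$ and
\begin{align*}
|\dot\mu|(t)\;\leq\;\Bigl(\int_M m_{\mu_t}(x)|v_t(x)|^2\,\d\mu_t(x)\Bigr)^{1/2}\qquad\text{for a.e. }t\in(0,1).
\end{align*}
Integrating and combining with the previous display yields
\begin{align*}
\int_0^1|\dot\mu|(t)\,\d t\;\leq\;\int_0^1\Bigl(\int_M m_{\mu_t}|v_t|^2\,\d\mu_t\Bigr)^{1/2}\,\d t\;=\;W_{m,2}(\mu_0,\mu_1),
\end{align*}
where the last equality uses the equivalent representation \eqref{eq:GeoRep} together with the minimality of $(\mu,v)$.

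For the opposite inequality, a general absolutely continuous curve $\gamma:[0,1]\to\mathcal{P}(M)$ from $\mu_0$ to $\mu_1$ can, by the converse direction of \cref{lm:AC}, be paired with a Borel velocity $w$ so that $(\gamma,w)\in CE(0,1;\mu_0\rightarrow\mu_1)$ and $(\int_M m_{\gamma_t}|w_t|^2\,\d\gamma_t)^{1/2}=|\dot\gamma|(t)$ a.e.; feeding this into \eqref{eq:GeoRep} gives $W_{m,2}(\mu_0,\mu_1)\leq \int_0^1|\dot\gamma|(t)\,\d t$. Hence $(\mathcal{P}(M),W_{m,2})$ is a length space and the minimizer constructed above attains the infimum, which is precisely the definition of a geodesic. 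No serious obstacle is expected, since the entire argument is an assembly of already-established facts; the only subtlety is ensuring one applies \cref{lm:AC} in both directions and uses the two equivalent characterizations in \cref{eq:Geo} consistently.
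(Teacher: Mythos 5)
Your argument is correct and follows essentially the same route as the paper, which deduces the claim directly from the constant-speed property \cref{eq:ConstGeo} of the minimizers from \cref{eq:Geo} together with the definition of the metric derivative. The only cosmetic slip is that you label the two directions of \cref{lm:AC} in reverse (the bound $|\dot\mu|(t)\leq\bigl(\int_M m_{\mu_t}|v_t|^2\,\d\mu_t\bigr)^{1/2}$ is the ``conversely'' part of that lemma), but the content you invoke at each step is the right one.
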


\subsection{Gradient flows of the interaction energy}

Let  $W(x,y) \in C^1(M\times M)$ be a symmetric interaction kernel. 
The interaction energy is given by 
\begin{align*}
    \mathcal{E(\mu)}\coloneqq \frac{1}{2} \int_{M \times M} W(x,y)\d\mu(x)\d\mu(y).
\end{align*}
Let us consider the following inverse  duality map 
\begin{align*}
    \mathcal{J}_2: x\in TM_p^* \mapsto |x|_*  \argmax_{y\in TM_p: |y|= 1} x(y).
\end{align*}
Since all tangent spaces are finite-dimensional this map is well defined. The application of $\mathcal{J}_2$ to a  1-form on $M$ (in particular, a differential of a function) yields a velocity field on $M$. 
Below we show that gradient flows of the energy $\mathcal{E}$ with respect to the metric $W_{m,2}$ are given by weak solutions to PDEs of the form
\begin{align}\label{eq:GradFlowPDE}
    \partial_t \mu+\div\left( \frac{1}{ m_{\mu}} J_2(\mathcal{D} W[\mu])\mu\right)=0,
\end{align}
where $W[\mu](x)= \int_M W(x,y)\d\mu(y)$. For $M=\mathcal{S}$, $K(x,y)=e^{\dpr{x}{Dy}}$ and $W(x,y)=\pm e^{\dpr{x}{Dy}}$ equation \cref{eq:GradFlowPDE} corresponds precisely to \cref{eq:continuityequation} if $V=\pm D$.
The sole difference between \cref{eq:GradFlowPDE} and classical Wasserstein gradient flows is the presence of of the factor $\frac{1}{m_\mu}$. It arises since the modified transport distance punishes movement of particles with a high mobility $m_\mu(x)$. When we interpret $K(x,y)$ as an interaction kernel between particles, those particles interacting strongly with others are slowed down while particles with low interaction are sped up.

\begin{lemma}[Chain rule] \label{lm: ChainRule}
Let $t\rightarrow\mu_t$ be an absolutely continuous curve in $W_{2,m}$. Then $t\mapsto \mathcal{E}(\mu_t)$ is absolutely continuous and
   \begin{align}\label{eq:CheinRule}
    \frac{\d}{\d t} \mathcal{E}(\mu_t)= \int_M \langle \mathcal{D}W[\mu_t](x),v_t(x)\rangle\d\mu_t(x) \quad \text{for a.e}\quad t\in (0,T). 
\end{align} 
\end{lemma}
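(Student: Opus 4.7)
My plan is to establish the chain rule by a polarization-type decomposition of the quadratic energy that reduces the identification of $\tfrac{d}{dt}\mathcal{E}(\mu_t)$ to testing the continuity equation against the (time-independent) functions $W[\mu_\tau] \in C^1(M)$ for parameters $\tau$ taken from the curve itself. For absolute continuity of $t\mapsto \mathcal{E}(\mu_t)$, I use the identity
\begin{align*}
    \mathcal{E}(\mu) - \mathcal{E}(\nu) = \frac{1}{2}\int_M (W[\mu]+W[\nu])\,d(\mu-\nu),
\end{align*}
which follows from the symmetry of $W$ and bilinearity of the energy. Since $W \in C^1(M\times M)$ and $M$ is compact, the family $\{W[\mu]\}_\mu$ is uniformly Lipschitz in $x$, so Kantorovich--Rubinstein duality yields $|\mathcal{E}(\mu)-\mathcal{E}(\nu)|\leq C\, W_2(\mu,\nu)$. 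By the equivalence of distances in \cref{cor:Top}, $\mathcal{E}$ is Lipschitz with respect to $W_{m,2}$, and hence $t\mapsto \mathcal{E}(\mu_t)$ is absolutely continuous.

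To identify the derivative, I fix $\tau \in [0,T]$: the function $W[\mu_\tau]$ lies in $C^1(M)$, and since $M$ is compact without boundary, $\varphi(x,t) = \psi(t)\, W[\mu_\tau](x)$ is admissible in \cref{eq:CE} for every $\psi \in C_c^1(0,T)$. A standard argument then shows that $t\mapsto \int_M W[\mu_\tau]\,d\mu_t$ is absolutely continuous with derivative $\int_M \langle \mathcal{D}W[\mu_\tau],v_t\rangle\,d\mu_t$, which lies in $L^1(0,T)$ because $\mathcal{D}W[\mu_\tau]$ is uniformly bounded and $\int_M |v_t|\,d\mu_t\in L^1(0,T)$ by the definition of $CE(0,T)$. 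Applying this identity once with $\tau = r$ and once with $\tau = s$ and substituting into the polarization formula gives, for every $0\leq s<r\leq T$,
\begin{align*}
    \mathcal{E}(\mu_r)-\mathcal{E}(\mu_s) = \frac{1}{2}\int_s^r \int_M \langle \mathcal{D}W[\mu_r]+\mathcal{D}W[\mu_s],\,v_t\rangle\,d\mu_t\,dt.
\end{align*}

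To conclude, set $a(t) := \int_M\langle \mathcal{D}W[\mu_t],v_t\rangle\,d\mu_t \in L^1(0,T)$. Narrow continuity of $t\mapsto \mu_t$, together with the fact that $\{\mathcal{D}W[\mu]\}_\mu$ is uniformly bounded and uniformly equicontinuous on $M$ (since $\mathcal{D}_x W \in C(M\times M)$ is uniformly continuous), yields via Arzelà--Ascoli that $t\mapsto \mathcal{D}W[\mu_t]$ is continuous as a map into $C(M;T^*M)$. Taking $r = t_0+h$, $s = t_0$ in the identity above and subtracting $\tfrac{1}{h}\int_{t_0}^{t_0+h} a(t)\,dt$, the remainder is controlled by
\begin{align*}
    \sup_{t\in[t_0,t_0+h]}\|\mathcal{D}W[\mu_t]-\mathcal{D}W[\mu_{t_0}]\|_\infty \cdot \frac{1}{h}\int_{t_0}^{t_0+h}\int_M |v_t|\,d\mu_t\,dt,
\end{align*}
which tends to $0$ as $h\to 0$ at any common Lebesgue point $t_0$ of $a(\cdot)$ and of $t\mapsto \int_M |v_t|\,d\mu_t$. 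Hence $\tfrac{d}{dt}\mathcal{E}(\mu_t)=a(t)$ almost everywhere, as desired. The main obstacle is precisely this last step: because $\mathcal{E}$ is quadratic, the polarization decomposition forces the test fields to be evaluated at the \emph{endpoints} $\mu_r,\mu_s$ rather than at the intermediate $\mu_t$, so one genuinely needs the uniform-in-$x$ continuity of $\tau\mapsto \mathcal{D}W[\mu_\tau]$, which is what the $C^1$ regularity of $W$ and the compactness of $M$ deliver.
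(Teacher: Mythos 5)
Your proof is correct, but it takes a genuinely different route from the paper's. The paper tests the continuity equation against the time-dependent potential $\eta(x,t)=\tfrac12\int_M W(x,y)\,\d\mu_t(y)$, whose time derivative involves $v_t$ and is therefore not $C^1$ in $t$; this forces the lengthy time-mollification argument of \cref{sc:ProofChain}, with a piecewise-constant-in-time approximation of the curve and Lipschitz estimates on $\mathcal{D}_y W$ in the $x$-variable. You instead polarize the quadratic energy, $\mathcal{E}(\mu_r)-\mathcal{E}(\mu_s)=\tfrac12\int_M (W[\mu_r]+W[\mu_s])\,\d(\mu_r-\mu_s)$, so that only the \emph{time-independent} test functions $W[\mu_r],W[\mu_s]\in C^1(M)$ ever enter \cref{eq:BInt}; the price, as you note, is that the resulting integrand carries the endpoint fields $\mathcal{D}W[\mu_r],\mathcal{D}W[\mu_s]$ rather than $\mathcal{D}W[\mu_t]$, and you repair this correctly at common Lebesgue points of $a(\cdot)$ and $t\mapsto\int_M|v_t|\,\d\mu_t$ using the uniform continuity of $\tau\mapsto\mathcal{D}W[\mu_\tau]$, which follows from narrow continuity of the curve together with the equicontinuity of $\{\mathcal{D}_xW(\cdot,y)\}_{y}$ (the same mechanism as in \cref{lem:212}). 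The factor $\tfrac12$ is handled correctly, since both endpoint fields converge to $\mathcal{D}W[\mu_{t_0}]$. What your route buys: it avoids mollification entirely, requires only the uniform continuity of $\mathcal{D}_xW$ on the compact $M\times M$ (whereas the appendix estimates additionally invoke a Lipschitz bound $|\mathcal{D}_yW(x_1,y)-\mathcal{D}_yW(x_2,y)|_*\leq C|x_1-x_2|$, i.e.\ slightly more than the stated $C^1$ regularity), and it yields the absolute continuity of $t\mapsto\mathcal{E}(\mu_t)$ essentially for free, either from the Kantorovich--Rubinstein bound or directly from your final integral identity. The polarization trick exploits the quadratic structure of $\mathcal{E}$ and would not extend verbatim to non-quadratic energies, where the paper's mollification strategy is the more robust template.
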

\begin{proof}
Let us consider an absolutely continuous curve $(\mu,v)\in CE(0,1; \mu\rightarrow \nu)$ and
the function $\eta: (x,t)\in M\times [0,T] \mapsto\frac{1}{2}\int_M W(x,y)\d\mu_t(y)$.
In the case when $\eta\in C^1(M\times [0,T])$ we could use it as a test function in \cref{eq:BInt} and immediately obtain
\begin{align*}
    \mathcal{E}(\mu_T)-\mathcal{E}(\mu_0)&=\int_M \eta(x,T)\d\mu_T(x)-\int_M \eta(x,0)\d\mu_0(x)\\
    &=\int_0^T\int_M \partial_t \eta(x,t)\d\mu_t(x)+\int_M \langle \mathcal{D} \eta(t,x),v_t(x)\rangle \d\mu_t(x)\d t\\
    &= \int_0^T\int_{M}\int_M \langle \mathcal{D}_x W(x,y),v_t(x)\rangle\d\mu_t(y)\d\mu_t(x)\d t<+\infty.   
\end{align*}
The finiteness follows from that fact that we can bound $|\mathcal{D}_{x} W(x,y)|_*$ uniformly on $M \times M$. 
In the general case we have to use a rather lengthy time mollification argument, see \cref{sc:ProofChain}. 
\end{proof}
Equation \cref{eq:CheinRule} is reminiscent of the classical chain rule $\frac{\d}{\d t} F(x(t))=\nabla F(x(t))\cdot \dot x(t)$ for a function $F:\R^d \rightarrow \R$ and a curve $x:[0,T] \rightarrow \R^d$.
The velocity field $v_t$ can be viewed as the \enquote{derivative} of the curve $\mu_t$, while $\mathcal{D}W[\mu_t]$ is the corresponding \enquote{gradient} of the interaction energy. Using this chain rule we can 
estimate how fast the energy can decrease along a curve~$\mu_t$. Therefore, curves reaching this bound dissipate the energy as fast as possible and satisfy the so-called energy dissipation equality.
\begin{lemma} \label{lm: DisE}
 For any absolutely continuous w.r.t. $W_{2,m}$ curve $(\mu_t)_{t\in (0,T)}$ we have that
 \begin{align}\label{eq:EnergyDis}
     \mathcal{E}(\mu_T)-\mathcal{E}(\mu_0)+ 
     \frac{1}{2}\int_0^T\int_M { m_{\mu_t}} |v_t|^2\d\mu_t\d t  +\frac{1}{2}\int_0^T\int_M  \frac{1}{m_{\mu_t}} |\mathcal{D} W[\mu_t]|_*^2\d\mu_t\d t \geq 0.
 \end{align}
 Moreover, we have equality if and only if $(\mu_t)_{t\in (0,T)}$ is a weak solution to \cref{eq:GradFlowPDE}.
\end{lemma}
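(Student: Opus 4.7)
The plan is to derive the inequality by combining the chain rule from \cref{lm: ChainRule} with a pointwise Young-type inequality applied in the duality pairing of $T^*M$ and $TM$, and then to identify the equality case with the gradient flow PDE \cref{eq:GradFlowPDE}. Concretely, since $(\mu_t)$ is absolutely continuous with respect to $W_{2,m}$, \cref{lm:AC} produces a Borel velocity field $(v_t)$ so that $(\mu,v) \in CE(0,T)$ and the weighted action is finite, and \cref{lm: ChainRule} then gives
\begin{align*}
    \mathcal{E}(\mu_T) - \mathcal{E}(\mu_0) = \int_0^T \int_M \langle \mathcal{D}W[\mu_t](x), v_t(x)\rangle \,\d\mu_t(x) \,\d t.
\end{align*}

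Next, I would invoke Young's inequality in the duality pairing between $T_x^*M$ and $T_xM$: for any $\lambda > 0$, $\alpha \in T_x^*M$ and $w \in T_xM$,
\begin{align*}
    \langle \alpha, w\rangle \geq -\frac{1}{2\lambda}|\alpha|_*^2 - \frac{\lambda}{2}|w|^2,
\end{align*}
with equality if and only if $\lambda w = -\mathcal{J}_2(\alpha)$. Applying this pointwise with $\lambda = m_{\mu_t}(x)$, $\alpha = \mathcal{D}W[\mu_t](x)$ and $w = v_t(x)$, integrating against $\d\mu_t \,\d t$ and combining with the chain rule identity immediately yields \cref{eq:EnergyDis}.

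For the equality case, the pointwise Young equality forces $v_t(x) = -\frac{1}{m_{\mu_t}(x)}\mathcal{J}_2\bigl(\mathcal{D}W[\mu_t](x)\bigr)$ for $\d\mu_t\,\d t$-a.e. $(x,t)$. Substituting this velocity into the continuity equation satisfied by $(\mu,v)\in CE(0,T)$ yields exactly the weak form of \cref{eq:GradFlowPDE}. Conversely, if $\mu_t$ is a weak solution of \cref{eq:GradFlowPDE}, then by uniqueness up to divergence-free fields one can use $v_t = -\frac{1}{m_{\mu_t}}\mathcal{J}_2(\mathcal{D}W[\mu_t])$ as the velocity in the chain rule, in which case Young is saturated at every point and the dissipation inequality becomes an equality.

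The main subtleties I anticipate are essentially bookkeeping rather than conceptual. First, the chain rule gives an identity for \emph{some} admissible velocity field, but the two action terms in \cref{eq:EnergyDis} must be finite for the Young step to make sense; this is ensured by taking the $v_t$ provided by \cref{lm:AC} (which realizes the metric derivative and is in particular square-integrable against $m_{\mu_t}\d\mu_t$), and by using the uniform bound $|\mathcal{D}_x W(x,y)|_* \leq C$ on $M\times M$ to control the second term. Second, one must verify that the characterization of equality in Young's inequality involving $\mathcal{J}_2$ is preserved under integration against $\d\mu_t\,\d t$; this is standard since the pointwise inequality is non-negative and its integral vanishes only if it vanishes almost everywhere. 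Finally, the converse implication (weak solutions of \cref{eq:GradFlowPDE} saturate the inequality) requires checking that the field $-\frac{1}{m_{\mu_t}}\mathcal{J}_2(\mathcal{D}W[\mu_t])$ is an admissible velocity for $\mu_t$ in the continuity equation and that the chain rule still applies to it, both of which follow from the regularity of $W$ and the lower bound on $m_{\mu_t}$.
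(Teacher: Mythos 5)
Your argument is correct and follows essentially the same route as the paper: the chain rule of \cref{lm: ChainRule} combined with a Young-type estimate, with equality traced back to $v_t$ being (up to sign) $\frac{1}{m_{\mu_t}}\mathcal{J}_2(\mathcal{D}W[\mu_t])$ and hence to the weak form of \cref{eq:GradFlowPDE}. The only cosmetic difference is that you apply Young's inequality pointwise in the duality pairing with weight $\lambda=m_{\mu_t}(x)$, whereas the paper first uses H\"older on the integrals and then scalar Young; your version makes the $\mu_t\otimes\d t$-a.e.\ characterization of the equality case slightly more immediate, and your sign for the optimal velocity is the one consistent with the non-negativity of \cref{eq:EnergyDis}.
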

\begin{proof}
We can estimate the right hand sight of \cref{eq:CheinRule} by Hölder's and Young's inequality 
\begin{align*}
    \int_M \mathcal{D} W[\mu_t](v_t)\d\mu_t\geq -\sqrt{ \int_M { m_{\mu_t}} |v_t|^2\d\mu_t }\sqrt{\int_M { \frac{1}{m_{\mu_t}}} |\mathcal{D} W[\mu_t]|_*^2\d\mu_t }\\
    \geq -\frac{1}{2}\int_M { m_{\mu_t}} |v_t|^2\d\mu_t  -\frac{1}{2}\int_M  \frac{1}{m_{\mu_t}} |\mathcal{D} W[\mu_t]|_*^2\d\mu_t.
\end{align*}
Integrating both sides of \cref{eq:CheinRule} from 0 to T we obtain \cref{eq:EnergyDis}. Moreover, equality holds if and only if for a.e. $t$ and $\mu_t$-a.e. we have $v_t=  \frac{1}{m_{\mu_t}} J_2  (\mathcal{D} W[\mu_t]$). Hence, $\mu_t$ is a weak solution to~\cref{eq:GradFlowPDE}.
\end{proof}

\subsection{Metric gradient flows}
Let us put the previous calculations into the context of curves of maximal slope \cite[Chapter 1]{ambrosio2008gradient} which can be viewed as a way to generalize gradient flows to general metric spaces.
We assume $(X,d)$ to be a complete metric space. Let $\mathcal{E}: X \rightarrow \R$. 
A function $g:X \rightarrow [0,+\infty ]$ is called a strong upper gradient of $\mathcal{E}$ if for any absolutely continuous curve $x:[0,T]\rightarrow X$ the concatenation $g\circ x$ is Borel and
 \begin{align*}
  |\mathcal{E}(x(t))-\mathcal{E}(x(s))|\leq \int_s^t g(x(r)) |\dot{x}|(r)\d r \quad \forall 0\leq s\leq t \leq T.    
 \end{align*}
 If $\mathcal{E}(x(t))$ is non-increasing in $t$ then the application of Young's inequality yields
 \begin{align*}
     \mathcal{E}(x(t))-\mathcal{E}(x(s))+\frac{1}{2} \int_s^t g(x(r))^2+|\dot{x}|(r)^2 \d r\geq0 \quad \forall 0\leq s\leq t\leq T.
 \end{align*}
 This observation allows us to define curves of maximal slope as those  that decrease the energy as fast as possible.

 \begin{definition}[Curve of maximal slope] An absolutely continuous curve $x:[0,T]\rightarrow X$ is called a curve of maximal slope of $\mathcal{E}$ with respect to its strong upper
 gradient $g$ if $t\mapsto E(x(t))$ is non-increasing and
 \begin{align*}
     \mathcal{E}(x(t))-\mathcal{E}(x(s))+\frac{1}{2} \int_s^t g(x(r))^2+|\dot{x}|(r)^2 \d r\leq0 \quad \forall 0\leq s\leq t\leq T.
 \end{align*}
 \end{definition}

 \begin{lemma}
 The map
\begin{align*}
   g: \mu \mapsto \sqrt{\int_M  \frac{1}{m_{\mu}} |\mathcal{D} W[\mu_t]|_*^2\d\mu}
\end{align*}     
is a strong upper gradient of $\mathcal{E}$ and solutions of \eqref{eq:GradFlowPDE} coincide with curves of maximal slope of $\mathcal{E}$ with respect to the strong upper gradient $g$.
 \end{lemma}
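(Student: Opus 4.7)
The plan is to derive both assertions from the chain rule (\cref{lm: ChainRule}), the characterization of absolutely continuous curves (\cref{lm:AC}), and the energy dissipation result (\cref{lm: DisE}). The proof splits naturally into verifying the upper gradient property of $g$ and then identifying solutions of \cref{eq:GradFlowPDE} with curves of maximal slope.

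For the upper gradient property I would first check Borel measurability. Since $K$ is continuous and bounded below on $M\times M$, the map $\mu\mapsto m_\mu$ is narrowly continuous into $C(M)$, and therefore so is $\mu\mapsto 1/m_\mu$. Differentiation under the integral (using $W\in C^1$ and compactness of $M$) gives narrow continuity of $\mu\mapsto \mathcal{D}W[\mu]$, so $g:\mathcal{P}(M)\to [0,\infty)$ is narrowly continuous and $t\mapsto g(\mu_t)$ is Borel for any narrowly continuous curve. Next let $\mu_t$ be absolutely continuous with respect to $W_{m,2}$ and let $v_t$ be the velocity field from \cref{lm:AC} satisfying $|\dot\mu|(t)^2=\int_M m_{\mu_t}|v_t|^2\,\d\mu_t$ a.e. Combining the chain rule with a pointwise Cauchy--Schwarz estimate weighted by $m_{\mu_r}$ and $1/m_{\mu_r}$ yields
\begin{align*}
|\mathcal{E}(\mu_t)-\mathcal{E}(\mu_s)|\leq \int_s^t\left|\int_M \langle \mathcal{D}W[\mu_r],v_r\rangle\,\d\mu_r\right|\d r\leq \int_s^t g(\mu_r)\,|\dot\mu|(r)\,\d r,
\end{align*}
which is exactly the strong upper gradient inequality.

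For the equivalence with curves of maximal slope I would argue in both directions. If $\mu_t$ is a weak solution of \cref{eq:GradFlowPDE}, then \cref{lm: DisE} gives the energy dissipation equality for the velocity field $\pm m_{\mu_t}^{-1}\mathcal{J}_2(\mathcal{D}W[\mu_t])$; combining it with the bound $|\dot\mu|(r)^2\leq \int_M m_{\mu_r}|v_r|^2\,\d\mu_r$ from \cref{lm:AC} produces
\begin{align*}
\mathcal{E}(\mu_t)-\mathcal{E}(\mu_s)+\tfrac{1}{2}\int_s^t \bigl(|\dot\mu|(r)^2+g(\mu_r)^2\bigr)\d r\leq 0,
\end{align*}
which is the defining inequality of a curve of maximal slope and, applied on subintervals, also yields monotonicity of $t\mapsto \mathcal{E}(\mu_t)$. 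Conversely, if $\mu_t$ is a curve of maximal slope, then combining its defining inequality with the upper gradient inequality (controlled via Young's inequality $ab\leq \tfrac12 a^2+\tfrac12 b^2$) forces equality throughout, and in particular, after replacing $|\dot\mu|^2$ by $\int_M m_{\mu_t}|v_t|^2\,\d\mu_t$ using the minimal velocity field of \cref{lm:AC},
\begin{align*}
\mathcal{E}(\mu_T)-\mathcal{E}(\mu_0)+\tfrac{1}{2}\int_0^T\!\int_M m_{\mu_t}|v_t|^2\,\d\mu_t\,\d t+\tfrac{1}{2}\int_0^T\!\int_M \tfrac{1}{m_{\mu_t}}|\mathcal{D}W[\mu_t]|_*^2\,\d\mu_t\,\d t=0.
\end{align*}
The characterization of the equality case in \cref{lm: DisE} then identifies $\mu_t$ as a weak solution of \cref{eq:GradFlowPDE}.

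The main obstacle I anticipate lies in the converse direction, where two separate inequalities must be saturated simultaneously: the Cauchy--Schwarz estimate at the heart of \cref{lm: DisE} (which pins $v_t$ to a multiple of $\mathcal{J}_2(\mathcal{D}W[\mu_t])$), and the metric-derivative bound of \cref{lm:AC} (which requires $v_t$ to be the minimal velocity field associated with $\mu_t$). Tracing the chain of inequalities carefully shows that saturation of the outer estimate forces saturation of each intermediate step, after which \cref{lm: DisE} applies verbatim.
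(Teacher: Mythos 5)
Your proof is correct and follows essentially the same route as the paper: \cref{lm:AC} to select the minimal velocity field, the chain rule plus the weighted Cauchy--Schwarz estimate for the strong upper gradient property, and \cref{lm: DisE} for the identification with curves of maximal slope. The only difference is that you spell out both directions of the equivalence (including the saturation argument), which the paper compresses into a single reference to \cref{lm: DisE}.
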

 \begin{proof}
  For an absolutely continuous w.r.t. $W_{2,m}$ curve $\mu_t$ we can find, by \cref{lm:AC}, a velocity field $(v_t)_{t\in(0,T)}$ such that $(\mu,v)\in CE(0,T)$ and $$\left( \int_M m_{\mu_t} |v_t|^2\d\mu_t\right)^{1/2} = |\dot{\mu}|(t)\quad \text{for a.e. }t\in(0,T).$$
  Then the chain rule \cref{lm: ChainRule} yields
  \begin{align*}
      | \mathcal{E}(\mu_t)-\mathcal{E}(\mu_s)|\leq \int_s^t | \langle \mathcal{D} W[\mu_t] , v_r\rangle  |\d r\leq \int_s^t  g( \mu_r) |\dot{\mu}|(r)\d r 
  \end{align*}
  and $g$ is a strong upper gradient. The coincidence of solutions of \eqref{eq:GradFlowPDE} and curves of maximal slope follows from \cref{lm: DisE}.
 \end{proof}

\newcommand{\localinclude}[1]{}
\localinclude{%
\subsection{$\lambda$-convexity ?}
We provide a very rough estimate for lambda convexity.
{\color{black} the manifold needs to be a geodesic space !}\todo{TR: there are still open ToDOs here.}
\begin{lemma}
    Let $W(x,\tilde{x})$ be jointly $\lambda$-convex along to arbitrary absolutely continuous curves $\{x_t\}_{t\in[0,T]}$  and $\{x_t\}_{t\in[0,T]}$ on $M$ with $\lambda<0$, i.e.:
    $$ W(x_t,\tilde{x}_t)\leq(1-t) W(x_0,\tilde{x}_0)+tW(x_T,\tilde{x}_T)-\frac{\lambda}{2}(1-t)t\left(d_M^2(x_0,x_T)+d_M^2(\tilde{x}_0,\tilde{x}_T) \right) $$
    then $\mathcal{E}$ is $\tilde{\lambda}$-convex for $\tilde{\lambda}=2 C \lambda$ along arbitrary AC curves.
\end{lemma}
\begin{proof}
We again exploit\todo{TR: Do we have a symbol for the integration domain, where there are currently dots?}
\begin{align*}
\mathcal{E}(\mu_t)
&=
\int_{M\times M} W(x,\tilde{x})\d\mu_t\otimes \mu_t= 
\int_{M\times M} W(x,\tilde{x}) d(e_t)_\# \tilde{\eta}\otimes (e_t)_\# \tilde{\eta}\\
&=
\int_{...\times ...} W(x(t),\tilde{x}(t)) d \tilde{\eta}\otimes  \tilde{\eta}\\
&\leq \int_{...\times ...} (1-t) W(x_0,\tilde{x}_0)+tW(x_T,x_T)-\frac{\lambda}{2}(1-t)t\ \left(d_M^2(x_0,x_T)+d_M^2(\tilde{x}_0,\tilde{x}_T)\right) d \tilde{\eta}\otimes  \tilde{\eta}\\
&=
(1-t) \mathcal{E}(\mu_0)+t\mathcal{E}(\mu_T)-\frac{\lambda}{2}(1-t)t 2\int_{...\times ...} d_M^2(x_0,x_T)d\tilde{\eta}\\
&\leq
...-\frac{\lambda}{2}(1-t)t 2\int_{...} \left( \int_0^T |\dot x_{geo}|(t)\d t\right)^2d\tilde{\eta}\\
&\leq
... -\frac{\lambda}{2}(1-t)t 2\int_{...}  \int_0^T |\dot x_{geo}|^2(t)\d t d\tilde{\eta}=... -\lambda(1-t)t W_2^2(\mu_0,\mu_T)
\end{align*}
\end{proof}
\todo{LW: how does uniqueness follow ???}
\todo{TR: What is $x_{geo}$?}
\todo{TR: Some parts of the above calculation aren't entirely clear to me}
\begin{cor}[Uniqueness]
    
\end{cor}%
} 

\subsection{Energy dissipation and large-time behavior}

Due to the missing \sk{geodesic} convexity properties of the energy, we cannot expect convergence of the evolution to a unique minimizer in the large-time limit. However, we can obtain some weaker results by further analyzing the energy dissipation property  
\begin{equation}
\label{eq:EnergyDisIn}
    {\cal E}(\mu_t) + \frac{1}2 \int_0^t \int_M m_{\mu_s}(x) |\nabla {\cal E}'(\mu_s)|^2\d\mu_s(x)\, \d s  \leq {\cal E}(\mu_0).
\end{equation}
As $s \rightarrow \infty$, we can pick narrowly  convergent subsequences of $\mu_s$ (i.e. converging weakly-star in the Banach space of Radon measures). Moreover, the entropy dissipation inequality above implies
$$  \int_0^\infty \int_M m_{\mu_s}(x) |\nabla {\cal E}'(\mu_s)|^2\,\d\mu_s(x)\, \d s < \infty ,$$
hence along suitable subsequences the entropy dissipation 
$$ D(s) = \int_M m_{\mu_s}(x) |\nabla {\cal E}'(\mu_s)|^2\d\mu_s(x)$$ 
converges to zero since it is non-negative and bounded. To establish the existence of subsequences converging to stationary solutions, we need to identify the limit in suitable spaces. Under appropriate regularity assumptions on the interaction kernel $W$ (satisfied, for example, for the exponential kernel), this is a direct consequence of the Arzelà--Ascoli theorem.

\begin{lemma}\label{lem:212}
Let $M$ be a compact manifold without a boundary, $W \in C^{1,\alpha}(M \times M)$ for some $\alpha > 0$ and symmetric. Moreover, let $\mu^n$ be a sequence of probability measures on $M$. Then the sequences
$$
m_{\mu^n}= \int_M W(\cdot, y)\d\mu^n(y)
\quad \text{and} \quad 
\nabla {\cal E'}(\mu^n) = \int_M \nabla_x W(\cdot, y)\d\mu^n(y) 
$$
have uniformly convergent subsequences. If $\mu^n$ converges narrowly to $\mu^*$, then $m_{\mu^n}$ converges uniformly to $m_{\mu^*}$ and $\nabla {\cal E'}(\mu^n)$ converges uniformly to $\nabla {\cal E'}(\mu^*).$
\end{lemma}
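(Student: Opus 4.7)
The plan is to derive both conclusions from the Arzel\`a--Ascoli theorem on the compact manifold $M$, using the $C^{1,\alpha}$ regularity of $W$ to obtain uniform boundedness and equicontinuity independent of $n$; narrow convergence will then pin down the limit.

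First I would establish uniform boundedness. Since $M\times M$ is compact and $W$ is continuous, $\|W\|_\infty \defeq \sup_{M\times M}|W| < \infty$, and for every $x \in M$ and every $n$
$$|m_{\mu^n}(x)| \leq \int_M |W(x,y)|\,\d\mu^n(y) \leq \|W\|_\infty.$$
The same estimate applied to the continuous function $\nabla_x W$ bounds $|\nabla \mathcal{E}'(\mu^n)(x)|$ uniformly in $x$ and $n$. Next, because $W \in C^{1,\alpha}(M\times M)$ and $M$ is compact, $W(\cdot,y)$ is Lipschitz in the first variable with some constant $L$ independent of $y$, and $\nabla_x W(\cdot,y)$ is $\alpha$-H\"older in the first variable with some constant $H$ independent of $y$ (one verifies this in a finite atlas of coordinate charts). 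Consequently,
$$|m_{\mu^n}(x_1) - m_{\mu^n}(x_2)| \leq L\, d(x_1,x_2), \qquad |\nabla \mathcal{E}'(\mu^n)(x_1) - \nabla \mathcal{E}'(\mu^n)(x_2)| \leq H\, d(x_1,x_2)^\alpha,$$
giving uniform Lipschitz (resp.\ H\"older) estimates across $n$. Arzel\`a--Ascoli applied in $C(M)$ and in the space of continuous sections of $TM$ then yields uniformly convergent subsequences.

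For the second part, assume $\mu^n \rightharpoonup \mu^*$ narrowly. Fix $x \in M$; the function $y \mapsto W(x,y)$ is bounded and continuous on $M$, so narrow convergence gives $m_{\mu^n}(x) \to m_{\mu^*}(x)$ pointwise, and the analogous statement holds componentwise for $\nabla \mathcal{E}'(\mu^n)(x)$. Combined with the uniform equicontinuity already established, a standard subsequence-of-subsequence argument concludes: every subsequence admits a uniformly convergent sub-subsequence whose limit must agree pointwise with $m_{\mu^*}$ (resp.\ $\nabla \mathcal{E}'(\mu^*)$); by uniqueness of the limit, the entire sequence converges uniformly. I do not expect a substantive obstacle here, as the author themselves describe the result as a direct consequence of Arzel\`a--Ascoli; the only mild care required is to interpret uniform convergence of the gradient-valued sequence via local coordinates or parallel transport on the tangent bundle, which is routine on a smooth compact manifold.
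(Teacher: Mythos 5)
Your proof is correct and follows exactly the route the paper intends: the paper gives no written proof beyond remarking that the lemma is a direct consequence of the Arzel\`a--Ascoli theorem, and your argument (uniform bounds and uniform Lipschitz/H\"older moduli from the $C^{1,\alpha}$ regularity on the compact product $M\times M$, Arzel\`a--Ascoli for the subsequences, and pointwise identification of the limit via narrow convergence plus equicontinuity for the full-sequence statement) is the standard way to fill in that sketch.
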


\cref{lem:212} combined with the entropy dissipation inequality \sk{\cref{eq:EnergyDisIn}} yields the following  result.
\begin{cor}\label{cor:convergence_subseq}
Let $M$ be a compact manifold without a boundary,  $W \in C^{1,\alpha}(M \times M)$ for some $\alpha > 0$ and symmetric. Then each weak solution $\mu_t$ of \eqref{eq:continuityequation} with the velocity field \eqref{eq:velocityfield} 
has a narrowly convergent subsequence $\mu_{t_n}$ as $t_n \rightarrow \infty$ whose limit is a stationary solution. 
\end{cor}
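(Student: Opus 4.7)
The plan is to use the energy dissipation inequality \cref{eq:EnergyDisIn} to produce a sequence along which the dissipation rate vanishes, then exploit narrow compactness together with \cref{lem:212} to pass to the limit.

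First, since $W$ is continuous on the compact product $M\times M$, the energy $\mathcal{E}$ is globally bounded on $\mathcal{P}(M)$, so letting $t\to\infty$ in \cref{eq:EnergyDisIn} yields
$$\int_0^\infty D(s)\,\mathrm{d}s < \infty, \qquad D(s) := \int_M m_{\mu_s}(x)\,|\nabla \mathcal{E}'(\mu_s)(x)|^2\,\mathrm{d}\mu_s(x).$$
Hence there exists a sequence $t_n \to \infty$ with $D(t_n)\to 0$. Moreover, since $M$ is compact, $\mathcal{P}(M)$ is narrowly compact, so after passing to a further subsequence (still denoted $\mu_{t_n}$) I may assume $\mu_{t_n} \rightharpoonup \mu^*$ for some $\mu^*\in \mathcal{P}(M)$.

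Next, \cref{lem:212} applied along $\mu_{t_n}$ gives $m_{\mu_{t_n}} \to m_{\mu^*}$ and $\nabla \mathcal{E}'(\mu_{t_n})\to \nabla \mathcal{E}'(\mu^*)$ uniformly on $M$, so that
$$f_n(x) := m_{\mu_{t_n}}(x)\,|\nabla \mathcal{E}'(\mu_{t_n})(x)|^2 \longrightarrow f^*(x) := m_{\mu^*}(x)\,|\nabla \mathcal{E}'(\mu^*)(x)|^2$$
uniformly on $M$. Combining this uniform convergence with the narrow convergence $\mu_{t_n}\rightharpoonup \mu^*$ yields
$$0 = \lim_{n\to\infty} D(t_n) = \lim_{n\to\infty} \int_M f_n\,\mathrm{d}\mu_{t_n} = \int_M f^*\,\mathrm{d}\mu^*.$$
Since $m_{\mu^*} \geq C > 0$ by \cref{ass:kernel}, this forces $\nabla \mathcal{E}'(\mu^*)=0$ $\mu^*$-almost everywhere. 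In particular the velocity field \cref{eq:velocityfield} vanishes $\mu^*$-a.e., so the constant-in-time curve $t\mapsto \mu^*$ solves \cref{eq:continuityequation}, i.e. $\mu^*$ is a stationary solution.

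The main obstacle is the passage to the limit in the dissipation integral: naive narrow convergence of $\mu_{t_n}$ alone would only yield a one-sided Fatou-type inequality, insufficient to conclude that the limiting integrand is actually zero. The uniform convergence provided by \cref{lem:212} (which rests on Arzelà--Ascoli and the $C^{1,\alpha}$ regularity of $W$) together with the strictly positive lower bound on the mobility are both essential to derive true pointwise vanishing of the tangential gradient of $\mathcal{E}'(\mu^*)$ on the support of $\mu^*$.
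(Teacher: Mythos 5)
Your proof is correct and follows exactly the route the paper intends: the paper gives no separate proof of this corollary, stating only that it follows from combining \cref{lem:212} with the dissipation inequality \cref{eq:EnergyDisIn}, and your argument is precisely the fleshed-out version of that sketch (integrability of $D(s)$ giving a vanishing sequence, narrow compactness of $\mathcal{P}(M)$, uniform convergence from \cref{lem:212} to pass to the limit in the dissipation integral, and the lower bound on the mobility to conclude $\nabla\mathcal{E}'(\mu^*)=0$ $\mu^*$-a.e.). No gaps.
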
\color{black}
The following example connects the general results of this section with the transformer dynamics. 
\begin{example}
The transformer dynamics for a finite number of particles described by \cref{eq:dynamics-particles} with $V = \pm D$ correspond to the choice  $M=\mathcal{S}$, $K(x,y)=e^{\dpr{x}{Dy}}$, and $W(x,y)= \pm e^{x\cdot Dy}$. As discussed in \cref{sc:MeasInt}, the corresponding empirical measures $\mu_t$ fulfill the continuity equation \cref{eq:conteq_empirical}. Thus, they solve \cref{eq:continuityequation} in the weak sense with the velocity field \cref{eq:velocityfield} and all requirements of \cref{cor:convergence_subseq} are fulfilled. Therefore, there exists a subsequence of $\mu_t$ that converges narrowly to a stationary solution of the interaction energy $\erg$ defined in \cref{eq:energy}.
\end{example}

This section establishes the relation between the particle model \cref{eq:dynamics-particles} and gradient flows of interaction energies for the special cases $V = \pm D$. The energy dissipation property \cref{eq:EnergyDis} and convergence property from \cref{cor:convergence_subseq} motivate the study of stationary solutions of the energy $\erg$, which we carry out in \cref{sec:energy-min-max,sec:stationary}.
\color{black}
We shall start with minimizers and maximizers.
\section{Explicit energy minimizers and maximizers} \label{sec:energy-min-max}

In this section, we compute explicit minimizers and maximizers of the energy $\erg{}$ (from \cref{eq:energy}, i.e. without the factor $1/2$) in different scenarios depending on the properties of the interaction matrix $D$. We make the dependence on the matrix $D$ explicit by employing it as a subscript of the energy. The case $D = \operatorname{Id}$ has been already covered in \cite[Proposition 3.4]{geshkovski2023mathematical} where it is stated that a measure is a maximizer if and only if it is a Dirac delta placed at any point on the sphere and a minimizer if and only if it is the uniform distribution. As we show below, for more general matrices the position of optimal Diracs depends strongly on the eigenvalues of the matrix $D$. We further derive a symmetry condition for minimizers of energies with a positive definite interaction matrix $D$. This property yields an alternative, simpler proof that the uniform distribution is the only minimizer for $D = \operatorname{Id}$.

\subsection{Maximal eigenvalue and related maximizers or minimizers}
Like for $D = \operatorname{Id}$, there are several cases in which the minimizers or maximizers of the energy $\erg{}$ are given by Diracs concentrated at a single point. We start with the maximizers when the largest eigenvalue of $D$ is also an eigenvalue of largest absolute value (or, respectively, minimizers  when the smallest eigenvalue of $D$ is also an eigenvalue of largest absolute value). 

\begin{thm} \label{thm:Diracmaxmin}
Let $\lambda$ be an eigenvalue of maximal absolute value of $D$ and  {$Z_{\lambda} \subseteq \mathcal{S}$ the set of associated normalized eigenvectors.} If $\lambda > 0$ then $\mu^* = \delta_{z}$ with $z \in Z_{\lambda}$ are the only maximizers of the energy $\erg{}$. If $\lambda < 0$ then $\mu^* = \delta_{z}$ with $z \in Z_{\lambda}$  are  {the only} minimizers. 
\end{thm}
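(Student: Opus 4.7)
The plan is to prove both claims in parallel, based on the pointwise bound $|\langle x, Dy\rangle| \leq |\lambda|$ that holds on $\mathcal{S}\times\mathcal{S}$ because the operator norm of the symmetric matrix $D$ coincides with its spectral radius $|\lambda|$. In the case $\lambda>0$ this gives $\langle x, Dy\rangle \leq \lambda$ and hence $\erg(\mu) \leq e^{\lambda}$ for every $\mu\in\mathcal{P}(\mathcal{S})$; in the case $\lambda<0$ it gives $\langle x, Dy\rangle \geq \lambda$ and hence $\erg(\mu) \geq e^{\lambda}$. For $\mu^* = \delta_z$ with $z \in Z_\lambda$ we have $\langle z, Dz\rangle = \lambda\|z\|^2 = \lambda$, so $\erg(\delta_z) = e^\lambda$, and the extremal value is attained in both cases.

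The substantive content is uniqueness, which I would handle by a two-step rigidity argument. Suppose $\mu$ attains $e^\lambda$. Strict monotonicity of $\exp$ forces $\langle x, Dy\rangle = \lambda$ for $\mu\otimes\mu$-a.e.\ $(x,y)$, and hence for every fixed $y_0 \in \supp\mu$ and $\mu$-a.e.\ $x$. Combining the Cauchy--Schwarz bound $|\langle x, Dy_0\rangle|\leq \|Dy_0\|$ with $\|Dy_0\|\leq |\lambda|$, equality forces both $\|Dy_0\| = |\lambda|$ and $x = \operatorname{sgn}(\lambda)\,Dy_0/|\lambda|$ for $\mu$-a.e.\ $x$. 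The first condition is equivalent to $D^2 y_0 = \lambda^2 y_0$, so one can decompose $y_0 = y_0^+ + y_0^-$ with $y_0^+$ in the $\lambda$-eigenspace and $y_0^-$ in the $(-\lambda)$-eigenspace (the latter being zero if $-\lambda$ is not an eigenvalue). A short computation using $Dy_0 = \lambda(y_0^+ - y_0^-)$ then gives $\operatorname{sgn}(\lambda)\,Dy_0/|\lambda| = y_0^+ - y_0^-$, so $\mu = \delta_{x^*}$ with $x^* := y_0^+ - y_0^-$. The self-consistency $y_0 \in \supp\mu = \{x^*\}$ then yields $y_0 = x^*$, hence $y_0^+ + y_0^- = y_0^+ - y_0^-$, and so $y_0^- = 0$. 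Consequently $y_0 \in Z_\lambda$ and $\mu = \delta_{y_0}$.

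The main obstacle is precisely that $-\lambda$ may also be an eigenvalue of the same absolute value, in which case the set of $y_0 \in \mathcal{S}$ achieving $\|Dy_0\| = |\lambda|$ strictly contains $Z_\lambda \cup Z_{-\lambda}$ and includes nontrivial mixtures from the two eigenspaces. Such mixtures realise the pointwise extremum of $(x,y) \mapsto \langle x, Dy\rangle$ and would otherwise survive as candidate support points for an extremising measure. The self-referential condition $y_0 \in \supp\mu$ is precisely what collapses the $(-\lambda)$-component and pins the support down to a genuine $\lambda$-eigenvector; without this consistency step one could not exclude a larger family of extremisers.
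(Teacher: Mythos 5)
Your proposal is correct, and its first half is exactly the paper's argument: bound $\langle x,Dy\rangle$ pointwise by $|\lambda|$ via the spectral radius of the symmetric matrix $D$, conclude $\erg{}(\mu)\leq e^{\lambda}$ (resp.\ $\geq e^{\lambda}$), and check that $\delta_z$ with $z\in Z_\lambda$ attains the value. Where you genuinely go beyond the paper is in the uniqueness step. The paper disposes of it in one line by asserting that $e^{x\cdot Dy}\leq e^{\lambda}$ ``with equality if and only if $x=y=\pm z$,'' which is imprecise: as you correctly observe, when $-\lambda$ is also an eigenvalue there are off-diagonal pairs $(x,y)$ with $y$ a nontrivial mixture of the $\pm\lambda$-eigenspaces and $x=Dy/\lambda\neq y$ that still achieve $\langle x,Dy\rangle=\lambda$ (and the equality set is a whole sphere, not $\pm z$, when the eigenspace has dimension greater than one). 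Your two-step rigidity argument --- first forcing $\mu$ to be a Dirac $\delta_{x^*}$ with $x^*=\operatorname{sgn}(\lambda)Dy_0/|\lambda|$ via the Cauchy--Schwarz equality case, then using the self-consistency $y_0\in\supp\mu=\{x^*\}$ to kill the $(-\lambda)$-component --- is precisely the missing justification, and it is what actually rules out the mixed-eigenspace candidates. The only step worth spelling out a little more is the upgrade from ``$\langle x,Dy\rangle=\lambda$ for $\mu\otimes\mu$-a.e.\ $(x,y)$'' to ``for \emph{every} $y_0\in\supp\mu$ and $\mu$-a.e.\ $x$''; this follows by noting that $y\mapsto\int_{\mathcal{S}}\langle x,Dy\rangle\,\d\mu(x)$ is continuous and equals $\lambda$ on a $\mu$-full (hence dense in $\supp\mu$) set, after which the pointwise bound $\langle x,Dy_0\rangle\leq\lambda$ forces a.e.\ equality for that fixed $y_0$. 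With that remark included, your proof is a complete and strictly more careful version of the paper's.
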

\begin{proof}
We consider the case $\lambda > 0$; the case $\lambda <0$ can be treated similarly.
For all $x,y \in {\cal S}$ we have $e^{x \cdot D y} \leq e^{ \lambda}$ with equality if  {and only if }$x=y=\pm z$. Thus,
$$ \erg{}(\mu)  =  \int_{\cal S} \int_{\cal S} e^{x \cdot Dy}\d\mu(x)\d\mu(y) \leq  
\int_\uS \int_\uS e^{\lambda}\d\mu(x)\d\mu(y)
= e^\lambda = \erg{}(\mu^*),$$
 {where the inequality is strict if $\mu$ is not concentrated on an eigenvector associated to $\lambda$.}
\end{proof}
An example of the above setting is maximizing the energy for $D = \operatorname{Id}$~\cite[Proposition 3.4]{geshkovski2023mathematical}, where the authors make a connection between the existence of concentrated  maximizers and the so-called mode collapse of transformers often observed in practice. For a positive definite $D \neq \operatorname{Id}$, \cref{thm:Diracmaxmin} shows that the set of maximizers is not only restricted to Dirac measures, but that it is actually finite. We summarize this insight in the following example and refer to \cref{sec:num_diracmaxis} for an illustrating numerical example. 
\begin{example}
    If $D = \Id$ then $\mu^* = \delta_{z}$ is a maximizer of the energy $\mathcal{E}_{\Id}$ for any $z \in {\cal S}$. Similarly, for $D = - \operatorname{Id}$, $\mu^* = \delta_{z}$ is a minimizer  for any $z \in {\cal S}$.
If $D \neq \operatorname{Id}$ is positive definite then $\mu^* = \delta_{z}$ is  a maximizer of  $\erg{}$ only if $Dz = \lambda z$ and $\lambda$ is the largest eigenvalue of $D$. Similarly, for a negative definite $D \neq \operatorname{Id}$, $\mu^* = \delta_{z}$ is a minimizer only if $Dz = \lambda z$ and $\lambda$ is the smallest eigenvalue of $D$.
\end{example}
In the remainder of this section we study minimizers for matrices that do not fulfill the conditions of \cref{thm:Diracmaxmin}.

\subsection{Minimizers for indefinite matrices }
We now generalize the statement in \cref{thm:Diracmaxmin} to minimizers of energies where the matrix $D$ has at least one non-positive eigenvalue. In particular, that we do not assume that the smallest eigenvalue is the eigenvalue of maximal absolute value. A key property is the following result that gives a lower bound on the energy in terms of the smallest eigenvalue of $D$.

\begin{lemma}\label{lem:lowerboundindefinite}
Let $\bar{x}$ be the expected value of $x$ under $\mu$, i.e., 
$ \bar{x} := \int_{\cal S} x\d\mu(x)$.
Then
\begin{equation} \label{eq:firstlowerboundonE}
\erg{}(\mu) \geq e^{\dpr{\bar{x}}{D \bar{x}}}.
\end{equation}
If $D$ is not positive definite and $\lambda_{\min}$ is its smallest eigenvalue,
it further holds that 
\begin{equation} \label{eq:secondlowerboundonE}
\erg{}(\mu) \geq e^{\lambda_{\min}}.
\end{equation}
\end{lemma}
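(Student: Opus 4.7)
The two bounds are both reached by an application of Jensen's inequality to the convex function $t\mapsto e^t$, combined with a simple observation that the barycenter of a probability measure on the sphere lies in the closed unit ball.

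For the first bound, I would note that $\erg{}(\mu)$ is the expectation of $e^{x\cdot Dy}$ under the product measure $\mu\otimes\mu$. Since $\exp$ is convex, Jensen's inequality yields
\[
\erg{}(\mu) = \int_\uS\int_\uS e^{\dpr{x}{Dy}}\d\mu(x)\d\mu(y) \;\geq\; \exp\!\left(\int_\uS\int_\uS \dpr{x}{Dy}\d\mu(x)\d\mu(y)\right).
\]
The exponent equals $\dpr{\bar x}{D\bar x}$ by linearity of $D$ and Fubini, which gives \cref{eq:firstlowerboundonE}.

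For the second bound, the key observation is that $\bar x = \int_\uS x\d\mu(x)$ satisfies $\|\bar x\| \leq 1$, because $\mu$ is a probability measure on $\uS$ and $\|\cdot\|$ is convex. Hence $\bar x$ lies in the closed unit ball. Using the standard Rayleigh quotient bound, $\dpr{\bar x}{D\bar x} \geq \lambda_{\min}\|\bar x\|^2$. When $D$ is not positive definite we have $\lambda_{\min}\leq 0$, and multiplying a non-positive number by $\|\bar x\|^2 \in [0,1]$ can only increase (or preserve) its value, so $\lambda_{\min}\|\bar x\|^2 \geq \lambda_{\min}$. Combining with \cref{eq:firstlowerboundonE} and monotonicity of $\exp$ yields \cref{eq:secondlowerboundonE}.

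There is no genuine obstacle here: once the convexity of $\exp$ and of the norm are in hand, the only subtle point is that the second bound \emph{fails} if $D$ is positive definite, because then $\lambda_{\min}>0$ and the inequality $\lambda_{\min}\|\bar x\|^2 \geq \lambda_{\min}$ reverses. This is precisely the reason to separate the two cases in the statement, and it also foreshadows why the positive-definite case requires a different (and, as the introduction suggests, genuinely harder) argument based on a fully supported minimizer rather than a Dirac.
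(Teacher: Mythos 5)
Your proof is correct and follows essentially the same route as the paper: Jensen's inequality applied to the convex exponential yields \cref{eq:firstlowerboundonE}, and the Rayleigh-quotient bound together with $\|\bar x\|\leq 1$ and $\lambda_{\min}\leq 0$ yields \cref{eq:secondlowerboundonE}. The only cosmetic difference is that you apply Jensen once to the product measure $\mu\otimes\mu$ while the paper applies it twice, once in each variable; these are equivalent.
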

\begin{proof}
We use the convexity of exponential functions of the form $x \mapsto e^{x \cdot a}$ and $y \mapsto e^{b \cdot y}$ for arbitrary $a,b \in \R^n$, which, with two applications of Jensen's inequality, implies
\begin{align}\label{eq:JensenJensen}
\erg{}(\mu) &= \int_{\cal S} \int_{\cal S} e^{\dpr{x}{D y}}\d\mu(y)\d\mu(y) \int_{\cal S} e^{\dpr{x}{D \bar{x}}}\d\mu(x) \geq e^{\dpr{\bar{x}}{D \bar{x}}}.
\end{align}
Since, further, $
    \dpr{\bar{x}}{D\bar{x}} \geq \lambda_{\min}\, \|\bar{x}\|^2$
and $0\leq\|\bar{x}\|\leq1$, the monotonicity of the exponential function gives us 
\begin{align*}
    \erg{}(\mu) \geq e^{\min\{\lambda_{\min}, 0\}}.
\end{align*}
If $D$ is not positive definite, we know that $\lambda_{\min} \leq 0$ and the above inequality reduces to \eqref{eq:secondlowerboundonE}.
\end{proof}
A direct consequence of \cref{lem:lowerboundindefinite} for indefinite matrices is that a Dirac measure that is concentrated on an eigenvector corresponding to the smallest eigenvalue is a minimizer of the energy. If the smallest eigenvalue is negative, we can even show that all minimizers are of this form. In the case of a vanishing smallest eigenvalue, it is necessary and sufficient that the measure is concentrated on the null space of $D$.
\begin{thm}\label{thm:minmaxindefinite}
 Consider a matrix $D$ that is not positive definite with the smallest eigenvalue $\lambda_{\min} \leq 0$.
 If $\lambda_{\min} < 0$, a measure minimizes the energy if and only if it is a Dirac measure placed at an eigenvector corresponding to $\lambda_{\min}$.
 If $\lambda_{\min} = 0$,
 a measure minimizes the energy if and only if it is concentrated on the null space of $D$.
\end{thm}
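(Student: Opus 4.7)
Both ``if'' directions follow by direct evaluation combined with the lower bound $\erg(\mu) \geq e^{\lambda_{\min}}$ of \cref{lem:lowerboundindefinite}. If $\mu^* = \delta_z$ with $Dz = \lambda_{\min} z$, then $\erg(\mu^*) = e^{z \cdot Dz} = e^{\lambda_{\min}}$; if $\lambda_{\min} = 0$ and $\supp \mu \subseteq \ker{D}$, then $x \cdot Dy = 0$ for $\mu \otimes \mu$-a.e.\ $(x,y)$, giving $\erg(\mu) = 1 = e^0$.

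\textbf{Necessity.} The plan is to track the equality cases in the chain of estimates from the proof of \cref{lem:lowerboundindefinite},
\begin{equation*}
\erg(\mu) \;\geq\; e^{\bar x \cdot D \bar x} \;\geq\; e^{\lambda_{\min}\|\bar x\|^2} \;\geq\; e^{\lambda_{\min}}.
\end{equation*}
The first bound is Jensen's inequality applied to the strictly convex function $\exp$ against $\mu \otimes \mu$, so equality forces $x \cdot Dy$ to be constant $\mu \otimes \mu$-a.e. The second is the Rayleigh-quotient inequality, and equality forces $\bar x$ to lie in the $\lambda_{\min}$-eigenspace of $D$, as one sees by orthogonally diagonalising $D$. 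The third bound uses $\|\bar x\| \in [0,1]$ together with the sign of $\lambda_{\min}$.

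\textbf{Case $\lambda_{\min} < 0$.} Equality in the third step now forces $\|\bar x\| = 1$. Writing this as a double integral and applying Cauchy--Schwarz pointwise,
\begin{equation*}
1 \;=\; \|\bar x\|^2 \;=\; \int_{\mathcal S}\int_{\mathcal S} x \cdot y \,\d \mu(x)\,\d \mu(y) \;\leq\; \int_{\mathcal S}\int_{\mathcal S} \|x\|\,\|y\| \,\d \mu(x)\,\d \mu(y) \;=\; 1,
\end{equation*}
one concludes that $x \cdot y = 1$, and hence $x = y$, for $\mu \otimes \mu$-a.e.\ $(x,y)$. This forces $\mu \otimes \mu$ to be concentrated on the diagonal, which is only possible if $\mu = \delta_z$ for some $z \in \mathcal S$, and the Rayleigh equality then upgrades to $Dz = \lambda_{\min} z$.

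\textbf{Case $\lambda_{\min} = 0$.} Here $D$ is positive semidefinite and the chain collapses to $\erg(\mu) \geq e^{\bar x \cdot D \bar x} \geq 1$. Equality forces $\bar x \cdot D \bar x = 0$, hence $D\bar x = 0$ by positive semidefiniteness, and the Jensen step forces $x \cdot Dy$ to equal the constant value $\bar x \cdot D \bar x = 0$ $\mu \otimes \mu$-a.e. Specialising to $x = y$ yields $x \cdot Dx = 0$ for $\mu$-a.e.\ $x$, which again by positive semidefiniteness gives $Dx = 0$ and therefore $\supp \mu \subseteq \ker{D}$. The main obstacle overall is the sphere-rigidity argument in the first case; the double-integral Cauchy--Schwarz identity above gives the cleanest route, reducing the question to $\mu \otimes \mu$ being concentrated on the diagonal without invoking any measurable-selection machinery.
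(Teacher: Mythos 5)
Your proof is correct and follows essentially the same route as the paper: sufficiency by direct evaluation against the lower bound of \cref{lem:lowerboundindefinite}, and necessity by tracking the equality cases in the Jensen/Rayleigh chain (your Cauchy--Schwarz double-integral argument even makes explicit the step the paper only asserts, namely that $\|\bar x\|=1$ forces $\mu$ to be a Dirac). The only point worth a one-line patch is ``specialising to $x=y$'' in the case $\lambda_{\min}=0$: the diagonal can be $\mu\otimes\mu$-null, so you should pass from the a.e.\ statement to $x\cdot Dy=0$ on all of $\supp\mu\times\supp\mu$ by continuity of $(x,y)\mapsto x\cdot Dy$ before restricting to $x=y$ (the paper's own proof elides the same step).
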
%
\begin{proof}
    We first assume $\lambda_{\min} < 0$. If follows directly from \eqref{eq:secondlowerboundonE} that every Dirac measure concentrated on an eigenvector corresponding to $\lambda_{\min}$ is a minimizer. We further see that $\dpr{\bar{x}}{D\bar{x}} = \lambda_{\min}$ if only if $\bar{x}$ is an eigenvector corresponding to $\lambda_{\min}$ and $\|\bar{x}\| = 1$. This can only hold for Dirac measures. Thus, there are no other minimizers. 
    
    For $\lambda_{\min} = 0$, it also follows directly from \eqref{eq:secondlowerboundonE} that every measure concentrated on the null space of $D$ minimizes the energy. However, $\dpr{\bar{x}}{D\bar{x}} = \lambda_{\min}$ holds for all measures that fulfill $\bar{x} = 0$. Still, the estimate \eqref{eq:JensenJensen}, obtained using Jensen's inequality, is only an equality if  $\dpr{x}{Dx} = \dpr{\bar{x}}{D\bar{x}} = 0$ for $\mu$-a.e. $x\in\S$. Therefore, all minimizers are concentrated on the null space of $D$.  
\end{proof}
\begin{rem}\label{rem:ndd}
    In general, \cref{thm:minmaxindefinite} does  not transfer to maximizers for matrices $D$ that are not negative definite. To see this, consider $D$ with the largest eigenvalue $\lambda_{\max} \geq 0$, the smallest eigenvalue $\lambda_{\min} < 0$ and corresponding eigenvectors $z_{\min}$ and $z_{\max}$. If further $e^{\lambda_{\max}} < \cosh(\lambda_{\min})$ it holds that
    \begin{align*}
        \erg{}(\delta_{z_{\max}}) = e^{\lambda_{\max}} < \cosh(\lambda_{\min}) = \erg{}\left(\frac{\delta_{z_{\min}} +\delta_{-z_{\min}}}{2}\right)
    \end{align*}
    and thus, $\delta_{z_{\max}}$ is not a maximizer. In the special case $\lambda_{\max} = 0$, the above inequality holds for all measures concentrated on the null space of $D$ and all $\lambda_{\min}  < 0$.
\end{rem}
 At this point we further note that the above strategy does not work for analyzing minimizers for positive definite interaction matrices $D$. In this case, \cref{lem:lowerboundindefinite} only gives us $\erg{}(\mu) \geq e^0 = 1$, but also $\dpr{x}{Dx} > 0$ for all $x \in \S$, so the inequality is strict for all measures $\mu \in \mathcal{P}(\S)$.  
 


\subsection{Symmetry property for positive definite matrices}
The remainder of this section gives the first characterization of minimizers of the energy when the interaction matrix is positive definite. More precisely, we can show that in this case all minimizers are symmetric and the symmetry axes are determined by the eigenvectors of $D$. The first step towards this  is to show that the energy $\erg{}$ is strictly convex if $D$ is positive definite.
\begin{lemma} \label{lem:convexity}
    If $D$ is positive semi-definite (resp. positive definite) then $\erg{}$ is convex (resp. strictly convex).
\end{lemma}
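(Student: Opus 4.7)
The plan is to exploit that $\erg{}$ is a quadratic functional in $\mu$. Setting $B(\mu,\nu) := \int_\S\int_\S e^{\dpr{x}{Dy}}\,\d\mu(x)\,\d\nu(y)$ so that $\erg{}(\mu) = B(\mu,\mu)$, expanding $B$ along the interpolation $(1-t)\mu_0 + t\mu_1$ gives
\begin{equation*}
\erg{}((1-t)\mu_0 + t\mu_1) = (1-t)\erg{}(\mu_0) + t\erg{}(\mu_1) - t(1-t)\,B(\mu_0 - \mu_1,\,\mu_0 - \mu_1).
\end{equation*}
Hence (strict) convexity reduces to showing that $B(\nu,\nu) \geq 0$ (respectively $>0$ whenever $\nu \neq 0$) for signed measures of the form $\nu = \mu_0 - \mu_1$.

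The main step is to expose a sum-of-squares representation of $B(\nu,\nu)$. I would factor $D = A^T A$, which exists because $D$ is symmetric positive semi-definite, and combine the Taylor series of the exponential with the multinomial theorem to obtain
\begin{equation*}
e^{\dpr{x}{Dy}} = e^{\dpr{Ax}{Ay}} = \sum_{\alpha \in \N^n} \frac{(Ax)^\alpha (Ay)^\alpha}{\alpha!},
\end{equation*}
with $\alpha! = \alpha_1!\cdots\alpha_n!$ and $(Ax)^\alpha = \prod_i (Ax)_i^{\alpha_i}$. The series converges uniformly on the compact product $\S\times\S$, and every partial sum is a finite linear combination of separable functions. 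Integrating termwise against $\nu\otimes\nu$ and passing to the limit yields
\begin{equation*}
B(\nu,\nu) = \sum_{\alpha \in \N^n} \frac{1}{\alpha!}\left(\int_\S (Ax)^\alpha\,\d\nu(x)\right)^2 \geq 0,
\end{equation*}
which establishes convexity.

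For strict convexity in the positive definite case, $A$ is invertible, so the monomials $\{(Ax)^\alpha\}_\alpha$ span the same polynomial algebra on $\R^n$ as $\{x^\beta\}_\beta$. Therefore $B(\nu,\nu) = 0$ forces every polynomial moment of $\nu$ to vanish, and since polynomials are dense in $C(\S)$ by the Stone--Weierstrass theorem and $\nu$ is a finite signed Radon measure, this forces $\nu = 0$. I do not anticipate a serious obstacle: the only delicate point is the termwise exchange of the infinite sum with integration against $\nu\otimes\nu$, which is routine given the uniform convergence of the Taylor expansion on $\S\times\S$ and the boundedness of the total variation of $\nu$.
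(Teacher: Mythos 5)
Your proof is correct, and its first step --- reducing (strict) convexity of the quadratic functional to (strict) positivity of the form $B(\nu,\nu)=\int_\S\int_\S e^{\dpr{x}{Dy}}\,\d\nu(x)\,\d\nu(y)$ on signed measures $\nu=\mu_0-\mu_1$ --- is exactly the paper's. Where you genuinely diverge is in how positivity is established. The paper factors $D=D^{1/2}D^{1/2}$, pushes the measure forward by $T(x)=D^{1/2}x$, uses $x\cdot y=\tfrac12|x|^2+\tfrac12|y|^2-\tfrac12|x-y|^2$ to absorb the diagonal terms into a reweighted measure $\d\eta=e^{|x|^2/2}\,\d T_\#\nu$, and then invokes the strict positive definiteness of the Gaussian kernel as a cited black box. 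You instead expand $e^{\dpr{Ax}{Ay}}$ with $D=A^TA$ via the Taylor series and the multinomial theorem and integrate termwise to obtain an explicit sum of squares $\sum_\alpha \frac{1}{\alpha!}\bigl(\int_\S (Ax)^\alpha\,\d\nu\bigr)^2$; the interchange is indeed routine, since the tail of the series is dominated by $\sum_k (n\|A\|^2)^k/k!$ uniformly on $\S\times\S$. Your route is more self-contained (it proves rather than cites the positive definiteness of the kernel), and your handling of the definite case --- invertibility of $A$ forces all polynomial moments of $\nu$ to vanish, hence $\nu=0$ by Stone--Weierstrass --- is cleaner and more explicit than the paper's rather terse discussion of when the pushed-forward measure $\eta$ can vanish. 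The one thing the paper's Gaussian-kernel detour buys is a ready description of the degenerate directions in the merely semi-definite case (nonzero $\nu$ with $T_\#\nu=0$, i.e.\ $\nu$ collapsing under projection along $\mathcal{N}(D)$), which is reused later in the proof of \cref{lem:ndsymm}; your series argument yields the same information if one tracks which monomial moments survive, but it was not needed for the statement as posed.
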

\begin{proof}
Since $\erg{}$ is quadratic, convexity (resp. strict convexity) follows from the non-negativity (resp. positivity) of the quadratic form
$${\cal F}(\mu) = \int_{\cal S} \int_{\cal S}e^{\dpr{x}{Dy}}\d\mu(x)\d\mu(y) $$
for arbitrary
signed Radon measures $\mu$, e.g. \cite[Proposition 2.11]{bilyk2022positive}. For $D$ positive semi-definite there exists a unique positive semi-definite matrix square root $D^{1/2}$ and we can use the transformation $T(x) = D^{1/2} x$. 
We
denote by $T_\# \mu $ the push-forward of $\mu$ by $T$, so that 
\begin{align*}{\cal F}(\mu) &= \int_{T({\cal S})} \int_{T({\cal S})}e^{x \cdot y} \d T_\# \mu(x) \d T_\# \mu(y) \\ &=
\int_{T({\cal S})} \int_{T({\cal S})}e^{ -\frac{1}2|x - y|^2} e^{\frac{1}2 |x|^2}\d T_\# \mu(x)  e^{\frac{1}2 |y|^2}\d T_\# \mu(y).
\end{align*}
Let $\d\eta = e^{\frac{1}2 |x|^2}\d T_\# \mu(x)$, then 
$${\cal F}(\mu) =  \int_{T(\S)} \int_{T(\S)} e^{ -\frac{1}2|x - y|^2}\d\eta(x)\d\eta(y).$$
The fact that the Gaussian kernel is positive definite (e.g. \cite{fasshauer2011positive}) yields that $\mathcal{F}(\mu) > 0$ unless $\nu$ vanishes. This can only happen if  $\mu = 0$ or, in case of a semi-definite matrix $D$, if $\mu$ is concentrated on the null space $\mathcal{N}(D)$ and $\mu(\mathcal{N}(D)) = 0$. This yields the assertion.
\end{proof}
\color{black}
\begin{rem}
The previous convexity result does not 
guarantee the convergence of the gradient flow \cref{eq:GradFlowPDE} to a global minimizer of $\mathcal{F}$. For such results usually a slightly different notion of convexity is required, the so-called geodesic convexity. The following example shows that besides the case of $D$ being a multiple of the identity  we do not have geodesic convexity for the classical $2$-Wasserstein distance. We do not expect any improvements for our modified optimal transport distance.
\end{rem}
\begin{example}\label{ex:geodesicconv}
We consider a simple counterexample in  \sk{${\cal S}^1$ (equipped with the spherical distance)} to show that $\mathcal{F}$ is not convex along $2$-Wasserstein geodesics. Choose
$$D=\begin{pmatrix}
  2&0\\
  0&1
\end{pmatrix} \quad \text{and the curve} \quad \gamma:t\in[0,1]\mapsto \begin{pmatrix}
    \cos(-\frac{\pi}{4}+t\frac{\pi}{2})\\
    \sin(-\frac{\pi}{4}+t\frac{\pi}{2})
\end{pmatrix}.$$
 Then $\mu_t\coloneqq\delta_{\gamma(t)}$ is a constant-speed geodesic in the $2$-Wasserstein space connecting $\delta_{\gamma(0)}$ and $\delta_{\gamma(1)}$. Clearly, the map $[0,1] \ni t \mapsto\mathcal{F}(\gamma(t))$ is not convex, since $$\mathcal{F}(\gamma(0))=\mathcal{F}(\gamma(1))=e^{1.5}<e^{2} = \mathcal{F}\left(\gamma\left(\frac{1}{2}\right)\right).$$ Such a counterexample can always be constructed as long as $D$ has two different eigenvectors. \sk{\cref{lem:convexity} does not contradict this counterexample, however, as it only implies the convexity of }$$[0,1] \ni t \mapsto \mathcal{F}((1-t) \mu_0 + t\mu_1).$$\end{example}\color{black}Having established convexity, we can show that reflecting a measure along the eigenvectors of $D$ and then normalizing it does not increase the energy. Moreover, if $D$ is positive definite and $\mu$ is not symmetric with respect to all eigenvectors of $D$, one can always construct a symmetric measure with a smaller energy.
%

\begin{lemma} \label{lem:symmetry}
Let $z$ be an eigenvector related to an eigenvalue $\lambda$ of a positive semi-definite matrix $D$. For a measure $\mu$, we define $\tilde{\mu}$ as
\begin{align*}
\tilde \mu := \frac{1}2 \left( \mu + {H_z}_\#\mu\right),
\qquad
H_z(x) = x - 2 (x \cdot z) z,
\end{align*}
where $H_z$ denotes a reflection.
Then $\erg{}(\tilde \mu) \leq \erg{}(\mu) $ and the inequality is strict if $D$ is positive definite and $\tilde \mu \neq \mu$.
\end{lemma}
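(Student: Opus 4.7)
The plan is to combine the (strict) convexity of $\erg{}$ from \cref{lem:convexity} with an invariance of $\erg{}$ under the reflection $H_z$. First, I set $\nu := (H_z)_\# \mu$, so that $\tilde\mu = \tfrac12(\mu + \nu)$. Since $z$ has unit norm, $H_z$ is an orthogonal involution mapping $\mathcal{S}$ onto itself, so $\nu \in \mathcal{P}(\mathcal{S})$ and $\tilde\mu \in \mathcal{P}(\mathcal{S})$.

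The key step is to show $\erg{}(\nu) = \erg{}(\mu)$. Since $D$ is symmetric and $Dz = \lambda z$, a direct computation gives
\begin{align*}
D H_z(x) = Dx - 2(x\cdot z)\lambda z = Dx - 2(Dx\cdot z)z = H_z(Dx),
\end{align*}
so $D$ and $H_z$ commute. Combined with $H_z^\top = H_z = H_z^{-1}$, this yields $H_z(x)\cdot D H_z(y) = H_z(x)\cdot H_z(Dy) = x\cdot Dy$ for all $x,y$. A change of variables under the push-forward then shows
\begin{align*}
\erg{}(\nu) = \int_{\cal S}\int_{\cal S} e^{H_z(x)\cdot D H_z(y)}\d\mu(x)\d\mu(y) = \erg{}(\mu).
\end{align*}

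Now \cref{lem:convexity} implies
\begin{align*}
\erg{}(\tilde\mu) = \erg{}\!\left(\tfrac12(\mu+\nu)\right) \leq \tfrac12\bigl(\erg{}(\mu) + \erg{}(\nu)\bigr) = \erg{}(\mu),
\end{align*}
which gives the desired inequality. For the strict case, assume $D$ is positive definite and $\tilde\mu \neq \mu$. Then necessarily $\nu \neq \mu$ (otherwise $\tilde\mu = \mu$), and the strict convexity of $\erg{}$ from \cref{lem:convexity} upgrades the above estimate to a strict inequality $\erg{}(\tilde\mu) < \erg{}(\mu)$.

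There is no serious obstacle: the argument is a clean three-line consequence of (i) the reflection being an isometry that commutes with $D$, and (ii) (strict) convexity of the quadratic functional $\erg{}$. The only bookkeeping point is to verify $\mu\neq\nu$ whenever $\tilde\mu\neq\mu$ in the strict-convexity step.
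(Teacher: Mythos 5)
Your proof is correct and follows essentially the same route as the paper's: establish the invariance $\erg{}({H_z}_\#\mu)=\erg{}(\mu)$ from $e^{H_z(x)\cdot DH_z(y)}=e^{x\cdot Dy}$ (which you justify in more detail via the commutation $DH_z=H_zD$), then conclude by the (strict) convexity of \cref{lem:convexity}. The only detail the paper leaves implicit that you also handle correctly is that $\tilde\mu\neq\mu$ forces ${H_z}_\#\mu\neq\mu$, so strict convexity applies.
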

\begin{proof}
Since $e^{x \cdot D y} = e^{H_z(x) \cdot D H_z(y)}$, it is straightforward to see that  $ \erg{}(\mu) = \erg{}({H_z}_\#\mu)$. The (strict) convexity of the energy yields the assertion. 
\end{proof}
As a direct consequence, we obtain a symmetry property of minimizers for positive definite~$D$.

\begin{cor}\label{cor:psd-symmetric}
If $D$ is positive definite then each minimizer is symmetric with respect to its eigenvectors.
\end{cor}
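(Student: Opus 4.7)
The statement is an almost immediate consequence of \cref{lem:symmetry}, so the plan is to argue by contradiction using the strict inequality provided there. Let $\mu^*$ be a minimizer of $\erg{}$ and suppose, towards a contradiction, that there exists an eigenvector $z$ of $D$ such that $\mu^*$ is not invariant under the reflection $H_z(x) = x - 2(x\cdot z)z$, i.e. $H_{z\#}\mu^* \neq \mu^*$.

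Define the symmetrized measure $\tilde\mu^* := \tfrac{1}{2}(\mu^* + H_{z\#}\mu^*)$. Then $\tilde\mu^* \neq \mu^*$, and since $D$ is positive definite, the strict part of \cref{lem:symmetry} yields $\erg{}(\tilde\mu^*) < \erg{}(\mu^*)$, contradicting the minimality of $\mu^*$. Hence $\mu^*$ must satisfy $H_{z\#}\mu^* = \mu^*$ for every eigenvector $z$ of $D$, which is exactly the claimed symmetry.

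There is essentially no obstacle here beyond invoking the right lemma; the only point worth being explicit about is that \cref{lem:symmetry} is applicable to \emph{every} eigenvector $z$ (not only to a fixed basis), so the conclusion holds with respect to the full collection of eigenvectors of $D$, as the statement requires. Because the argument uses only the strict convexity part of \cref{lem:symmetry}, positive definiteness of $D$ is truly needed; for merely positive semi-definite $D$ the same proof gives only that symmetrization does not increase the energy, not that minimizers themselves are symmetric.
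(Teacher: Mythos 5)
Your argument is correct and is exactly the intended one: the paper states this corollary without proof as a ``direct consequence'' of \cref{lem:symmetry}, and your contradiction via the strict inequality $\erg{}(\tilde\mu^*)<\erg{}(\mu^*)$ for $\tilde\mu^*\neq\mu^*$ is precisely that consequence. Nothing is missing.
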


If $D$ is a positive multiple of the identity, one can easily show using the above result that the uniform distribution is the unique energy minimizer. This has been shown already in \cite[Proposition 3.4]{geshkovski2023mathematical} using properties of 
Gegenbauer polynomials~\cite[Proposition 2.2]{bilyk2016geodesic}. The symmetry property from \cref{cor:psd-symmetric} gives an alternative---and straightforward---proof of this fact. 

\begin{prop}\label{prop:uniformmin}
    If $D=\lambda \, \id$ for $\lambda >0 $ then the uniform distribution is the unique energy minimizer.
\end{prop}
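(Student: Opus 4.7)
The plan is to combine the strict convexity of $\erg{}$ with the symmetry property from \cref{cor:psd-symmetric}, and then exploit the fact that $D = \lambda \operatorname{Id}$ has every unit vector as an eigenvector.

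First, I would argue that a minimizer exists and is unique. Since $(x,y) \mapsto e^{\lambda x \cdot y}$ is bounded and continuous on the compact set $\mathcal{S} \times \mathcal{S}$, the functional $\erg{}$ is narrowly continuous on $\mathcal{P}(\mathcal{S})$, which is compact in the narrow topology. Hence a minimizer $\mu^*$ exists. Since $D = \lambda \operatorname{Id}$ with $\lambda > 0$ is positive definite, \cref{lem:convexity} implies that $\erg{}$ is strictly convex on $\mathcal{P}(\mathcal{S})$, so $\mu^*$ is unique.

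Next, I would apply the symmetry result. Because every $z \in \mathcal{S}$ is a (normalized) eigenvector of $D = \lambda \operatorname{Id}$, \cref{cor:psd-symmetric} implies that $\mu^*$ is invariant under the reflection $H_z$ through the hyperplane orthogonal to $z$, for every $z \in \mathcal{S}$. By the Cartan--Dieudonn\'e theorem, the orthogonal group $O(n)$ is generated by such reflections. Consequently, $\mu^*$ is $O(n)$-invariant, i.e., $R_\# \mu^* = \mu^*$ for every $R \in O(n)$.

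Finally, I would invoke the classical fact that the uniform (normalized surface) measure $\sigma$ on $\mathcal{S}$ is the only $O(n)$-invariant Borel probability measure on $\mathcal{S}$: averaging any such measure against the Haar measure on the compact group $O(n)$ produces $\sigma$, and invariance forces $\mu^* = \sigma$. Combining this with uniqueness from strict convexity yields $\mu^* = \sigma$, as claimed.

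The only mildly delicate point is the invocation of Cartan--Dieudonn\'e together with the uniqueness of the $O(n)$-invariant probability measure on $\mathcal{S}$; both are standard, so I would simply cite them rather than prove them. No further computation with Gegenbauer polynomials (as in \cite{geshkovski2023mathematical}) is required, which is precisely the simplification advertised before the proposition.
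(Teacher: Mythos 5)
Your proposal is correct and rests on the same core mechanism as the paper's proof, namely the strict energy decrease under the reflection symmetrization of \cref{lem:symmetry} applied with every unit vector as an eigenvector of $\lambda\,\id$; the paper simply runs the argument contrapositively (a non-uniform $\mu$ fails to be $H_z$-invariant for some $z$, so symmetrizing strictly lowers the energy), leaving the existence of a minimizer and the fact that invariance under all reflections forces uniformity implicit. Your version makes those implicit steps (compactness, strict convexity, Cartan--Dieudonn\'e, uniqueness of the $O(n)$-invariant measure) explicit, which is fine but not a different route.
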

\begin{proof} 
If $\mu$ is not uniform, we can find a unit vector $z$ such that  with $H_z$ as in Lemma \ref{lem:symmetry} we have
$$\tilde \mu =\frac{1}2 \left( \mu + {H_z}_\#\mu\right) \neq \mu. $$
However, for $D=\lambda \id$ every unit vector is an eigenvector and  \cref{lem:symmetry} implies that
$ \erg{}(\tilde \mu) < \erg{}(\mu)$. Hence, the uniform distribution is the only minimizer of the energy.
\end{proof}
\begin{rem}\label{rem:ndb}
    The statement in \cref{prop:uniformmin} does not transfer to maximizers for negative multiples of the identity. 
    To see this,  consider $D = \lambda \operatorname{Id}$ with $\lambda < 0$ and let $\mu_0$ denote the uniform distribution on $\S$. The symmetry of $\mu_0$  yields
    \begin{align*}
        \erg{}(\mu_0) = 2\int_{\S^+}\int_{\S^+}e^{\lambda\dpr{x}{y}}+e^{-\lambda\dpr{x}{y}}\,\d\mu_0(x)\,\d\mu_0(y) = 4\int_{\S^+}\int_{\S^+}\cosh(\lambda \dpr{x}{y})\,\d\mu_0(x)\,\d\mu_0(y), 
    \end{align*}
    where $\S^+:= \{x\in\S: \,x_1 > 0\}$. Since $|\dpr{x}{y}| < 1$ $\mu_0\times\mu_0$-almost everywhere on $\S^+\times\S^+$ the integrand can be strictly bounded from above by $4\cosh(\lambda)$. Since $\mu_0(\S^+) = 1/2$ it follows that
    \begin{align*}
        \erg{}(\mu_0) < \cosh(\lambda) = \erg{}\left(1/2\,(\delta_{z} + \delta_{-z})\right),
    \end{align*}
    with $z \in \S$.
   ´Therefore, $\mu_0$ cannot be a maximizer of $\erg{}$.
\end{rem}
\begin{rem}\label{rem:ndc}
    The above argument can be used to show that for arbitrary $D$, one has
    \begin{align*}
        \erg{}(\mu) \leq \erg{}\left(\frac{\delta_{z} +\delta_{-z}}{2}\right)
    \end{align*}
    for all symmetric measures $\mu$ if and only if $z$ is an eigenvector that corresponds to the eigenvalue of largest absolute value. In the upcoming section, we use
    this insight to show that such measures are maximizers of $\erg{}$ for negative semi-definite $D$.
\end{rem} 
If $D$ has non-positive eigenvalues,  \cref{thm:Diracmaxmin,thm:minmaxindefinite} still show that all minimizers are invariant with respect to reflections $H_z$, where $z$ corresponds to a positive eigenvalue. However, if $D$ has negative eigenvalues, such reflections can increase the energy when they are applied to general, non-minimizing measures. This is illustrated by the following example.
\begin{example}
\def\lm{\lambda}
Consider the two-dimensional case with $D = \text{diag}(\lm, 1)$ and $\lm<0$.
%
%
For any $\theta \in [0,2\pi)$, denote by $\delta_\theta$ the Dirac delta placed at $(\cos(\theta), \sin(\theta))$. Fix $\phi\in [0,2\pi)$ and let
\begin{align*}
\mu = \frac{1}{2}(\delta_\phi + \delta_{\pi + \phi}).
\end{align*}
In the two-dimensional setting, the symmetrization is given by
\begin{align*}
\tilde{\mu} = \frac{1}{4}(
\delta_\phi + \delta_{\pi + \phi}
+
\delta_{-\phi} + \delta_{\pi - \phi}
).
\end{align*}
Denoting, for convenience, $\cos(\phi) = c$, we have
\begin{align*}
\erg{}(\mu) -
\erg{}(\tilde{\mu}) = 
\frac{1}{2}
\bigg(
\cosh\left|(\lm -1)c^2 + 1\right|
-
\cosh{\left|(-\lm - 1)c^2 + 1\right|}
\bigg).
\end{align*}
Since $t\mapsto \cosh(t)$ is strictly increasing  for $t\geq 0$, we  get that $\erg{}(\mu) \leq \erg{}(\tilde{\mu})$ since
\begin{align*}
\left|(\lm -1)c^2 + 1\right| = \left|-|\lm|c^2 + 1-c^2\right|
\leq |\lm|c^2 + 1-c^2 = \left||\lm|c^2 + 1-c^2\right| = \left|(-\lm - 1)c^2 + 1\right|
\end{align*}
for any $0 \leq c \leq 1$ and $\lm \leq 0$, and the inequality is strict if and only if $0 < c < 1$ and $\lm < 0$. 
\end{example}

\subsection{Maximizers for negative semi-definite matrices}\label{sec:maxnd}
There is no apparent way to use the  proof strategy from the previous Section for showing that maximizers for negative definite matrices are symmetric since the kernel $(x,y) \mapsto e^{\dpr{x}{Dy}}$ is not negative definite for a negative definite $D$. However, we can show that the quadratic form ${\cal F}$ used to prove \cref{lem:convexity} is non-positive for anti-symmetric measures. This yields a symmetry property of maximizers for negative semi-definite matrices.  
\begin{lemma}\label{lem:ndsymm}
Let $D$ be a negative semi-definite matrix and $\mu$ a measure on the sphere. Define $\tilde{\mu}$ as
\begin{align*}
    \d\tilde{\mu}(x) = \frac{1}{2} (\d\mu(x) + \d\mu(-x)).
\end{align*}
Then $\erg{(\tilde{\mu})} \geq \erg{(\mu)}$ and the inequality is strict if  $\tilde{\mu} \neq \mu$ and either $D$ is negative definite or $\tilde{\mu} = \mu$ on the null space $\mathcal{N}(D)$.
\end{lemma}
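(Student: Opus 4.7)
The plan is to extract the anti-symmetric part $\nu := \mu - \tilde\mu$, which by construction satisfies $\nu(-A) = -\nu(A)$ for every Borel set $A$, and to show that it contributes non-positively to the energy. Expanding the bilinear form gives
\begin{equation*}
\erg{}(\mu) = \erg{}(\tilde\mu) + 2 \iint e^{\dpr{x}{Dy}}\d\tilde\mu(x)\d\nu(y) + \iint e^{\dpr{x}{Dy}}\d\nu(x)\d\nu(y).
\end{equation*}
The cross term vanishes, because symmetry of $\tilde\mu$ gives that $y \mapsto \int e^{\dpr{x}{Dy}}\d\tilde\mu(x) = \int \cosh(\dpr{x}{Dy})\d\tilde\mu(x)$ is even in $y$, and the integral of an even function against the anti-symmetric $\nu$ is zero. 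The proof therefore reduces to showing
\begin{equation*}
\mathcal{F}(\nu) := \iint e^{\dpr{x}{Dy}}\d\nu(x)\d\nu(y) \leq 0.
\end{equation*}

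Using anti-symmetry of $\nu$ in \emph{both} arguments, a short parity computation shows that $\mathcal{F}(\nu)$ is unchanged when $e^{\dpr{x}{Dy}}$ is replaced by its totally odd part $\sinh(\dpr{x}{Dy})$. The negative semi-definiteness of $D$ now enters decisively: writing $D = -S^2$ with $S = (-D)^{1/2}$ symmetric and positive semi-definite, one has $\sinh(\dpr{x}{Dy}) = -\sinh(\dpr{Sx}{Sy})$, and expanding $\sinh$ as its power series together with the tensor identity $\dpr{Sx}{Sy}^m = \dpr{(Sx)^{\otimes m}}{(Sy)^{\otimes m}}$ yields
\begin{equation*}
\mathcal{F}(\nu) = -\sum_{k=0}^\infty \frac{1}{(2k+1)!}\,\adaptnorm{\int (Sx)^{\otimes(2k+1)}\d\nu(x)}^2 \leq 0.
\end{equation*}
This mirrors the positive-definite-kernel argument in \cref{lem:convexity}; only odd powers appear because the anti-symmetry of $\nu$ annihilates even-degree tensor moments.

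For strict inequality, equality above forces every squared norm to vanish, so $\int p(Sx)\d\nu(x) = 0$ for every polynomial $p$ (odd degree by what was just shown, even degree by anti-symmetry), and a Stone--Weierstrass approximation yields $S_{\#}\nu = 0$. When $D$ is negative definite, $S$ is invertible, so $S_{\#}\nu = 0$ directly forces $\nu = 0$ and contradicts $\tilde\mu \neq \mu$. In the negative semi-definite case, $S$ collapses $\mathcal{N}(D)$ to the origin, so $S_{\#}\nu = 0$ only determines $\nu$ up to movement in the directions of $\mathcal{N}(D)$; the hypothesis that $\mu$ and $\tilde\mu$ agree on $\mathcal{N}(D)$ is meant to kill this residual ambiguity. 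Making this final step rigorous is the main obstacle: it requires disintegrating $\nu$ along the fibers of the projection onto $\mathcal{N}(D)^\perp$ and combining the vanishing of $S_{\#}\nu$ with the hypothesis on $\mathcal{N}(D)$ to conclude that each fiberwise component vanishes; here some care is needed, as cancellations of $\nu$ across these fibers seem possible even when $\nu$ itself carries no mass on $\mathcal{N}(D)$.
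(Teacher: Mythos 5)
Your argument for the inequality itself is correct and is organizationally a close cousin of the paper's proof. Both isolate the anti-symmetric part (your $\nu=\mu-\tilde\mu$ is $-\zeta$ in the paper's notation) and reduce everything to showing that the quadratic form is non-positive on anti-symmetric measures; your cancellation of the cross term via the evenness of $y\mapsto\int\cosh(\dpr{x}{Dy})\d\tilde\mu(x)$ is the same computation the paper performs through $\erg{}(N_\#\mu)=\erg{}(\mu)$ and bilinearity. Where you genuinely diverge is the sign of $\mathcal{F}(\nu)$: the paper uses anti-symmetry once to write $\ergp_D(\zeta)=-\ergp_{-D}(\zeta)$ and then invokes \cref{lem:convexity}, i.e.\ positive definiteness of the Gaussian kernel after the substitution $x\mapsto(-D)^{1/2}x$, whereas you reduce the kernel to its odd part $-\sinh(\dpr{Sx}{Sy})$ and expand it as a series of squared odd tensor moments. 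Your route is more self-contained (it reproves positive definiteness of the exponential kernel from scratch rather than citing the Gaussian fact) and makes the equality case transparent: equality forces all moments of $S_\#\nu$ to vanish, hence $S_\#\nu=0$, which settles the negative definite case immediately.

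Your hesitation about the semi-definite strictness is well founded, and you should not try to close that step by disintegration: the cancellation across fibers that you worry about really occurs. Take $D=\mathrm{diag}(-1,0,0)$, $p=(a,b,c)\in\S$ with $a\neq 0$ and $(b,c)\neq(0,0)$, and $\mu=\tfrac12(\delta_{(a,b,c)}+\delta_{(-a,b,c)})$. Then $\tilde\mu\neq\mu$, both measures vanish on $\mathcal{N}(D)$, the anti-symmetric part $\nu$ is nonzero, yet $S_\#\nu=0$ and indeed $\erg{}(\tilde\mu)=\erg{}(\mu)=\cosh(a^2)$ because the kernel $e^{-x_1y_1}$ only sees the (identical) first-coordinate marginals. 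So the strict inequality fails under the literal reading of the hypothesis ``$\tilde\mu=\mu$ on $\mathcal{N}(D)$''; what your argument (and the paper's) actually delivers is strictness when $\nu$ is concentrated on $\mathcal{N}(D)^\perp$, which is the condition the paper's proof silently substitutes in its final sentence. The gap you identify is therefore not a defect of your proof but an imprecision in the statement: the correct fix is to the hypothesis, after which your moment argument closes the case exactly as in the negative definite setting, since $S$ is injective on $\mathcal{N}(D)^\perp$.
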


\begin{proof}
\def\tmu{\tilde{\mu}}%
We denote by $N(x)=-x$ the negation and define
\begin{align*}
\mu^+:=\mu,\quad \mu^-:=N_\#\mu,\quad \zeta:=1/2\,(\mu^- - \mu^+).
\end{align*} 
This yields that $\d\zeta(-x) = 2(\d\mu(-x) -\d\mu(x)) = -\d\zeta(x)$ and
\begin{align*}
\erg(\zeta) =& 
\int_\uS\int_\uS e^{\dpr{x}{Dy}} \d\zeta(x) \d\zeta(y) = 
\int_{\uS^+}\int_{\uS^+} e^{\dpr{x}{Dy}} \d\zeta(x) \d\zeta(y)  + 
\int_{\uS^+}\int_{\uS^+} e^{\dpr{x}{Dy}} \d\zeta(-x) \d\zeta(-y)\\
+& 
2\int_{\uS^+}\int_{\uS^+} e^{-\dpr{x}{Dy}} \d\zeta(-x) \d\zeta(y)
=
2
\int_{\uS^+} \int_{\uS^+} e^{\dpr{x}{Dy}} - e^{\dpr{-x}{Dy}} \d\zeta(x) \d\zeta(y) = - \ergp_{-D}(\zeta).
\end{align*}
Since $-D$ is positive semi-definite, the proof of \cref{lem:convexity} shows that $ \ergp_{-D}(\zeta)\geq 0$ and thus $\erg(\zeta)\leq 0$. The inequality is strict if $\zeta \neq 0$ and either $D$ is negative definite or $\zeta$ is concentrated on $\mathcal{N}(D)^\perp$. The symmetry of the kernel yields $\erg(\mu^-) = \erg(\mu^+)$. Further, substituting $\mu^+ = \tilde{\mu} + \zeta$ and $\mu^- = \tilde{\mu} - \zeta$ we see that
\begin{align*}
\erg(\tmu) &= \frac{1}{4}\erg(\mu^+) + \frac{1}{4}\erg(\mu^-) + \frac{1}{2} \erg(\mu^+, \mu^-)\\ 
&= \frac{1}{2}\erg(\mu) + \frac{1}{2} \erg(\tilde{\mu}+\zeta, \tilde{\mu}-\zeta) = \frac{1}{2}\erg(\mu)+ \frac{1}{2} \erg(\tilde{\mu}) -\frac{1}{2}\erg{(\zeta)}.
\end{align*}
Reordering the terms leads to 
\begin{align*}
    \erg{(\tmu)} = \erg{(\mu)} - \erg{(\zeta)} \geq \erg{(\mu)}.
\end{align*} 
From the conditions on $\zeta$ and $D$ that lead to $\erg < 0$ we derive that the above inequality is strict if $\tilde{\mu} \neq \mu$ and either $D$ negative definite or $\tilde{\mu} = \mu$ on $\mathcal{N}$. 
\end{proof}
\begin{cor}\label{cor:ndsymm}
Let $\mu^*$ be a maximizer of $\erg{}$ for a negative definite $D$. Then  $\d\mu^*(x) = \d\mu^*(-x)$.
\end{cor}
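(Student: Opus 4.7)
The plan is to read the corollary as an immediate consequence of the contrapositive of the strict inequality in \cref{lem:ndsymm}. Concretely, let $\mu^*$ be a maximizer and form the symmetrization
\[
\tilde{\mu}^* = \tfrac{1}{2}\bigl(\mu^* + N_\#\mu^*\bigr), \qquad N(x) = -x,
\]
which is a well-defined probability measure on $\mathcal{S}$ (both summands are probability measures, and $\mathcal{P}(\mathcal{S})$ is convex). By \cref{lem:ndsymm} applied to $\mu^*$, one has $\erg(\tilde{\mu}^*) \geq \erg(\mu^*)$. Since $\mu^*$ is a maximizer of $\erg$ over $\mathcal{P}(\mathcal{S})$, the reverse inequality $\erg(\tilde{\mu}^*) \leq \erg(\mu^*)$ also holds, so $\erg(\tilde{\mu}^*) = \erg(\mu^*)$.

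Next, I would invoke the strict-inequality clause of \cref{lem:ndsymm}. Because $D$ is assumed negative definite, the first of the two alternatives (either $D$ negative definite, or $\tilde{\mu}=\mu$ on $\mathcal{N}(D)$) is automatically in force, so the lemma asserts $\erg(\tilde{\mu}^*) > \erg(\mu^*)$ whenever $\tilde{\mu}^*\neq \mu^*$. Having just shown equality of energies, we conclude $\tilde{\mu}^* = \mu^*$. Unpacking the definition of $\tilde{\mu}^*$ this means $\mu^* = N_\#\mu^*$, which, tested against Borel sets $A\subseteq\mathcal{S}$, reads $\mu^*(A) = \mu^*(-A)$. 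Equivalently $\d\mu^*(x) = \d\mu^*(-x)$, which is the claim.

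There is essentially no obstacle beyond checking that \cref{lem:ndsymm} is indeed applicable: $\mathcal{S}$ is invariant under $N$ so $N_\#\mu^*\in\mathcal{P}(\mathcal{S})$, and the strict version of the inequality requires exactly the hypothesis $D$ negative definite that the corollary assumes. The existence of a maximizer itself is not discussed in the statement, but as $\mathcal{E}$ is narrowly continuous on the compact set $\mathcal{P}(\mathcal{S})$ (which follows from the continuity and boundedness of $(x,y)\mapsto e^{x\cdot Dy}$ on the compact product $\mathcal{S}\times\mathcal{S}$) the set of maximizers is non-empty, so the statement is not vacuous. Overall the corollary is a clean one-line deduction from the preceding lemma and requires no further machinery.
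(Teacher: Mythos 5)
Your proof is correct and is exactly the argument the paper intends: the corollary is stated without proof as an immediate consequence of \cref{lem:ndsymm}, and your deduction (symmetrize, use maximality to force equality of energies, then invoke the strict-inequality clause for negative definite $D$ to conclude $\tilde{\mu}^*=\mu^*$) is precisely that one-line argument, carefully spelled out.
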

This symmetry property is the missing ingredient for showing that the discrete measures introduced in \cref{rem:ndb,rem:ndc} are maximizers for negative semi-definite matrices $D$. 
\begin{thm}\label{thm:maxnd}
    Let $D$ be negative semi-definite and $\lambda_{\min} < 0$ its smallest eigenvalue. Then a measure $\mu$ maximizes $\erg{}$ if and only if $\mu^* = 1/2\, (\delta_{z}+\delta_{-z})$
    where $z \in \S$ is an eigenvector associated to $\lambda_{\min}$. 
\end{thm}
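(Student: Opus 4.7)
The plan is to reduce to symmetric measures via \cref{lem:ndsymm} and then bound the energy through a $\cosh$-representation available for such measures. A direct calculation gives $\erg\bigl(\tfrac{1}{2}(\delta_z+\delta_{-z})\bigr)=\tfrac{1}{2}(e^{\lambda_{\min}}+e^{-\lambda_{\min}})=\cosh(\lambda_{\min})$. For any symmetric $\mu$ (i.e.\ $N_{\#}\mu=\mu$), averaging the integrand of $\erg(\mu)$ over $(x,y)$ and $(-x,y)$ rewrites it as $\erg(\mu)=\int_{\mathcal{S}}\int_{\mathcal{S}}\cosh(\dpr{x}{Dy})\d\mu(x)\d\mu(y)$, and $|\dpr{x}{Dy}|\leq\|Dy\|\leq|\lambda_{\min}|$ together with the monotonicity of $\cosh$ on $[0,\infty)$ gives $\erg(\mu)\leq\cosh(\lambda_{\min})$. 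Invoking \cref{lem:ndsymm} to replace an arbitrary $\mu$ by its symmetrization $\tilde\mu$ extends the bound to all of $\mathcal{P}(\mathcal{S})$, establishing the ``if'' direction.

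For the converse, let $\mu^*$ be any maximizer. Then $\erg(\tilde{\mu^*})\geq\erg(\mu^*)=\cosh(\lambda_{\min})$, so $\tilde{\mu^*}$ saturates the $\cosh$-bound. The saturation $|\dpr{x}{Dy}|=|\lambda_{\min}|$ for $\tilde{\mu^*}\otimes\tilde{\mu^*}$-a.e.\ $(x,y)$ forces $y$ to be a unit eigenvector of $\lambda_{\min}$ (via $\|Dy\|=|\lambda_{\min}|$) and $x=\pm y$ (via the Cauchy--Schwarz equality condition $x\parallel Dy$). Two distinct antipodal pairs $\{z_1,-z_1\}\neq\{z_2,-z_2\}$ in the $\lambda_{\min}$-eigenspace cannot both carry mass, as $x=z_1$, $y=z_2$ would violate $x=\pm y$. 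Hence $\supp\,\tilde{\mu^*}\subset\{z,-z\}$ for a single unit eigenvector $z$ of $\lambda_{\min}$, and symmetry pins down $\tilde{\mu^*}=\tfrac{1}{2}(\delta_z+\delta_{-z})$.

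Finally, I would upgrade $\tilde{\mu^*}$ to $\mu^*$ itself. The algebraic identity in the proof of \cref{lem:ndsymm}, $\erg(\mu^*)=\erg(\tilde{\mu^*})-\erg(\zeta)$ with $\zeta=\tfrac{1}{2}(N_{\#}\mu^*-\mu^*)$ antisymmetric, forces $\erg(\zeta)=0$, equivalently $\ergp_{-D}(\zeta)=0$. By the proof of \cref{lem:convexity}, $\ergp_{-D}$ is a positive semi-definite quadratic form on signed Radon measures whose kernel consists of measures concentrated on $\ker{D}$; since $\lambda_{\min}\neq 0$, $\{z,-z\}\cap\ker{D}=\emptyset$, so the supports of $\tilde{\mu^*}$ and $\zeta$ are disjoint and $\mu^*|_{\ker{D}}=-\zeta|_{\ker{D}}\geq 0$. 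Combined with the antisymmetry $\zeta(A)=-\zeta(-A)$ on the symmetric subspace $\ker{D}$, this non-negativity forces $\zeta\equiv 0$ and therefore $\mu^*=\tfrac{1}{2}(\delta_z+\delta_{-z})$. The main obstacle is precisely this last step in the properly semi-definite case: \cref{lem:ndsymm} does not provide strict inequality once $\ker{D}$ is nontrivial, so the possible antisymmetric residue on $\ker{D}$ must be killed by hand via the interplay of positivity of $\mu^*$ and antisymmetry of $\zeta$. In the negative definite case this subtlety disappears and $\mu^*=\tilde{\mu^*}$ follows directly from \cref{lem:ndsymm}.
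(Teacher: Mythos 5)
Your argument follows the same route as the paper's proof: compute $\erg(\mu^*)=\cosh(\lambda_{\min})$, reduce to symmetric measures via \cref{lem:ndsymm}, bound symmetric measures through the $\cosh$-representation, and analyze the equality case. The ``if'' direction and the identification of the symmetrization $\tilde{\mu^*}=\tfrac12(\delta_z+\delta_{-z})$ are correct, and your equality analysis (forcing $y$ into the $\lambda_{\min}$-eigenspace and $x=\pm y$, then excluding two distinct antipodal pairs) is in fact more explicit than the paper's.

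The gap is in your final step. You claim that the kernel of the positive semi-definite form $\ergp_{-D}$ ``consists of measures concentrated on $\mathcal{N}(D)$'' and deduce from $\ergp_{-D}(\zeta)=0$ that $\supp\zeta\subseteq\mathcal{N}(D)$. This characterization is false: for $x\neq y$ on $\S$ with $x-y\in\mathcal{N}(D)$ but $x,y\notin\mathcal{N}(D)$, one has $T_\#(\delta_x-\delta_y)=0$ for $T=(-D)^{1/2}$ and hence $\ergp_{-D}(\delta_x-\delta_y)=0$. So vanishing of the quadratic form does not localize $\zeta$ on the null space, and the subsequent ``disjoint supports'' deduction is unjustified. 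The step can be repaired, and more simply, by using positivity of $\mu^*=\tilde{\mu^*}-\zeta$ directly: since $\tilde{\mu^*}$ is concentrated on $\{z,-z\}$, on the symmetric set $\S\setminus\{z,-z\}$ we have $-\zeta=\mu^*\geq 0$, and an antisymmetric measure that is nonpositive on a symmetric set vanishes there; hence $\zeta=c(\delta_z-\delta_{-z})$ for some $c\in\R$. A direct computation gives $\ergp_{-D}\bigl(c(\delta_z-\delta_{-z})\bigr)=2c^2\bigl(e^{-\lambda_{\min}}-e^{\lambda_{\min}}\bigr)>0$ unless $c=0$, so $\zeta=0$ and $\mu^*=\tilde{\mu^*}$. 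This closes both the definite and the semi-definite case without any appeal to the structure of the kernel of $\ergp_{-D}$.
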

\begin{proof}
By \cref{lem:ndsymm} it suffices to consider $\mu$ satisfying $\d\mu(x) = \d\mu(-x)$. Denoting $\S^+:=\{x\in \S: x_1 > 0\}$ and using the symmetry property of $\mu$, with the arguments  from \cref{rem:ndb} we have 
\begin{align*}
    \erg{(\mu)} \leq \cosh{\lambda_{\min}} = \erg{(\mu^*)}, 
\end{align*}
where equality is only obtained if $|\dpr{x}{Dy}| = \lambda_{\min}$ holds $\mu\times\mu$-almost everywhere on $\S^+\times\S^+$. Since $\mu$ is symmetric, this is equivalent to $\mu = \mu^*$. For a negative definite $D$ we already know from \cref{cor:ndsymm} that there are no other measures that maximize $\erg$. In the negative semi-definite case, we have that any $\mu$ that fulfills $\erg{(\mu)} = \cosh{\lambda_{\min}}$ has to be concentrated on $\mathcal{N}(D)^\perp$ and, therefore, also in this case there are no other maximizers.
\end{proof}
\section{Energy variation and stationary points}\label{sec:stationary}

In order to study stationary points or local maximizers / minimizers, it is useful to consider the first and second variations of the energy on the Wasserstein space of probability measures on the sphere, as studied previously for Vlasov-type interactions. e.g. the mean-field aggregation equation, cf. \cite{Burger2013,carrillo2017geometry,gomez2024beginner}. 
The first variation of $\erg{}$ is given by
\begin{equation}\label{eq:first-variation}
\d\erg{}(\mu;V)=\frac{\d}{\d t} \erg{}(\mu_t)|_{t=0}
\end{equation}
where $\mu_t$ satisfies 
\begin{equation}\label{eq:transport-equation} 
\partial_t \mu_t + \nabla \cdot  ( \mu_t P^\perp_x V) = 0, \quad \mu_0=\mu,
\end{equation}
where $P^\perp_x= \Id - x x^T$ is the projection to the tangent space of the unit ball at $x$. Here, the velocity field $V$ is an arbitrary Lipschitz function on $\R^n$; by the projection $P^\perp_x$ we restrict it further to admissible velocities that keep the distribution on the unit sphere. 

The following weak formulation, where $\varphi$ is a continuously differentiable test function, will be useful later 
$$ \frac{\d}{\d t} \int_{\cal S} \varphi(x)\, \d\mu_t(x) = 
\int_{\cal S} P^\perp_x\nabla \varphi(x) \cdot V(x) \,\d\mu_t(x).$$
Similarly to the first variation, the second variation of $\erg{}$ can be defined as 
\begin{equation}\label{eq:second-variation}
\d^2\erg{}(\mu;V.W)=\frac{\d}{\d t} \d\erg{}(\mu_t,W)|_{t=0}
\end{equation}
if the derivative on the right-hand side exists. The computation of the first variation is completely analogous to the case of the aggregation equation (cf. \cite{Burger2013}) and thus omitted here.
\begin{lemma}\label{lem:firstvar} 
For  any Lipschitz continuous vector field $V$, the first variation of the energy $\erg{}$ in the direction $V$ exists and is given by 
\begin{equation}
    \d\erg{}(\mu;V) = \int_{\cal S} \int_{\cal S} e^{x \cdot (Dy)}  P^\perp_xDy \cdot V(x)\d\mu(x)\d\mu(y) . 
\end{equation}
\end{lemma}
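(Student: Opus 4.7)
The plan is to apply the definition \cref{eq:first-variation} directly by writing $\erg{}(\mu_t)$ as a double integral against $\mu_t\otimes \mu_t$ and then invoking the weak form of the transport equation \cref{eq:transport-equation} with an explicit smooth test function. This is the same computation carried out in~\cite{Burger2013} for mean-field aggregation equations; the only novelty here is the tangential projection $P_x^\perp$ enforced by the restriction to the sphere, but since $P_x^\perp$ is idempotent and self-adjoint it can be absorbed at the very end.

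First I would introduce the single-layer potential
$$\psi_\mu(x) := \int_{\cal S} e^{x\cdot Dy}\,\d\mu(y),$$
so that $\erg{}(\mu)=\int_{\cal S} \psi_\mu(x)\,\d\mu(x)$. Differentiating at $t=0$ and using the product rule in the quadratic dependence on $\mu_t$ produces two contributions. For the contribution from differentiating the outer measure, I would use the $C^1$ test function $\psi_\mu$, whose Euclidean gradient is $\nabla\psi_\mu(x)=\int_{\cal S} e^{x\cdot Dy}\,Dy\,\d\mu(y)$, in the weak form of \cref{eq:transport-equation}; this yields
$$\int_{\cal S}\int_{\cal S} e^{x\cdot Dy}\,P_x^\perp Dy\cdot V(x)\,\d\mu(y)\,\d\mu(x).$$
For the contribution from differentiating the inner measure hidden in $\psi_{\mu_t}$, I would apply the weak formulation in the $y$-variable with the smooth test function $y\mapsto e^{x\cdot Dy}$ for each fixed $x$; using the symmetry $D=D^T$, so that $\nabla_y e^{x\cdot Dy}=e^{x\cdot Dy}\,Dx$, and relabelling $x\leftrightarrow y$ identifies this term with the first one (up to the factor coming from the product rule).

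The main obstacle is really just justifying the interchange of $\partial_t$ with the double integral, which is routine in this setting: since ${\cal S}\times {\cal S}$ is compact and the kernel $e^{x\cdot Dy}$ together with all its derivatives is smooth and bounded, and since $V$ is Lipschitz so that \cref{eq:transport-equation} admits a narrowly continuous solution $t\mapsto\mu_t$ with a well-defined characteristic flow, dominated convergence applied to the weak formulation suffices. No additional regularity machinery is required, which is why the authors skip the detailed calculation and refer to the aggregation-equation literature.
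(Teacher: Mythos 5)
Your computation is correct and is precisely the argument the paper has in mind: the paper omits the proof entirely, remarking only that it is ``completely analogous to the case of the aggregation equation (cf.\ \cite{Burger2013})'', and your product-rule decomposition into an outer and an inner contribution --- each handled by the weak formulation of \cref{eq:transport-equation} with a \emph{fixed} $C^1$ test function ($\psi_\mu$, respectively $y\mapsto e^{x\cdot Dy}$ for fixed $x$), followed by the use of $D=D^T$ to identify the two terms --- is exactly that standard computation transplanted to the sphere, with the justification of differentiating under the integral being routine by compactness and smoothness of the kernel, as you say. One bookkeeping remark: summing the two equal contributions gives \emph{twice} the displayed right-hand side; you flag this (``up to the factor coming from the product rule''), and the discrepancy traces back to the paper's own inconsistent normalization of $\erg{}$ (the factor $1/2$ present in \cref{sec:gradientflow} is dropped in \cref{sec:energy-min-max,sec:stationary}), which is immaterial for the stationarity statements the lemma is used for.
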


It is straightforward to see that the first variation vanishes at the extremal points of the energy:
\begin{prop}
    Let $\mu^*$ be a minimizer or maximizer of the energy. Then $\d\erg{}(\mu;V) = 0$ for all Lipschitz vector fields $V$.
\end{prop}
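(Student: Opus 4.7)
The strategy is the standard one from calculus of variations adapted to the Wasserstein-type setting of this paper: realize the flow $\mu_t$ defining $\d\erg{}(\mu^*;V)$ as a curve of admissible perturbations of $\mu^*$ in $\mathcal{P}(\mathcal{S})$ defined on a symmetric interval around $0$, and then invoke Fermat's rule for the scalar function $t\mapsto \erg{}(\mu_t)$ at its interior extremum $t=0$.

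\textbf{Step 1 (existence of a two-sided flow).} First I would note that since $V$ is Lipschitz on $\R^n$ and $x\mapsto P^\perp_x = \Id-xx^T$ is smooth, the vector field $x\mapsto P^\perp_x V(x)$ is Lipschitz on the compact manifold $\mathcal{S}$ and tangent to it. Therefore the ODE $\dot X = P^\perp_X V(X)$ generates a smooth flow $\Phi_t:\mathcal{S}\to\mathcal{S}$ defined for all $t\in\R$ (or at least on an open interval $(-\tau,\tau)$), and $\mu_t := (\Phi_t)_\#\mu^*$ is a narrowly continuous curve in $\mathcal{P}(\mathcal{S})$ solving \cref{eq:transport-equation} with $\mu_0=\mu^*$. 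This produces a one-parameter family of admissible competitors on both sides of $t=0$.

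\textbf{Step 2 (Fermat's rule).} Suppose $\mu^*$ is, say, a minimizer of $\erg{}$ on $\mathcal{P}(\mathcal{S})$. Then $\erg{}(\mu_t)\geq \erg{}(\mu^*)$ for every $t\in(-\tau,\tau)$, so $t\mapsto \erg{}(\mu_t)$ attains a minimum at the interior point $t=0$. By the definition \eqref{eq:first-variation} the derivative $\frac{\d}{\d t}\erg{}(\mu_t)|_{t=0}$ exists and equals $\d\erg{}(\mu^*;V)$; in fact \cref{lem:firstvar} already gives an explicit formula for this derivative, so differentiability is not in doubt. Hence $\d\erg{}(\mu^*;V)=0$. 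The maximizer case is identical with the opposite sign convention. Since $V$ was an arbitrary Lipschitz vector field on $\R^n$, this completes the argument.

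\textbf{Main obstacle (expected to be minor).} The only non-routine point is making sure that the curve $\mu_t$ used in \eqref{eq:first-variation} really can be taken for $t$ in a full neighborhood of $0$ rather than just for $t\geq 0$, so that the extremum at $t=0$ is interior. This is resolved by the observation in Step~1 that $P^\perp_x V(x)$ is a Lipschitz tangent vector field on the compact manifold $\mathcal{S}$, hence generates a complete flow, and by the fact that the formula for $\d\erg{}$ given by \cref{lem:firstvar} is linear and symmetric in $V$ and $-V$, so sign changes of $t$ amount to replacing $V$ by $-V$ without affecting the vanishing conclusion. Apart from this, the proof is a direct application of the first-order optimality condition and can be written in only a few lines.
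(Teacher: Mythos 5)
Your proposal is correct and follows essentially the same route as the paper: both construct admissible competitors $\mu_t$ by solving the transport equation for the Lipschitz field and then apply the first-order optimality condition at $t=0$. The paper works only with the one-sided perturbation $t>0$ and closes the argument via linearity of $\d\erg{}$ in $V$ (replacing $V$ by $-V$), which is precisely the fallback you already identify in your final paragraph, so the two arguments coincide in substance.
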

\begin{proof}
Let $\mu^*$ be the initial value for the transport equation~\eqref{eq:transport-equation}. 
For Lipschitz-continuous vector fields there is a unique solution $\mu_t$ of the transport equation and for all times $t>0$ it is an admissible distribution on the sphere. Hence, if $\mu$ is a minimizer then
$$ \erg{}(\mu^*) \leq \erg{}(\mu_t)$$
for all $t >0$, which implies that $\d\erg{}(\mu^*;V) \leq 0$ in the limit $t \downarrow 0$. Since $V$ is arbitrary and $\d\erg{}$ is linear in $V$, we have that $\d\erg{}(\mu;V) = 0$. The case of a maximizer is treated in the same way, with an opposite inequality initially.
\end{proof}

The connection between the transformer dynamics and the energy variations in Wasserstein spaces is readily established in the following

\begin{lemma}\label{lem:varstationary} A probability measure $\mu$ is a stationary solution of \eqref{eq:continuityequation} with the velocity field \eqref{eq:velocityfield}  if and only if $\d\erg{}(\mu;W)=0$
 for all Lipschitz continuous $W$.
\end{lemma}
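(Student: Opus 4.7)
My strategy is to reduce both conditions to the same pointwise identity, namely $\mathsf{V}[\mu]=0$ $\mu$-a.e., using the crucial structural observation that $\mathsf{V}[\mu]$ is itself a tangential gradient of a regular potential on $\mathcal{S}$.

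First I would rewrite the first variation supplied by \cref{lem:firstvar} in a form that exposes the velocity field. Using the self-adjointness of $P_x^\perp$ and swapping the integrals,
\begin{equation*}
d\mathcal{E}(\mu;W) \;=\; \int_{\mathcal{S}} \Bigl[P_x^\perp \!\int_{\mathcal{S}} e^{x\cdot Dy}\, Dy \,d\mu(y)\Bigr]\cdot W(x)\,d\mu(x) \;=\; \pm\int_{\mathcal{S}} m_\mu(x)\,\mathsf{V}[\mu](x)\cdot W(x)\,d\mu(x),
\end{equation*}
where the sign depends on whether $V=D$ or $V=-D$. The key observation is $\nabla_x m_\mu(x)=\int_{\mathcal{S}}e^{x\cdot Dy}Dy\,d\mu(y)$, whence $P_x^\perp \nabla_x m_\mu = \nabla_{\mathcal{S}} m_\mu$ and therefore $\mathsf{V}[\mu] = \pm \nabla_{\mathcal{S}}\log m_\mu$. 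Under \cref{ass:kernel} we have $m_\mu \geq C > 0$, so $\log m_\mu \in C^1(\mathcal{S})$ and $\mathsf{V}[\mu]$ is itself a (smooth and hence) Lipschitz vector field.

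For the implication ``$d\mathcal{E}(\mu;W)=0$ for all Lipschitz $W$ $\Rightarrow$ $\mu$ stationary'' I would insert $W=\mathsf{V}[\mu]$ into the displayed formula. This produces $\int_{\mathcal{S}} m_\mu\,|\mathsf{V}[\mu]|^2\,d\mu = 0$, and the positive lower bound on $m_\mu$ forces $\mathsf{V}[\mu]=0$ $\mu$-a.e. Plugging the constant curve $\mu_t\equiv\mu$ with velocity field $v_t\equiv\mathsf{V}[\mu]$ into the weak continuity equation \eqref{eq:CE} then shows that $\mu$ is stationary.

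For the converse, if $\mu$ is a stationary solution of \eqref{eq:continuityequation}, the weak form with $\mu_t\equiv \mu$ reads
\begin{equation*}
\int_{\mathcal{S}} \langle \mathcal{D}\varphi(x),\mathsf{V}[\mu](x)\rangle\,d\mu(x) \;=\; 0 \quad\text{for every } \varphi \in C^1(\mathcal{S}).
\end{equation*}
Choosing the admissible test function $\varphi = \log m_\mu$ and using $\mathsf{V}[\mu]=\pm\nabla_{\mathcal{S}}\log m_\mu$ yields $\int_{\mathcal{S}}|\mathsf{V}[\mu]|^2\,d\mu = 0$, hence $\mathsf{V}[\mu]=0$ $\mu$-a.e. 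Returning to the rewritten first variation, this immediately gives $d\mathcal{E}(\mu;W)=0$ for every Lipschitz $W$. The only delicate point is the regularity justification needed to (i) apply \cref{lem:firstvar} with $W=\mathsf{V}[\mu]$ and (ii) use $\log m_\mu$ as a $C^1$ test function; both follow directly from \cref{ass:kernel} together with the smoothness of $e^{x\cdot Dy}$ in $x$. I do not anticipate any other substantive obstacle.
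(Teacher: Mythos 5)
Your proof is correct. The paper states \cref{lem:varstationary} without proof (``readily established''), so there is nothing to compare against line by line, but your argument supplies exactly the missing bridge: both sides of the equivalence reduce to $\mathsf{V}[\mu]=0$ $\mu$-a.e., and the nontrivial direction in each case is closed by the observation that $\mathsf{V}[\mu]=\pm\nabla_{\mathcal S}\log m_\mu$ is a tangential gradient of a $C^1$ (indeed smooth) potential bounded away from zero. This single structural fact is what lets you test the first variation with $W=\mathsf{V}[\mu]$ in one direction and test the stationary continuity equation with $\varphi=\log m_\mu$ in the other; without it, the mismatch between scalar test functions in \cref{eq:CE} and vector-field directions $W$ in \cref{lem:firstvar} would be a genuine obstruction. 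The identification $\int_{\mathcal S} e^{x\cdot Dy}P_x^\perp Dy\,\d\mu(y)=\pm m_\mu(x)\mathsf{V}[\mu](x)$ and the lower bound $m_\mu\geq e^{-|\lambda|}>0$ are both verified correctly. For the converse direction an alternative route, more in the spirit of the paper's \cref{sec:gradientflow}, is to invoke the energy dissipation equality of \cref{lm: DisE} along the constant curve: stationarity forces $\frac{\d}{\d t}\erg(\mu_t)=0$, hence $\int_{\mathcal S} m_\mu|\mathsf{V}[\mu]|^2\,\d\mu=0$. Your choice of test function achieves the same conclusion more directly. The only cosmetic caveat is that $\mathsf{V}[\mu]$ is defined on $\mathcal S$ while \cref{lem:firstvar} takes $W$ Lipschitz on $\R^n$; the explicit formula extends smoothly off the sphere (away from the origin), so this is immaterial.
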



Similarly to \cref{lem:firstvar}, one can obtain an expression for the second variation.

\begin{lemma} \label{lem:secondvar} 
For $V,W$ being Lipschitz continuous, the second variation of the energy $\erg{}$ in the directions $V$, $W$ exists and is given by 
\begin{equation*}
\d\erg{}(\mu;V,W) = \int_{\cal S} \int_{\cal S} e^{\dpr{x}{Dy}}  \left( (P^\perp_xDy \cdot V(x)) (P^\perp_xDy \cdot W(x)) +(Dy)^T \nabla (P^\perp_xV(x)) \right)\d\mu(x)\d\mu(y) . 
\end{equation*}
\end{lemma}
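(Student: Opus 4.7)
The plan is to differentiate $\d\erg{}(\mu_t;W)$ at $t=0$ along the transport flow $\mu_t$ generated by $V$ via \eqref{eq:transport-equation}. Setting
$$F(x,y):=e^{\dpr{x}{Dy}}\,P^\perp_x Dy\cdot W(x),$$
one has $\d\erg{}(\mu_t;W)=\iint F(x,y)\,\d\mu_t(x)\,\d\mu_t(y)$. Because the product measure $\mu_t\otimes\mu_t$ is transported by the tangential velocity $(P^\perp_x V(x),P^\perp_y V(y))$ on ${\cal S}\times{\cal S}$ (seen from the weak formulation applied to tensor-product test functions and a density argument), the product rule yields
$$\d^2\erg{}(\mu;V,W)=\iint\bigl(P^\perp_x\nabla_x F\cdot V(x)+P^\perp_y\nabla_y F\cdot V(y)\bigr)\d\mu(x)\d\mu(y).$$
Existence of the $t$-derivative is immediate because $F\in C^1({\cal S}\times{\cal S})$ and the Lipschitz field $P^\perp_x V$ generates a smooth-in-$t$ flow preserving ${\cal S}$.

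The core step is the Leibniz computation of the two projected gradients. For $\nabla_x F$ the exponential factor contributes $e^{\dpr{x}{Dy}}Dy$, which after projection by $P^\perp_x$ and pairing with $V(x)$ produces the symmetric rank-one term $e^{\dpr{x}{Dy}}(P^\perp_x Dy\cdot V(x))(P^\perp_x Dy\cdot W(x))$ appearing in the statement. The remaining contribution uses $P^\perp_x Dy\cdot W(x)=Dy\cdot P^\perp_x W(x)$ (self-adjointness of $P^\perp_x$); its $x$-gradient is $(Dy)^T$ applied to the Jacobian of $x\mapsto P^\perp_x W(x)$, which, once contracted with $P^\perp_x V(x)$, becomes the second term $(Dy)^T\nabla(P^\perp_x V(x))$ in the claim (with the contraction against $W$ understood). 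For $\nabla_y F$ the exponential factor gives $Dx$ while $\nabla_y[Dy\cdot P^\perp_x W(x)]=DP^\perp_x W(x)$ by symmetry of $D$. Relabelling $x\leftrightarrow y$ in the second double integral and invoking $D=D^T$ together with $(P^\perp_x)^T=P^\perp_x$ shows that the $y$-contribution merges with the $x$-contribution into the displayed expression.

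The main obstacle is the careful bookkeeping of the $x$-derivative of the projection $P^\perp_x=\operatorname{Id}-xx^T$: its dependence on $x$ produces cross terms proportional to $(x\cdot Dy)W(x)$ and $(x\cdot W(x))Dy$, which only collapse into the compact matrix-gradient form after applying $P^\perp_x$ and annihilating their normal components. A subsidiary technical point is justifying the product transport identity, which reduces to the one-marginal case because \eqref{eq:transport-equation} with Lipschitz $P^\perp_x V$ admits a unique classical flow whose Cartesian product transports $\mu_t\otimes\mu_t$ and keeps the support on ${\cal S}\times{\cal S}$.
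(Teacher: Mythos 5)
The paper itself omits the proof (it only says the computation is analogous to that of the first variation), so the comparison here is purely about whether your computation is correct. Your overall strategy---differentiating $\d\erg{}(\mu_t;W)$ along the flow generated by $V$, using the product transport of $\mu_t\otimes\mu_t$ and the Leibniz rule for $F(x,y)=e^{\dpr{x}{Dy}}P^\perp_xDy\cdot W(x)$---is the natural and intended one, and your treatment of the $x$-derivative (the rank-one term from the exponential plus the Jacobian of $x\mapsto P^\perp_xW(x)$) is fine.

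The gap is in the last step: the $y$-contribution does \emph{not} merge with the $x$-contribution. Carrying out your own computation, $P^\perp_y\nabla_yF(x,y)\cdot V(y)$ equals $e^{\dpr{x}{Dy}}\bigl[(P^\perp_yDx\cdot V(y))(P^\perp_xDy\cdot W(x))+DP^\perp_xW(x)\cdot P^\perp_yV(y)\bigr]$, and after relabelling $x\leftrightarrow y$ (using $D=D^T$) this becomes
\begin{equation*}
\int_{\cal S}\int_{\cal S} e^{\dpr{x}{Dy}}\Bigl[(P^\perp_xDy\cdot V(x))\,(P^\perp_yDx\cdot W(y))+P^\perp_xV(x)\cdot D\,P^\perp_yW(y)\Bigr]\d\mu(x)\d\mu(y).
\end{equation*}
Both summands couple $V$ at $x$ with $W$ at $y$ (and involve the projection $P^\perp_y$ at the \emph{other} point), so they are genuinely nonlocal cross terms that cannot be rewritten as $(P^\perp_xDy\cdot V(x))(P^\perp_xDy\cdot W(x))$ or as a contraction of $(Dy)^T$ with a single Jacobian at $x$; no relabelling or symmetry of $D$ collapses them. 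A correct proof must either retain these cross terms in the final formula or give an argument for why they vanish (they do not in general). Note also that, as displayed, the term $(Dy)^T\nabla(P^\perp_xV(x))$ is a covector rather than a scalar, and your derivation actually produces the Jacobian of $P^\perp_xW(x)$ contracted against $P^\perp_xV(x)$ rather than the other way around; the "contraction against $W$ understood" remark papers over a real asymmetry, since the two orderings differ away from critical points. So the proposal as written does not establish the stated identity; it establishes the diagonal ($x$-derivative) part and then incorrectly discards the off-diagonal part.
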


\subsection{Energy variation at concentrated distributions} \label{sec:energy-var-deltas}
From \cref{lem:firstvar} we see that any measure $\mu$ that fulfills
\begin{align}\label{eq:sufficientStationary}
    \int_\S e^{\dpr{x}{Dy}} P^{\perp}_xDy \,\d\mu(y) = 0 \qquad \text{for $\mu$-almost all $x \in \mathcal{S}$},
\end{align}
is  a stationary point of $\erg{}$. Here and in the following, with a slight abuse of notation, we denote the $0$-vector by $0$. For concentrated measures, the above condition is also necessary and moreover rather easy to verify, as we see in what follows.
We first show that single Dirac measures can only be stationary points if they align with an eigenvector of the matrix $D$.
\begin{lemma}
A Dirac measure $\mu^*=\delta_z$ is a stationary point of $\erg{}$ if and only if $z$ is an eigenvector of $D$.
\end{lemma}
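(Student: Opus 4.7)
The plan is to combine \cref{lem:varstationary} and \cref{lem:firstvar} and then observe that at a Dirac mass the double integral collapses, leaving a pointwise algebraic condition on $z$.

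First I would apply \cref{lem:varstationary} to reduce stationarity of $\mu^* = \delta_z$ to the requirement that $\d\erg{}(\delta_z; W) = 0$ for every Lipschitz vector field $W$. Inserting $\mu = \delta_z$ into the formula from \cref{lem:firstvar}, the double integral evaluates at $x = y = z$, giving
\begin{equation*}
\d\erg{}(\delta_z; W) = e^{z \cdot Dz}\bigl(P^\perp_z Dz\bigr) \cdot W(z).
\end{equation*}
Since $e^{z \cdot Dz} > 0$ and $W$ can be chosen as an arbitrary constant Lipschitz field with $W(z)$ any prescribed vector in $\R^n$, the condition $\d\erg{}(\delta_z; W) = 0$ for all Lipschitz $W$ is equivalent to $P^\perp_z Dz = 0$.

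Next I would unpack the projection: using $P^\perp_z v = v - (z \cdot v) z$, the identity $P^\perp_z Dz = 0$ reads
\begin{equation*}
Dz = (z \cdot Dz)\, z,
\end{equation*}
which means exactly that $Dz$ is a scalar multiple of $z$, i.e. $z$ is an eigenvector of $D$ with eigenvalue $\lambda = z \cdot Dz$. Conversely, if $Dz = \lambda z$, then $P^\perp_z Dz = \lambda z - \lambda (z \cdot z) z = 0$ since $z \in \mathcal{S}$, so both implications close up. There is no real obstacle here; the only point requiring a small word of care is justifying that values of $W$ at the single point $z$ can be chosen freely while keeping $W$ Lipschitz, but this is immediate by taking $W$ constant.
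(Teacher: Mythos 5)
Your proposal is correct and follows essentially the same route as the paper: evaluate the first-variation formula of \cref{lem:firstvar} at $\mu=\delta_z$, use the arbitrariness of $W(z)$ to reduce stationarity to $P^\perp_z Dz=0$, and identify this with the eigenvector condition $Dz=(z\cdot Dz)z$. (The paper writes the collapsed first variation with an extraneous minus sign, which is immaterial; your sign is the one consistent with \cref{lem:firstvar}.)
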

\begin{proof}
The first variation is given by 
$$ \d\erg{}(\mu^*;V) = - e^{z\cdot Dz} P_z Dz \cdot V(z). $$
Since $V(z)$ is an arbitrary vector, $\mu^*$ is a stationary point if an only if
$$ 0 = P^\perp_z D z = D z - (z^T D z) z,$$
which holds if and only if $z$ is an eigenvector of $D$.
\end{proof}
Intuitively speaking, $P^\perp_zDz = 0$ means that the force emerging from the interaction of a particle located at eigenvector $z$ with itself is orthogonal to the tangent space of $\S$ at point $z$ and is thus canceled out by the projection. The same effect can be observed for convex combinations of a Dirac measure and its reflection.

\begin{lemma}\label{lem:statcomb}
For any $t\in [0,1]$ we have that $t\delta_z + (1-t)\delta_{-z} $ is a stationary point of $\erg{}$ if and only if $z$ is an eigenvector of $D$.
\end{lemma}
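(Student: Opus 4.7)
My plan is to apply the first-variation characterization of stationarity from \cref{lem:varstationary}, combined with the explicit formula from \cref{lem:firstvar}, to the concentrated measure $\mu^*=t\delta_z+(1-t)\delta_{-z}$. Because the support of $\mu^*$ consists of only the two antipodal points $\pm z$, the double integral in the first variation collapses into a finite sum of four terms. Using the two elementary identities $P^\perp_{-z}=P^\perp_z$ (since the projector depends only on $zz^\top$) and $D(-z)=-Dz$, I expect that every term in this sum will be proportional to the single vector $P^\perp_z Dz$. Setting $\alpha:=z\cdot Dz$, the formula should reduce to
\[
\d\erg{}(\mu^*;V)=P^\perp_z Dz\cdot\bigl[\,t(te^{\alpha}-(1-t)e^{-\alpha})\,V(z)+(1-t)(te^{-\alpha}-(1-t)e^{\alpha})\,V(-z)\bigr],
\]
so that the whole analysis hinges on whether $P^\perp_z Dz$ vanishes and whether $V(z),V(-z)$ can be varied independently.

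For the ``if'' direction the argument is then immediate: if $z$ is an eigenvector of $D$ with eigenvalue $\lambda$, then $P^\perp_z Dz=\lambda(z-\|z\|^2z)=0$ since $z\in\mathcal S$, the bracketed expression is annihilated, and hence $\d\erg{}(\mu^*;V)=0$ for every Lipschitz $V$. By \cref{lem:varstationary}, $\mu^*$ is stationary for every $t\in[0,1]$.

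For the ``only if'' direction I would proceed by contraposition: assume $z$ is not an eigenvector, i.e.\ $P^\perp_z Dz\neq 0$, and exhibit a Lipschitz $V$ making the variation nonzero. Since $z\neq -z$, a Lipschitz bump-function construction on $\mathcal S$ allows one to prescribe $V(z)$ and $V(-z)$ independently. Testing with $V(z)=P^\perp_z Dz,\;V(-z)=0$ and then with the roles reversed produces the two scalar relations
\[
t\bigl(te^{\alpha}-(1-t)e^{-\alpha}\bigr)=0,\qquad (1-t)\bigl(te^{-\alpha}-(1-t)e^{\alpha}\bigr)=0,
\]
as necessary conditions for stationarity. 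The endpoints $t\in\{0,1\}$ reduce directly to the preceding single-Dirac lemma, so one is left with $t\in(0,1)$.

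The main (and only) algebraic point is to show that these two conditions cannot both hold on $(0,1)$, and this is where I would expect the delicate case-analysis: simultaneous vanishing would force $te^{\alpha}=(1-t)e^{-\alpha}$ together with $te^{-\alpha}=(1-t)e^{\alpha}$; dividing the two yields $e^{2\alpha}=e^{-2\alpha}$, hence $\alpha=0$ and then $t=1/2$. This degenerate configuration $(t,\alpha)=(1/2,0)$ is the only sticking point of the whole argument, and must either be excluded by the standing assumption that $z$ be not an eigenvector (together with what it implies about $z\cdot Dz$), or handled separately; once it is dismissed, one concludes $P^\perp_z Dz=0$, i.e.\ $z$ is an eigenvector of $D$, completing the equivalence.
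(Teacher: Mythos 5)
Your computation is correct and follows the same route as the paper: expand the first variation from \cref{lem:firstvar} over the four point pairs in $\supp\mu^*\times\supp\mu^*$, use $P^\perp_{-z}=P^\perp_z$ and $D(-z)=-Dz$, and read off the coefficients of $V(z)$ and $V(-z)$. Your signs are in fact the right ones: the paper's intermediate display writes $P^\perp_{-z}Dz$ for the pair $(x,y)=(z,-z)$ where the formula calls for $P^\perp_zD(-z)=-P^\perp_zDz$ (and similarly for the pair $(-z,z)$), which flips two signs and turns the coefficients into $t\,(te^{\alpha}+(1-t)e^{-\alpha})$ and $-(1-t)((1-t)e^{\alpha}+te^{-\alpha})$; these can never vanish simultaneously, which is why the paper's ``comparison of coefficients'' goes through without further ado. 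With the correct signs, as you observe, simultaneous vanishing is possible exactly at $(t,\alpha)=(1/2,0)$.

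That deferred case is therefore a genuine gap in your argument, and you should be aware that it cannot be closed in general: it is a counterexample to the ``only if'' direction of the lemma. If $D$ is indefinite there exist $z\in\S$ with $\dpr{z}{Dz}=0$ that are not eigenvectors, e.g.\ $n=2$, $D=\operatorname{diag}(1,-1)$, $z=(1,1)^T/\sqrt{2}$, for which $P^\perp_zDz=Dz\neq0$. For such $z$ and $t=1/2$ both of your coefficients vanish, so $\d\erg{}(\mu^*;V)=0$ for every Lipschitz $V$; equivalently, the velocity field \cref{eq:velocityfield} vanishes on $\{\pm z\}$ because $e^{\dpr{z}{Dz}}Dz+e^{\dpr{z}{D(-z)}}D(-z)=Dz-Dz=0$. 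Hence $\tfrac12(\delta_z+\delta_{-z})$ is stationary although $z$ is not an eigenvector. The correct conclusion of your computation is the dichotomy: $t\delta_z+(1-t)\delta_{-z}$ is stationary if and only if $P^\perp_zDz=0$, or $t=1/2$ and $\dpr{z}{Dz}=0$. The extra case collapses into ``$z$ is an eigenvector'' precisely when $D$ is positive or negative semi-definite (then $\dpr{z}{Dz}=0$ forces $Dz=0$), so the lemma as stated holds in that regime but fails for indefinite $D$ at $t=1/2$; your proof is incomplete as written, but the missing case is not an oversight you could have argued away.
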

\begin{proof}
Using the expression in \cref{lem:firstvar}, we obtain for any Lipschitz continuous $V$, using the abbreviation $\iota = \dpr{z}{Dz}$, that
\begin{align*}
\d\erg{}(t \delta_z + (1-t)\delta_{-z}; V) 
&= 
t^2\, e^{\iota} P^\perp_z Dz\, V(z) + 
(1-t)^2\, e^\iota P_{-z}^\perp D(-z)\, V(-z)
\\
&+ t (1-t)\, e^{-\iota} P_{-z}^\perp Dz\, V(z) +
t (1-t)\, e^{-\iota} P_z^\perp D(-z)\, V(-z).
\end{align*}
We first observe that for any $x,y$ one has that $P_x^\perp y = P_{-x}^\perp y = -P_x^\perp(-y)$. By comparing the coefficients in the above equation, we obtain that 
\begin{gather*}
\d\erg{}(t)\delta_z + (1-t)\delta_{-z}; V)  = 0\quad \text{for all } V \text{ Lipschitz}\quad \Leftrightarrow\quad
P^\perp_z Dz = 0\\
\Leftrightarrow
Dz - (\dpr{z}{Dz}) z = 0 
\quad \Leftrightarrow
\quad z\text{ is an eigenvector}. \qedhere
\end{gather*}
\end{proof}
For the symmetric case $t = 1/2$ in the above lemma, we can further show that any convex combination of such stationary points is again a stationary point.
\begin{lemma}
Let $Z_D$ be a finite subset of eigenvectors of $D$ such that $\dpr{w}{z} = 0$ for all $z \in Z_D\backslash\{w\}$. Then for any choice of parameters $t: Z_D \rightarrow \R^{+}_0$ such that $\sum_{z \in Z_D} t(z) = 1$ the following measure is a stationary point of $\erg{}$
\begin{align*}
    \mu = \frac12 \sum_{z \in Z_D} t(z
    ) (\delta_z + \delta_{-z}).
\end{align*}
\end{lemma}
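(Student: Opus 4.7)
By \cref{lem:varstationary}, it suffices to show that $\d\erg{}(\mu;V)=0$ for every Lipschitz continuous vector field $V$. Using the formula from \cref{lem:firstvar} and the fact that $\mu$ is supported on the finite set $S:=Z_D\cup(-Z_D)$, the first variation reduces to a finite double sum, and it will vanish for all $V$ if and only if the inner integral in \cref{eq:sufficientStationary} vanishes at every $x\in S$. So my plan is to fix an arbitrary $x\in S$, say $x=\sigma z_0$ with $\sigma\in\{+1,-1\}$ and $z_0\in Z_D$, and show directly that
\[
\sum_{w\in Z_D} \tfrac{t(w)}{2}\Bigl( e^{\dpr{x}{Dw}}P^{\perp}_x Dw + e^{\dpr{x}{-Dw}}P^{\perp}_x(-Dw)\Bigr)=0.
\]

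For the key step I would split the sum into $w=z_0$ and $w\neq z_0$. For $w\neq z_0$, the orthogonality assumption $\dpr{w}{z_0}=0$ and the eigenvector property $Dw=\lambda_w w$ give $\dpr{x}{\pm Dw}=\pm\sigma\lambda_w\dpr{z_0}{w}=0$, so both exponentials equal $1$. The two projection terms $P^{\perp}_x Dw$ and $P^{\perp}_x(-Dw)$ then cancel. For $w=z_0$, we have $Dz_0=\lambda_0 z_0$, hence $P^{\perp}_x Dz_0 = \lambda_0 P^{\perp}_x z_0 = \sigma\lambda_0 P^{\perp}_x x = 0$ because $P^{\perp}_x x=x-(x\cdot x)x=0$ for $x\in\mathcal{S}$. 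The $-Dz_0$ term vanishes for the same reason. Thus the full sum is zero at every $x\in S$, which is precisely the pointwise condition \cref{eq:sufficientStationary} on the support of $\mu$.

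There is no serious obstacle here: the argument is essentially a bookkeeping computation combining (i) the eigenvector relations, (ii) pairwise orthogonality of the elements of $Z_D$, and (iii) the fact that the tangential projection annihilates the radial direction at any point of the sphere. The only mild subtlety is making sure that \cref{eq:sufficientStationary} is indeed sufficient and not merely a special case, but this follows directly from \cref{lem:firstvar}: the linearity of $V\mapsto\d\erg{}(\mu;V)$ together with $\mu$ being purely atomic means that vanishing of the measure-integrated vector $\int_\S e^{\dpr{x}{Dy}}P^{\perp}_x Dy\,\d\mu(y)$ at every atom $x$ of $\mu$ is equivalent to stationarity. This reduction mirrors the single-Dirac and antipodal-pair arguments used in \cref{lem:statcomb}, and the orthogonality hypothesis on $Z_D$ is exactly what is needed to make the cross terms ($w\neq z_0$) drop out.
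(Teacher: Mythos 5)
Your proposal is correct and follows essentially the same route as the paper: both reduce the claim to the pointwise condition \cref{eq:sufficientStationary} at the atoms of $\mu$ and then use the eigenvector property to kill the $w=z_0$ term via $P^\perp_x x=0$ and the pairwise orthogonality to make the $w\neq z_0$ contributions of $Dw$ and $-Dw$ cancel (the paper phrases the latter as $e^{\dpr{w}{Dz}}=e^{-\dpr{w}{Dz}}$). Your explicit treatment of both signs $x=\pm z_0$ is a minor tidying of the paper's argument, which only spells out the case $w\in Z_D$; otherwise the two proofs coincide.
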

\begin{proof}
    We prove the statement by showing that \eqref{eq:sufficientStationary} holds. For any $w \in Z_D$ it holds that 
    \begin{align*}
        P^\perp_w Dw = - P^\perp_w Dw = 0,
    \end{align*}
    since $Z_D$ only contains eigenvectors of $D$. On the other hand, since we also require  $\dpr{w}{z} = 0$ for all $z \in Z_D\backslash\{w\}$ it follows that $\dpr{z}{Dw} = -\dpr{z}{Dw} = 0$ and therefore
    \begin{align*}
        e^{\dpr{w}{Dz}} = e^{-\dpr{w}{Dz}} 
    \end{align*}
    for all $z \in Z_D\backslash\{w\}$. In total this yields 
    \begin{align*}
        \int_\S e^{\dpr{w}{Dy}} P^{\perp}_xDy \,\d\mu(y) = \sum_{z \in Z_D} t(z) \left(e^{\dpr{w}{Dz}} - e^{-\dpr{w}{Dz}}\right) P^\perp_w(Dz) = 0
    \end{align*}
    for all $w \in Z_D$ and thus also for $\mu$-almost all $w \in \S$.
    \end{proof}
The above proof strategy works only for Dirac measures aligned with the eigenvectors of $D$. However, there exist other discrete measures that are stationary points, as the following example shows. For the sake of simplicity, we restrict ourselves to the two-dimensional case with a positive definite matrix $D$ and a symmetric combination of four Dirac measures. We further assume that $D$ is diagonal; the case of a general symmetric $D$ can be treated similarly with a rotation argument.
\begin{lemma}\label{lem:symmetricpeaks}
Let $n= 2$, $\phi \in [0,2\pi)$ and $D$ be diagonal and positive definite. A discrete measure \begin{align}\label{eq:fourpeaks}
    \mu_{\phi} = \frac{1}{|X_\phi|} \sum_{x \in X_{\phi}} \delta_{x}, \quad \text{where} \quad
    X_\phi = \{X(\phi), X(\pi - \phi), X(\pi + \phi), X(2\pi - \phi)\},
\end{align}
is a stationary point of $\erg{}$ if and only if either $\phi \in\{0,\pi/2, \pi\}$ or
\begin{align}\label{eq:tanhcriterion2d}
    \frac{\tanh(\lambda_1 \cos^2\phi)}{\tanh(\lambda_2 \sin^2\phi)} = \frac{\lambda_2}{\lambda_1},
\end{align}
where $\lambda_1,\lambda_2$ denote the diagonal entries of $D$. For any choice of $\lambda_1,\lambda_2 > 0$ there exists exactly one $\phi\in (0,\pi/2)$ that fulfills \eqref{eq:tanhcriterion2d}. 
 \end{lemma}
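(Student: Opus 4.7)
The plan is to verify the sufficient stationarity condition \eqref{eq:sufficientStationary} pointwise on the finite support of $\mu_\phi$, exploiting the coordinate symmetries of the problem to reduce this verification to a single vector identity. First I would observe that $\mu_\phi$ is invariant under each of the coordinate reflections $R_1(x_1,x_2)=(-x_1,x_2)$ and $R_2(x_1,x_2)=(x_1,-x_2)$, and that both reflections commute with the diagonal matrix $D$. Combined with the identity $P^\perp_{Rx}(Rz)=R\,P^\perp_x(z)$ for any orthogonal $R$, this implies that the vector field
\[
F(x):=\int_\S e^{\dpr{x}{Dy}}\,P^\perp_x Dy\,\d\mu_\phi(y)
\]
transforms equivariantly, $F(R_ix)=R_iF(x)$. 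Hence it suffices to check $F(X(\phi))=0$ at the single point $x=X(\phi)$.

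Next I would compute $F(X(\phi))$ explicitly. Writing $c=\cos\phi$, $s=\sin\phi$, $a=\lambda_1 c^2$, $b=\lambda_2 s^2$, and grouping the four summands into the pairs $X(\phi)\leftrightarrow X(\pi-\phi)$ and $X(2\pi-\phi)\leftrightarrow X(\pi+\phi)$, the exponents combine through $e^{\alpha}\pm e^{-\alpha}$ into hyperbolic functions, giving
\[
\sum_{y\in X_\phi} e^{\dpr{X(\phi)}{Dy}}\,Dy \;=\; 4\bigl(\lambda_1 c\,\sinh(a)\cosh(b),\;\lambda_2 s\,\cosh(a)\sinh(b)\bigr).
\]
The projection $P^\perp_{X(\phi)}$ annihilates exactly the vectors parallel to $X(\phi)=(c,s)$, so $F(X(\phi))=0$ is equivalent to the two components above being in the ratio $c:s$. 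If $cs\neq 0$ this rearranges (after dividing by $cs\,\cosh(a)\cosh(b)$) to $\lambda_1\tanh(a)=\lambda_2\tanh(b)$, which is exactly \eqref{eq:tanhcriterion2d}. The degenerate cases $c=0$ or $s=0$, together with $\phi=\pi$ (which coincides with $\phi=0$ after the obvious relabeling $X_\pi=X_0$), give precisely $\phi\in\{0,\pi/2,\pi\}$, where $\mu_\phi$ collapses to $\tfrac12(\delta_z+\delta_{-z})$ for an eigenvector $z$ of the diagonal matrix $D$ and stationarity is already covered by \cref{lem:statcomb}.

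For the existence and uniqueness of $\phi\in(0,\pi/2)$ satisfying \eqref{eq:tanhcriterion2d}, I would analyze
\[
g(\phi):=\frac{\tanh(\lambda_1\cos^2\phi)}{\tanh(\lambda_2\sin^2\phi)}
\]
on $(0,\pi/2)$. Since $\lambda_1,\lambda_2>0$ and $\tanh$ is strictly increasing and positive on $(0,\infty)$, the numerator is positive and strictly decreasing in $\phi$ while the denominator is positive and strictly increasing, so $g$ is continuous and strictly decreasing. Using $\tanh(t)/t\to 1$ as $t\to 0^+$, one has $g(\phi)\to +\infty$ as $\phi\to 0^+$ and $g(\phi)\to 0$ as $\phi\to(\pi/2)^-$. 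The intermediate value theorem then yields a unique $\phi\in(0,\pi/2)$ with $g(\phi)=\lambda_2/\lambda_1$.

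The only nonroutine step is the explicit simplification of $F(X(\phi))$, but once the summands are grouped by the reflection $R_1$ the calculation collapses to elementary $\sinh$/$\cosh$ identities, so no serious obstacle is anticipated.
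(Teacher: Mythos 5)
Your proposal is correct and follows essentially the same route as the paper: reduce by the reflection symmetries of $\mu_\phi$ and $D$ to checking the pointwise condition at the single point $X(\phi)$, compute the resulting two-dimensional vector explicitly, and obtain \eqref{eq:tanhcriterion2d} together with a monotonicity argument for uniqueness. Your factorization of the four exponentials into $\sinh$/$\cosh$ products before projecting makes the $\tanh$ criterion drop out slightly more directly than the paper's route through the scalar $E(x;\mu_\phi)$ and the $\sinh$ addition formula, and your monotonicity argument on the quotient $g(\phi)$ is an equally valid variant of the paper's monotonicity argument on the $\sinh$ expression.
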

 \begin{proof}
Without loss of generality we prove the statement for $\phi \in [0,\pi/2]$, since otherwise it holds that $(\psi\mod 2\pi) \in [0,\pi/2]$ for a $\psi \in \{\pi - \phi, \pi + \phi, 2\pi - \phi\}$, and thus $\mu_{\phi} = \mu_{\psi}$.

 It follows directly from \cref{lem:statcomb} that $\mu_{\phi}$ is a stationary point if $\phi \in\{0,\pi/2\}$. Therefore, it remains to show that  $\mu_{\phi}$ is a stationary point if and only if \eqref{eq:tanhcriterion2d} is fulfilled. This means that we have to see when there exists a Lipschitz continuous $V$ such that $\d\erg{}(\mu_{\phi}, V) \neq 0$.

We first fix $x \in \mathcal{S}$ and consider 
\begin{multline}
    \label{eq:oneintvector}
    \int_{\mathcal{S}} e^{\dpr{x}{Dy}} P^\perp_{x}{Dy}\, \d\mu_{\phi}(y) 
    = \frac{1}{4} \left( \left(e^{\dpr{x}{DX(\phi)}}-e^{-\dpr{x}{DX(\phi)}}\right)P^\perp_xDX(\phi) \right. \\ \left. + \left(
    e^{\dpr{x}{DX(\pi - \phi)}}-e^{-\dpr{x}{DX(\pi - \phi)}}\right)P^\perp_xDX(\pi-\phi)\right).
\end{multline}
Since $n = 2$, we can further write $P^\perp_xy = \dpr{x^\perp}{y}\,x^\perp$, where $x^\perp = (-x_2,x_1)^T$. We factor out $x^\perp$ to rewrite \eqref{eq:oneintvector} as $E(x; \mu_\phi) \,x^\perp$ with
    \begin{align*}
        E(x; \mu_\phi) =  (1/2) \left(\sinh(\dpr{x}{DX(\phi)})\,\dpr{x^\perp}{DX(\phi)} + \sinh(\dpr{x}{DX(\pi - \phi)})\,\dpr{x^\perp}{DX(\pi-\phi)}\right).
    \end{align*}
    \cref{lem:firstvar} now gives us that
    \begin{align*}
        \d\erg{}(\mu_{\phi}, V) = \sum_{x \in X_\phi} E(x;\mu_\phi)\,\dpr{x^\perp}{V(x)},
    \end{align*}
    which can  become zero for all admissible $V$ if and only if $E(x;\mu_\phi) = 0$ for all $x \in X_\phi$. Due to the symmetry properties of our measures $\mu_\phi$, it further holds that $E(x;\mu_\phi)$ is constant on $X_\phi$; therefore, it suffices to consider  $x = X(\phi)$. Remembering that $X(\phi) = (\cos\phi, \sin\phi)^T$ we derive
    \begin{align*}
        2E(X(\phi); \mu_\phi) = & \sinh(\lambda_1\, \cos^2\phi + \lambda_2\, \sin^2\phi) \, (-\lambda_1 + \lambda_2)\,\sin\phi\cos\phi\\
        +&\sinh(-\lambda_1\, \cos^2\phi + \lambda_2\, \sin^2\phi) \, (\lambda_1+\lambda_2)\,\sin\phi\cos\phi.
    \end{align*}
    Since $\phi \in (0,\pi/2)$, the factor $\sin\phi\cos\phi$ cannot vanish and  the zeros of $E(X(\phi); \mu_\phi)$ coincide with those of 
    \begin{align}\label{eq:sinhzeros}
        &\sinh(\lambda_1\, \cos^2\phi + \lambda_2\, \sin^2\phi) \, (-\lambda_1 + \lambda_2)\,+\sinh(-\lambda_1\, \cos^2\phi + \lambda_2\, \sin^2\phi) \, (\lambda_1+\lambda_2)\\
        =&  \sinh(\lambda_1 + (-\lambda_1+\lambda_2)\, \sin^2\phi) \, (-\lambda_1 + \lambda_2)\,+\sinh(-\lambda_1 + (\lambda_1+\lambda_2)\, \sin^2\phi) \, (\lambda_1+\lambda_2)\nonumber.
    \end{align}
    This function obtains its minima at $(\phi\mod 2\pi) \in \{0, \pi\}$ and its maxima at $(\phi\mod 2\pi) \in \{\pi/2, 3\pi/2\}$ and strictly increases or decreases, respectively, in between. Substituting  these points into \eqref{eq:sinhzeros}, we see that the minima are strictly negative and the maxima are strictly positive since  $\lambda_1,\lambda_2 > 0$. Therefore, there exists exactly one zero in the interval $(0, \pi/2)$.  Using the hyperbolic identity $\sinh{(x+y)} = \sinh{x}\cosh{y} + \cosh{x}\sinh{y}$ in~\eqref{eq:sinhzeros} we arrive at the criterion \eqref{eq:tanhcriterion2d}.
\end{proof}
\begin{rem}\label{rem:peaks-magnitude}
Importantly, the angle $\phi$ that fulfills \eqref{eq:tanhcriterion2d} depends not only on the ratio of the  eigenvalues of $D$ but also on their magnitude since they appear separately within the hyperbolic tangent. 
\end{rem}
Although the ratio of the eigenvalues does in general not determine the angle $\phi$ that fulfills \eqref{eq:tanhcriterion2d}, we can still make a qualitative prediction based on the ratio. The left-hand side of~\eqref{eq:tanhcriterion2d} decreases monotonously for $\phi \in [0,\pi/2)$; for $\lambda_1 = \lambda_2$, the condition is fulfilled for $\phi = \pi/4$. Therefore, the condition is fulfilled by some $\phi \in [0,\pi/4)$ if $\lambda_2 > \lambda_1$ and by some $\phi \in (\pi/4, \pi/2]$ if $\lambda_1 > \lambda_2$. The numerical experiments in \cref{sec:num_minposdef} show that the measures characterized by \eqref{eq:tanhcriterion2d} are not only stationary points but also minimizers among empirical measures consisting of at most four Dirac measures. In the remainder of this section, we aim to characterize minimizers for positive definite matrices $D$ in arbitrary dimensions $n \geq 2$.
\subsection{Energy variation at the uniform distribution}
To characterize minimizers for positive definite $D$, we start by identifying the cases when the uniform distribution is a stationary state. As we show in the following lemma, this can only be the case if the strength of the interaction does not depend on the direction, i.e. the eigenvalues of $D$ all have the same absolute value.  
\begin{lemma}\label{lem:stationaryuniform}
The uniform distribution $\mu = \frac{1}{|\mathcal{S}^{n-1}|}\mathcal{H}^n$ is a stationary point of $\erg{}$ if and only if all eigenvalues $(\lambda_i)_{i = 1}^n$ of $D$ have the same absolute value, i.e. $|\lambda_i| = \lambda$ for some $\lambda \in \R$.
\end{lemma}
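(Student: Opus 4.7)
The plan is to reduce stationarity of the uniform measure to a pointwise vectorial identity and then extract the eigenvalue condition from a well-chosen family of test points. Combining \cref{lem:varstationary} with the first-variation formula of \cref{lem:firstvar}, the uniform measure is stationary if and only if
\[
F(x) := \int_{\mathcal{S}} e^{x \cdot Dy}\, P_x^\perp Dy \, \d\mu(y) = 0 \qquad \text{for all } x \in \mathcal{S};
\]
here "for all" (rather than "$\mu$-a.e.") is legitimate because $\mu$ has full support and $F$ is continuous. Since the uniform measure is $O(n)$-invariant, I may diagonalize and work with $D = \mathrm{diag}(\lambda_1,\ldots,\lambda_n)$.

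For the "if" direction, assume $|\lambda_i| = \lambda$ for all $i$ and write $D = \lambda E$ with $E = \mathrm{diag}(\epsilon_1,\ldots,\epsilon_n)$, $\epsilon_i \in \{\pm 1\}$. The involution $E$ is orthogonal, hence preserves $\mu$, and the substitution $y = Ey'$ yields $Dy = \lambda y'$ and $x\cdot Dy = \lambda x\cdot y'$, so
\[
F(x) = \lambda \int_{\mathcal{S}} e^{\lambda x \cdot y'}\, P_x^\perp y' \, \d\mu(y').
\]
For any rotation $R$ fixing $x$, a further substitution $y' = Rz$ shows that $\int_{\mathcal{S}} e^{\lambda x \cdot y'} y' \,\d\mu(y')$ is $R$-invariant, hence parallel to $x$, so $P_x^\perp$ annihilates it.

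For the converse, the informative test points are $x = \tfrac{1}{\sqrt{2}}(e_i + e_j)$ with $i \neq j$; the naive single-index tests $x = e_i$ turn out to yield only tautologies ($F(e_i) = 0$ for any $D$). By parity in every coordinate $y_k$ with $k\notin\{i,j\}$, combined with the reflection symmetry of $\mu$, the integral $\int_{\mathcal{S}} e^{x\cdot Dy} Dy\,\d\mu(y)$ lies in $\mathrm{span}(e_i,e_j)$ with components $\lambda_i A_i$ and $\lambda_j A_j$, where
\[
A_k := \int_{\mathcal{S}} \exp\!\Big(\tfrac{1}{\sqrt{2}}(\lambda_i y_i + \lambda_j y_j)\Big)\, y_k \, \d\mu(y), \qquad k \in \{i,j\}.
\]
The condition $F(x)=0$ then forces this vector to be parallel to $x$, i.e.\ $\lambda_i A_i = \lambda_j A_j$. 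Writing $A_k = \sqrt{2}\,\partial_{\lambda_k} I$ for $I(\lambda_i,\lambda_j) := \int_{\mathcal{S}}\exp(\tfrac{1}{\sqrt{2}}(\lambda_i y_i + \lambda_j y_j))\,\d\mu(y)$, and invoking rotational invariance of $\mu$ in the $(y_i, y_j)$-plane so that $I = g(r)$ with $r := \sqrt{(\lambda_i^2+\lambda_j^2)/2}$, the chain rule gives $\lambda_k\partial_{\lambda_k} I = \lambda_k^2\, g'(r)/(2r)$. Since $g'(r) > 0$ for $r > 0$ (as $g$ is strictly convex and even), the identity $\lambda_i^2 = \lambda_j^2$ follows, and the degenerate case $r = 0$ is trivial. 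Ranging $(i,j)$ over all pairs yields $|\lambda_1| = \cdots = |\lambda_n|$.

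The main obstacle is the converse: one must find a test family rich enough to separate eigenvalues --- and the single-index family is not --- and then read off the condition $|\lambda_i|=|\lambda_j|$ from the resulting constraint. The other nontrivial ingredient is cleanly invoking the two-dimensional rotational symmetry to introduce the radial function $g$, after which the strict positivity of $g'(r)$ for $r > 0$ closes the argument.
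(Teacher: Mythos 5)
Your proof is correct, but it takes a genuinely different route from the paper's, most notably in the necessity direction. The paper (for $n=2$ in the main text, $n>2$ in \cref{app:stationuni}) computes the vector field $F(x)=\int_{\S}e^{x\cdot Dy}P_x^\perp Dy\,\d\mu(y)$ explicitly for \emph{every} $x$ via rotated spherical coordinates, obtaining $F(x)=C(n,\|Dx\|)\left(D^2x/\|Dx\|-\|Dx\|x\right)$ with $C>0$; stationarity is then read off as \enquote{every $x$ is an eigenvector of $D^2$}, and necessity is closed by constructing a Lipschitz $V$ supported where this expression is nonzero. You instead evaluate $F$ only at the finitely many test points $\tfrac{1}{\sqrt2}(e_i+e_j)$ and extract $\lambda_i^2=\lambda_j^2$ from the radial structure of the potential $I=g(r)$ together with $g'(r)>0$; this avoids the spherical-coordinate computation that forms the technical bulk of the paper's appendix proof, at the price of yielding less information (the paper's closed form identifies the pointwise stationarity condition $D^2x=\|Dx\|^2x$, which is of independent interest). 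Your correct observation that the single-index tests $x=e_i$ are tautological, and your reduction from \enquote{$\mu$-a.e.} to \enquote{everywhere} via continuity and full support, are both sound, as is the sufficiency argument by the substitution $y=Ey'$. One cosmetic point: in the sufficiency step, for $n=2$ the only \emph{rotation} fixing $x$ is the identity, so you should invoke the full orthogonal stabilizer of $x$ (in particular the reflection across $\R x$, under which $\mu$ is still invariant) to conclude that $\int_{\S}e^{\lambda x\cdot y'}y'\,\d\mu(y')$ is parallel to $x$ --- or simply cite the computation underlying \cref{lem:technicalx}.
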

\begin{proof}
To keep the notation simple we treat here the case $n = 2$, leaving the general proof for $n > 2$ to \cref{app:stationuni}. 
Let us fix $x\in \S$ and determine $\phi \in [0,2\pi)$ such that $Dx/\|Dx\| = (\cos\phi, \sin\phi)^T$. Consider the integral
\begin{align*}
   \int_{\S} e^{x\cdot Dy} P^\perp_x Dy \, \d\mathcal{H}^{2}(y) &= \int_{0}^{2\pi} e^{\|Dx\| \cos(\psi - \phi)} P^\perp_x\left(D(\cos\psi, \sin\psi)^T\right) \,\d\psi = (*),
\end{align*}
which can be rewritten with a change of variables $\theta = \psi - \phi$ as follows (recall that $P^\perp_x= \Id - x x^T$)
\begin{align*}
   (*) &= \int_{0}^{2\pi} e^{\|Dx\| \cos\theta} \left(\cos\theta\left(D^2x/\|Dx\| - \|Dx\|x\right) + \sin\theta \,(Dx/\|Dx\|)^\perp\right) \,\d\theta\\
   &= \left(D^2x/\|Dx\| - \|Dx\|x\right) \underbrace{\int_{0}^{2\pi} e^{\|Dx\| \cos\theta} \cos\theta \,\d\theta}_{> 0} + (Dx/\|Dx\|)^\perp \underbrace{\int_{0}^{2\pi} e^{\|Dx\| \cos\theta} \sin\theta \,\d\theta}_{ = 0}.
\end{align*}
From the above derivations we see that $(*) = 0$ if and only if $x$ is an eigenvector of $D^2$. This holds true for $\mu$-almost all $x \in \mathcal{S}$ if and only if $|\lambda_1| = |\lambda_2|$. This automatically yields $\d\erg{}(\mu,V) = 0$ if $|\lambda_1| = |\lambda_2|$. It remains to show that this is also a necessary condition. 

Without loss of generality, we assume that $|\lambda_1| > |\lambda_2|$, where $\lambda_1$ and $\lambda_2$ are the eigenvalues corresponding to the eigenvectors $z_1$ and $z_2$, respectively. Then, $\left(D^2x/\|Dx\| - \|Dx\|x\right) \cdot z_2$ is strictly negative on the set
\begin{align*}
    A = \{x \in \S \, | \, (x\cdot z_1) \in (|\lambda_2/\lambda_1|, 1),\, (x\cdot z_2) > 0\}.
\end{align*}
Since $\mu(A) > 0$ we can find a Lipschitz continuous $V$ such that $V \cdot z_1 = 0$ for $\mu$-a.e. on $\mathcal{S}$ and
\begin{align*}
    V(x)\cdot z_2 \begin{cases}
        > 0 &\text{for a.e. } x \in A \\
        = 0 &\text{for a.e. } x \in \mathcal{S}\backslash A.
    \end{cases}
\end{align*}
For all such $V$ it holds that $\d\erg{}(\mu, V) > 0$, which concludes the proof.
\end{proof}
Since we already know that minimizers for $D$ with at least one negative eigenvalue are Dirac measures, we can conclude that the uniform distribution is only a minimizer for $D = \operatorname{Id}$.
\begin{cor}
    The uniform distribution $\mu = \frac{1}{|\mathcal{S}^{n-1}|}\mathcal{H}^n$ minimizes $\erg{}$ if and only if $D = \lambda \operatorname{Id}$ for $\lambda \geq 0$.
\end{cor}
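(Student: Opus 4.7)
The plan is to prove the two implications separately. The ``$\Leftarrow$'' direction is essentially a recap of what has already been established, while the ``$\Rightarrow$'' direction is the substantive one and proceeds by combining \cref{lem:stationaryuniform} with \cref{thm:minmaxindefinite}.

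For the backward direction, first consider the degenerate case $\lambda = 0$, where $D = 0$ and $\erg{}(\mu) = \int_\S\int_\S 1 \,\d\mu(x)\,\d\mu(y) = 1$ for every $\mu \in \mathcal{P}(\S)$, so the uniform distribution is trivially a minimizer. For $\lambda > 0$, the statement is exactly \cref{prop:uniformmin}, which shows that the uniform distribution is the unique minimizer of $\erg{}$ when $D = \lambda \operatorname{Id}$.

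For the forward direction, suppose the uniform distribution $\mu_0 = \frac{1}{|\mathcal{S}^{n-1}|}\mathcal{H}^n$ is a minimizer of $\erg{}$. Then $\mu_0$ is in particular a stationary point, since any minimizer satisfies $\d\erg{}(\mu_0;V) = 0$ for all Lipschitz $V$. \cref{lem:stationaryuniform} (together with \cref{lem:varstationary}) then forces all eigenvalues $(\lambda_i)_{i=1}^n$ of $D$ to share a common absolute value $\lambda \geq 0$. If $\lambda = 0$, then $D = 0$ and we are done. If $\lambda > 0$, the eigenvalues each equal $\pm \lambda$, and I need to rule out the possibility that any eigenvalue equals $-\lambda$. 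For this, suppose $\lambda_{\min} = -\lambda < 0$; then $D$ is not positive definite, so \cref{thm:minmaxindefinite} applies and characterizes every minimizer as a Dirac measure placed at an eigenvector for $\lambda_{\min}$. Since $\mu_0$ is not a Dirac, this is a contradiction. Hence all eigenvalues of $D$ equal $+\lambda$, i.e. $D = \lambda \operatorname{Id}$ with $\lambda > 0$.

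There is no real obstacle here; the proof is essentially an assembly of previously proved ingredients. The only point requiring a little care is confirming the degenerate case $D = 0$, which corresponds to $\lambda = 0$ and is handled by the direct computation above. The structure $|\lambda_i| = \lambda$ from \cref{lem:stationaryuniform} is what makes the dichotomy ``either $\lambda = 0$ or some eigenvalue is $-\lambda$'' exhaustive enough to invoke \cref{thm:minmaxindefinite}.
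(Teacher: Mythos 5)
Your proof is correct and follows essentially the same route as the paper: minimality implies stationarity, \cref{lem:stationaryuniform} forces all eigenvalues to share one absolute value, and the presence of a negative eigenvalue is excluded because minimizers would then have to be Dirac measures (you invoke \cref{thm:minmaxindefinite} where the paper cites \cref{thm:Diracmaxmin}, but in this setting the negative eigenvalue is automatically of maximal absolute value, so both apply and give the same conclusion). Your explicit treatment of the degenerate case $\lambda=0$ is a small but welcome addition of care over the paper's version.
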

\begin{proof}
We only need to show that there are no other matrices $D$ such that $\erg{}$ is minimized by $\mu$; the other direction has been treated in \cref{prop:uniformmin}.
The measure $\mu$ can only be a minimizer if it is a stationary point.
By \cref{lem:stationaryuniform}, this implies that all eigenvalues of $D$ have to have the same absolute value. If such $D$ has at least one negative eigenvalue, it is also the smallest eigenvalue. Thus, by \cref{thm:Diracmaxmin}, the only minimizers are Dirac deltas placed at eigenvectors corresponding to the negative eigenvalue. 
\end{proof}

\subsection{Perturbation of the identity}\label{sec:perturabation-identity}
It is not clear whether an explicit computation of stationary points for an arbitrary positive definite matrix $D$ with at least two distinct eigenvalues is possible, but some insight can be gained with asymptotic analysis. We consider the following perturbed energy 
\begin{align*}
    \ergp{}_{\eps}(\mu) := \int_{\S}\int_{\S} e^{\dpr{x}{(\operatorname{Id} + \eps M)y}}\,\d\mu(x)\,\d\mu(y) ,
\end{align*}
where  $M$ is a diagonal matrix and $|\eps| \ll 1$ is a small parameter. Using the second-order Taylor expansion of the exponential function, we can write
\begin{align}\label{eq:expansionInt}
\ergp{}_{\eps}(\mu) \approx\,& \erg{}(\mu) + \eps \int_{\S} \int_{\S} e^{\dpr{x}{y}} \dpr{x}{My} \,\d\mu(x)\, \d\mu(y) + \eps^2 \int_{\S} e^{\dpr{x}{y}} (\dpr{x}{My})^2 \,\d\mu(x)\, \d\mu(y).
\end{align} For $\eps = 0$ we know that the unique minimizer $\mu_0$ is the uniform distribution on the sphere. 
Therefore, we use the following second-order asymptotic ansatz
\begin{equation}\label{eq:expansion-mu}
    \mu_\eps \defeq \mu_0 + \eps \nu + \eps^2 w, \quad \int_\S \d\nu = \int_\S \d w = 0.
\end{equation}
We stress that here we consider the energy as a function on the space of signed Radon measures on the sphere $\cal M(\cal S)$ with the total variation norm and not on the space of probability measures $\cal P(\cal S)$ with the Wasserstein metric as in \cref{sec:energy-var-deltas}. For this reason, the perturbation here is a measure and not a vector field (cf.~\eqref{eq:first-variation}). 

Substituting \eqref{eq:expansion-mu} into \eqref{eq:expansionInt} and neglecting higher-order terms we derive
\begin{align*}
    \ergp{}_{\eps}(\mu_\eps) - \ergp{}_{\eps}(\mu_0) \approx\, \eps\,\erg{}(\mu_0,\nu) + \eps^2\,\erg{}(\mu_0, w) + \eps^2\,\erg{}(\nu) + 2\eps^2 \int_{\S} \int_{\S} e^{\dpr{x}{y}} \dpr{x}{My} \,\d\mu_0(x)\d\nu(y).
\end{align*}
Since further $y\mapsto \int_\S e^{\dpr{x}{y}}\,d\mu_0(x)$ is constant on $\S$, it follows that 
\begin{align*}
    \erg{}(\mu_0,\nu) = C(n) \int_\S \, \d\nu = 0
\qquad \text{and} \qquad 
    \erg{}(\mu_0,w) = C(n) \int_\S \, \d w = 0.
\end{align*}
In particular, we see that the term $\eps^2\omega$ from \eqref{eq:expansion-mu} does not contribute to the second-order expansion of the energy. Therefore, minimizing $\ergp{}_{\eps}$ over all possible $\mu_\eps$ satisfying \eqref{eq:expansion-mu} is equivalent to minimizing
\begin{align*}
    \tilde{\ergp{}}_\eps(\nu) := \eps^2\left( \erg{}(\nu) + 2\int_{\S} \int_{\S} e^{\dpr{x}{y}} \dpr{x}{My} \,\d\mu_0(x)\d\nu(y)\right)
\end{align*}
over all signed measures $\nu$ with $\nu(\mathcal{S}) = 0$. 
The first variation in the direction  $\nu'$ satisfying $\int_\S\,\d\nu' = 0$ is given by
\begin{align}\label{eq:firstvarnu}
    \d\Tilde{\ergp{}}_\eps(\nu, \nu') = 2 \eps^2\left(\int_\S \int_\S e^{\dpr{x}{y}}\, \d\nu(x)\,\d\nu'(y) +  \int_{\S} \int_{\S} e^{\dpr{x}{y}} \dpr{x}{My} \, \d\mu_0(x)\d\nu'(y)\right).
\end{align}
Our goal is now to find an optimal measure $\nu$, such that its first variation vanishes in any direction $\nu'$ such that $\int_\S\,\d\nu' = 0$. To do so, we will need the following two technical lemmas. \color{black}To make the definition of the uniform distribution on the sphere rigorous, we denote by $\mathcal{H}^n$ the $n$-dimensional Hausdorff measure and write $\mathcal{S}^{n-1}$ instead of $\mathcal{S}$. \color{black}
\begin{lemma}\label{lem:technicalx}
 Let $n \geq 2$ and $\mu_0 = \frac{1}{|\mathcal{S}^{n-1}|} \mathcal{H}^n$. It holds that
    \begin{align}\label{eq:intx}
        \int_{\mathcal S^{n-1}} e^{x\cdot y} x \,\d\mu_0(x) = C_1\,y
    \end{align}
    for any $y \in \mathcal{S}^{n-1}$, where the constant $C_1$ is positive and depends only on the dimension $n$.
\end{lemma}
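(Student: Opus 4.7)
The plan is to exploit the rotational invariance of the uniform measure $\mu_0$ on $\mathcal{S}^{n-1}$. Define the vector field $F\colon \mathcal{S}^{n-1}\to\mathbb{R}^n$ by
\[
F(y) \defeq \int_{\mathcal{S}^{n-1}} e^{x\cdot y}\,x\,\d\mu_0(x).
\]
For any orthogonal matrix $R\in O(n)$, substituting $x\mapsto Rx$ and using the rotation invariance of $\mu_0$ together with $\langle Rx, Ry\rangle = \langle x,y\rangle$ yields the equivariance identity $F(Ry) = R\,F(y)$.

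Next I would show that $F(y)$ must be collinear with $y$. Fix $y\in \mathcal{S}^{n-1}$ and consider the stabilizer $\{R\in O(n)\st Ry=y\}$, which is isomorphic to $O(n-1)$ acting on the orthogonal complement $y^\perp$. Equivariance gives $R\,F(y) = F(y)$ for every such $R$; since for $n\geq 2$ the group $O(n-1)$ fixes no nonzero vector in $y^\perp$, it follows that $F(y) \in \mathbb{R}\,y$. Hence $F(y) = c(y)\,y$ for some scalar $c(y)$. Transitivity of $O(n)$ on $\mathcal{S}^{n-1}$ then forces $c$ to be constant: given $y,y'$, pick $R\in O(n)$ with $Ry=y'$ so that $c(y')\,y' = F(y') = R\,F(y) = c(y)\,y'$, whence $c(y')=c(y)=:C_1$.

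Finally, I would verify positivity of $C_1$. Taking the inner product of \cref{eq:intx} with $y$ and choosing, by rotational invariance, $y=e_1$, we obtain
\[
C_1 \;=\; \int_{\mathcal{S}^{n-1}} e^{x_1}\,x_1\,\d\mu_0(x).
\]
Using the invariance of $\mu_0$ under the reflection $x_1\mapsto -x_1$ to fold the integral onto $\{x_1>0\}$ gives
\[
C_1 \;=\; \int_{\{x_1>0\}} \bigl(e^{x_1}-e^{-x_1}\bigr)\,x_1\,\d\mu_0(x) \;=\; 2\int_{\{x_1>0\}} x_1\sinh(x_1)\,\d\mu_0(x) \;>\; 0,
\]
where strict positivity follows since $t\sinh(t)>0$ for $t>0$ and the set $\{x_1>0\}$ has positive $\mu_0$-measure.

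No step is particularly delicate: the main point is to isolate the symmetry argument cleanly so that no explicit evaluation of the integral (which would involve Bessel-type functions) is required. The only caveat is the restriction $n\geq 2$, which is needed to ensure that the stabilizer $O(n-1)$ acts without fixed vectors on $y^\perp$; this is precisely the hypothesis of the lemma.
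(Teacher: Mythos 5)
Your proof is correct, and it takes a genuinely different route from the paper. The paper proves the lemma by an explicit computation in (rotated) spherical coordinates: it writes $x = X_n^\theta(\phi)$, splits the integrand into a component along $y$ and a component in the orthogonal "angular" directions, observes that the latter integrates to zero by oddness, and thereby obtains the explicit value $C_1 = \tfrac{|\mathcal{S}^{n-2}|}{|\mathcal{S}^{n-1}|}\int_0^\pi e^{\cos\phi}\cos\phi\,\sin^{n-2}\phi\,\d\phi$, whose positivity is then checked by folding the integral exactly as you do. Your argument replaces the coordinate computation by the abstract $O(n)$-equivariance of $F$ together with the fact that the stabilizer of $y$ acts on $y^\perp$ without nonzero fixed vectors (for $n\geq 2$ one can simply take $-\operatorname{Id}$ on $y^\perp$), which forces $F(y)\in\R y$, and transitivity of $O(n)$ on the sphere then makes the scalar constant. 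This is cleaner and coordinate-free; what it gives up is the closed-form expression for $C_1$, which the paper's method produces as a byproduct and which is convenient for the numerical comparisons in Section 5 (only positivity is actually needed for \cref{thm:perturbidentity}, so nothing downstream breaks). Note also that the paper's coordinate machinery is reused verbatim for the companion \cref{lem:technicalxx} involving $x_i^2$, where the symmetry argument would need a slightly more involved invariant-theory step; your approach and the paper's are therefore complementary rather than interchangeable across the two lemmas. All steps in your argument check out, including the change of variables establishing $F(Ry)=RF(y)$ and the observation that $\{x_1=0\}$ is $\mu_0$-null so the folding onto $\{x_1>0\}$ loses nothing.
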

\begin{proof}
For the sake of simplicity, here we present the (more intuitive) proof for $n = 2$, leaving the general case $n > 2$ to \cref{app:stationtechx}.
We write $x = (\cos\phi, \sin\phi)^T$ and $y = (\cos\psi, \sin\psi)^T$ and derive that
\begin{align*}
    2\pi \int_{\mathcal S} e^{x\cdot y} x \,\d\mu_0(x) &= \int_0^{2\pi} e^{\cos(\phi-\psi)}(\cos\phi, \sin\phi)^T\,\d\phi = \int_0^{2\pi} e^{\cos\theta} (\cos(\psi+\theta), \sin(\psi+\theta))^T\,\d\theta \\ &=  (\cos\psi, \sin\psi)^T\int_0^{2\pi} e^{\cos\theta}\cos\theta \,\d\theta + (-\sin\psi, \cos\psi)^T \int_0^{2\pi} e^{\cos\theta} \sin\theta\,\d\theta,
\end{align*}
where we use the coordinate transform $\theta = \phi-\psi$ and two trigonometric identities to separate the summands inside sine and cosine.
Since $\int_S e^{\cos\theta} \sin\theta\,\d\theta = 0$, this  yields \eqref{eq:intx} with
\begin{equation*}
    C_1 = \frac{1}{2\pi}\int_0^{2\pi} e^{\cos\theta}\cos\theta\,\d\theta > 0. \qedhere
\end{equation*}
\end{proof}
\begin{lemma}\label{lem:technicalxx}
     Let $n \geq 2$ and $\mu_0 = \frac{1}{|\mathcal{S}^{n-1}|} \mathcal{H}^n$. It holds that for any $y \in \mathcal{S}^{n-1}$
    \begin{align}\label{eq:intxx}
        \int_{\mathcal S^{n-1}} e^{x\cdot y} x_i^2 \,\d\mu_0(x) = C_2\,y_i^2+ C_3, \qquad \text{$1 \leq i\leq n$},
    \end{align}
    where the constants $C_2$ and $C_3$ are positive and depend only on the dimension $n$.
\end{lemma}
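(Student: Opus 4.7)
The plan is to treat the scalar integrals in~\eqref{eq:intxx} as the diagonal entries of the matrix-valued function
$$T(y) \defeq \int_{\mathcal{S}^{n-1}} e^{x\cdot y}\, x x^T \, \d\mu_0(x), \qquad y \in \mathcal{S}^{n-1}.$$
The key structural observation is the orthogonal equivariance $T(Ry) = R\,T(y)\,R^T$ for every $R \in O(n)$, which follows immediately from the rotation-invariance of $\mu_0$ via the change of variables $x \mapsto R^T x$. This rigid equivariance will pin down the form of $T$ up to two scalar constants.

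First I would specialize to $y = e_1$. Applying the equivariance to any $R \in O(n)$ fixing $e_1$ (i.e.\ any rotation acting only on the last $n-1$ coordinates) yields $R\,T(e_1)\,R^T = T(e_1)$. Since the only symmetric matrices commuting with the full subgroup $O(n-1)$ acting on $e_1^\perp$ are of the form $a\, e_1 e_1^T + b\,(I - e_1 e_1^T)$, we obtain $T(e_1) = b\,I + (a-b)\, e_1 e_1^T$. Applying the equivariance once more to any rotation sending $e_1$ to an arbitrary $y \in \mathcal{S}^{n-1}$ gives $T(y) = b\,I + (a-b)\, y y^T$. Reading off the $(i,i)$ diagonal entry produces~\eqref{eq:intxx} with $C_3 = b$ and $C_2 = a - b$.

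It remains to verify that both constants are strictly positive. The identities $a = e_1^T T(e_1) e_1 = \int e^{x_1} x_1^2 \, \d\mu_0$ and $\operatorname{tr} T(e_1) = \int e^{x_1} \, \d\mu_0 = a + (n-1) b$ give
$$b = \frac{1}{n-1}\int_{\mathcal{S}^{n-1}} e^{x_1}(1 - x_1^2)\, \d\mu_0(x),$$
which is strictly positive for $n \geq 2$ since the integrand is nonnegative and positive on a set of positive $\mu_0$-measure. For the harder constant one has $a - b = \tfrac{1}{n-1}\int e^{x_1}(n x_1^2 - 1)\, \d\mu_0$. Using that the marginal of $x_1$ under $\mu_0$ is even, the integral equals $\tfrac{1}{n-1}\int \cosh(x_1)(n x_1^2 - 1)\, \d\mu_0$. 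From $\sum_i x_i^2 = 1$ and the symmetry of $\mu_0$ one reads off $\int x_1^2 \, \d\mu_0 = 1/n$, so the function $n x_1^2 - 1$ is $\mu_0$-centered; since both $\cosh(x_1)$ and $n x_1^2 - 1$ are strictly increasing functions of $|x_1|$, a Chebyshev/FKG-type covariance inequality applied to the (even) law of $x_1$ yields that this integral is strictly positive.

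The main obstacle I anticipate is the positivity of $C_2$: the structural reduction to two constants follows routinely from equivariance, but the strict inequality $a > b$ needs the correlation argument together with the moment identity $\int x_1^2 \, \d\mu_0 = 1/n$. For $n = 2$ the proof sketched in the main text sidesteps this via explicit trigonometric computation in polar coordinates; the approach outlined above is dimension-free and covers all $n \geq 2$ uniformly.
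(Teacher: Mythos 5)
Your proof is correct, and it takes a genuinely different route from the one in the paper. The paper's argument (\cref{app:stationtechxx}) works in rotated spherical coordinates built from Givens rotations: it expands $\bigl[R(\theta)^T\tilde z\bigr]^2$ explicitly, shows the cross terms integrate to zero, and reduces everything to one-dimensional integrals in the polar angle; positivity of $C_2$ is then obtained by integration by parts together with the elementary inequality $t\cosh t>\sinh t$ for $t>0$. You instead package the integrals into the matrix $T(y)=\int e^{x\cdot y}xx^T\,\d\mu_0(x)$, use the $O(n)$-equivariance $T(Ry)=RT(y)R^T$ and the stabilizer $O(n-1)$ of $e_1$ to force $T(y)=b\,\Id+(a-b)\,yy^T$, and then handle positivity of $C_2=a-b=\tfrac{1}{n-1}\int e^{x_1}(nx_1^2-1)\,\d\mu_0$ by symmetrizing $e^{x_1}$ to $\cosh(x_1)$, using $\int x_1^2\,\d\mu_0=1/n$ to center the second factor, and invoking the Chebyshev correlation inequality for the law of $|x_1|$ (both $\cosh$ and $n|x_1|^2-1$ being strictly increasing in $|x_1|$ and non-constant, the covariance is strictly positive). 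All steps check out: the block-diagonalization of $O(n-1)$-invariant symmetric matrices, the trace identity $\operatorname{tr}T(e_1)=\int e^{x_1}\,\d\mu_0=a+(n-1)b$, and the strictness of the correlation inequality are all sound, including the edge case $n=2$. Your argument is shorter, coordinate-free, and uniform in $n$, whereas the paper's computation yields the constants as explicit one-dimensional integrals (which is convenient downstream, e.g.\ for the ratio $C_1/C_2$ in \cref{thm:perturbidentity} and the numerics of \cref{sec:numerics}); your $a$ and $b$ are of course also explicit integrals over the marginal of $x_1$, so little is lost.
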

\begin{proof}
For the sake of simplicity, we again present the proof for $n = 2$; the general case $n > 2$ is treated in \cref{app:stationtechxx}. Using the same arguments as in the previous proof, we derive 
\begin{align*}
    &2\pi \int_{\mathcal S} e^{x\cdot y} x^2 \,\d\mu_0(x) =  \int_0^{2\pi} e^{\cos\theta} (\cos^2(\psi+\theta), \sin^2(\psi+\theta))^T\,\d\theta\\
    &= (\cos^2\psi, \sin^2\psi)^T\int_0^{2\pi} e^{\cos\theta}\cos^2\theta \,\d\theta + (\sin^2\psi, \cos^2\psi)^T\int_0^{2\pi} e^{\cos\theta}\sin^2\theta \,\d\theta,
\end{align*} where the mixed terms containing $\cos\theta\sin\theta$ vanish due to symmetry. Further, since $\cos^2\psi + \sin^2\psi = 1$ we can write 
\begin{align*}
    (\sin^2\psi, \cos^2\psi)^T = (1,1)^T - (\cos^2\psi, \sin^2\psi)^T.
\end{align*}
This  yields \eqref{eq:intxx} with positive constants
\begin{equation*}
    C_2 = \frac{1}{2\pi} \int_0^{2\pi} e^{\cos\theta} (\cos^2(\theta) - \sin^2(\theta))\,\d\theta, \qquad C_3 = \frac{1}{2\pi} \int_0^{2\pi}e^{\cos\theta}\sin^2(\theta)\,\d\theta. 
    \qedhere
\end{equation*}
\end{proof}
\cref{lem:technicalx} allows us to rewrite the second summand in \eqref{eq:firstvarnu} such that it contains $\dpr{y}{My}$. Using \cref{lem:technicalxx}, we can then deduce that, up to constants, the measure $-(\dpr{x}{Mx})\,\mu_0(x)$ is a stationary point of $\Tilde{\ergp{}}_\eps$.
\begin{thm}\label{thm:perturbidentity}
The measure
\begin{align*}
    \d\nu^*(x) = \left(\alpha\,\dpr{x}{Mx}+\beta\right)\d\mu_0(x), \quad \text{where \; $\alpha = -C_1/C_2$ \; and \; $\beta= - \int_{\S} \alpha\,\dpr{x}{Mx}\d\mu_0(x)$},
\end{align*}
fulfills $\int_\S\d\nu^* = 0$ and $\d\ergp{}_{\eps}(\nu^*, \nu') = 0$ for all $\nu'$ satisfying $\int_\S \d\nu' = 0$.
\end{thm}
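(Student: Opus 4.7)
My plan splits naturally into two pieces, corresponding to the two conclusions of the theorem.

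The first assertion, $\int_\S \d\nu^* = 0$, is immediate from the definition of $\beta$. Indeed,
\begin{align*}
\int_\S \d\nu^*(x) = \alpha\int_\S \dpr{x}{Mx}\d\mu_0(x) + \beta\,\mu_0(\S) = -\beta + \beta = 0,
\end{align*}
since $\mu_0$ is a probability measure.

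For the second assertion, the plan is to explicitly evaluate each of the two integrals in \eqref{eq:firstvarnu} using \cref{lem:technicalx,lem:technicalxx}, and check that the choice $\alpha = -C_1/C_2$ makes them cancel. For the term that does not involve $\nu$, Fubini together with \cref{lem:technicalx} gives
\begin{align*}
\int_\S \int_\S e^{\dpr{x}{y}} \dpr{x}{My} \, \d\mu_0(x)\,\d\nu'(y) = \int_\S \left\langle My,\, \int_\S e^{\dpr{x}{y}} x\,\d\mu_0(x) \right\rangle \d\nu'(y) = C_1 \int_\S \dpr{y}{My}\,\d\nu'(y).
\end{align*}
For the term involving $\nu^*$, I would substitute the definition of $\nu^*$ and split the resulting expression into an $\alpha$-part and a $\beta$-part. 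Since $y \mapsto \int_\S e^{\dpr{x}{y}}\d\mu_0(x)$ is rotationally invariant and $y$ lies on the sphere, this is a constant, so the $\beta$-part collapses to a constant multiple of $\int_\S \d\nu' = 0$ and drops out. For the $\alpha$-part, writing $M = \mathrm{diag}(m_1,\dots,m_n)$ and applying \cref{lem:technicalxx} coordinatewise yields
\begin{align*}
\int_\S e^{\dpr{x}{y}} \dpr{x}{Mx}\,\d\mu_0(x) = \sum_{i=1}^n m_i \int_\S e^{\dpr{x}{y}} x_i^2\,\d\mu_0(x) = C_2 \dpr{y}{My} + C_3\,\mathrm{tr}(M),
\end{align*}
so the $\alpha$-part becomes $\alpha C_2 \int_\S \dpr{y}{My}\,\d\nu'(y) + \alpha C_3\,\mathrm{tr}(M)\int_\S \d\nu'(y)$, where the second summand again vanishes.

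Combining everything, the first variation equals $2\eps^2(\alpha C_2 + C_1)\int_\S \dpr{y}{My}\,\d\nu'(y)$, which is zero for every admissible $\nu'$ precisely because $\alpha C_2 + C_1 = 0$ by the choice of $\alpha$. No step looks like a real obstacle: the two technical lemmas do the heavy lifting, and the only thing to be careful about is keeping track of the two constant terms produced by \cref{lem:technicalxx} and by the inner $\mu_0$-integral in the $\beta$-part, both of which are eliminated by the mean-zero condition on $\nu'$.
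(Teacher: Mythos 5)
Your proposal is correct and follows essentially the same route as the paper: both parts rest on \cref{lem:technicalx} for the term involving $\mu_0$ and on \cref{lem:technicalxx} (applied coordinatewise to the diagonal $M$) for the term involving $\nu^*$, with the constant contributions killed by the mean-zero condition on $\nu'$ and the cancellation enforced by $\alpha = -C_1/C_2$. The only cosmetic difference is that the paper phrases the computation as verifying an ``optimality condition'' rather than summing the two terms directly.
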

\begin{proof}
From the definition of $\beta$ and $\int_\S\d\mu_0 = 1$ it follows that $\int_\S \d\nu^* = 0$. With
    \cref{lem:technicalx} we write the optimality condition derived from \eqref{eq:firstvarnu} as
    \begin{align*}
    \int_\S \int_\S e^{\dpr{x}{y}}\, \d\nu(x)\,\d\omega(y)  = - C_1 \int_{\S} y\cdot My\,\d\omega(y).
    \end{align*}
    Substituting $\nu^*$ into the left-hand side and using \cref{lem:technicalxx} we get
    \begin{align*}
         \int_\S \int_\S e^{\dpr{x}{y}}\, \d\nu^*(x)\,\d\omega(y) &= \int_\S \alpha \,\left(C_2\,\dpr{y}{My} + \operatorname{Tr}(M)\,C_3\right)\,\d\omega(y)+\beta\int_\S \int_\S e^{\dpr{x}{y}}\,\d\mu_0(x)\,\d\omega(y)\\
         &= \alpha \,C_2\int_{\S} \dpr{y}{My}\,\d\omega(y),
    \end{align*}
    where all terms that do not depend on $y$, including $\int_\S e^{\dpr{x}{y}}\,\d\mu_0(x)$, vanish due to $\int_\S\,\d\omega = 0$. Substituting $\alpha = -C_1/C_2$ completes the proof.
\end{proof}
\cref{thm:perturbidentity} gives us the following intuitive characterization. The measure $\mu_\eps$ that optimizes the perturbed energy is obtained by taking mass from the uniform distribution where $(\dpr{x}{Mx})$ is large and adding it where $(\dpr{x}{Mx})$ is small. In other words, we expect minimizers of the energy $\erg{}$ with a positive definite matrix $D$ to have more mass in regions that correspond to small eigenvalues of $D$ than in regions that correspond to large ones. This intuition is in line with the results of the particle approximation in \cref{fig:min}. Furthermore, in \cref{fig:densityall} we also observe that the density obtained in \cref{eq:expansion-mu} with the measure $\nu^*$ from above can indeed be seen as a first order approximation for small values of $\eps$.
\section{Numerical examples}\label{sec:numerics}

To illustrate the obtained theoretical results we perform a series of numerical experiments using a particle approximation of the energy~\cref{eq:energy} with with an ensemble of $N$ particles $X=(X_1, \ldots, X_N)$,
%
\begin{align*}
\erg(\mu_N(X)),\qquad\text{where}\qquad 
\mu_N(X) = \frac{1}{N}\sum_{i=1}^N \delta_{X_i}.
\end{align*}
\color{black}
We consider the following  particle flow, introduced in \cite{geshkovski2023mathematical},
\begin{align*}
\dot{X_i}(t) = P^\perp_{X_i(t)} 
\left( 
\pm
\frac{1}{J_i(X)}
\sum_{j=1}^N e^{\dpr{X_i(t)}{DX_j(t)}} DX_j(t)
\right)
\end{align*}
with normalization factors $J_i(X)$. If we choose the constant normalization 
\begin{align}\label{eq:normconst}
J_i(X)=N, 
\end{align}
 this corresponds merely to a step-size rescaling of a standard gradient descent scheme for $\erg$, which is called the (USA) flow in \cite{geshkovski2023mathematical}. Choosing the normalization as the partition function,
\begin{align}\label{eq:normpart}
J_i(X) = \sum_{j=1}^N e^{\dpr{X_i(t)}{DX_j(t)}},
\end{align}
corresponds more closely to the self-attention dynamics and is labelled the (SA) flow in \cite{geshkovski2023mathematical}. In what follows, we mostly use the normalization in \cref{eq:normpart}, highlighting minor differences between the two formulations as appropriate. %
We use the explicit Euler discretization from~\cref{eq:expEuler} with step size $\tau>0$ to obtain the following update
\begin{align}\label{eq:discusa}
X_i(t + \tau) =
\Pi
\left(
X_i(t) \pm 
\frac{\tau}{J_i(X)}
\sum_{j=1}^N e^{\dpr{X_i(t)}{DX_j(t)}} DX_j(t)
\right).
\end{align}
\color{black}%
\begin{rem}
For $N=1$ and this scheme reduces to the following power iteration in the limit $\tau\to\infty$
\begin{align*}
X_1(t + \tau) =
\Pi(DX_1(t)).
\end{align*}
In this regard, the iteration~\eqref{eq:discusa} can be seen as a method for approximating the largest eigenvalue and the corresponding eigenvector. We leave further analysis of this connection to future work. 
\end{rem}

The source code for the experiments here is available at 
\url{https://github.com/TimRoith/TransformerDynamics}
and uses \texttt{Python} \cite{van1995python}, mainly building upon the packages \texttt{NumPy} \cite{harris2020array}, \texttt{SciPy} \cite{2020SciPy-NMeth}, and \texttt{PyTorch} \cite{paszke2019pytorch}.
\subsection{Maximizers for positive definite matrices}\label{sec:num_diracmaxis}
To validate our results on maximizers, we first consider a simple setup of one-particle system, $N=1$. We choose $\tau=0.075$ and run~\eqref{eq:discusa} for $1500$ iterations. \color{black} We only report the results for the adaptive normalization \cref{eq:normpart}, those for the constant normalization \cref{eq:normconst} being essentially the same. \color{black} For $D=\Id$ we know that every single Dirac is a maximizer, which is indeed observed in \cref{fig:maxid}. Here, each random initialization on the sphere leads to a different final state. In fact, in this case there is no evolution at all and the particle stays at its initial position. 
If $D$ is positive definite and has a strictly largest eigenvalue $\lambda_\text{max}$, \cref{thm:Diracmaxmin} shows that only Diracs at eigenvectors $z_\text{max}$ corresponding to $\lambda_\text{max}$ are maximizers. This can be observed in \cref{fig:maxpert} where the final state is either at $z_\text{max}$ or $-z_\text{max}$. 
\begin{figure}
\begin{subfigure}[t]{.47\textwidth}%
\centering%
\includegraphics[width=.8\textwidth,trim={.9cm 0cm 0cm 0cm},clip]{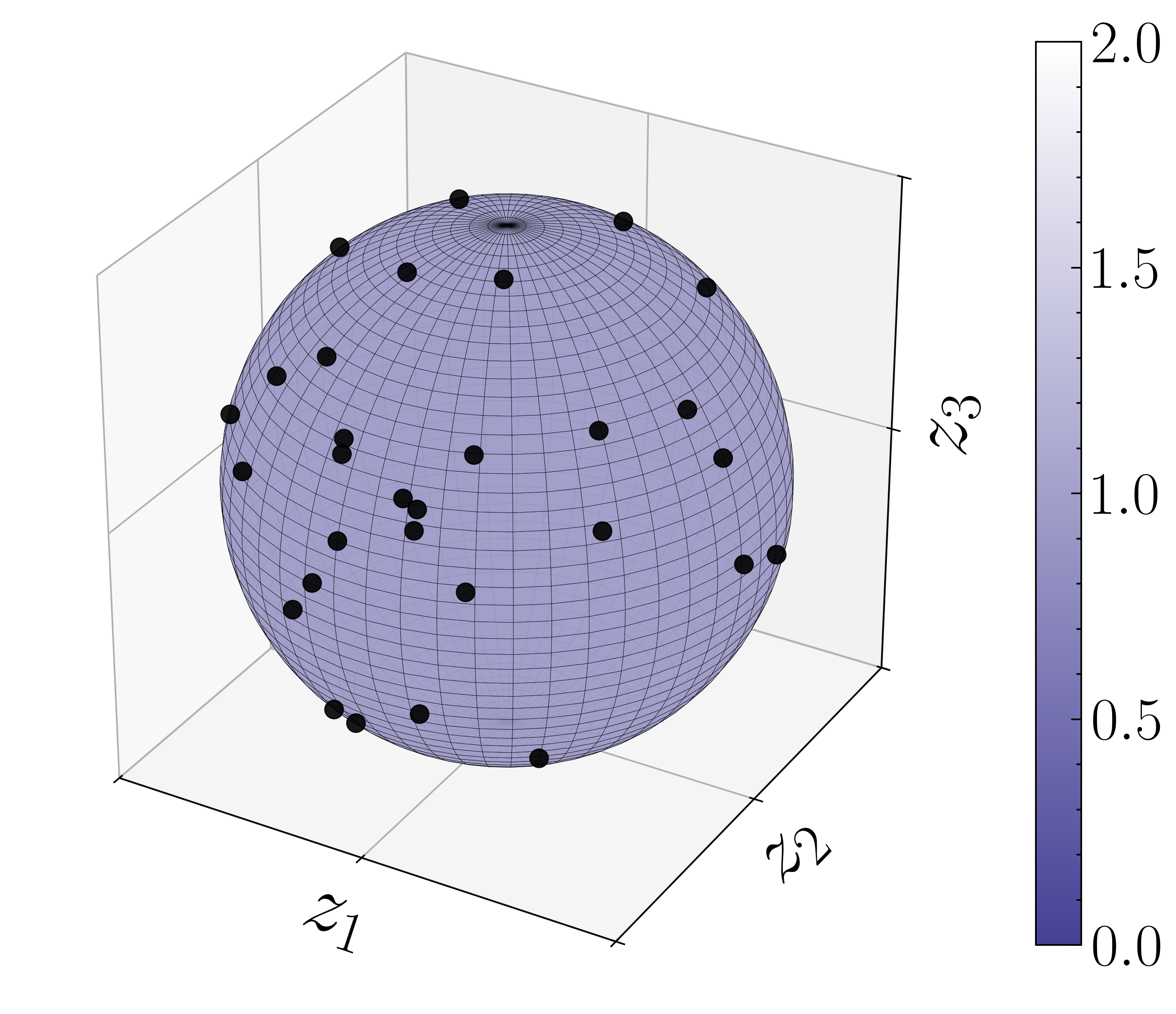}\caption{For $D=\Id$ every single Dirac is a maximizer. We show the results for $30$ different initializations}\label{fig:maxid}
\end{subfigure}\hfill%
\begin{subfigure}[t]{.47\textwidth}%
\centering%
\includegraphics[width=.8\textwidth,trim={.9cm 0cm 0cm 0cm},clip]{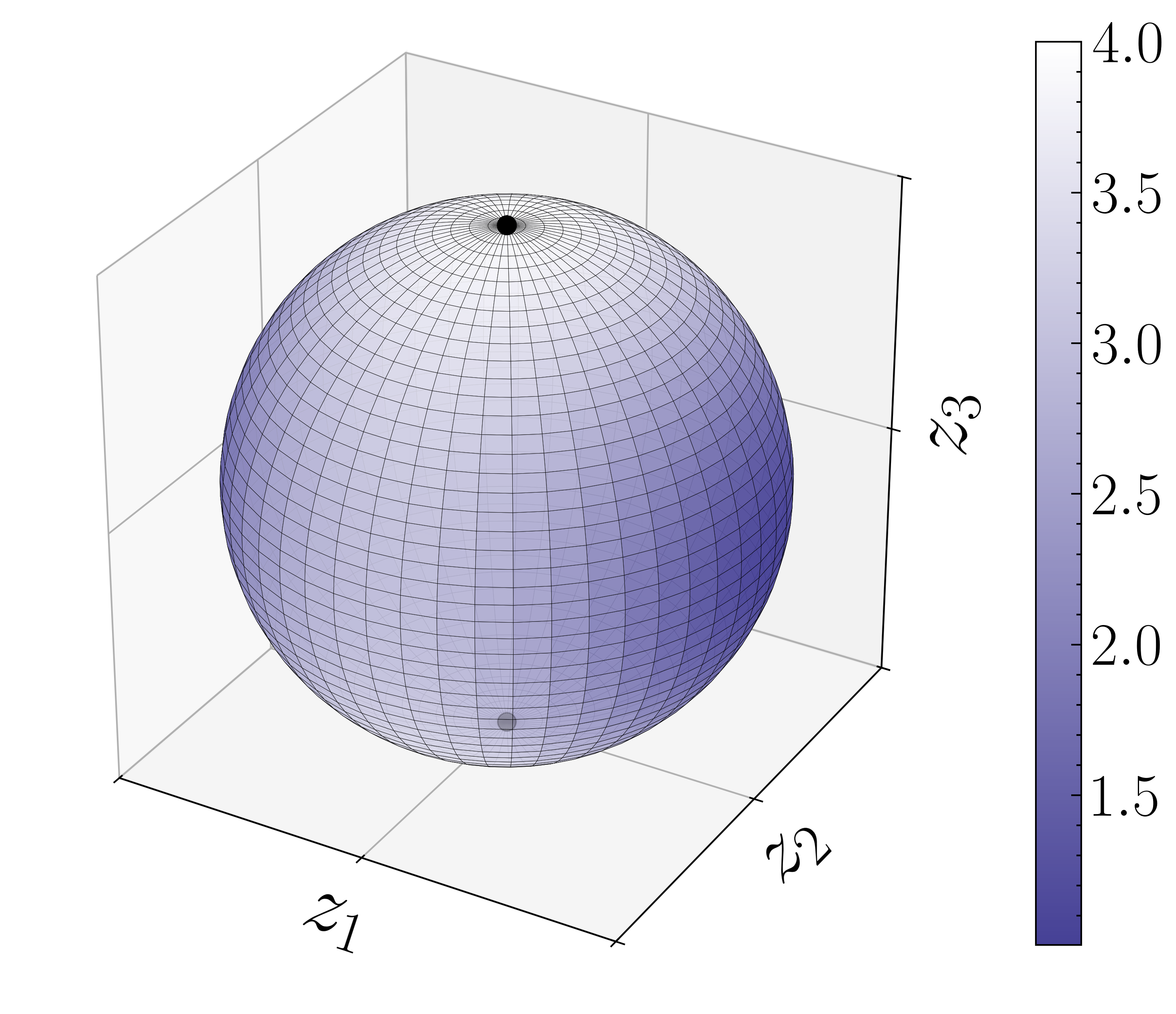}\caption{For $D=\text{diag}(1, 3, 4)$ the final state is either $(0,0,1)$ or $(0,0,-1)$.}\label{fig:maxpert}
\end{subfigure}
\caption{Discrete maximizers on the sphere for $N=1$ particles. The color indicates the value of ${x\cdot Dx}$ at each point on the sphere.}\label{fig:max}
\end{figure}

For multiple particle systems with $N>1$, \cref{lem:statcomb} suggests that also linear combinations of an eigenvector with its negative are stationary points. These linear combinations are not maximizers, but their basin of attraction depends on the eigenvalues of the matrix. In \cref{fig:clusters} (left) we plot the probability (i.e. the proportion of random initializations) of converging to a single cluster vs. two clusters as function of the eigenvalues. We fix $\lambda_1=1$ and vary $\lambda_2$ between $1$ and $1.5$. Note that, as discussed in \cref{lem:symmetricpeaks,rem:peaks-magnitude}, the actual values of the eigenvalues matter and not just their ratio. For $\lambda_2 \sim 1$ the probability of converging to a single cluster is high, whereas for larger values $\lambda_2 \gtrsim 1.4$ most trajectories converge to two clusters. \color{black} The results in \cref{fig:clusters} were obtained with the  adaptive normalization \cref{eq:normpart}; however, we observed the same quantitative behavior with the constant normalization \cref{eq:normconst}.\color{black}

\begin{figure}[t]
\begin{subfigure}[b]{.55\textwidth}
\begin{tikzpicture}
\begin{axis}[
        ybar stacked,
        ylabel={Count},
        xlabel={largest eigenvalue $\lambda_2$},
        ymin=0,
        legend style={at={(0.5,1.10)},
        anchor=north,legend columns=-1},
        width=\textwidth,
        height=.8\textwidth,
        bar width=0.1cm,
        axis x line*=bottom,
        axis y line*=left,
    ]  
    \addplot +[ybar]table [x index=0,y index=1] {results/max_split/CC.txt};
    \addplot +[ybar,     
            postaction={
            pattern=north west lines,
            pattern color = red
            }
    ]table [x index=0,y index=2] {results/max_split/CC.txt};
    \legend{Single Cluster, Two Clusters}
\end{axis}
\end{tikzpicture}
\end{subfigure}\hfill%
\begin{subfigure}[b]{.45\textwidth}
Single cluster (maximizers):\\
\hbox to \textwidth{\hfill%
\includegraphics[width=.33\textwidth]{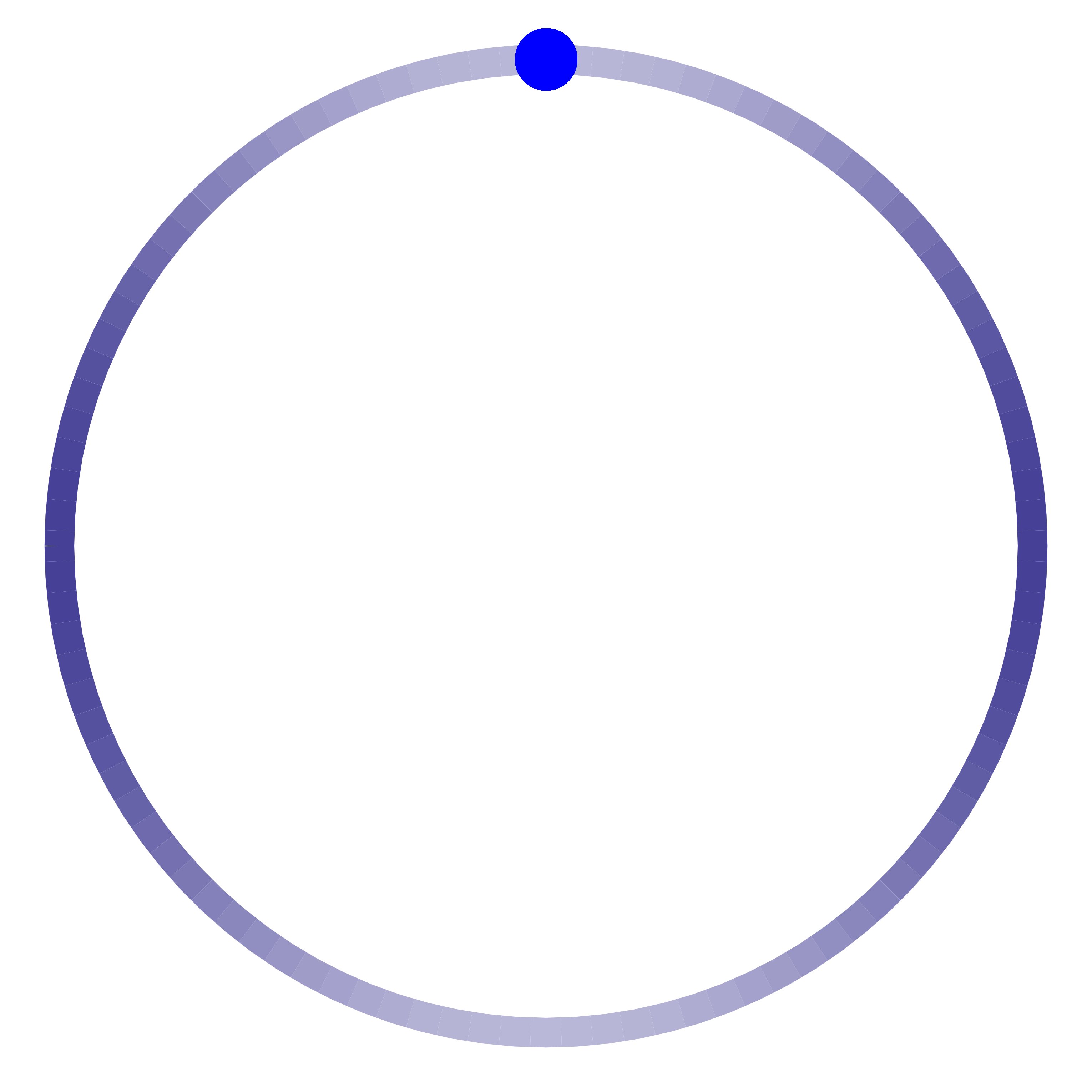}\hfill%
\includegraphics[width=.33\textwidth]{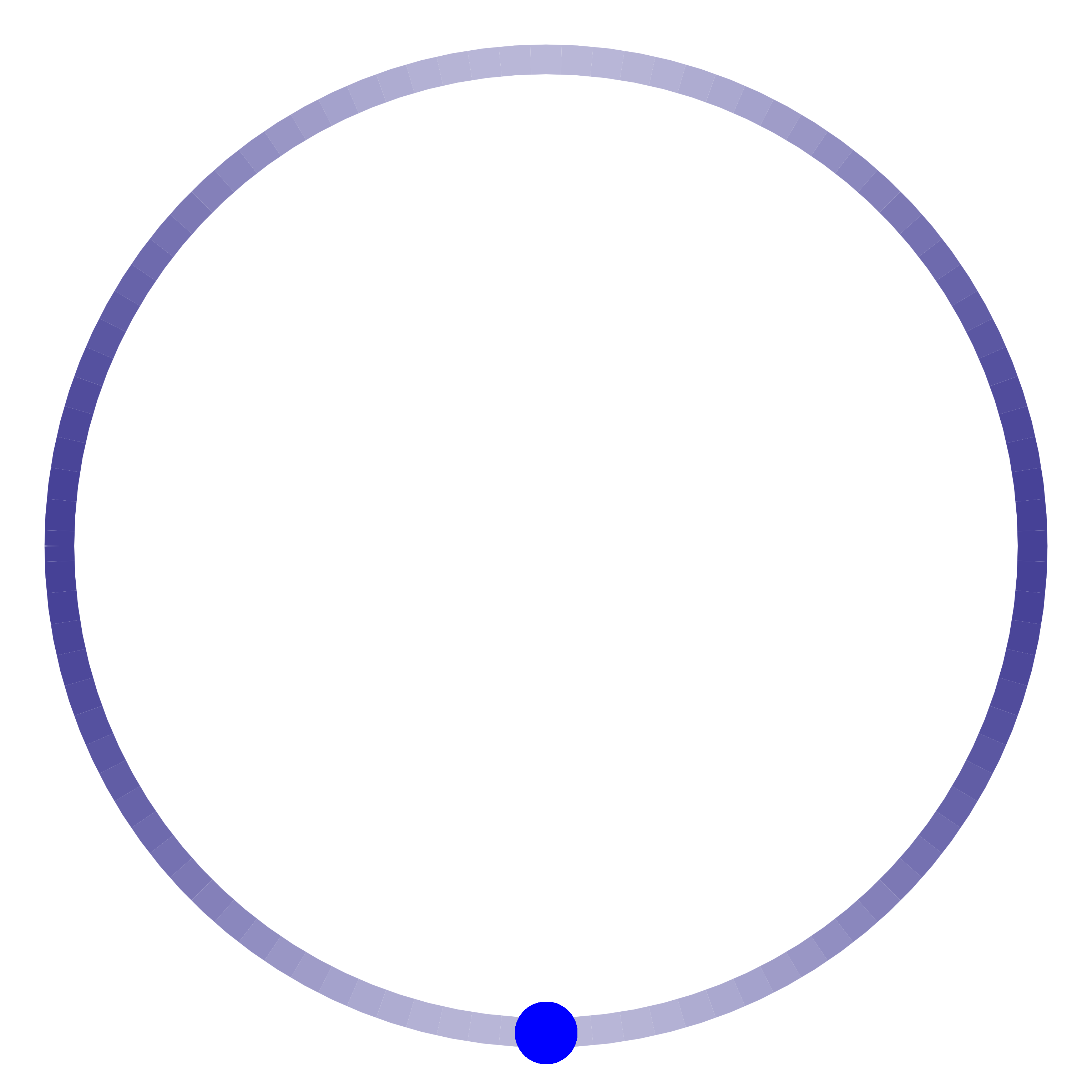}\hfill%
}\\
Two clusters (suboptimal stationary point):\\
\hbox to \textwidth{\hfill%
\includegraphics[width=.33\textwidth]{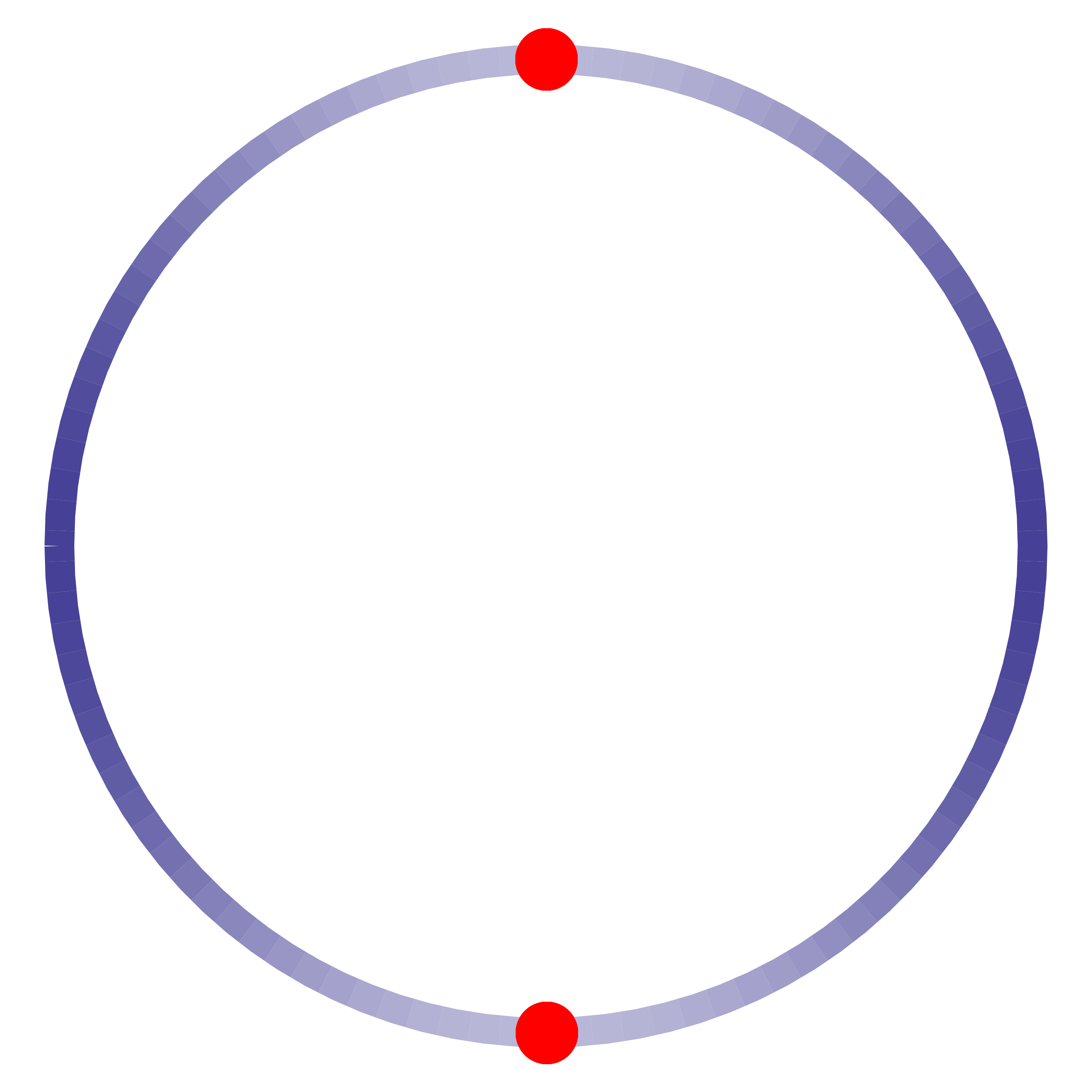}\hfill}%
\vspace{3em}
\end{subfigure}%
\caption{We study the trajectories for a symmetric positive definite matrix $D=\text{diag}(1, \lambda_2)$ with $\lambda_2 \in [1.,1.5]$ and $100$ different initializations using $100$ particles. We evaluate the number of clusters at the final iteration with the $k$-means implementation of the \texttt{SciPy} package \cite{2020SciPy-NMeth}. The center of each cluster is close to an eigenvector corresponding to an eigenvalue of maximal absolute value. For $\lambda_2 \approx 1$, the evolution converges to the optimal state with a single cluster (blue, solid), while for bigger values it tends to get stuck in the suboptimal stationary state with two clusters (red, hatched) from \cref{lem:statcomb}.}
\label{fig:clusters}
\end{figure}
%
%
%
\subsection{Minimizers for positive (semi-) definite matrices}\label{sec:num_minposdef}

We now study discrete minimizers for positive definite matrices. In  \cref{fig:min} we show how the matrix $D$ influences the particle configuration which the scheme in \cref{eq:discusa} converges to. \color{black} Here, too, we used the  adaptive normalization \cref{eq:normpart}; the results for the constant one \cref{eq:normconst} are largely the same. \color{black}

\begin{figure}%
\begin{subfigure}[t]{.24\textwidth}%
\centering%
\includegraphics[width=\textwidth,trim={.9cm 0cm 0cm 0cm},clip]{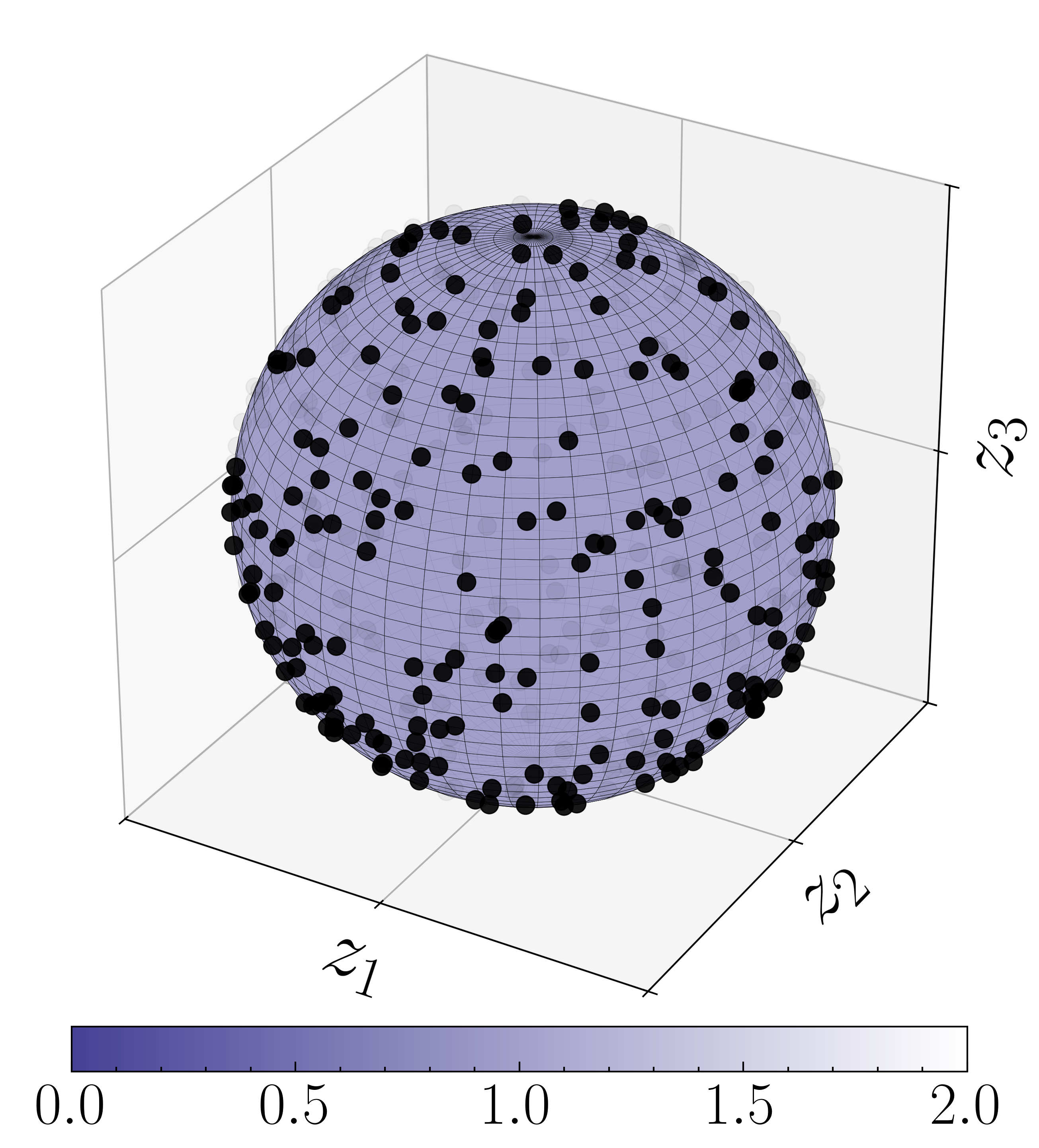}\captionsetup{width=.9\linewidth}%
\caption{$D=\Id$.
}
\end{subfigure}\hfill%
\begin{subfigure}[t]{.24\textwidth}%
\centering%
\includegraphics[width=\textwidth,trim={.9cm 0cm 0cm 0cm},clip]{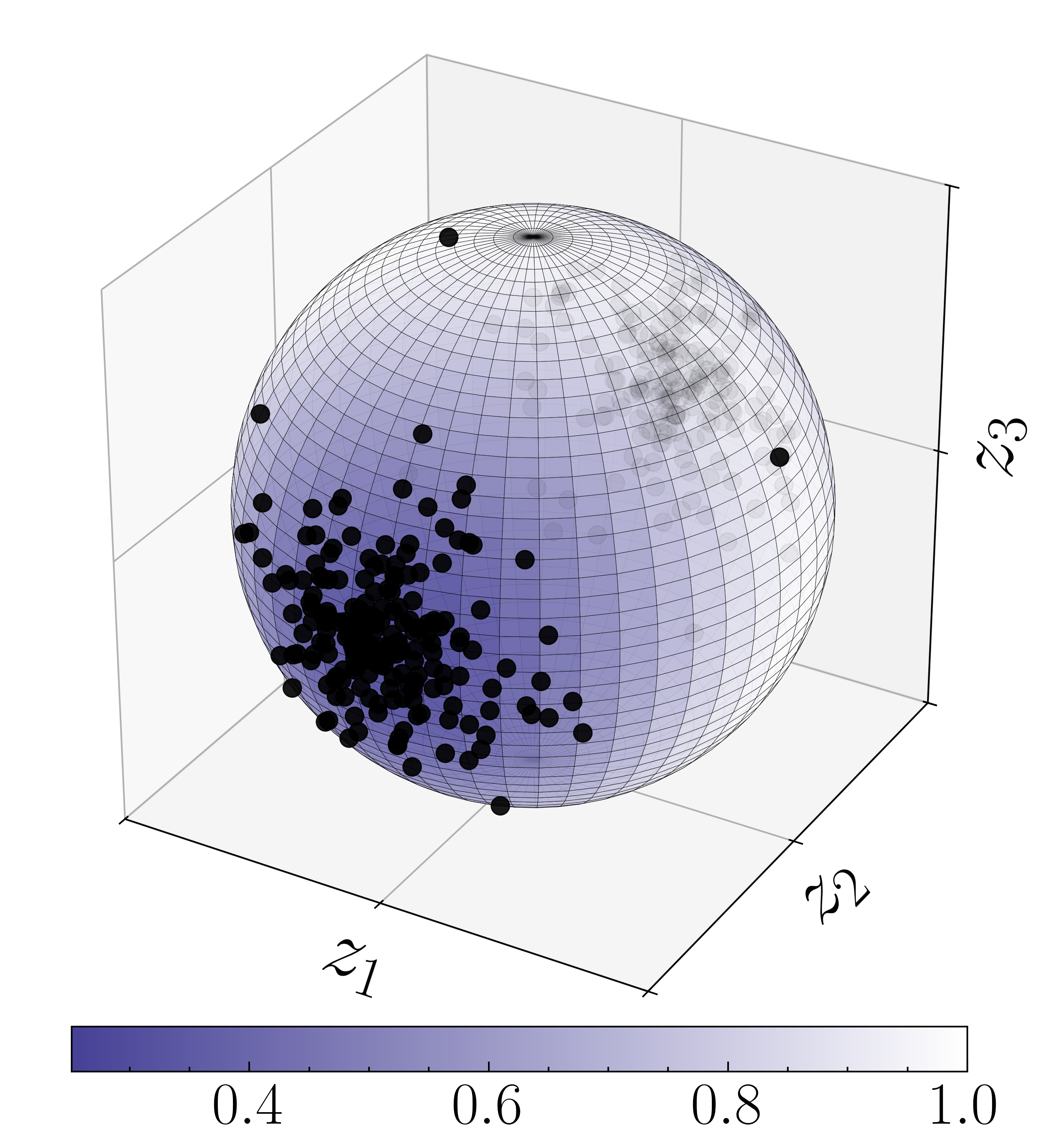}
\caption{$D=\text{diag}(1, 0.25, 1)$}
\end{subfigure}\hfill%
\begin{subfigure}[t]{.24\textwidth}%
\centering%
\includegraphics[width=\textwidth,trim={.9cm 0cm 0cm 0cm},clip]{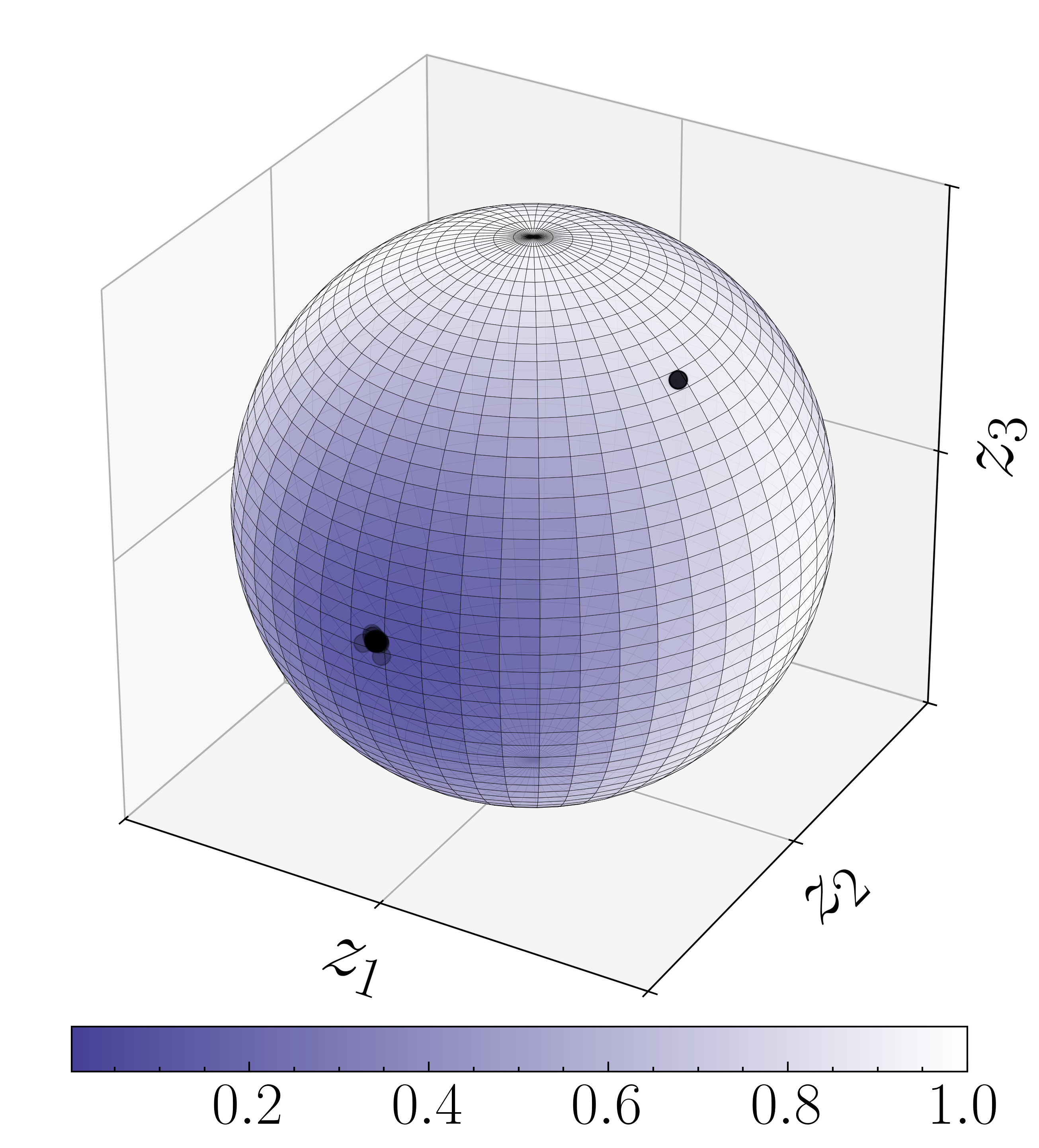}
\captionsetup{width=.9\linewidth}%
\caption{$D=\text{diag}(1, 0, 1)$}
\end{subfigure}\hfill%
\begin{subfigure}[t]{.24\textwidth}%
\centering%
\includegraphics[width=\textwidth,trim={.9cm 0cm 0cm 0cm},clip]{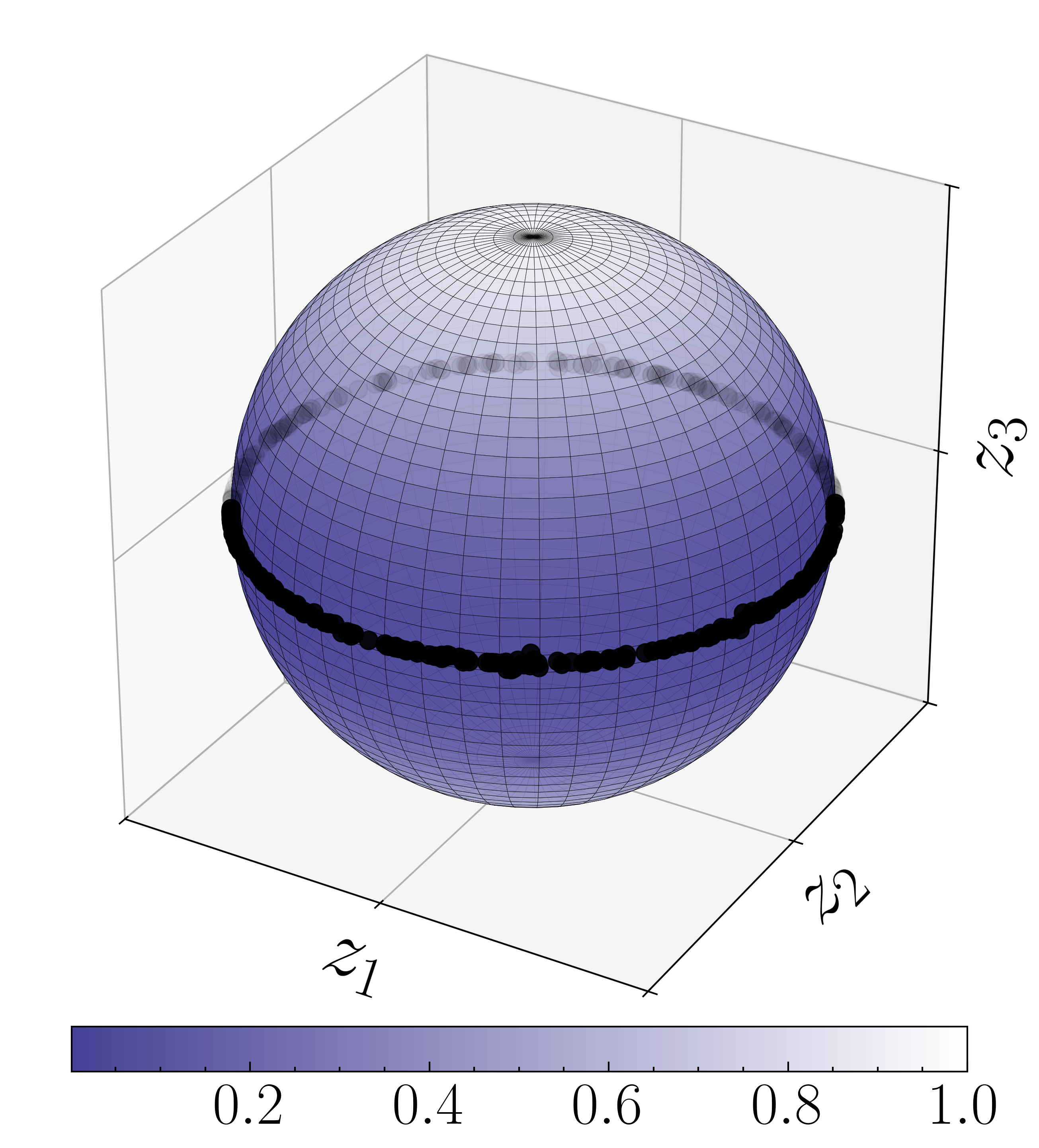}\captionsetup{width=.9\linewidth}\caption{$D=\text{diag}(0,0,1)$}
\end{subfigure}
\caption{Final states for the minimization scheme after $10000$ steps with $N=400$ particles. The color indicates the value of $\dpr{x}{Dx}$ at each point on the sphere. In (a) the uniform distribution is the minimizer of the energy. In (b)  the particles do not form clusters at single Diracs but rather follow a smooth distribution on the sphere. In (c) any configuration with $(X_i)_1 = (X_i)_3 =0$ for all~$i$ is a minimizer. In (d) any configuration with $(X_i)_3 =0$ for all~$i$ is a minimizer.
}\label{fig:min}
\end{figure}%
Furthermore, in \cref{fig:fourpart} we illustrate the results of \cref{lem:symmetricpeaks} for matrices $D=\text{diag}(1, \lambda_2)$ with varying values $\lambda_2\in[0.5,8]$. We initialize $N=4$ particles as
\begin{align}\label{eq:fourpartinit}
X_i = X(\phi_i) \qquad \text{with} \quad \phi_i = (i-1)\cdot \pi + \pi/4
\qquad \text{for} \quad i=1,\ldots,4
\end{align}
and  let the scheme \cref{eq:discusa} run for $10000$ iterations. From the final particle state, we compute the value $\tanh(\cos^2\phi_i)/\tanh(\lambda_2 \sin^2\phi_i)$ for each particle separately; \cref{lem:symmetricpeaks} tell us that this should be equal to $\lambda_2$ for the minimizer. In \cref{fig:fourpart} we observe that this holds true for the particle configurations computed with the discrete scheme. However, if the step size is too big compared to the value $\lambda_2$, the system instead converges to the two-cluster stationary point from \cref{fig:clusters}. \color{black} Here, we notice a slight difference between the two normalizations. The  adaptive normalization \cref{eq:normpart} allows choosing bigger step sizes compared to the constant normalization \cref{eq:normconst}, enabling faster convergence to the large-time limit.\color{black}
\begin{figure}
\centering%
\begin{tikzpicture}
\begin{axis}[width=.82\textwidth,
height=.5\textwidth,grid=major,
legend pos=outer north east,
legend cell align={left}, xlabel=$\lambda_2$,
ylabel=$\tanh(\cos^2\phi_i)/\tanh(\lambda_2 \sin^2\phi_i)$,
axis x line*=bottom,
axis y line*=left,
x label style={anchor=west},
y label style={anchor=south},
xmin=-1,
name=aa
]
%
%
\addplot[line width=1.5pt, loosely dashed, color=black,opacity=1., mark=*,mark options={solid}]  table[x index=0, y index=1,restrict x to domain=0:6]{results/tanh_min/tanh_minusa0.2.csv};
\label{e1}
\addplot[line width=1.5pt, dotted, color=red,opacity=1.,mark=*, mark options={solid}]  table[x index=0, y index=1,restrict x to domain=0:8]{results/tanh_min/tanh_minsa0.35.csv};
\label{e2}
\addplot[line width=1.5pt, color=blue,opacity=1., mark=*]  table[x index=0, y index=1,restrict x to domain=0:8]{results/tanh_min/tanh_minsa0.2.csv};
\label{e3}
\tikzstyle{every pin}=[
    draw=blue,
    fill=white,
    thick
]
\tikzset{every pin edge/.style={draw=blue, thick}}
\node [
coordinate,
pin={[pin distance=1cm]above:{\includegraphics[width=1.2cm]{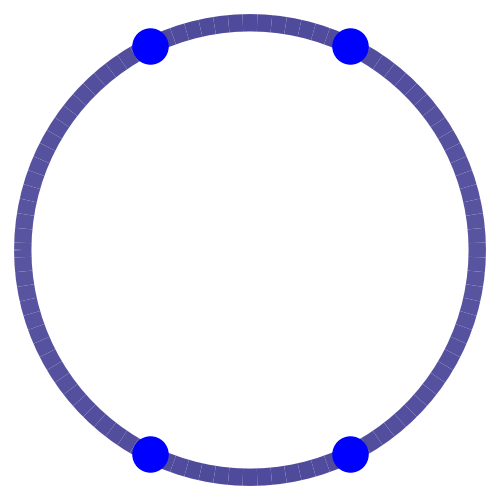}}},
] at (axis cs:0.5,0.5)   {};
\node [
coordinate,
pin={[pin distance=2cm]above:{\includegraphics[width=1.2cm]{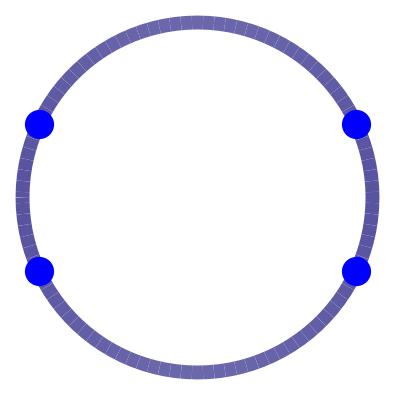}}},
] at (axis cs:2,2)   {};
\node [
coordinate,
pin={[pin distance=.5cm]90:{\includegraphics[width=1.2cm]{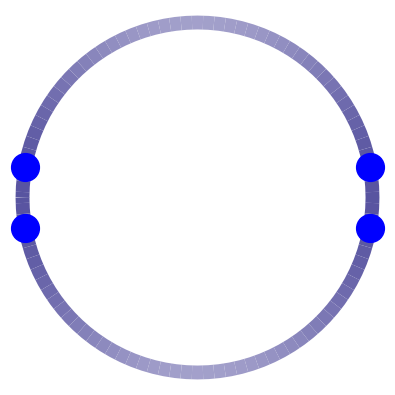}}},
] at (axis cs:5,5)   {};
\node [
coordinate,
pin={[pin distance=.6cm]below:{\includegraphics[width=1.2cm]{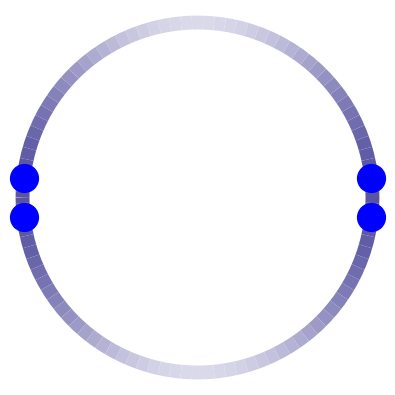}}},
] at (axis cs:8,8)   {};
\node [
coordinate,
pin={[pin distance=.5cm, draw=red, pin edge={red, thick}]above:{\includegraphics[width=1.2cm]{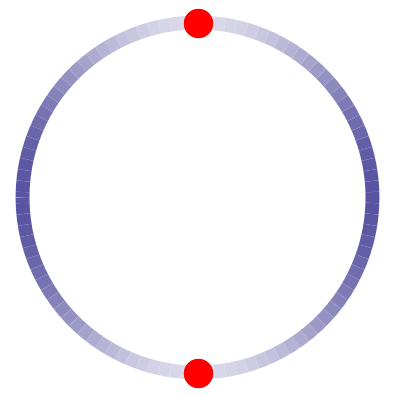}}},
] at (axis cs:8,0)   {};
\end{axis}
\matrix [
 matrix of nodes,
 nodes={anchor=west},
 anchor=north west,
 at={([shift={(2pt,-2pt)}]aa.north east)},
 fill=white,
 draw,
 inner sep=2pt,
 row sep=2pt
] {
\ref{e1} \cref{eq:normconst}, $\tau=0.2$ \\
\ref{e3} \cref{eq:normpart}, $\tau=0.2$ \\
\ref{e2} \cref{eq:normpart}, $\tau=0.35$ \\
};
\end{tikzpicture}
\caption{We consider minimizers for the matrix $D=\text{diag}(1,\lambda_2)$. Starting with the initial configuration described in \cref{eq:fourpartinit} we compute the mean of $\tanh(\cos^2\phi_i)/\tanh(\lambda_2 \sin^2\phi_i)$ over all particles.
For a small step size, the resulting curve is very close to the identity, as predicted by \cref{lem:symmetricpeaks}. If $\lambda_2\, \tau$ is too big, the dynamics  converge to a suboptimal stationary point. \color{black}{We also compare the normalizations~\eqref{eq:normconst} and~\eqref{eq:normpart}. We see that with the same step size $\tau=0.2$, the adaptive normalization~\eqref{eq:normconst} yields faster convergence than the constant one~\eqref{eq:normpart}.
}}\label{fig:fourpart}
\end{figure}

We further investigate the validity of the asymptotic solution from \cref{thm:perturbidentity} in the two-dimensional case. Here we deviate from the particle approximation and instead discretize the interval $[-\pi,\pi)$ with $N$ equidistant grid points $\Theta\in[-\pi,\pi]^N$ and the associated points on the sphere $x_1,\ldots,x_N\in\mathcal{S}^1$. In this setting, we then aim to minimize
\begin{align}\label{eq:mirror}
\tilde{E}_\varepsilon(m) = 
\sum_{i,j=1}^N e^{x_i\cdot (\Id + \varepsilon M)x_j} m_i\cdot m_j,
\end{align}
where $m\in\R^N$ is a probability vector. Note that already for $n=3$ a more sophisticated quadrature rule would be required, e.g. the Lebedev quadrature on the sphere \cite{marchuk1986numerical}. To deal with the simplex constraint for the vector $m$ we use exponentiated gradient descent, specifically mirror descent with 
the negative log-entropy as the distance generating function~\cite{kivinen1997exponentiated} which yields the update
\begin{align}\label{eq:scheme-m-eps}
m(\varepsilon)_i \gets \frac{m_i e^{-\tau \grad \tilde{E}_\varepsilon(m(\varepsilon))_i}}{\sum_{j=1}^N m(\varepsilon)_j e^{-\tau \grad \tilde{E}(m(\varepsilon))_j}} 
= 
\operatorname{SoftMax}(\log(m(\varepsilon)) - \tau \grad{\tilde{E}_\varepsilon}(m(\varepsilon))_i.
\end{align}

We take the perturbation matrix as $M=\text{diag}(0,1)$, that is, the perturbed matrix $D$ is given by 
$
D_\eps = \text{diag}(1,1+\eps).    
$
Recall the asymptotic expansion~\eqref{eq:expansionInt}. As noted in \cref{sec:perturabation-identity}, the contribution of the term $\eps^2\omega$ vanishes in the second-order expansion of the energy and we are left with a solution
\begin{equation}\label{eq:asympt-exp-numerics}
    \mu^*_\varepsilon=\mu_0 + \varepsilon \nu^*,
\end{equation}
where $\nu^*$ is as in \cref{thm:perturbidentity}. We note that this measure has a Lebesgue density that can be evaluated at the grid points in $\Theta$; we denote the resulting vector  by $\d\mu^*_\varepsilon\vert_{\Theta}$.
In \cref{fig:densityall} we compare this solution to the vector $m(\varepsilon)$ obtained by solving~\eqref{eq:mirror}--\eqref{eq:scheme-m-eps}. 
The vector $m(\varepsilon)$ for different values of $\varepsilon$ is shown in \cref{fig:density} and in \cref{fig:diffdensity} we plot the $\ell^2$ error 
%
$\abs{m(\epsilon) - d\mu^*_\varepsilon\vert_{\Theta}}_2$.  

%
%
\begin{figure}
\begin{subfigure}[t]{.5\textwidth}%
\centering%
\includegraphics[width=.97\textwidth,
trim={.7cm .2cm .05cm 1.7cm}, clip]{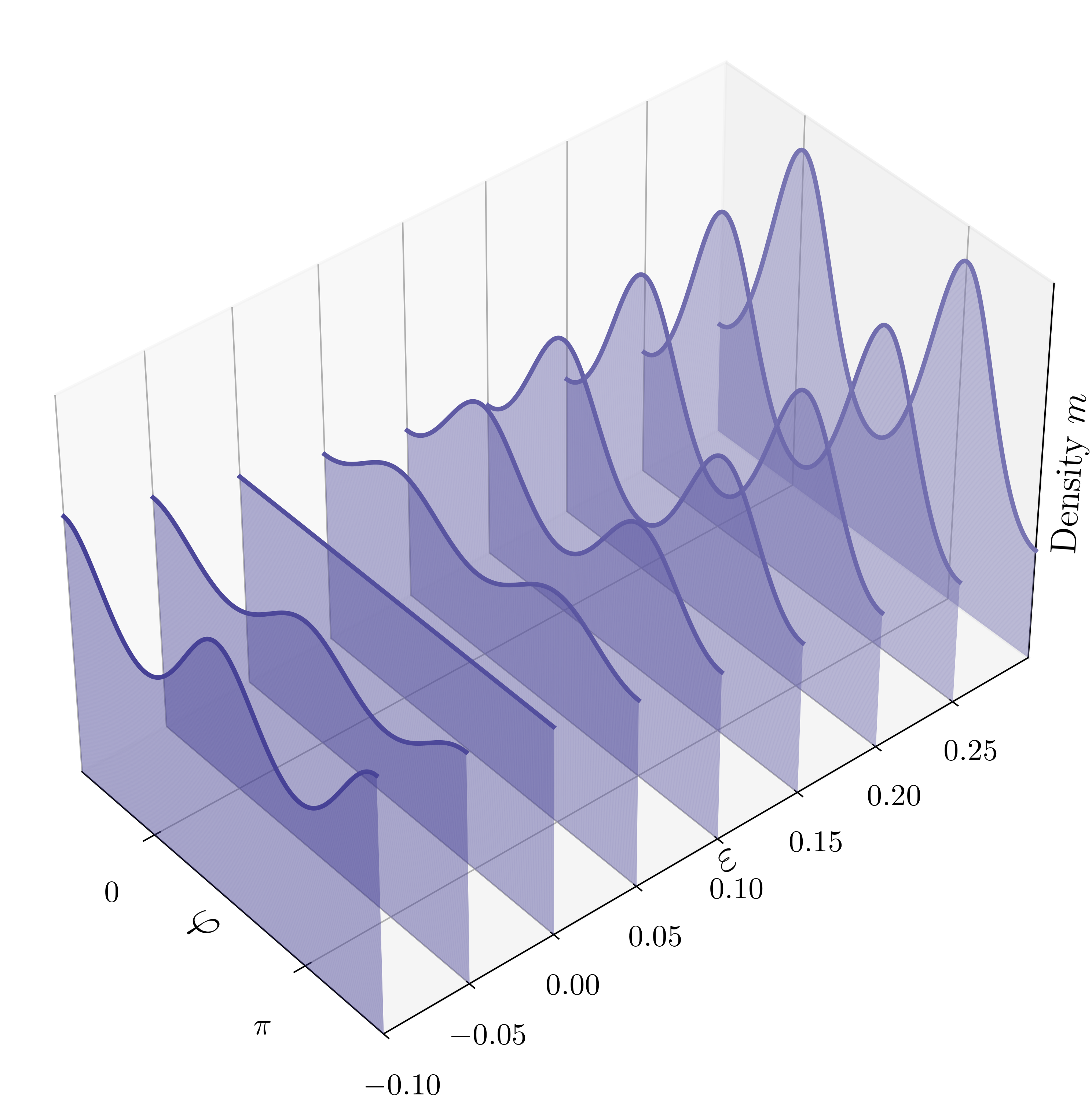}%
\captionsetup{width=.9\linewidth}%
\caption{The probability vectors $m(\epsilon)$ computed by in \cref{eq:mirror} with $500$ steps for $\tau=0.1$.}\label{fig:density}
\end{subfigure}
\begin{subfigure}[t]{.5\textwidth}
\begin{tikzpicture}
\begin{axis}[width=\textwidth,
height=\textwidth,grid=major,
legend pos=outer north east,
legend cell align={left}, xlabel=$\varepsilon$,
ylabel=$\ell^2$ distance,
axis x line*=bottom,
axis y line*=left,
x label style={anchor=west},
y label style={anchor=south},
legend style={
    at={(0.5,0.99)},
    anchor=north,legend columns=1,
    font=\scriptsize
    },
ymax=0.01
]
\addplot[line width=1.5pt, color=blue,opacity=1.,mark=*]  table[x index=0, y index=1]{results/densities/diffs.csv};
\addlegendentry{Approximation from \cref{thm:perturbidentity}}
\addplot[line width=1.5pt, color=cadmiumgreen, dotted,opacity=1.,mark=*,mark options={solid}]  table[x index=0, y index=2]{results/densities/diffs.csv};%
\addlegendentry{Conjectured form \cref{eq:guess}}
\end{axis}
\end{tikzpicture}%
\captionsetup{width=.9\linewidth}%
\caption{The $\ell^2$ approximation error  for the first-order expansion \eqref{eq:asympt-exp-numerics} (blue, solid) and the conjectured form \eqref{eq:guess} (green, dotted)}\label{fig:diffdensity}
\end{subfigure}
\caption{Numerical study of the asymptotic solution from \cref{thm:perturbidentity} in two dimensions}\label{fig:densityall}
\end{figure}

Beyond the first-order expansion~\eqref{eq:asympt-exp-numerics}, we conjecture that  $m(\varepsilon)$ behaves as follows
%
\begin{align}\label{eq:guess}
\d\mu^{\text{guess}}_\varepsilon(\theta) \sim \exp(\Upsilon(\varepsilon)\cos(2\theta)),
\end{align}
where $\Upsilon(\varepsilon)$ is a function to be determined. Taking a second-order Taylor expansion $\Upsilon(\varepsilon)$, we estimate the coefficients via  linear regression with the  given  vectors $m(\varepsilon)$ as data points and obtain $\Upsilon(\varepsilon) \approx 1/5\, \varepsilon^2 + e/2\, \varepsilon$. 
The $\ell^2$ error of this approximation is shown in \cref{fig:diffdensity} and is lower than that of the first-order expansion~\eqref{eq:asympt-exp-numerics}. We leave the analysis of this ansatz to future work.%

%
%
%
\subsection{Maximizers for negative definite and indefinite matrices}\label{sec:num_diracmaxis_neg}
We proceed to numerical examples for  \cref{sec:maxnd}, i.e. maximization of the energy corresponding to a negative definite matrix. We take a system of $N=100$ particles and consider the two matrices from \cref{fig:max} multiplied by $-1$. The results are shown in \cref{fig:maxnd}. We observe that a single final state consists of clusters at $\pm z$, where $z$ is an eigenvector corresponding to the smallest eigenvalue, in agreement with \cref{thm:maxnd}. As shown there, the behavior does not change if one of the eigenvalues is zero, as only the eigenvectors corresponding to the smallest eigenvalue are relevant. 
For this reason, we don't consider the semi-definite case separately. \color{black} The results here are not impacted by the choice of the normalization; we only show the ones obtained with \cref{eq:normpart}.
\color{black}
\begin{figure}
\begin{subfigure}[t]{.47\textwidth}%
\centering%
\includegraphics[width=.8\textwidth,trim={.9cm 0cm 0cm 0cm},clip]{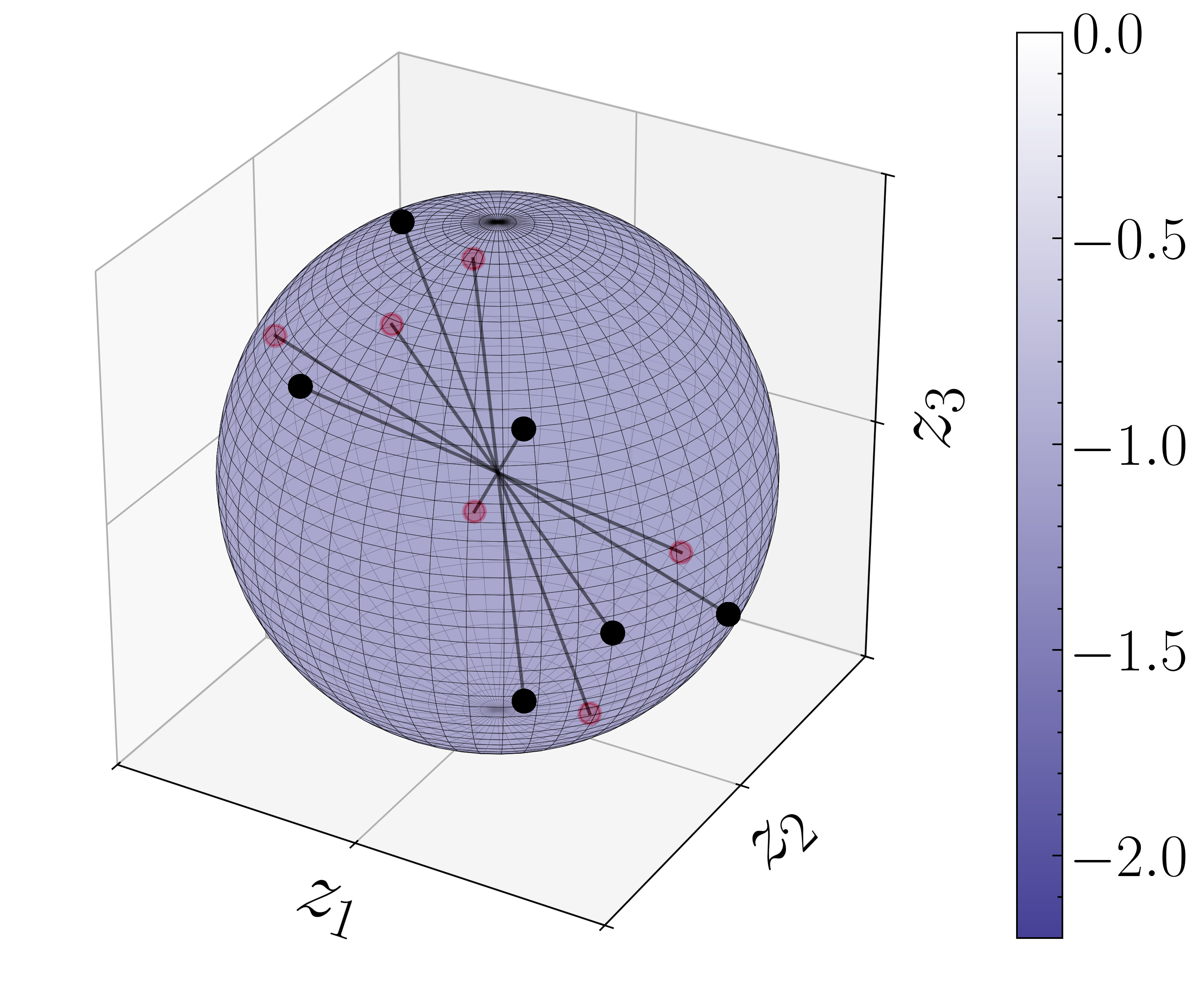}\caption{
For $D=-\Id$ a single final state has clusters at both $z$ \textbf{and} $-z$ for any $z\in\S$. For clarity, we only show results for $6$ different initializations.}
\end{subfigure}\hfill%
\begin{subfigure}[t]{.47\textwidth}%
\centering%
\includegraphics[width=.8\textwidth,trim={.9cm 0cm 0cm 0cm},clip]{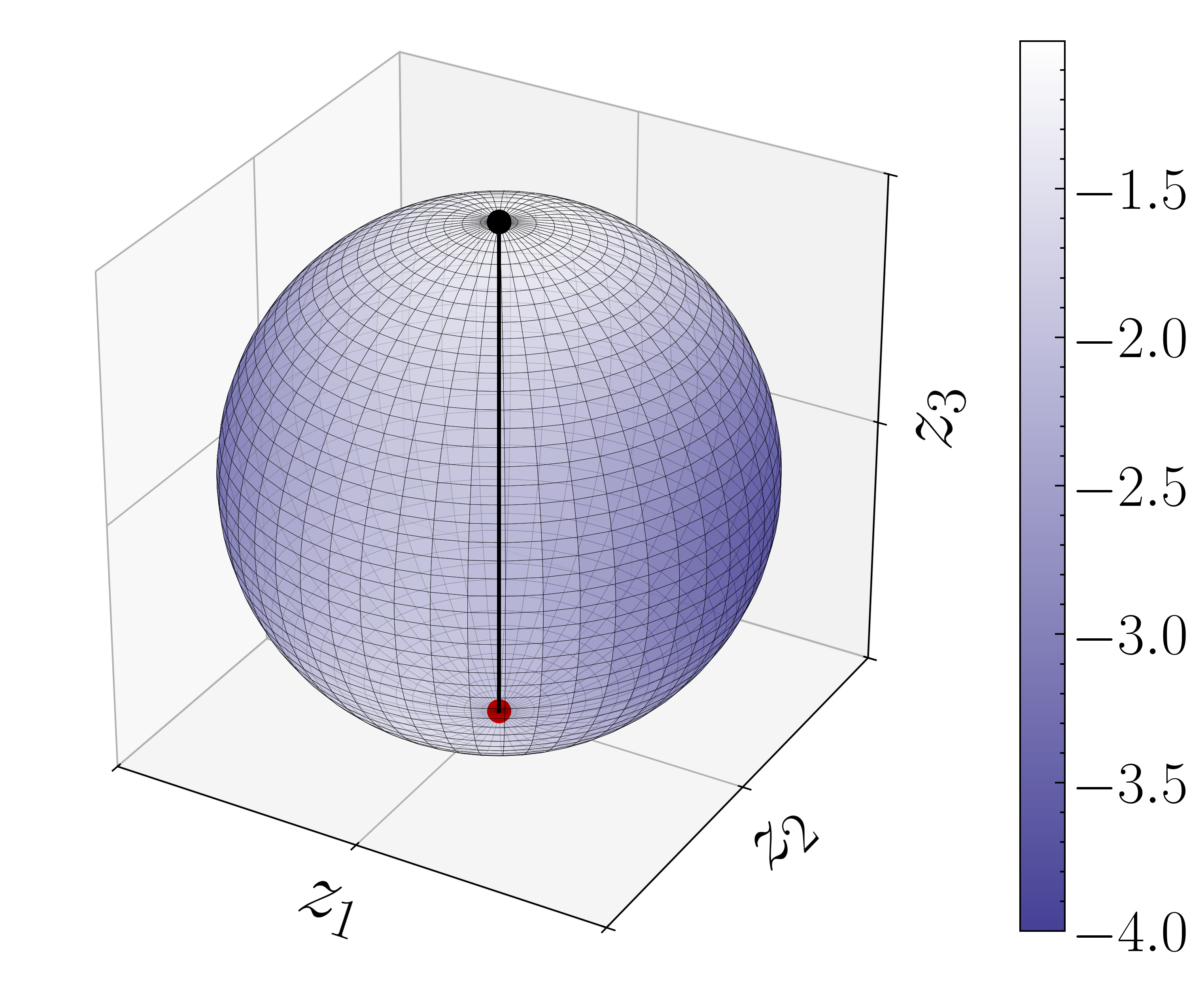}\caption{For $D=-\text{diag}(1, 3, 4)$ a single final state has clusters both at $(0,0,1)$ \textbf{and} $(0,0,-1)$. We show the results for $100$ different initializations.}
\end{subfigure}
\caption{Discrete maximizers on the sphere for negative definite matrices obtained with $N=100$ particles. We visualize the two-cluster final states by connecting the two components of each cluster corresponding to the same run with a line, assigning different colors to the two opposite clusters. The color of the sphere indicates the value of $\dpr{x}{Dx}$ at each point on the sphere.}\label{fig:maxnd}
\end{figure}
%
%
%
%
%
%
%
%
%

Finally, we turn to the case of indefinite matrices. As noted in \cref{rem:ndd}, for a matrix $D$ that is not negative definite, a Dirac delta placed at the eigenvector corresponding to the largest eigenvalue may not be a maximizer. This can be observed numerically as shown in \cref{fig:maxind} where we plot the energies of one- and two-cluster states for $D=\text{diag}(-1, \lambda_2)$ with $\lambda_2 \in [-1,1]$.
\begin{figure}
\centering%
\begin{subfigure}[b]{.45\textwidth}%
\begin{tikzpicture}
\begin{axis}[width=\textwidth,
height=.25\textheight,grid=major,
legend pos=outer north east,
legend cell align={left}, 
ytick={1, 1.54, 2},
yticklabels={1,\tiny$\cosh(-1)$,2},
y tick label style={rotate=25,anchor=east},
extra x tick style = {xticklabel style={rotate=25,anchor=north, yshift=-1.5ex}},
xtick={-1, 0, 1},
xticklabels={-1, 0, 1},
axis x line*=bottom,
axis y line*=left,
x label style={anchor=north},
y label style={anchor=south},
xlabel = {largest eigenvalue $\lambda_2$},
extra x ticks = {0.433},
extra x tick label = {\tiny$\log\cosh(-1)$},
ylabel = {energy $\erg$}
]
\addplot[line width=2pt, color=blue, opacity=0.5]  table[x index=0, y index=1, mark=none]{results/max_nd/maxid.csv};
\addlegendentry{Single cluster, $\erg(\mu_4(X^\text{single})) = e^{\lambda_2}$}
\addplot[line width=2pt, color=red,opacity=0.5, dashed]  table[x index=0, y index=3, mark=none]{results/max_nd/maxid.csv};
\addlegendentry{Two clusters, $\erg(\mu_4(X^{\text{two, 1}}))  = \cosh{(\lambda_2)}$}
\addplot[line width=2pt, color=cadmiumgreen, opacity=0.5, dotted]  table[x index=0, y index=2, mark=none]{results/max_nd/maxid.csv};
\addlegendentry{Two clusters, $\mathcal{E}_{D}(\mu_4(X^{\text{two},2}))  = \cosh{(-1)}$}
\end{axis}
\end{tikzpicture}
\end{subfigure}\hfill%
\begin{subfigure}[b]{.55\textwidth}%
\hbox to \textwidth{%
\includegraphics[width=.27\textwidth]{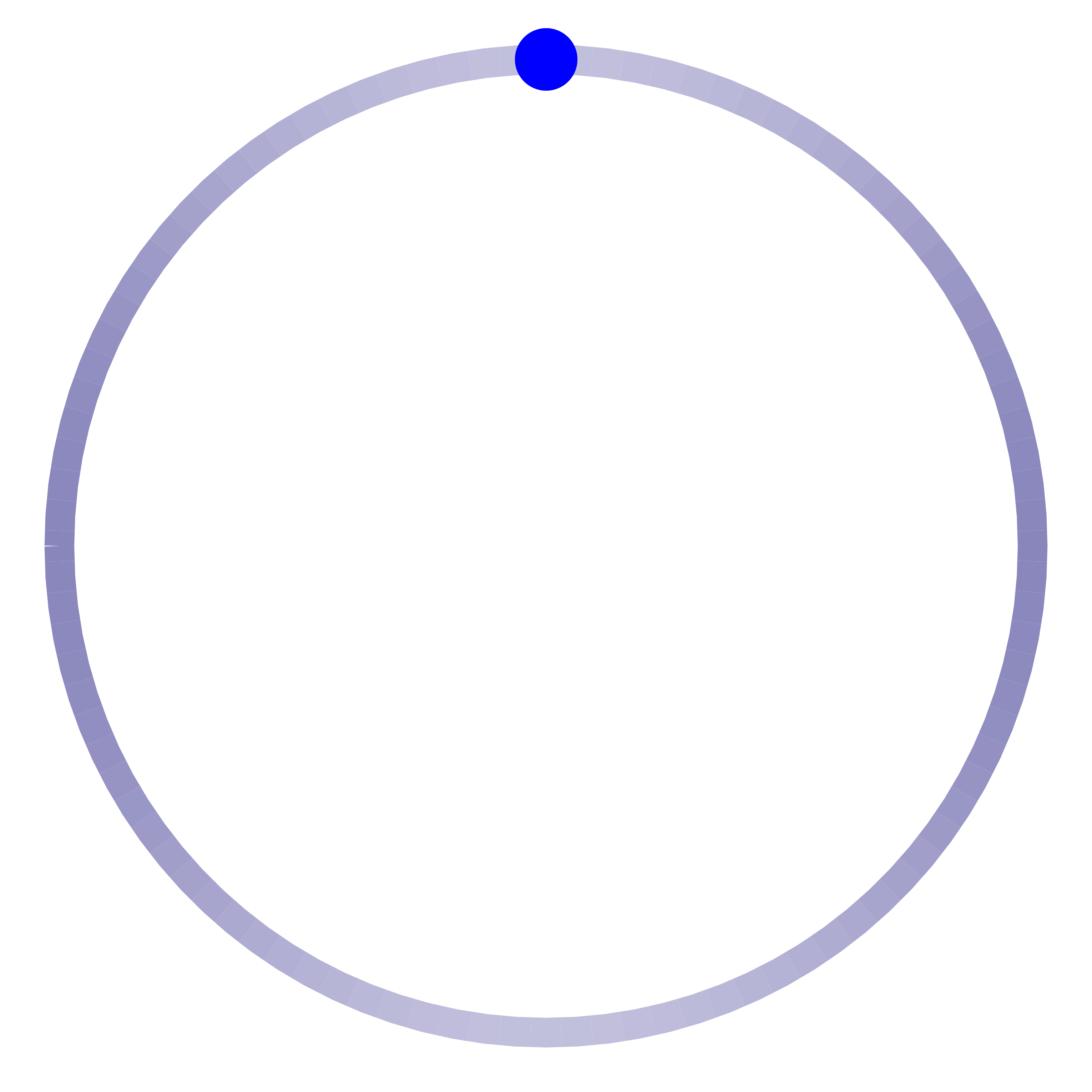}%
\hfill%
\includegraphics[width=.27\textwidth]{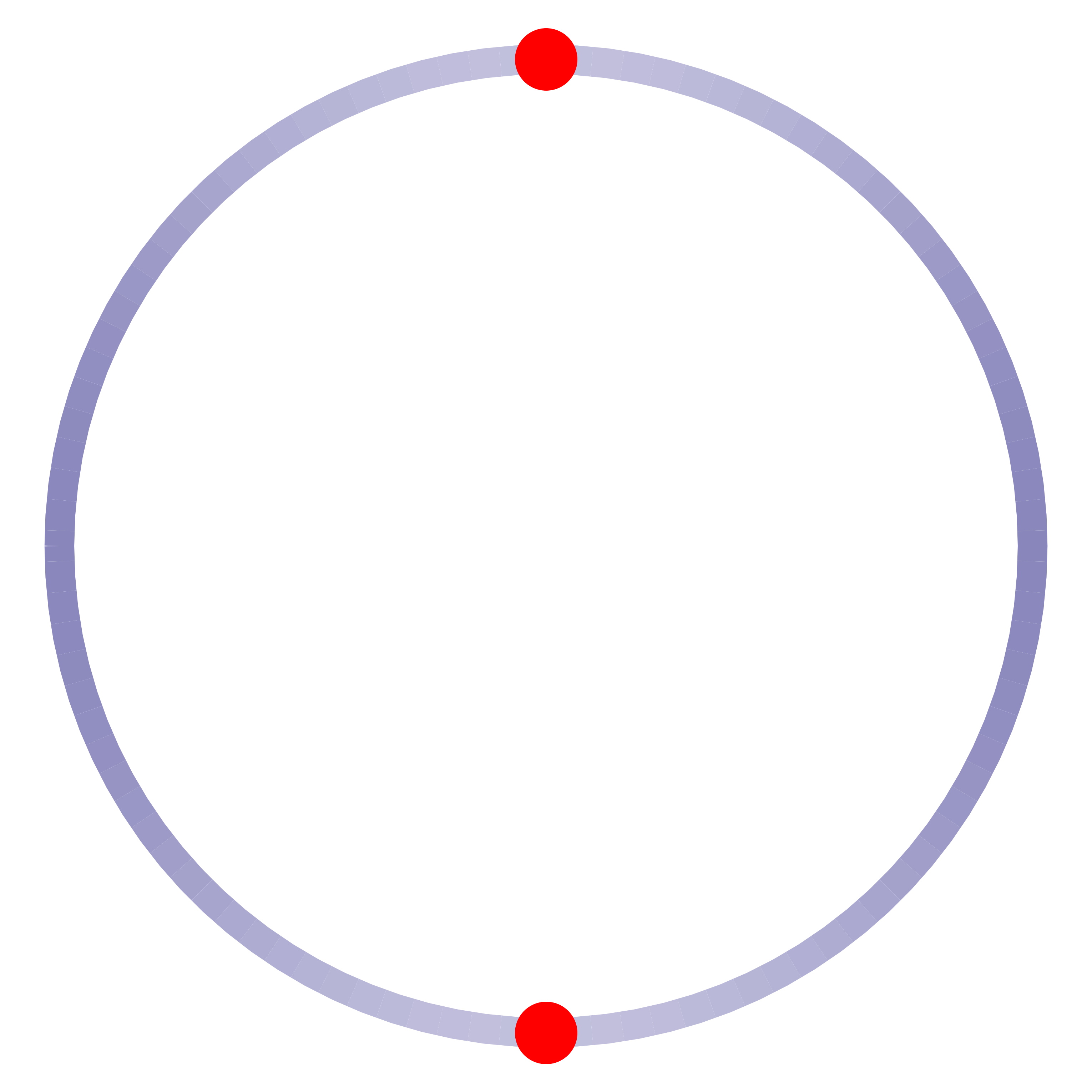}
\hfill%
\includegraphics[width=.27\textwidth]{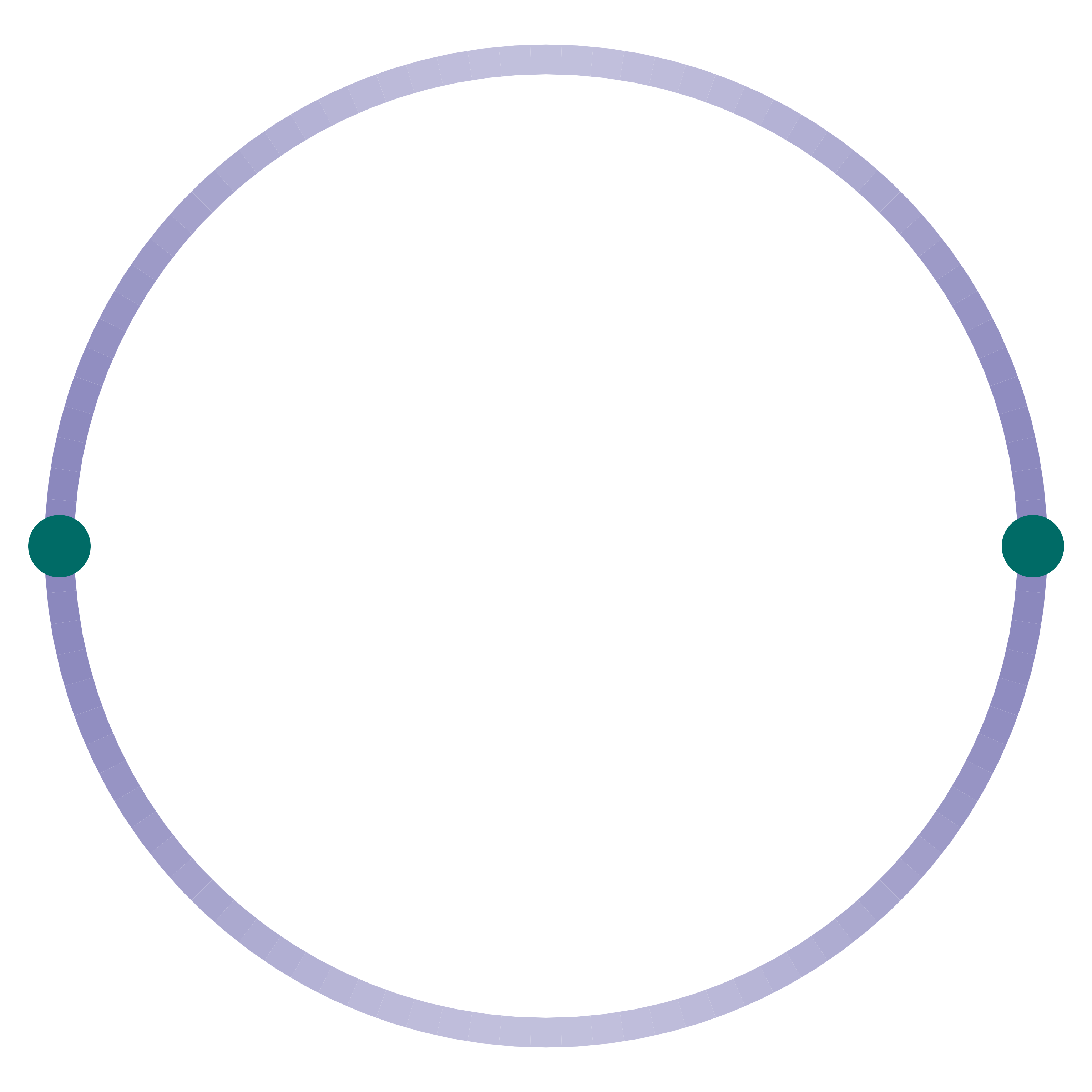}}%
\end{subfigure}%
\caption{Energies of the states $X^{\text{single}} = ((0,1))$ in blue, $X^{\text{two},1} = ((0,1), (0,-1))$ in red, and $X^{\text{two},2} = ((1,0), (-1,0))$ in green for the matrix $D=\text{diag}(-1, \lambda_2)$ with varying values of $\lambda_2$.}\label{fig:maxind}
\end{figure}
\section{Conclusion}\label{sec:conclusion}

In this work, we studied a mathematical model of self-attention layers used in the  transformer architecture. Building upon~\cite{geshkovski2023mathematical} we analyzed a continuum limit in the space of probability measures on a sphere.
In order to understand the underlying geometry, we studied a new optimal transport distance $W_{m,2}$ with a non-local mobility. We proved that the space of probability measures with this distance is a geodesic space and characterized absolutely continuous curves in this space. This allowed us to interpret  continuity equations \cref{eq:GradFlowPDE} as curves of maximal slope of the interaction energy and to analyze the large-time behavior using the energy dissipation property, showing that the dynamics converge to a stationary point of the interaction energy.

We analyzed these critical points (in particular, minimizers and maximizers) for various types of interactions determined by the matrix $D$ in~\eqref{eq:energy}. These results are summarized in~\cref{tab:summary-minimizers}. 
We find that positions of stationary points are strongly connected to normalized eigenvectors of $D$, which form a strict subset of $\S$ in the case $D \neq \lambda\,\id$. In other words, the regions where clusters appear do
not only depend on the initial configuration, but also on the interaction matrix itself. This could be related to mode collapse often observed in practice.
It is an interesting question to understand whether an alternative, rotation-invariant architecture could prevent mode collapse.

\begin{table}
	\centering%
\begin{tabular}{l | c | c}
 Property of $D$ & Minimizers & Maximizers \\ 
 \toprule
 positive definite & symmetric w.r.t. all eigenvectors& $\mu = \delta_{z_{\max}}$\\
 & (\cref{cor:psd-symmetric}, \cref{sec:num_minposdef}) & (\cref{thm:Diracmaxmin}, \cref{sec:num_diracmaxis})\\
 \midrule%
 positive semi-definite & any $\mu$ concentrated on $\mathcal{N}(D)$&$\mu = \delta_{z_{\max}}$\\
 & (\cref{thm:minmaxindefinite}, \cref{sec:num_minposdef}) & (\cref{thm:Diracmaxmin}
 )\\
\midrule%
negative (semi-)definite & $\mu = \delta_{z_{\min}}$ &  $\mu = 1/2\,(\delta_{z_{\min}} + \delta_{-z_{\min}} )$ \\  &
(\cref{thm:Diracmaxmin}) & (\cref{cor:ndsymm}, \cref{sec:num_diracmaxis_neg})\\
\midrule
indefinite & $\mu = \delta_{z_{\min}}$ & $|\lambda_{\max}|$ maximal: $\mu = \delta_{z_{\max}}$\\
& (\cref{thm:minmaxindefinite}) & (\cref{thm:Diracmaxmin}, \cref{sec:num_diracmaxis_neg})\\
\bottomrule%
\end{tabular}
\caption{Summary of results on minimizers/maximizers of the interaction energy~\eqref{eq:energy}. We denote by $z_{\min}$ and $z_{\max}$ the eigenvectors that correspond to the smallest, respectively largest, eigenvalue of $D$.}
\label{tab:summary-minimizers}
\end{table}

Several further questions remain open for future work: 
as already discussed, it would be interesting to study the optimal transport distance for mobilities $m_\mu$ that cannot be bounded from below,  which is the case, for example, in problems of opinion dynamics where the  Gaussian kernel on the Euclidean space is often used. In this case the metric $W_{m,2}$ is no longer equivalent to $W_2$.
So far we have only shown that \cref{eq:GradFlowPDE} are gradient flows in $(\mathcal{P}(M), W_{m,2})$ using the concept of curves of maximal slope. We do not know if these curves  satisfy the slightly stronger energy variational inequality (EVI), which would yield an easy stability estimate for solutions of \cref{eq:GradFlowPDE}.

From a practical point of view, an even more interesting direction is studying more general flows in $W_{m,2}$ that correspond to non-symmetrical matrices $D$ in~\eqref{eq:energy}, which is common in transformer architectures. As mentioned above, basic properties of the distance carry over to the non-symmetric case but characterizing the stationary states is non-trivial; one possibility is splitting the effective velocity fields into a dissipative and a (generalized) divergence-free part, similarly to non-symmetric Fokker--Planck equations.

Finally, to justify the use of the continuum limit for studying practical behavior of transformers one 
needs to establish convergence of discrete time-stepping in arbitrary time intervals.
Moreover, it is worth studying how the step size influences the behavior of the system and what effect weight-sharing would have.



\ack{
MB and TR acknowledge funding by the German Ministry of Science and Technology (BMBF) under grant agreement No. 01IS24072A (COMFORT).
MB, SK, TR and LW acknowledge support from DESY (Hamburg,
Germany), a member of the Helmholtz Association HGF. This research was
supported in part through the Maxwell computational resources operated
at Deutsches Elektronen-Synchrotron DESY, Hamburg, Germany. MB, SK acknowledge support from the German Research Foundation, project BU 2327/19-1. MB and LW acknowledge support from the German Research Foundation, project BU 2327/20-1. 
YK acknowledges support from the German Research Foundation as visiting fellow within the priority programme Foundations of Deep Learning. Part of this study was carried out while SK and TR were visiting the California institute of technology, supported by the DAAD grant for project 57698811 \enquote{Bayesian Computations for Large-scale (Nonlinear) Inverse Problems in Imaging}.
YK acknowledges the support of the EPSRC (Fellowship EP/V003615/2 and Programme Grant EP/V026259/1).
SK and YK are grateful for the hospitality of the University of Bath during the workshop \enquote{Machine Learning in Infinite Dimensions}, sponsored by the ICMS, LMS, IMI Bath, ProbAI, and Maths4DL, where part of this work was undertaken.
}

\sloppy
\printbibliography

@inproceedings{sander2022sinkformers,
  title={Sinkformers: Transformers with doubly stochastic attention},
  author={Sander, Michael E and Ablin, Pierre and Blondel, Mathieu and Peyr{\'e}, Gabriel},
  booktitle={International Conference on Artificial Intelligence and Statistics},
  pages={3515--3530},
  year={2022}
}

@article{burger2008large,
  title={Large time behavior of nonlocal aggregation models with nonlinear diffusion},
  author={Burger, Martin and Di Francesco, Marco},
  journal={Networks and Heterogeneous Media},
  volume={3},
  number={4},
  pages={749--785},
  year={2008},
  publisher={American Institute of Mathematical Sciences}
}

@article{nguyen2024primal,
  title={A primal-dual framework for transformers and neural networks},
  author={Nguyen, Tan M and Nguyen, Tam and Ho, Nhat and Bertozzi, Andrea L and Baraniuk, Richard G and Osher, Stanley J},
  journal={arXiv:2406.13781},
  year={2024}
}

@article{kan2025ot,
  title={OT-Transformer: A Continuous-time Transformer Architecture with Optimal Transport Regularization},
  author={Kan, Kelvin and Li, Xingjian and Osher, Stanley},
  journal={arXiv preprint arXiv:2501.18793},
  year={2025}
}

@article{gomez2024beginner,
  title={Beginner’s guide to aggregation-diffusion equations},
  author={G{\'o}mez-Castro, David},
  journal={SeMA Journal},
  pages={1--57},
  year={2024},
  publisher={Springer}
}

@article{piccoli2021generalized,
  title={Generalized solutions to bounded-confidence models},
  author={Piccoli, Benedetto and Rossi, Francesco},
  journal={Mathematical Models and Methods in Applied Sciences},
  volume={31},
  number={06},
  pages={1237--1276},
  year={2021},
  publisher={World Scientific}
}

@InProceedings{castin2024howsmooth,
  title = 	 {How Smooth Is Attention?},
  author =       {Castin, Val\'{e}rie and Ablin, Pierre and Peyr\'{e}, Gabriel},
  booktitle = 	 {Proceedings of the 41st International Conference on Machine Learning},
  pages = 	 {5817--5840},
  year = 	 {2024},
  editor = 	 {Salakhutdinov, Ruslan and Kolter, Zico and Heller, Katherine and Weller, Adrian and Oliver, Nuria and Scarlett, Jonathan and Berkenkamp, Felix},
  volume = 	 {235},
  series = 	 {Proceedings of Machine Learning Research},
  month = 	 {21--27 Jul},
  publisher =    {PMLR},
  pdf = 	 {https://raw.githubusercontent.com/mlresearch/v235/main/assets/castin24a/castin24a.pdf},
  url = 	 {https://proceedings.mlr.press/v235/castin24a.html},
}

@article{alcalde2024clustering,
  title={Clustering in pure-attention hardmax transformers and its role in sentiment analysis},
  author={Alcalde, Albert and Fantuzzi, Giovanni and Zuazua, Enrique},
  journal={arXiv preprint arXiv:2407.01602},
  year={2024}
}

@article{chizat2018global,
  title={On the global convergence of gradient descent for over-parameterized models using optimal transport},
  author={Chizat, Lenaic and Bach, Francis},
  journal={Advances in neural information processing systems},
  volume={31},
  year={2018}
}

@article{ding2021global,
  title={On the global convergence of gradient descent for multi-layer resnets in the mean-field regime},
  author={Ding, Zhiyan and Chen, Shi and Li, Qin and Wright, Stephen},
  journal={arXiv:2110.02926},
  year={2021}
}

@article{karagodin2024clustering,
  title={Clustering in causal attention masking},
  author={Karagodin, Nikita and Polyanskiy, Yury and Rigollet, Philippe},
  journal={arXiv preprint arXiv:2411.04990},
  year={2024}
}

@article{duncan2023geometry,
  title={On the geometry of Stein variational gradient descent},
  author={Duncan, Andrew and N{\"u}sken, Nikolas and Szpruch, Lukasz},
  journal={Journal of Machine Learning Research},
  volume={24},
  number={56},
  pages={1--39},
  year={2023}
}

@article{burger2023covariance,
  title={Covariance-modulated optimal transport and gradient flows},
  author={Burger, Martin and Erbar, Matthias and Hoffmann, Franca and Matthes, Daniel and Schlichting, Andr{\'e}},
  journal={arXiv:2302.07773},
  year={2023}
}

@article{lisini2012cahn,
  title={Cahn--Hilliard and thin film equations with nonlinear mobility as gradient flows in weighted-Wasserstein metrics},
  author={Lisini, Stefano and Matthes, Daniel and Savar{\'e}, Giuseppe},
  journal={Journal of differential equations},
  volume={253},
  number={2},
  pages={814--850},
  year={2012},
  publisher={Elsevier}
}

@article{shu2024wasserstein,
  title={Wasserstein-infinity stability and mean field limit of discrete interaction energy minimizers},
  author={Shu, Ruiwen},
  journal={arXiv:2407.18395},
  year={2024}
}

@article{simione2015existence,
  title={Existence of ground states of nonlocal-interaction energies},
  author={Simione, Robert and Slep{\v{c}}ev, Dejan and Topaloglu, Ihsan},
  journal={Journal of Statistical Physics},
  volume={159},
  number={4},
  pages={972--986},
  year={2015},
  publisher={Springer}
}

@article{carrillo2014global,
  title={On global minimizers of repulsive--attractive power-law interaction energies},
  author={Carrillo, Jos{\'e} Antonio and Chipot, Michel and Huang, Yanghong},
  journal={Philosophical Transactions of the Royal Society A: Mathematical, Physical and Engineering Sciences},
  volume={372},
  number={2028},
  pages={20130399},
  year={2014},
  publisher={The Royal Society Publishing}
}

@article{geshkovski2024measure,
  title={Measure-to-measure interpolation using Transformers},
  author={Geshkovski, Borjan and Rigollet, Philippe and Ruiz-Balet, Dom{\`e}nec},
  journal={arXiv preprint arXiv:2411.04551},
  year={2024}
}

@article{canizo2024discrete,
  title={Discrete minimizers of the interaction energy in collective behavior: a brief numerical and analytic review},
  author={Ca{\~n}izo, Jos{\'e} A and Ramos-Lora, Alejandro},
  journal={arXiv:2403.00594},
  year={2024}
}

@inproceedings{carrillo2017geometry,
  title={Geometry of minimizers for the interaction energy with mildly repulsive potentials},
  author={Carrillo, JA and Figalli, Alessio and Patacchini, Francesco S},
  booktitle={Annales de l'IHP Analyse non lin{\'e}aire},
  volume={34},
  number={5},
  pages={1299--1308},
  year={2017}
}

@article{viswanathan2025geometry,
  title={The Geometry of Tokens in Internal Representations of Large Language Models},
  author={Viswanathan, Karthik and Gardinazzi, Yuri and Panerai, Giada and Cazzaniga, Alberto and Biagetti, Matteo},
  journal={arXiv preprint arXiv:2501.10573},
  year={2025}
}

@article{calvello2024continuum,
  title={Continuum Attention for Neural Operators},
  author={Calvello, Edoardo and Kovachki, Nikola B and Levine, Matthew E and Stuart, Andrew M},
  journal={arXiv:2406.06486},
  year={2024}
}

@article{vuckovic2020mathematical,
  title={A mathematical theory of attention},
  author={Vuckovic, James and Baratin, Aristide and Combes, Remi Tachet des},
  journal={arXiv:2007.02876},
  year={2020}
}

@article{fields2023vision,
  title={Vision language transformers: A survey},
  author={Fields, Clayton and Kennington, Casey},
  journal={arXiv:2307.03254},
  year={2023}
}

@book{van1995python, 
  title={Python tutorial}, 
  author={Van Rossum, Guido and Drake Jr, Fred L}, 
  year={1995}, 
  publisher={Centrum voor Wiskunde en Informatica Amsterdam, The Netherlands} 
}

@article{paszke2019pytorch,
  title={Pytorch: An imperative style, high-performance deep learning library},
  author={Paszke, Adam and others},
  journal={Advances in neural information processing systems},
  volume={32},
  year={2019}
}

@article{kivinen1997exponentiated,
  title={Exponentiated gradient versus gradient descent for linear predictors},
  author={Kivinen, Jyrki and Warmuth, Manfred K},
  journal={Information and Computation},
  volume={132},
  number={1},
  pages={1--63},
  year={1997},
  publisher={Elsevier}
}

@article{li2021hessian,
  title={Hessian metric via transport information geometry},
  author={Li, Wuchen},
  journal={Journal of Mathematical Physics},
  volume={62},
  number={3},
  year={2021},
  publisher={AIP Publishing}
}

@book{marchuk1986numerical,
  title={Numerical methods in the theory of neutron transport},
  author={Marchuk, {GL} and Lebedev, Viacheslav Ivanovich},
  year={1986},
  publisher={Harwood Academic Pub., New York, NY}
}

@Article{harris2020array,
 title         = {Array programming with {NumPy}},
 author        = {Charles R. Harris and others},
 year          = {2020},
 month         = sep,
 journal       = {Nature},
 volume        = {585},
 number        = {7825},
 pages         = {357--362},
 doi           = {10.1038/s41586-020-2649-2},
 publisher     = {Springer},
 url           = {https://doi.org/10.1038/s41586-020-2649-2}
}

@ARTICLE{2020SciPy-NMeth,
author  = {Virtanen, Pauli and others},
title   = {{{SciPy} 1.0: Fundamental Algorithms for Scientific Computing in Python}},
journal = {Nature Methods},
year    = {2020},
volume  = {17},
  pages   = {261--272},
  doi     = {10.1038/s41592-019-0686-2},
}

@article{alcalde2025exact,
  title={Exact Sequence Classification with Hardmax Transformers},
  author={Alcalde, Albert and Fantuzzi, Giovanni and Zuazua, Enrique},
  journal={arXiv preprint arXiv:2502.02270},
  year={2025}
}

@article{castin2025unified,
  title={A Unified Perspective on the Dynamics of Deep Transformers},
  author={Castin, Val{\'e}rie and Ablin, Pierre and Carrillo, Jos{\'e} Antonio and Peyr{\'e}, Gabriel},
  journal={arXiv preprint arXiv:2501.18322},
  year={2025}
}

@article{abella2024asymptotic,
  title={The Asymptotic Behavior of Attention in Transformers},
  author={Abella, {\'A}lvaro Rodr{\'\i}guez and Silvestre, Jo{\~a}o Pedro and Tabuada, Paulo},
  journal={arXiv preprint arXiv:2412.02682},
  year={2024}
}

@article{bilyk2016geodesic,
      title={Geodesic distance Riesz energy on the sphere}, 
      author={Dmitriy Bilyk and Feng Dai},
      year={2016},
    journal={arXiv:1612.08442}
}

@article{bilyk2022positive,
  title={Positive definiteness and the Stolarsky invariance principle},
  author={Bilyk, Dmitriy and Matzke, Ryan W and Vlasiuk, Oleksandr},
  journal={Journal of Mathematical Analysis and Applications},
  volume={513},
  number={2},
  pages={126220},
  year={2022},
  publisher={Elsevier}
}

@inproceedings{wu2023multimodal,
  title={Multimodal large language models: A survey},
  author={Wu, Jiayang and Gan, Wensheng and Chen, Zefeng and Wan, Shicheng and Philip, S Yu},
  booktitle={2023 IEEE International Conference on Big Data (BigData)},
  pages={2247--2256},
  year={2023},
  organization={IEEE}
}

@article{deffuant2000mixing,
  title={Mixing beliefs among interacting agents},
  author={Deffuant, Guillaume and Neau, David and Amblard, Frederic and Weisbuch, G{\'e}rard},
  journal={Advances in Complex Systems},
  volume={3},
  number={01n04},
  pages={87--98},
  year={2000},
  publisher={World Scientific}
}

@article{criscitiello2024synchronization,
  title={Synchronization on circles and spheres with nonlinear interactions},
  author={Criscitiello, Christopher and Rebjock, Quentin and McRae, Andrew D and Boumal, Nicolas},
  journal={arXiv:2405.18273},
  year={2024}
}

@article{thorpe2023deep,
  title={Deep limits of residual neural networks},
  author={Thorpe, Matthew and van Gennip, Yves},
  journal={Research in the Mathematical Sciences},
  volume={10},
  number={1},
  pages={6},
  year={2023},
  publisher={Springer}
}

@article{geshkovski2023mathematical,
  title={A mathematical perspective on transformers},
  author={Geshkovski, Borjan and Letrouit, Cyril and Polyanskiy, Yury and Rigollet, Philippe},
  journal={arXiv:2312.10794},
  year={2023}
}

@book{ambrosio2000functions,
  title={Functions of bounded variation and free discontinuity problems},
  author={Ambrosio, Luigi and Fusco, Nicola and Pallara, Diego},
  year={2000},
  publisher={Oxford University Press}
}

@article{dolbeault2009new,
  title={A new class of transport distances between measures},
  author={Dolbeault, Jean and Nazaret, Bruno and Savar{\'e}, Giuseppe},
  journal={Calculus of Variations and Partial Differential Equations},
  volume={34},
  number={2},
  pages={193--231},
  year={2009},
  publisher={Springer}
}

@article{bruno2024emergence,
  title={Emergence of meta-stable clustering in mean-field transformer models},
  author={Bruno, Giuseppe and Pasqualotto, Federico and Agazzi, Andrea},
  journal={arXiv preprint arXiv:2410.23228},
  year={2024}
}

@article{hegselmann2002opinion,
  title={Opinion dynamics and bounded confidence models, analysis, and simulation},
  author={Hegselmann, Rainer and Krause, Ulrich},
  journal={Journal of artificial societies and social simulation},
  volume={5},
  number={3},
  year={2002}
}

@article{gomez2012bounded,
  title={The bounded confidence model of opinion dynamics},
  author={G{\'o}mez-Serrano, Javier and Graham, Carl and Le Boudec, Jean-Yves},
  journal={Mathematical Models and Methods in Applied Sciences},
  volume={22},
  number={02},
  pages={1150007},
  year={2012},
  publisher={World Scientific}
}

@book{golubmatrix2013,
author = {Golub, Gene H. and Van Loan, Charles F.},
title = {Matrix Computations},
edition = {$4^{th}$},
publisher = {Johns Hopkins University Press},
year = {2013},
doi = {10.1137/1.9781421407944},
address = {Philadelphia, PA},
}

@book{Lee2012,
  title={Introduction to Smooth Manifolds},
  author={Lee, John M},
  year={2012},
  publisher={Springer}
}

@book{ambrosio2008gradient,
  title={Gradient flows: in metric spaces and in the space of probability measures},
  author={Ambrosio, Luigi and Gigli, Nicola and Savar{\'e}, Giuseppe},
  year={2008},
  publisher={Springer}
}

@article{geshkovski2024dynamic,
  title={Dynamic metastability in the self-attention model},
  author={Geshkovski, Borjan and Koubbi, Hugo and Polyanskiy, Yury and Rigollet, Philippe},
  journal={arXiv preprint arXiv:2410.06833},
  year={2024}
}

@article{vaswani2017attention,
  title={Attention is all you need},
  author  = {Ashish Vaswani and Noam Shazeer and Niki Parmar and Jakob Uszkoreit and Llion Jones and Aidan N. Gomez and Lukasz Kaiser and Illia Polosukhin},
  journal={Advances in Neural Information Processing Systems},
  year={2017}
}

@article{bahdanau2014neural,
  title={Neural machine translation by jointly learning to align and translate},
  author={Bahdanau, Dzmitry},
  journal={arXiv:1409.0473},
  year={2014}
}

@article{achiam2023gpt,
  title={{GPT-4} technical report},
  journal={arXiv:2303.08774},
  year={2023},
  author={{OpenAI}}
}

@inproceedings{esser2024scaling,
      title={Scaling Rectified Flow Transformers for High-Resolution Image Synthesis}, 
      author={Patrick Esser and others},
      year={2024},
      booktitle={Forty-first International Conference on Machine Learning}
}

@article{abramson2024accurate,
  title={Accurate structure prediction of biomolecular interactions with AlphaFold 3},
  author={Abramson, Josh  and others},
  journal={Nature},
  pages={1--3},
  year={2024},
  publisher={Nature Publishing Group UK London}
}

@article{jumper2021highly,
	author = {Jumper, John and others},
	date = {2021/08/01},
	date-added = {2024-12-23 15:14:57 +0000},
	date-modified = {2024-12-23 15:14:57 +0000},
	doi = {10.1038/s41586-021-03819-2},
	id = {Jumper2021},
	isbn = {1476-4687},
	journal = {Nature},
	number = {7873},
	pages = {583--589},
	title = {Highly accurate protein structure prediction with AlphaFold},
	url = {https://doi.org/10.1038/s41586-021-03819-2},
	volume = {596},
	year = {2021},
	bdsk-url-1 = {https://doi.org/10.1038/s41586-021-03819-2}}

@article{zhang2019root,
  title={Root mean square layer normalization},
  author={Zhang, Biao and Sennrich, Rico},
  journal={Advances in Neural Information Processing Systems},
  volume={32},
  year={2019}
}

@article{touvron2023llamaopenefficientfoundation,
  title={Llama: Open and efficient foundation language models},
  author={Touvron, Hugo and others},
  journal={arXiv:2302.13971},
  year={2023}
}

@InProceedings{ioffe15batch,
  title = 	 {Batch Normalization: Accelerating Deep Network Training by Reducing Internal Covariate Shift},
  author = 	 {Ioffe, Sergey and Szegedy, Christian},
  booktitle = 	 {Proceedings of the 32nd International Conference on Machine Learning},
  pages = 	 {448--456},
  year = 	 {2015},
  editor = 	 {Bach, Francis and Blei, David},
  volume = 	 {37},
  address = 	 {Lille, France}
}

@article{ba2016layernormalization,
  title={Layer normalization},
  author={Lei Ba, Jimmy and Kiros, Jamie Ryan and Hinton, Geoffrey E},
  journal={arXiv:1607.06450},
  year={2016}
}

@book{folland1999real,
  title={Real analysis: modern techniques and their applications},
  author={Folland, Gerald B},
  volume={40},
  year={1999},
  publisher={John Wiley \& Sons}
}

@article{wright2021transformers,
  title={Transformers are deep infinite-dimensional non-mercer binary kernel machines},
  author={Wright, Matthew A and Gonzalez, Joseph E},
  journal={arXiv:2106.01506},
  year={2021}
}

@book{evans2022partial,
  title={Partial differential equations},
  edition = {$2^{nd}$},
  author={Evans, Lawrence C},
  volume={19},
  year={2010},
  publisher={American Mathematical Society}
}

@book{spivak2018calculus,
  title={Calculus on manifolds: a modern approach to classical theorems of advanced calculus},
  author={Spivak, Michael},
  year={2018},
  publisher={CRC press}
}

@InProceedings{he2016resnets,
author={He, Kaiming
and Zhang, Xiangyu
and Ren, Shaoqing
and Sun, Jian},
editor={Leibe, Bastian
and Matas, Jiri
and Sebe, Nicu
and Welling, Max},
title={Identity Mappings in Deep Residual Networks},
booktitle={Computer Vision -- ECCV 2016},
year={2016},
publisher={Springer},
pages={630--645},
}

@article{e2017dynamical,
title = {A Proposal on Machine Learning via Dynamical
Systems},
  author={Weinan E},
  journal={Communications in Mathematics and Statistics},
  volume={5},
  year={2017}
}

@article{haber2017stable,
  title={Stable architectures for deep neural networks},
  author={Haber, Eldad and Ruthotto, Lars},
  journal={Inverse problems},
  volume={34},
  number={1},
  pages={014004},
  year={2017},
  publisher={IOP Publishing}
}

@article{chen2018neural,
  title={Neural ordinary differential equations},
  author={Chen, Ricky TQ and Rubanova, Yulia and Bettencourt, Jesse and Duvenaud, David K},
  journal={Advances in neural information processing systems},
  volume={31},
  year={2018}
}

@article{Burger2013,
  title = {Stationary states of quadratic diffusion equations with long-range attraction},
  volume = {11},
  ISSN = {1945-0796},
  url = {http://dx.doi.org/10.4310/CMS.2013.v11.n3.a3},
  DOI = {10.4310/cms.2013.v11.n3.a3},
  number = {3},
  journal = {Communications in Mathematical Sciences},
  publisher = {International Press of Boston},
  author = {Burger,  Martin and Francesco,  Marco di and Franek,  Marzena},
  year = {2013},
  pages = {709–738}
}

@article{benamou2000computational,
  title={A computational fluid mechanics solution to the Monge-Kantorovich mass transfer problem},
  author={Benamou, Jean-David and Brenier, Yann},
  journal={Numerische Mathematik},
  volume={84},
  number={3},
  pages={375--393},
  year={2000},
  publisher={Springer}
}

@article{fasshauer2011positive,
  title={Positive definite kernels: past, present and future},
  author={Fasshauer, Gregory E},
journal={Dolomites Research Notes on Approximation},
year ={2011},
volume={4.2}
}

@article{blumenson1960spherical,
 ISSN = {00029890, 19300972},
 URL = {http://www.jstor.org/stable/2308932},
 author = {L. E. Blumenson},
 journal = {The American Mathematical Monthly},
 number = {1},
 pages = {63--66},
 publisher = {[Taylor & Francis, Ltd., Mathematical Association of America]},
 title = {A Derivation of n-Dimensional Spherical Coordinates},
 urldate = {2025-01-04},
 volume = {67},
 year = {1960}
}

@inproceedings{lu2020understanding,
  title={Understanding and Improving Transformer From a Multi-Particle Dynamic System Point of View.},
  author={Lu, Yiping and Li, Zhuohan and He, Di and Sun, Zhiqing and Dong, Bin and Qin, Tao and Wang, Liwei and Liu, Tie-yan},
  booktitle={ICLR 2020 Workshop on Integration of Deep Neural Models and Differential Equations}
}

@article{dutta2021redesigning,
  title={Redesigning the transformer architecture with insights from multi-particle dynamical systems},
  author={Dutta, Subhabrata and Gautam, Tanya and Chakrabarti, Soumen and Chakraborty, Tanmoy},
  journal={Advances in Neural Information Processing Systems},
  volume={34},
  pages={5531--5544},
  year={2021}
}

\appendix
\section{Proofs of \cref{sec:gradientflow}}
\subsection{Continuity equation on manifolds}\label{app:ContEq}

Let $M$ be a compact, $n$-dimensional Riemannian manifold and  $TM= \sqcup_{x\in M} T_xM$ its tangent bundle. Although $TM$ is not a vector space, the tangent bundle $TM$ itself can be considered as an $2n$-dimensional Riemannian manifold.
For its proper definition and the topology on $TM$ we refer to
\cite[Chapter 3 (The Tangent Bundle)]{Lee2012}. Velocity fields on manifolds are maps $V:M \rightarrow TM$ such that $\pi\circ V=Id_{M}$, where $\pi:TM \rightarrow M$ is the projection map sending each vector in $T_xM$ to $x$. We will regularly commit the mild crime of interpreting $V(x)$ as an element in $T_xM$ instead of $TM$.
Let $I=(0,T)$ be an open interval, ${(\mu_t)}_{t\in I}$ be a Borel family  of probability measures on $M$ and $v: (x,t)\in M\times I \mapsto v_t(x) \in T_xM$ be a time dependent Borel velocity field such that
\begin{align}\label{eq:VBound}
    \int \int  |v_t(x)| \d\mu_t \d t<\infty,
\end{align}
where $|\cdot|: TM_x \rightarrow [0,+\infty)$ denotes the norm induced by the inner product of the Riemannian structure.
The continuity equation holds in the sense of distributions if
\begin{align}\label{eq:CEA}
    \int_{(0,T)} \int_M \partial_t \varphi(x,t)+ \langle \mathcal{D} \varphi(x,t)  ,v_t(x)\rangle \d\mu_t \d t=0 \quad \forall \varphi\in C_c^1(M\times(0,T)).
\end{align}
Here $\mathcal{D}\varphi$ denotes the differential of the map $x\in M\mapsto \varphi(t,x)$ for a fixed $t\in [0,T]$.

\begin{prop}[Properties]\label{pro:Pro} Solutions to the continuity equation have the following properties:
\begin{itemize}
    \item \textbf{Continuous representative:} Let $\mu_t$ be a Borel family of probability measures satisfying \cref{eq:CEA} for a Borel vector field $v_t$ satisfying \cref{eq:VBound}. Then there
 exists a narrowly continuous curve $t\in[0,T] \rightarrow \tilde{\mu}_t\in\mathcal{P}(M)$ such that $\mu_t=\tilde{\mu}_t$ for a.e. $t\in(0,T)$. Moreover, if $\varphi\in C^1_c(M\times [0,T])$ and $s\leq r\in[0,T]$ we have~\cite[Lemma 8.1.2]{ambrosio2008gradient}
 \begin{align}\label{eq:BInt}
     \int_M \varphi(x,r) \d\tilde{\mu}_r-\int_M \varphi(x,s) \d\tilde{\mu}_s=\int_s^r \int_M\partial_t \varphi+\mathcal{D}\varphi(v_t)  \d\mu_t \d t.
 \end{align}
    \item \textbf{Time rescaling:} Let $t: s\in[0,T'] \rightarrow t(s)\in[0,T]$ be a strictly increasing absolutely continuous map with absolutely continuous inverse $s\coloneqq t^{-1}$.
 Then $(\mu_t,v_t)$ is a distributional solution of the continuity equation if and only if~\cite[Lemma 8.1.3]{ambrosio2000functions}
 \begin{align*}
    \hat{\mu}\coloneqq\mu\circ t,\  \hat{v}\coloneqq t'v\circ t \; \text{ is a distributional solution of the continuity equation on } (0,T').
 \end{align*}
    \item \textbf{Gluing solutions:} Let $\{\mu_t\}_{t\in[0,T_1]}, \{\nu_t\}_{t\in[0,T_2]}$ be two narrowly continuous curves in $\mathcal{P}(M)$ with $\mu_{T_1}=\nu_0$.
    Let further $\{v\}_{t\in[0,T_1]},\{w\}_{t\in[0,T_2]}$ be the corresponding Borel velocity fields such that \cref{eq:BInt} is satisfied. Then $\{\eta_t\}_{t\in[0,T_1+T_2]}$ and $\{u_t\}_{t\in[0,T_1+T_2]}$ defined by
    \begin{align*}
        \eta_t\coloneqq
        \begin{cases}
            \mu_t &\text{if } t\in [0,T_1],\\
            \nu_{t-T_1} &\text{if } t\in (T_1,T_1+T_2],
        \end{cases}
        \quad u_t\coloneqq
        \begin{cases}
            v_t &\text{if } t\in [0,T_1],\\
            w_{t-T_1} &\text{if } t\in (T_1,T_1+T_2]
        \end{cases}
    \end{align*}
    satisfy \cref{eq:BInt} \cite[Lemma 4.4]{dolbeault2009new}.
\end{itemize}
\end{prop}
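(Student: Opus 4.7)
The plan is to obtain all three properties by adapting the corresponding well-known results for the Euclidean continuity equation to the compact Riemannian manifold setting. Since $M$ is compact, we may fix a finite atlas $\{(U_\alpha, \phi_\alpha)\}$ with subordinate partition of unity $\{\rho_\alpha\}$; this allows us to localize the analysis when needed, but in fact most of the argument can be carried out intrinsically using only that $|\mathcal{D}\varphi|$ is uniformly bounded on $M$ for $\varphi \in C^1(M)$.

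For the continuous representative, I would first fix $\varphi \in C^1(M)$ and test \eqref{eq:CEA} against separated functions of the form $\varphi(x)\chi(t)$ with $\chi \in C^1_c(0,T)$. This shows that the map $F_\varphi: t \mapsto \int_M \varphi \,\d\mu_t$ has distributional derivative $t\mapsto \int_M \langle \mathcal{D}\varphi, v_t\rangle \,\d\mu_t$, which lies in $L^1(0,T)$ by \eqref{eq:VBound} and the uniform bound $\|\mathcal{D}\varphi\|_\infty<\infty$. Hence $F_\varphi$ admits an absolutely continuous representative $\widetilde{F}_\varphi$ on $[0,T]$ satisfying the analogue of \eqref{eq:BInt} for autonomous test functions. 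Choosing a countable dense subset $\{\varphi_k\}\subset C^1(M)$ (which exists as $M$ is compact Hausdorff and separable), I would then take a common Lebesgue null set $N\subset(0,T)$ outside of which all $\widetilde{F}_{\varphi_k}$ agree with $F_{\varphi_k}$. On $[0,T]\setminus N$, the Riesz representation theorem defines a probability measure $\widetilde{\mu}_t$ by $\varphi_k \mapsto \widetilde{F}_{\varphi_k}(t)$, and this family extends to all of $[0,T]$ by continuity; density of $C^1(M)$ in $C(M)$ yields narrow continuity. Passing to general, time-dependent $\varphi\in C_c^1(M\times[0,T])$ in \eqref{eq:BInt} follows by a standard approximation via tensor products $\sum_j \varphi_{k_j}(x)\chi_j(t)$.

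For the time-rescaling property, the approach is a direct change of variables in \eqref{eq:BInt}. Given a test function $\widehat{\varphi}(x,s)\in C^1_c(M\times(0,T'))$, set $\varphi(x,t) \defeq \widehat{\varphi}(x,s(t))$; this is admissible since $s$ is absolutely continuous, modulo a short mollification in time. Then $\partial_t \varphi = s'(t)\,\partial_s\widehat{\varphi}$ and $\mathcal{D}_x\varphi = \mathcal{D}_x\widehat{\varphi}$, and substituting $t=t(\sigma)$ with $\d t = t'(\sigma)\,\d\sigma$ turns the $\partial_t$-term into $\partial_\sigma \widehat{\varphi}$ (using $s'(t(\sigma))\,t'(\sigma)=1$) and the velocity-term into $t'(\sigma)\langle \mathcal{D}\widehat{\varphi},v_{t(\sigma)}\rangle = \langle \mathcal{D}\widehat{\varphi},\widehat{v}\rangle$. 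The equivalence of the two continuity equations is then immediate.

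For the gluing, I would verify \eqref{eq:BInt} directly for the concatenated pair $(\eta,u)$. For $\varphi\in C^1_c(M\times[0,T_1+T_2])$ and any $0\leq s\leq r\leq T_1+T_2$, split the integration interval at $T_1$ (if relevant) and apply \eqref{eq:BInt} separately to $(\mu,v)$ on $[s,T_1]$ and to $(\nu_{\cdot-T_1},w_{\cdot-T_1})$ on $[T_1,r]$, using $\mu_{T_1}=\nu_0$ to match the boundary terms. Telescoping gives the claim. The main obstacle I expect is in the first part: guaranteeing that a single narrowly continuous representative works for every continuous test function requires the careful countable-dense-family argument combined with a density extension, rather than pointwise manipulations, and also a clean justification that the tensor-product approximation passes to the limit inside \eqref{eq:BInt}.
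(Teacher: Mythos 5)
The paper gives no proof of this proposition at all: each item is justified solely by a citation (\cite[Lemma 8.1.2]{ambrosio2008gradient}, \cite[Lemma 8.1.3]{ambrosio2000functions}, \cite[Lemma 4.4]{dolbeault2009new}), the point being that the Euclidean arguments transfer verbatim to a compact manifold. Your sketch reconstructs exactly those standard arguments --- separated test functions and an $L^1$ distributional derivative, a countable dense family plus Riesz representation and continuity to build $\tilde{\mu}_t$, change of variables for the rescaling, and interval splitting with matching boundary terms for the gluing --- and it is correct; the only genuinely manifold-specific ingredient is the one you identify, namely that $|\mathcal{D}\varphi|$ is uniformly bounded on the compact $M$, so \cref{eq:VBound} makes the relevant derivative integrable. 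Two small technical points worth tightening if you wrote this out in full: to get a linear functional from the dense family $\{\varphi_k\}$ you should pass to its countable $\mathbb{Q}$-linear span, and the tensor-product approximation of $\varphi\in C^1_c(M\times[0,T])$ must converge in $C^1$ (not just uniformly) so that both $\partial_t\varphi$ and $\mathcal{D}\varphi$ pass to the limit in \cref{eq:BInt}.
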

\subsection{Proof of \cref{eq:Geo}}\label{app:Geo}
We follow the proof strategy from \cite{dolbeault2009new} for the \enquote{flat} Euclidean case but since $TM$ is not a vector space  modifications are required.
We start by establishing a compactness result for solutions of continuity equations with finite energy. 
For our purposes we define the \enquote{lifted} flux $J_t \in \mathcal{P}(TM \times M)$ in duality with $C_c(TM\times M)$ (see \cite[Theorem 7.2]{folland1999real}) by
\begin{align}\label{eq:Flux}
    \int_{TM\times M} \varphi(w,y ) \d J_t(w,y)=\int_M \int_M \varphi(v_t(x),y) \d\mu_t(x) \d\mu_t(y) \quad \forall \varphi\in C_c(TM\times M).
\end{align}
Notably, $(\mu_t,J_t)$ solve the continuity equation in the sense that for all $s\leq r\in[0,T]$ 
\begin{align}\label{eq:CoEqFl}
    \int_M \varphi_r \d\mu_r-\int_M \varphi_s \d\mu_s =\int_s^r\int_M \partial_t \varphi \d\mu_t \d t+\int_s^r \int_{TM\times M} \tilde{\mathcal{D} \varphi} \d J_t \d t \quad \forall \varphi\in C^1(M\times [0,T]),
\end{align}
where $\tilde{\mathcal{D} \varphi}: (w,y) \mapsto \langle\mathcal{D}\varphi(\pi(w)),w\rangle$ is the extension of $\mathcal{D}\varphi$ onto $TM \times M$ that is constant along $y\in M$. 
Further, we define $\bm{J}\in\mathcal{P}(TM\times M \times[0,T])$ in duality with $C_c(TM\times M\times[0,T])$ by 
 \begin{align*}
 \int_{TM\times M \times (0,T)} \varphi \d\bm{J}=\int_0^T\int_{TM\times M} \varphi \d J_t \d t \quad\forall \varphi\in C_c(TM\times M \times [0,T]).
\end{align*}

\begin{lemma}\label{lm:BEComp} Let $(\mu^n,v^n)$ be a sequence in $CE(0,T)$ with
\begin{align*}
\sup_n \left\{\int_0^1 \int_{M} m_\mu(x)|v_t^n(x)|^2\d\mu_t^n(x)\d t \right\}< +\infty.
\end{align*}
Then there exists a subsequence and a couple $(\mu, J)$ satisfying the continuity equation in the sense of \cref{eq:CoEqFl} such that
\begin{align*}
\mu_t^n \rightharpoonup \mu_t \quad \forall t\in[0,T]\qquad\text{and}\quad
    \bm{J}^n \rightharpoonup \bm{J}
\end{align*}
and for the map $g: (v,p)\in TM\times M \mapsto (\pi(v),p)$ one has
\begin{align} \label{eq:DisIntJ}
    g_\# J_t= \mu_t \otimes \mu_t\quad  \text{for a.e. } t\in(0,T).
\end{align}
\end{lemma}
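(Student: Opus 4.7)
The plan is to establish compactness via Arzelà--Ascoli for the time-dependent measures $\{\mu^n_t\}$ and via tightness for the lifted fluxes $\{\bm{J}^n\}$, and then to pass to the limit in the weak continuity equation \cref{eq:CoEqFl}.

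First, I would exploit the uniform lower bound on the mobility: since $K \geq C$ and each $\mu_t^n$ is a probability measure, \cref{ass:kernel} gives $m_{\mu_t^n}(x) \geq C$, which upgrades the kinetic energy bound to a uniform $L^2$ bound
$$\int_0^T \int_M |v_t^n(x)|^2 \, d\mu_t^n(x) \, dt \leq E/C.$$
By Cauchy--Schwarz on $(s,r) \times M$ with measure $d\mu_t^n\,dt$ (which has mass $r-s$), this yields the equi-Hölder estimate
$$\left|\int_M \varphi \, d\mu_r^n - \int_M \varphi \, d\mu_s^n\right| \leq \|\mathcal{D}\varphi\|_\infty \sqrt{E/C}\,\sqrt{r-s}$$
for every $\varphi \in C^1(M)$. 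Combined with narrow compactness of $\mathcal{P}(M)$ (since $M$ is compact) and a diagonal argument over a countable dense family in $C(M)$, Arzelà--Ascoli produces a subsequence and a narrowly continuous curve $\mu$ such that $\mu_t^n \rightharpoonup \mu_t$ for every $t \in [0,T]$.

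Next, I would show narrow compactness of $\bm{J}^n$ on $TM \times M \times [0,T]$. The base $M \times [0,T]$ is already compact, so the only non-compact direction is the fiber of $TM$. The second-moment bound
$$\int |w|^2 \, d\bm{J}^n(w,y,t) = \int_0^T \int_M |v_t^n(x)|^2 \, d\mu_t^n(x) \, dt \leq E/C,$$
combined with Chebyshev's inequality applied to the closed disc bundle $\{|w| \leq R\}$, controls the escape of mass to infinity and yields tightness, hence a narrowly convergent subsequence $\bm{J}^n \rightharpoonup \bm{J}$. Since each $J_t^n$ is a probability measure (take $\varphi\equiv 1$ in \cref{eq:Flux}), the time marginal of every $\bm{J}^n$ equals Lebesgue measure on $[0,T]$, and so does that of $\bm{J}$; disintegration then produces a Borel family $(J_t)_{t \in [0,T]}$ with $\bm{J} = J_t \otimes dt$.

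Finally, I would pass to the limit in \cref{eq:CoEqFl}, which is linear in $(\mu, J)$. The only technical point is that the relevant integrand $\tilde{\mathcal{D}\varphi}(w,y) = \langle \mathcal{D}\varphi(\pi(w)), w\rangle$ is only continuous with linear growth in $|w|$, so plain narrow convergence of $\bm{J}^n$ is not sufficient; however, the uniform second-moment bound supplies the needed uniform integrability by a standard Vitali-type argument. The identity $g_\# J_t = \mu_t \otimes \mu_t$ for a.e. $t$ is then immediate: the equality $g_\# J_t^n = \mu_t^n \otimes \mu_t^n$ holds by the defining formula \cref{eq:Flux}, and it passes to the limit as $g_\# \bm{J} = (\mu_t \otimes \mu_t) \otimes dt$ on $M \times M \times [0,T]$ using the pointwise-in-$t$ narrow convergence $\mu_t^n \rightharpoonup \mu_t$. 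The main obstacle throughout is the non-compactness of the velocity fibers of $TM$, both for tightness of $\bm{J}^n$ and for testing against the linearly-growing $\tilde{\mathcal{D}\varphi}$; in both places the obstruction is removed by the uniform kinetic energy bound, which is itself available thanks to the lower bound on the mobility in \cref{ass:kernel}.
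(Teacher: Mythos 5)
Your proof is correct and follows the same overall strategy as the paper: the lower bound on $K$ from \cref{ass:kernel} upgrades the kinetic bound to a uniform second-moment bound on the lifted fluxes, which gives tightness of $\bm{J}^n$ and the uniform integrability needed to pass to the limit in \cref{eq:CoEqFl} despite the linear growth of $\tilde{\mathcal{D}\varphi}$; the identity \cref{eq:DisIntJ} is then obtained exactly as in the paper by pushing forward and using the pointwise-in-$t$ narrow convergence. The one place where you diverge is how $\mu_t^n\rightharpoonup\mu_t$ for every $t$ is established: you prove it independently via the equi-H\"older estimate from \cref{eq:BInt} and Cauchy--Schwarz, followed by Arzel\`a--Ascoli and a diagonal argument over a countable dense family of test functions, whereas the paper first passes to the limit in the flux and then uses the continuity equation \cref{eq:CoEqFl} to show that all subsequential narrow limits of $\mu_t^n$ (which exist by compactness of $M$) coincide, being determined by $\mu_0$ and the limit flux. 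Your route has the small advantage of yielding the $\tfrac12$-H\"older continuity of the limit curve directly and not requiring the remark that $\bm{J}$ does not charge the time slices $\{0,t\}$; the paper's route avoids the diagonal extraction. Both are standard and equally valid here.
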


\begin{proof}
    \textbf{Step 1} (Convergence of $\bm{J}$)\textbf{:}\\
     The estimate 
    \begin{align*}
    \sup_n\int_0^T \int_{TM\times M} |w|^2 \d J_t^n(w,x) \d t \leq \frac{1}{C}    \sup_n\int_0^T \int_{TM} m_{\mu^n_t} |v^n_t|^2 \d\mu^n_t \d t<\infty
    \end{align*}
    combined with the fact that $M$ is compact and \cite[Remark 5.1.5]{ambrosio2008gradient} implies tightness of $\bm{J}\in \mathcal{P}(TM\times M\times [0,T])$.
By disintegrating $\bm{J}$ we obtain a Borel family $J_t$ such that $\d\bm{J}=\d J_t \d t$.
Since $M$ is compact $\mu_0^n$ is tight and we extract a further subsequence such that $\mu_0^n{\rightharpoonup} \mu_0$. \\
\textbf{Step 2} (Convergence of $\mu_t$)\textbf{:}\\
Consider a function $\varphi \in C^1(M)$ and for $t \in [0,T]$  set $\bm{\zeta}:(v,y,t)\in TM\times M\times[0,T]\mapsto \chi_{[0,t]} \langle \mathcal{D}\varphi(\pi(v)),v\rangle$.  Since the discontinuity set of $\bm{\zeta}$ is concentrated on $N=TM\times M\times \{0,t\}$ and $|F|(N)=0$, general convergence
 theorems (see, e.g. [3, Prop. 5.1.10]) imply
 \begin{align}\label{eq:LimIntJ}
     \lim_{n\rightarrow\infty} \int_0^t \int_{TM\times M} \tilde{\mathcal{D}\varphi} \d J^n_t \d t &=\lim_{n\rightarrow\infty}\int_{TM\times M\times [0,T]} \bm{\zeta} \d\bm{J}^n \nonumber \\
     &=\int_{TM\times M \times [0,T]} \bm{\zeta} \d\bm{J}=\int_0^t \int_{TM\times M} \tilde{\mathcal{D}\varphi} \d J_t \d t. 
 \end{align}
 Let us fix a $t\in(0,T]$. Since $M$ is compact, $\mu_t^n$ is tight and we can extract from any subsequence  a further subsequence such that  $\mu_t^n$ converges narrowly.
 Then by \cref{eq:CoEqFl} and \cref{eq:LimIntJ} and the fact that $C^1$ is dense in $C^0$ we know that for all subsequences have the same limit. Therefore, $\mu_t^n {\rightharpoonup}\mu_t\in \mathcal{P}(M)$ for a particular $\mu_t$. By the previous calculations we also immediately obtain that $(\mu,J)$ satisfy the continuity equation in the sense of \cref{eq:CoEqFl}.
 To show \cref{eq:DisIntJ} we observe that since $M$ compact
 \begin{align*}
g_{\#}J_t^n=\mu^n_t\otimes\mu^n_t {\rightharpoonup} \mu_t\otimes\mu_t \quad \forall t\in[0,T] .
 \end{align*}
\end{proof}


\begin{proof}[Proof of \cref{eq:Geo}] \textbf{Step 1:}\\
Let $(\mu^n,v^n) \in CE(0,1)$ be a minimizing sequence of \cref{eq:OpTrDi} for some $\mu_0,\mu_1$. Then the conditions of \cref{lm:BEComp} are met and we obtain that 
\begin{align*}
    \mu_t^n \rightharpoonup \mu_t \in \mathcal{P}(M) \quad \forall[0,T] \qquad \text{and} \qquad 
    \bm{J}^n \rightharpoonup \bm{J} \in\mathcal{P}(TM\times M\times[0,T]),
\end{align*}
where the limit satisfies the continuity equation in the sense of \cref{eq:CoEqFl}. Equation \cref{eq:DisIntJ} in particular implies that $\bm{J}$ can be disintegrated in the following way,
$$\d\bm{J}=\d u_{t,x}(v) \d\mu_t(x)\d\mu_t(y)\d t,$$
where $u_{t,x}(v) \in \mathcal{P}(TM_p= \pi^{-1}(x))$. Using \cite[Lemma 5.1.7]{ambrosio2008gradient} we now show that for $\Bar{u}_{x,t}=\textbf{Mean}(u_{t,x})=\int_{TM_x} v \d u_{t,x}(v)$ it holds that
\begin{align*}
    & W_{m,2}(\mu_0,\mu_1)^2 =\lim_{n\rightarrow \infty}\int_0^1\int_M m_{\mu^n_t}(x)|v^n_t(x)|^2 \d\mu_t^n(x) \d t \\
    =&\lim_{n\rightarrow \infty}\int_0^1\int_{TM \times M} K(\pi(v),y)|v|^2 \d J_t^n(v,y) \d t
    = \int_0^1\int_{TM \times M} K(\pi(v),y)|v|^2 \d J_t(v,y) \d t \\
    =&\int_0^1\int_M\int_M K(x,y)\d\mu_t(y) \int_{TM_p} |v|^2 \d u_{x,t}(v)   \d\mu_t(x) \d t \\
    \geq& \int_0^1\int_M m_{\mu_t}(x) \int_{TM_x} |v|^2  \d \delta_{\Bar{u}_{x,t}}(v)   \d\mu_t(x) \d t=\int_0^1\int_M m_{\mu_t}(x) |\Bar{u}_{x,t}|^2\d\mu_t(x)\d t,
\end{align*}
where in the last line we used Jensen's inequality. Since $\mathcal{D}\varphi(x): TM_x \rightarrow \R $ is linear and $(\mu,\bm{J})$ satisfy \cref{eq:CoEqFl} this implies that $\left(\mu,v=(\Bar{u}_{x,t})_{t\in[0,T]}\right)\in CE(0,1)$ and for this couple the infimum in \eqref{eq:OpTrDi} is obtained. \\
\textbf{Step 2:}\\
\cref{pro:Pro} and a linear time rescaling show that
\begin{align}\label{eq:AltDisDef}
    W_{m,2}^2(\mu_0,\mu_T)=\inf\left\{ T\int_0^T \int_M m_{\mu_t} |v_t|^2\d\mu_t\d t\ :\quad (\mu_t,v_t)\in CE(0,T; \mu_0\rightarrow\mu_T)\right\}.
\end{align}
We denote by $\Bar{W}_{m,2}(\mu,\nu)$ the infimum in \cref{eq:GeoRep} and show that indeed $\Bar{W}_{m,2}(\mu,\nu)=W_{m,2}(\mu,\nu)$. By Hölder's inequality we immediately obtain that $\Bar{W}_{m,2}(\mu,\nu)\leq W_{m,2}(\mu,\nu)$. To show the reverse we follow the arguments of \cite[Theorem 5.4]{dolbeault2009new} and define for $(\mu,v)\in CE(0,T; \mu \rightarrow \nu)$ 
\begin{align*}
    s_\epsilon(t)\coloneqq \int_0^t \left(\epsilon+\int_M m_{\mu_t} |v_t|^2 \d\mu_t\right)^{1/2}\d r\quad \text{for }t\in[0,T].
\end{align*}
Then $s_\epsilon$ is strictly increasing,  $s_\epsilon'\geq \epsilon$ and $s_\epsilon(0,T)=(0,S_\epsilon)$ with $S_\epsilon\coloneqq s_\epsilon(T)$, so that its inverse map $t_\epsilon: [0,S_\epsilon]\rightarrow [0,T]$ is well defined and Lipschitz continuous and
\begin{align*}
    t'_\epsilon \circ s_\epsilon\coloneqq \left(\epsilon+\int_{M} m_{\mu_t} |v_t|^2\d\mu_t\right)^{-1/2} \quad \text{for a.e. } t\in (0,T).
\end{align*}
By \cref{pro:Pro} we have that for  $\mu^\epsilon \coloneqq \mu \circ t_\epsilon$, $v^\epsilon\coloneqq t_\epsilon' v\circ t_\epsilon$ the couple $(\mu^\epsilon,v^\epsilon)\in CE(0,S_\epsilon;\mu,\nu)$ and
\begin{align*}
    W^2_{m,2}(\mu,\nu)&\leq S_\epsilon \int_0^{S_\epsilon} \int_M m_{\mu_t^\epsilon} |v_t^\epsilon|\d\mu_t^\epsilon \d s\\ &= S_\epsilon \int_0^T \frac{\int_M m_{\mu_t}|v_t|^2\d\mu_t}{\epsilon+\int_M m_{\mu_t}|v_t|^2\d\mu_t}\left(\epsilon +\int_M m_{\mu_t}|v_t|^2\d\mu_t \right)^{1/2}\d t,
\end{align*}
with the last term being smaller or equal to $S_\epsilon^2$. Sending $\epsilon \rightarrow 0$ we obtain
\begin{align*}
    W_{m,2}(\mu,\nu)=\int_0^T\left( \int_M m_{\mu_t} |v_t|^2 \d\mu_t\right)^{1/2} \d t \quad \text{for all } (\mu,v)\in CE(0,T; \mu \rightarrow \nu)
\end{align*}
and hence $W_{m,2}(\mu,\nu)=\Bar{W}_{m,2}(\mu,\nu)$. This in particular implies that for every minimizer $(\mu,v)\in CE(0,1; \mu \rightarrow \nu)$ of \cref{eq:OpTrDi} the equality 
\begin{align*}
    \left(\int_0^1 \int_M m_{\mu_t} |v_t|^2 \d\mu_t \d t\right)^{1/2} =\int_0^1\left( \int_M m_{\mu_t} |v_t|^2 \d\mu_t \right)^{1/2}\d t
\end{align*}
holds, which is only the case when $\int_M m_{\mu_t} |v_t|^2 \d\mu_t $ is constant for a.e. $t\in (0,T)$, implying by a further time rescaling argument
\begin{equation} \label{eq:ConstGeo}
    W_{m,2}(\mu_s,\mu_t)=|s-t|W_{m,2}(\mu_0,\mu_1) \quad \forall\ 0\leq s\leq t\leq 1. \qedhere
\end{equation}
\end{proof}
\subsection{Proof of \cref{lm:AC}}\label{app:AC}
\begin{proof}[Proof of \cref{lm:AC}]
    If $(\mu,v)\in CE(0,T)$ and $\int_0^T\left( \int_M m_{\mu_t} |v_t|^2 \d\mu_t\right)^{1/2} \d t<+\infty$  then by \cref{eq:GeoRep} we have
    \begin{align*}
    W_m(\mu_s,\nu_r)\leq\int_s^r \left( \int_M m_{\mu_t} |v_t|^2 \d\mu_t\right)^{1/2}\d t
    \quad \forall 0\leq s\leq r\leq T.\end{align*}
    On the other hand, if $\mu_t$ is an absolutely continuous curve then  by a standard reparametrization argument \cite[ Lemma 1.1.4]{ambrosio2008gradient} we may assume $\mu_t$ to be Lipschitz. For $N\in \mathbb{N}$ we set the step size as $\tau=T 2^{-N}$ and choose a family of constant-speed geodesics $(\mu^{k,N},v^{k,N})\in CE({(k-1)\tau}, k\tau; \mu_{(k-1)\tau}\rightarrow \mu_{k\tau})$, $k\in \{1,...,N\}$ such that for $t\in ((k-1)\tau, k\tau)$
    \begin{align*}
        \tau \int_{M} m_{\mu_t} |v_t|^2 \d\mu_t \stackrel{\cref{eq:AltDisDef}}{=} \frac1\tau W_m^2(\mu_{(k-1)\tau},\mu_{k\tau})\leq \frac1\tau \left(\int_{(k-1)\tau}^{k\tau} |\dot\mu|(t) \d t\right)^2\stackrel{\text{Hölder}}{\leq} \int_{(k-1)\tau}^{k\tau} |\dot\mu|(t)^2 \d t.
    \end{align*}
Gluing all geodesics together by \cref{pro:Pro} we obtain a curve $(\mu^N,v^N)\in CE(0,1)$.  \cref{lm:BEComp} gives us a subsequence, still denoted by $N$, and a couple $(\tilde{\mu},\tilde{v})\in CE(0,1)$ such that $\mu_t^N\rightharpoonup \tilde{\mu}_t$ and $\bm{J}\rightharpoonup \Tilde{\bm{J}}$. By construction $\tilde{\mu}_t$ and $\mu_t$ coincide on the dense (in $[0,T]$) set $\{0\}\cup\left\{ \frac T M 2^{-N}: M,N\in \R, M\leq N\right\}$. Since both $\tilde{\mu}_t$ and $\mu_t$ are narrowly continuous $\tilde{\mu}_t=\mu_t$ must hold. 
Again, equation \cref{eq:DisIntJ} implies that $\bm{J}$ can be disintegrated in the following way
$$\d\bm{J}=\d u_{t,x}(v) \d\mu_t(x)\d\mu_t(y)\d t,$$
where $u_{t,x}(v) \in \mathcal{P}(TM_x= \pi^{-1}(x))$. Then $(\mu, \tilde{v})\in CE(0,T)$ with 
$\tilde{v}_t\coloneqq \int_{TM_x} w \d u_{t,x}(w)$ and

\begin{align}\label{eq:ACH}
\begin{split}
    &\int_0^T  \int_M m_{\mu_t} |\tilde{v}_t|^2  \d\mu_t \d t\stackrel{\text{Jensen}}{\leq} \int_0^T\int_M\int_M K(x,y)\d\mu_t(y) \int_{TM_p} |v|^2 \d u_{x,t}(v)   \d\mu_t(x) \d t \\
    \leq& \int_0^T\int_{TM \times M} K(\pi(v),y)|v|^2 \d J_t(v,y) \d t\leq\liminf_{n\rightarrow \infty}\int_0^T\int_{TM \times M} K(\pi(v),y)|v|^2 \d J_t^n(v,y) \d t\\
    =& \liminf_{n\rightarrow \infty}\int_0^T \int_M m_{\mu^n_t}|v^n_t|^2 \d\mu_t \d t\leq \int_0^T |\dot\mu|^2(t)\d t 
\end{split}
\end{align}
Since $(\mu,\tilde{v})\in CE(0,T)$ we have that
$$ |\dot{\mu}|(t)\leq \left( \int_M m_{\mu_t} |v_t|^2 \d\mu_t\right)^{1/2}\quad \text{for a.e. }t\in(0,T).$$
Finally, for \cref{eq:ACH} to hold $  |\dot{\mu}|(t)= \left( \int_M m_{\mu_t} |v_t|^2 \d\mu_t\right)^{1/2} $ must hold for a.e. $t\in(0,T)$ 
\end{proof}

\subsection{Proof of Lemma \ref{lm: ChainRule}}\label{sc:ProofChain}

\begin{proof}[Proof of Lemma \ref{lm: ChainRule}]
From \cref{cor:Top} we know that the distances $W_2$ and $W_{m,2}$ are equivalent. Therefore, we can assume absolute continuity wit respect to $W_2$. Further, by a standard rescaling argument (e.g. \cite[Lemma 1.1.4]{ambrosio2008gradient} or \cite[Lemma 8.1.3]{ambrosio2008gradient}) it is enough to show prove \cref{eq:CheinRule} for $1$-Lipschitz curves (w.r.t. $W_2$), i.e. we only need to consider absolutely continuous curves $(\mu_t,v_t)\in CE(0,1; \mu\rightarrow \nu)$ such that
\begin{align*}
    \int_M |v_t(x)|^2 \d\mu_t(x)=1 \quad \text{for a.e. }t\in (0,T).
\end{align*}
For convenience we will set $\mu_t=\mu_0$ for $t\leq 0$ and $\mu_t=\mu_T$ for $t\geq T$ as well as $v_t=0$ for $t\not\in[0,T]$.
We define the function $\eta: (x,t)\in M\times \R \mapsto \frac{1}{2}\int_M W(x,y) \d\mu_t(y)$ for which 
$$ \partial_t \eta(t,x) =\begin{cases}
    0&\quad \text{if } t\not \in [0,T]\\
    \frac{1}{2}\int_M \langle \mathcal{D}_y W(x,y),v_t(y)\rangle   \d\mu_t(y)&\quad \text{else}
\end{cases} $$
in the distributional sense. Using the mollifier $g_\epsilon$ as described in \cite[C.4.]{evans2022partial}, one can  smooth out $\eta$ in the time direction by setting 
\begin{align*}
    \eta_\epsilon(t,x)\coloneqq \int_\R \eta(\tau, x) g_\epsilon(t-\tau) \d\tau.
\end{align*}
By \cite[C.5. Theorem 7 (iii)]{evans2022partial} we have that  $\eta_\epsilon \rightarrow \eta$ pointwise and with the use of the dominated convergence theorem with the upper bound $|\eta_\epsilon| \leq \sup_{(x,y)\in M\times M} |W(x,y)|<\infty$ we calculate 
\begin{align*}
    \mathcal{E}(\mu_T)-\mathcal{E}(\mu_0)=\int_M \eta \d\mu_T-\int_M \eta \d\mu_0=\lim_{\epsilon\rightarrow 0} \int_M \eta_\epsilon \d\mu_T-\int_M \eta_\epsilon \d\mu_0.
\end{align*}
We further have that
\begin{align*}
    +\infty&>\frac{1}{2}\int_0^T\int_M \int_M \langle \mathcal{D}_{y} W(y,x),v_t(y)\rangle  \d\mu_t(y) \d\mu_t(x) \d t\\
    &\stackrel{(**)}{=}\lim_{\epsilon\rightarrow 0} \frac{1}{2}\int_0^T\int_\R\int_M \int_M \langle \mathcal{D}_{y} W(x,y),v_t\rangle   \d\mu_t(y)  g_\epsilon(t-\tau)(t) \d\tau \d\mu_t(x)  \d t\\
    &\stackrel{(*)}{=}-\lim_{\epsilon\rightarrow 0} \int_0^T\int_M\int_\R \eta(\tau,x) \partial_\tau g_\epsilon(t-\tau) \d\tau \d\mu_t(x)  \d t\\
   & =\lim_{\epsilon\rightarrow 0} \int_0^T\int_M\int_\R \eta(\tau,x) \partial_t g_\epsilon(t-\tau) \d\tau \d\mu_t(x)  \d t=\lim_{\epsilon\rightarrow 0} \int_0^T\int_M \partial_t \eta_\epsilon(t,x) \d\mu_t(x)  \d t,
\end{align*}
where for $(*)$ we use the definition of the distributional derivative and rearrange the integral using the Fubini--Tonelli theorem.
To prove $(**)$, we need to define a piecewise constant approximation of $\mu_t$.
We fix a $N\in \mathbb{N}$ $\tau=\frac{T}{N}$ and set for $k\in \{1,N\}$
\begin{align*}
    \bar{\mu}_t\coloneqq \mu_{k\tau} \quad\text{for } t\in[k\tau, (k+1) \tau) , \qquad 
    \bar{\mu}_T\coloneqq\mu_T.
\end{align*}
Since $\mu_t$ is $1$-Lipschitz we have $W_2(\mu_t,\bar{\mu}_t)\leq \tau$ for all $t\in[0,T]$.
Then we estimate 
\begingroup
\allowdisplaybreaks
\begin{align}\label{eq:CES1}
&\bigg|\int_0^T\int_M \int_M \langle \mathcal{D}_{y} W(x,y),v_t(y)\rangle
\d\mu_t(y) \d\mu_t(x) \d t\nonumber\\
- &\int_0^T\int_M \int_M \langle \mathcal{D}_{y} W(x,y),v_t(y)\rangle  \d\mu_t(y)\d\bar{\mu}_t(x) \d t \bigg| \nonumber \\
\leq &\int_0^T \int_{M\times M} \int_M \bigg|\langle \mathcal{D}_{y} W(x_1,y),v_t(y)\rangle-\langle \mathcal{D}_{y} W(x_2,y),v_t(y)\rangle\bigg|\d\mu_t(y)  \d\pi_t(x_1,x_2) \d t \nonumber \\
\leq &\int_0^T \int_{M\times M} \int_M \bigg| \mathcal{D}_{y} W(x_1,y)-\mathcal{D}_{y} W(x_2,y)\bigg|_* |v_t(y)|\d\mu_t(y)  \d\pi_t(x_1,x_2) \d t \nonumber \\
\leq &C\, \int_0^T \int_{M\times M} \int_M\bigg| x_1-x_2\bigg| |v_t(y)|\d\mu_t(y)  \d\pi_t(x_1,x_2) \d t \\
=&C\, \int_0^T \bigg(\int_{M\times M} \bigg| x_1-x_2\bigg|   \d\pi_t(x_1,x_2)\bigg)\left(\int_M |v_t(y)|\d\mu_t(y)\right) \d t \nonumber \\
\leq &C\,\bigg(\int_0^T W^2_2(\mu_t,\bar{\mu}_t)\d t \int_0^T \int_M |v_t(y)|^2 \d\mu_t(y)\d t\bigg)^{1/2} \nonumber \\
=&C\,\bigg(T \int_0^T W^2_2(\mu_t,\bar{\mu}_t)\d t\bigg)^{1/2}
\quad \leq\quad  C\bigg(T \int_0^T \tau^2\d t\bigg)^{1/2}\quad =\quad  C \frac{T^2}{N}, \nonumber 
\end{align}
\endgroup
where $\pi_t\in \mathcal{P}(M\times M)$ is the optimal transport plan between $\mu_t$ and $\bar{\mu}_t$ and $| \cdot|_*$ denotes the dual norm of $|\cdot|$. (For more details on the static formulation of Wasserstein distances via optimal transport plans, we refer to \cite[Chapter 6]{ambrosio2008gradient} ). We can argue similarly in the mollified case

\begingroup
\allowdisplaybreaks
\begin{align}\label{eq:CES2}
&\bigg|
\int_0^T\int_M \int_\R\int_M \langle \mathcal{D}_{y} W(x,y),v_\tau(y)\rangle  \d\mu_\tau(y) g_\epsilon(t-\tau) \d\tau \d\mu_t(x) \d t - \nonumber \\
&\int_0^T\int_M \int_\R\int_M \langle \mathcal{D}_{y} W(x,y),v_\tau(y)\rangle  \d\mu_\tau(y)  g_\epsilon(t-\tau) \d\tau\d\bar{\mu}_t(x) \d t \bigg| 
\nonumber \\
\leq& 
\int_0^T \int_{M\times M} \int_\R \int_M\big|\langle \mathcal{D}_{y} W(x_1,y),v_\tau(y)\rangle-\nonumber\\
&\hspace{3cm}\langle \mathcal{D}_{y} W(x_2,y),v_\tau(y)\rangle\big|\d\mu_\tau(y) g_\epsilon(t-\tau) \d\tau \d\pi_t(x_1,x_2) \d t \nonumber \\
\leq& \int_0^T \int_{M\times M} \int_\R\int_M\big| \mathcal{D}_{y} W(x_1,y)-\mathcal{D}_{y} W(x_2,y)\big|_* |v_\tau(y)|\d\mu_\tau(y)  g_\epsilon(t-\tau) \d\tau\d\pi_t(x_1,x_2) \d t \nonumber \\
\leq& C \int_0^T \int_{M\times M} \int_\R\int_M\big| x_1-x_2\big| |v_\tau(y)|\d\mu_\tau(y) g_\epsilon(t-\tau) \d\tau \d\pi_t(x_1,x_2) \d t\\
=& C \int_0^T \bigg(\int_{M\times M} \big| x_1-x_2\big|   \d\pi_t(x_1,x_2)\bigg) \left(\int_\R\int_M |v_\tau(y)|\d\mu_\tau(y) g_\epsilon(t-\tau)\d\tau\right) \d t \nonumber\\
\leq& C \int_0^T  W_2(\mu_t,\bar{\mu}_t)\int_\R\int_M |v_\tau(y)|\d\mu_\tau(y) g_\epsilon(t-\tau) \d\tau \d t \nonumber \\
\leq& C \frac{T}{N}\int_0^T \int_\R\int_M |v_\tau(y)|\d\mu_\tau(y) g_\epsilon(t-\tau) \d\tau \d t
 = C \frac{T}{N} \int_\R\int_M |v_\tau(y)|\d\mu_\tau(y) \int_0^T g_\epsilon(t-\tau)\d t \d\tau \nonumber \\
\leq& C \frac{T}{N} \int_\R\int_M |v_\tau(y)|\d\mu_\tau(y)\d\tau
\quad =\quad C \frac{T}{N} \int_0^T\int_M |v_\tau(y)|\d\mu_\tau(y) \d\tau \nonumber \\
\leq& C \frac{T}{N} \int_0^T\bigg(\int_M |v_\tau(y)|^2 \d\mu_\tau(y)\bigg)^{1/2}\d\tau\quad \leq\quad C \frac{T^2}{N} . \nonumber 
\end{align}
\endgroup
We denote $\tilde{C}=\sup_{(x,y)\in M\times M} |\mathcal{D}_{y} W(x,y)|_*<+\infty$ and combine \cref{eq:CES1} and \cref{eq:CES2} to estimate
\begingroup
\allowdisplaybreaks
\begin{align*}
&\bigg|\int_0^T\int_M \int_M \langle \mathcal{D}_{y} W(x,y),v_t(y)\rangle  \d\mu_t(y) \d\mu_t(x) \d t\\ 
-& \int_0^T\int_M \int_\R\int_M \langle \mathcal{D}_{y} W(x,y),v_\tau(y)\rangle  \d\mu_\tau(y) g_\epsilon(t-\tau) \d\tau \d\mu_t(x) \d t \bigg|\\
\leq& 2C\frac{T^2}{N}+ \bigg|\int_0^T\int_M \hspace{2em}\hspace{-2em}\underbrace{\int_M \langle \mathcal{D}_{y} W(x,y),v_t(y)\rangle  \d\mu_t(y)}_{\coloneqq f(x,t)}\d\bar{\mu}_t(x) \d t - \\
& \phantom{2C\frac{T^2}{N}+\bigg|} \int_0^T\int_M \hspace{3em}\hspace{-3em}\underbrace{\int_\R\int_M \langle \mathcal{D}_{y} W(x,y),v_\tau(y)\rangle  \d\mu_\tau(y) g_\epsilon(t-\tau) \d\tau }_{\coloneqq f_\epsilon(x,t)}\d\bar{\mu}_t(x) \d t \bigg|\\
\leq& 2C\frac{T^2}{N}+\sum\limits_{i=1}^N \int_{(i-1)\tau+\epsilon}^{i\tau-\epsilon}\int_M|f-f_\epsilon| \d\bar{\mu}_t \d t+
\int_{(i-1)\tau}^{(i-1)\tau+\epsilon}\int_M |f-f_\epsilon|\d\bar{\mu}_t \d t +\\
&\hspace{6cm}\int_{i\tau-\epsilon}^{i\tau} \int_M|f-f_\epsilon|\d\bar{\mu}_t \d t\\
\leq& 2C\frac{T^2}{N}+\sum\limits_{i=1}^N \int_{(i-1)\tau+\epsilon}^{i\tau-\epsilon}\int_M|f-f_\epsilon| \d\bar{\mu}_t \d t+
\int_{(i-1)\tau}^{(i-1)\tau+\epsilon}\int_M 2\tilde{C}\d\bar{\mu}_t \d t +\int_{i\tau-\epsilon}^{i\tau} \int_M 2\tilde{C}\d\bar{\mu}_t \d t\\
 \leq& 2C\frac{T^2}{N}+\sum\limits_{i=1}^N \int_{(i-1)\tau+\epsilon}^{i\tau-\epsilon}\int_M|f-f_\epsilon| \d\bar{\mu}_t \d t+ 4N\epsilon \tilde{C}
 \leq \frac{\delta}{3}+\sum_{i=1}^N \frac{\delta}{3 N} +\frac{\delta}{3},
\end{align*}
\endgroup
where, firstly, $N$ is chosen such that $N\geq 6 C \frac{T^2}{\delta}$ and, secondly, $\epsilon$ such that  $\epsilon\leq \frac{\delta}{12 N \tilde{C}}$ and for each $i\in\{1,...,N\}$ it holds $\int_{(i-1)\tau+\epsilon}^{i\tau-\epsilon}\int_M|f-f_\epsilon| \d\bar{\mu}_t \d t \leq \frac{\delta}{3 N}$ (by \cref{lm:L1Con}). Therefore $(**)$ is proven.

Finally, by \cref{lm:Smooth} we obtain $n_\epsilon \in C^1(M\times[0,T])$ that we can use as a test function in \cref{eq:BInt} and send $\epsilon \rightarrow 0$ to obtain
\begin{multline*}
    \mathcal{E}(\mu_T)-\mathcal{E}(\mu_0)=\int_M \eta \d\mu_T-\int_M \eta \d\mu_0=\int_0^T\int_M \partial_t \eta \d\mu_t+\int_M \langle\mathcal{D} \eta,v_t\rangle  \d\mu_t \d t\\
    = \int_0^T\int_{M\times M} \langle \mathcal{D}_{x} W(x,y),v_t(x)\rangle \d\mu_t(x) \d\mu_t(y) \d t. \qedhere  
\end{multline*}
\end{proof}

\begin{lemma}\label{lm:L1Con}
    Let $f: M \times [0,T] \rightarrow \R$ be Borel measurable and $\mu \in \mathcal{P}(M)$
with
$$\int_a^b \int_M |f| \d\mu \d t<\infty \quad \text{for } 0\leq a < b\leq T.$$
For 
$$ \mu_a^b(A)\coloneqq \mu \otimes \mathcal{L}(a,b)$$
it holds
$$ \|f_\epsilon\|_{L^1(\mu_{a+\epsilon}^{b-\epsilon})}\leq \|f\|_{L^1(\mu_a^b)} \quad \text{and} \quad
f_\epsilon \rightarrow f \quad \text{in }{L^1(\mu_{a+\epsilon}^{b-\epsilon})}.$$ 
\end{lemma}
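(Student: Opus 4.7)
The plan is to treat this as a standard $L^1$-mollification estimate in the time variable. For the norm bound, the first step would be to apply Tonelli's theorem to interchange the order of integration, obtaining
\begin{align*}
\|f_\epsilon\|_{L^1(\mu_{a+\epsilon}^{b-\epsilon})}
&\leq \int_{a+\epsilon}^{b-\epsilon} \int_M \int_\R |f(x,\tau)|\, g_\epsilon(t-\tau) \,\d\tau\, \d\mu(x)\, \d t\\
&= \int_\R \int_M |f(x,\tau)| \left(\int_{a+\epsilon}^{b-\epsilon} g_\epsilon(t-\tau) \,\d t\right) \d\mu(x)\, \d\tau.
\end{align*}
Since $g_\epsilon$ is non-negative, integrates to one, and is supported in $[-\epsilon,\epsilon]$, the inner $t$-integral is bounded by $1$ and vanishes whenever $\tau \notin [a,b]$ (because for such $\tau$ and any $t\in[a+\epsilon,b-\epsilon]$ one has $|t-\tau|>\epsilon$). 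This immediately gives $\|f_\epsilon\|_{L^1(\mu_{a+\epsilon}^{b-\epsilon})}\leq \|f\|_{L^1(\mu_a^b)}$.

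For the $L^1$-convergence I would use a density argument. Given $\delta>0$, I would first pick $\tilde f \in C_c(M \times (a,b))$ with $\|f-\tilde f\|_{L^1(\mu_a^b)}<\delta$; this is possible since $\mu_a^b$ is a finite Radon measure on the locally compact Hausdorff space $M\times(a,b)$, for which $C_c$-functions are dense in $L^1$. Extending $\tilde f$ by zero outside $(a,b)$, I would split
$$f_\epsilon - f = (f - \tilde f)_\epsilon + (\tilde f_\epsilon - \tilde f) + (\tilde f - f)$$
and estimate the three summands in the $L^1(\mu_{a+\epsilon}^{b-\epsilon})$-norm. The first term is bounded by $\delta$ via the already-established norm bound applied to $f-\tilde f$; the third is bounded by $\delta$ since $[a+\epsilon,b-\epsilon]\subseteq[a,b]$. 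For the middle term, the plan is to use that $\tilde f$ is uniformly continuous on its compact support, so standard mollifier theory yields $\tilde f_\epsilon \to \tilde f$ uniformly, hence
$$\|\tilde f_\epsilon - \tilde f\|_{L^1(\mu_{a+\epsilon}^{b-\epsilon})} \leq \mu(M)(b-a)\,\|\tilde f_\epsilon - \tilde f\|_\infty \xrightarrow[]{\epsilon \to 0} 0.$$
Letting first $\epsilon \to 0$ and then $\delta \to 0$ gives the conclusion.

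There is essentially no substantive obstacle here, as this is a textbook mollification result. The only point requiring mild care is observing that the $t$-support of $g_\epsilon(\cdot - \tau)$ must fall inside $[a+\epsilon, b-\epsilon]$ to produce a clean bound by one and, consequently, to force the effective $\tau$-integration to remain in $[a,b]$; this is precisely why the integration interval is shrunk by $\epsilon$ on each side in the statement. Everything else is a routine combination of the norm estimate with $L^1$-density of continuous compactly supported functions.
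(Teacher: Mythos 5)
Your proposal is correct and follows essentially the same route as the paper: the norm bound via Tonelli and the fact that the mollifier has unit mass, then a three-term splitting against a $C_c$-approximant, using the norm bound for the mollified error term and convergence of the mollification of the continuous function for the middle term (the paper invokes pointwise convergence plus dominated convergence where you use uniform convergence, which is an immaterial difference). No gaps.
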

\begin{proof}
We adapt \cite[C.5 Theorem 7]{evans2022partial} to our case and start by showing
\begin{align*}
    &\|f_\epsilon\|_{L^1(\mu_{a+\epsilon}^{b-\epsilon})}\leq \int_M\int_{a+\epsilon}^{b-\epsilon} \int_a^b |f(x,\tau)| g_\epsilon(t-\tau)  \d\tau \d t \d\mu(x) \\
    =&\int_M\int_a^b  |f(x,\tau)| \int_{a+\epsilon}^{b-\epsilon}g_\epsilon(t-\tau)   \d t \d\tau \d\mu(x)= \int_M\int_a^b  |f(x,\tau)| \d\tau \d\mu(x)= \|f\|_{L^1(\mu_{a}^{b})}.
\end{align*}
We approximate $f$ in $L^1(\mu_a^b)$ by $\gamma\in C_c(M\times [a,b])$ (see \cite[Proposition 7.9]{folland1999real})
and calculate
\begin{multline*}
    \| f-f_\epsilon\|_{L^1(\mu_{a+\epsilon}^{b-\epsilon})}\leq \| f-\gamma\|_{L^1(\mu_{a+\epsilon}^{b-\epsilon})}+\| \gamma-\gamma_\epsilon\|_{L^1(\mu_{a+\epsilon}^{b-\epsilon})}+\| \gamma_\epsilon-f_\epsilon\|_{L^1(\mu_{a+\epsilon}^{b-\epsilon})}\\
    \leq 2 \| f-\gamma\|_{L^1(\mu_{a}^{b})}+\| \gamma-\gamma_\epsilon\|_{L^1(\mu_{a+\epsilon}^{b-\epsilon})}.
\end{multline*}
From \cite[C.5 Theorem 7]{evans2022partial} we know that $\gamma_\epsilon \rightarrow \gamma$ for all $(x,t) \in M\times[a,b] $ because $\gamma$ is continuous. 
Choosing $\gamma$ such that $\| f-\gamma\|_{L^1(\mu_{a}^{b})} <\delta$ and using the dominated convergence theorem we get $\limsup_{\epsilon\rightarrow 0} \| f-f_\epsilon\|_{L^1(\mu_{a+\epsilon}^{b-\epsilon})} \leq 2 \delta $. As $\delta$ can be chosen arbitrary small we obtain convergence.
\end{proof}

\begin{lemma}\label{lm:Smooth}
    We have $\eta_\epsilon \in C^1(M\times [0,T])$. 
\end{lemma}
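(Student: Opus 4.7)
The plan is to verify separately that the partial derivative in the time variable and the spatial differential of $\eta_\epsilon$ exist and are jointly continuous on $M\times[0,T]$.

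First I would establish that for every fixed $t\in\R$ the map $x\mapsto\eta(t,x)$ is of class $C^1$ on $M$, with
$$\mathcal{D}_x\eta(t,x)=\tfrac{1}{2}\int_M\mathcal{D}_xW(x,y)\,\d\mu_t(y).$$
This follows from $W\in C^1(M\times M)$ together with the compactness of $M\times M$: the family $\{|\mathcal{D}_xW(x,y)|_*\}_{(x,y)\in M\times M}$ is uniformly bounded by some constant $\tilde C$, and differentiation under the integral is justified by dominated convergence against the $\mu_t$-integrable majorant $\tilde C$. The resulting $\mathcal{D}_x\eta(t,\cdot)$ is continuous on $M$ by the uniform continuity of $\mathcal{D}_xW$ on the compact set $M\times M$.

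Next, since $g_\epsilon\in C^\infty_c(\R)$ and $\eta$ is uniformly bounded by $\tfrac12\sup_{(x,y)\in M\times M}|W(x,y)|<\infty$, standard differentiation under the integral gives
$$\partial_t\eta_\epsilon(t,x)=\int_\R\eta(\tau,x)\,\partial_tg_\epsilon(t-\tau)\,\d\tau,\qquad \mathcal{D}_x\eta_\epsilon(t,x)=\int_\R\mathcal{D}_x\eta(\tau,x)\,g_\epsilon(t-\tau)\,\d\tau,$$
where the second identity uses the Step~1 formula for $\mathcal{D}_x\eta$ and Fubini. Both integrands are dominated by integrable majorants (the products of the smooth compactly supported factor with the uniform bound on $\eta$ or on $\mathcal{D}_x\eta$), so the formulas are valid.

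It then remains to show joint continuity of $\partial_t\eta_\epsilon$ and $\mathcal{D}_x\eta_\epsilon$ on $M\times[0,T]$. The hard part here is that $t\mapsto\eta(t,x)$ is only continuous (not differentiable) because of the merely narrow continuity of $t\mapsto\mu_t$; however, continuity is enough once the time derivative has been shifted onto $g_\epsilon$. For joint continuity, fix $(t_0,x_0)\in[0,T]\times M$ and let $(t_n,x_n)\to(t_0,x_0)$. Uniform continuity of $W$ and of $\mathcal{D}_xW$ on the compact set $M\times M$ combined with narrow continuity of $\mu_\tau$ yield continuity of $\tau\mapsto\eta(\tau,x_n)$ and $\tau\mapsto\mathcal{D}_x\eta(\tau,x_n)$, and more importantly uniform control in $n$. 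Dominated convergence then transfers the convergence through the $g_\epsilon$-integral, giving continuity of the two derivatives at $(t_0,x_0)$. This establishes $\eta_\epsilon\in C^1(M\times[0,T])$.
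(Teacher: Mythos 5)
Your proposal is correct and follows essentially the same route as the paper: differentiate under the integral (pushing $\partial_t$ onto the mollifier $g_\epsilon$ and $\mathcal{D}_x$ onto $W$), then obtain joint continuity of both partial derivatives from the uniform bounds on $W$ and $\mathcal{D}_x W$ on the compact set $M\times M$ together with dominated convergence, concluding $C^1$ from continuity of the partials. The only cosmetic difference is that the paper carries out the spatial part in local charts $\gamma:V\to U(x)$ (which is the cleanest way to compare differentials at nearby base points), whereas you phrase it intrinsically; this does not affect the validity of the argument.
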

\begin{proof}
Let $\gamma: V \subset \R^d  \rightarrow U(x)$ be a smooth local chart for an open set $U(x)$ containing  $x$. Then since $W(x,y)\in C^1(M\times M)$ the function  $z \mapsto \int_M \partial_{z_i} W(\gamma(z),y) \d\mu_\tau$ is continuous in $z$ and the product 
$$ (z,t) \mapsto \int_M \partial_{z_i} W(\gamma(z),y) \d\mu_\tau(y) g_\epsilon(t-\tau)$$ is continuous on $V\times \R$. Taking any sequence $(z_n,t_n) \rightarrow (z,t)$ we can use the dominated convergence theorem to obtain
\begin{align*}
    \lim_{n\rightarrow\infty}\partial_t \eta_\epsilon(\gamma(z_n),t_n)=\lim_{n\rightarrow\infty}\int_\R\int_M \partial_{z_i} W(\gamma(z_n),y)\d\mu_\tau(y) g_\epsilon(t_n-\tau) \d\tau\\
    =\int_\R\int_M \partial_{z_i} W(\gamma(z),y)\d\mu_\tau(y) g_\epsilon(t-\tau) \d\tau=\partial_t \eta_\epsilon(\gamma(z),t).
\end{align*}
An upper bound is given by the function $\sup_{V\times W} \partial_{z_i} W(\gamma(z),y) \ \chi_{[\inf_{n} t_n-\epsilon,\sup_n t_n+\epsilon]}(\tau)$. Thus, $\partial_t \eta_\epsilon(\gamma(z),t)$ is continuous in $V\times [0,T]$.
With the same argument, a similar statement can be shown for  
\begin{align*}
     \partial_t \eta_\epsilon(x,t)= \int_\R \int_M W(x,y)\d\mu_\tau(y) \partial_t g_\epsilon(t-\tau)(t) \d\tau
\end{align*}
By \cite[Theorem 2.8]{spivak2018calculus} it follows that $\eta_\epsilon(t,\gamma(z))\in C^1(V\times[0,T])$ and since the local chart was chosen arbitrarily $\eta_\epsilon \in C^1(M\times[0,T])$.
\end{proof}
\section{Spherical coordinates}\label{app:sphericalCoords}
For many computations in \cref{sec:stationary} we use spherical coordinates. Up to small notational changes, we use the definition provided in \cite{blumenson1960spherical}.
We define the coordinate transform $X_n: \phi \in [0,\pi]^{n-2} \times [0,2\pi] \rightarrow \S^{n-1}$ for $\phi \in [0,\pi]^{n-2} \times [0,2\pi]$ as
\begin{align*}
    X_n(\phi) = \cos(\phi_1)\, e_1 + \sum_{i = 2}^{n-1} \cos(\phi_i) \prod_{j = 1}^{i-1} \sin(\phi_j)\, e_i + \prod_{j = i}^{n-1} \sin(\phi_i)\, e_n .
\end{align*}
Here and in the following, $e_i\in \R^n$ denotes the $i$-th standard basis vector.

The Jacobian determinant is given by 
\begin{align*}
    JX_n(\phi) =  \prod_{i = 1}^{n-2} \sin^{n-1-i}(\phi_i).
\end{align*}
To highlight the recursive character of $X_n$ with respect to $n$, we further note that 
\begin{align*}
    X_n(\phi)_{\hat{1}} = \sin(\phi_1) \,X_{n-1}(\phi_{\hat{1}}) 
\quad \text{and} \quad
    JX_n(\phi) = \sin^{n-2}(\phi_1)JX_{n-1}(\phi_{\hat{1}})
\end{align*}
where the index $\hat{1}$ denotes that we drop the first element, i.e. for $\phi \in \R^{n-1}$, $\phi_{\hat{1}} = \sum_{i = 2}^{n-1} (\phi_i)\, e_{i-1}$. A practical consequence of this property is the recursive computation formula for the Hausdorff measure of the $n$-dimensional sphere.
\begin{lemma}\label{lem:recursiveSphere}
    Denote $|\S^{n-1}| := \mathcal{H}^n(\S^{n-1})$. For $n \geq 2$ it holds that 
    \begin{align*}
        |\S^{n-1}| = |\S^{n-2}| \int_0^\pi \sin^{n-2}\phi\,\d\phi.
    \end{align*}
\end{lemma}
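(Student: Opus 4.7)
The plan is to exploit the spherical parametrization $X_n$ defined just above the lemma, together with its recursive Jacobian structure, and reduce the identity to a one-line application of Fubini's theorem.

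First, I would express $|\S^{n-1}|$ as an integral over the parameter domain of $X_n$. Since $X_n : [0,\pi]^{n-2} \times [0,2\pi] \to \S^{n-1}$ is a smooth parametrization that is bijective away from a set of Hausdorff measure zero (the standard coordinate singularities), the area formula gives
\begin{align*}
    |\S^{n-1}| = \int_{[0,\pi]^{n-2} \times [0,2\pi]} JX_n(\phi) \, \d\phi .
\end{align*}
Next I would substitute the recursive identity $JX_n(\phi) = \sin^{n-2}(\phi_1)\, JX_{n-1}(\phi_{\hat 1})$ stated in \cref{app:sphericalCoords}, and apply Fubini to separate the $\phi_1$-integration from the integration over the remaining angles:
\begin{align*}
    |\S^{n-1}|
    = \int_0^\pi \sin^{n-2}(\phi_1) \, \d\phi_1 \cdot \int_{[0,\pi]^{n-3} \times [0,2\pi]} JX_{n-1}(\phi_{\hat 1}) \, \d\phi_{\hat 1} .
\end{align*}
The second factor is, by the very same area formula applied to $X_{n-1}$, exactly $|\S^{n-2}|$, which yields the claim.

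For the base case $n = 2$, a brief comment suffices: $\S^1$ is parametrized by $X_2(\phi) = (\cos\phi, \sin\phi)$ with $\phi \in [0,2\pi]$ and $JX_2 \equiv 1$, so $|\S^1| = 2\pi$, while $\S^0 = \{-1, +1\}$ gives $|\S^0| = 2$ (consistent with the convention $\mathcal{H}^n(\S^{n-1})$ used throughout this appendix), so that both sides of the identity equal $2\pi$.

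There is no real obstacle here beyond the bookkeeping with the recursion for $JX_n$, which is already recorded in \cref{app:sphericalCoords}; the only point worth double-checking is that the measure-zero coordinate singularities of $X_n$ can indeed be ignored in the area computation, which is standard for the sphere.
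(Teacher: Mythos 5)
Your proposal is correct and follows essentially the same route as the paper's proof: both express $|\S^{n-1}|$ as the integral of $JX_n$ over the parameter domain, apply the recursion $JX_n(\phi)=\sin^{n-2}(\phi_1)\,JX_{n-1}(\phi_{\hat 1})$, separate the $\phi_1$-integral by Fubini, and check the base case $n=2$ via $|\S^0|=2$ and $|\S^1|=2\pi$. No gaps.
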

\begin{proof}
    For $n = 2$, the proof follows from a simple computation and the fact that $|\S^0| = 2$ and $|\S^1| = 2\pi$.
    For $n > 2$, we have
    \begin{align*}
        |\S^{n-1}| &= \int_{[0,\pi]^{n-2} \times [0,2\pi]} JX_n(\phi)\, \d\phi = \int_0^{\pi} \sin^{n-2}\phi_1 \,JX_{n-1}(\phi_{\hat{1}})\,\d\phi\\ &= \int_0^\pi \sin^{n-2}\phi\,\d\phi\, \int_{[0,\pi]^{n-3} \times [0,2\pi]} JX_{n-1}(\psi)\, \d\psi = |\S^{n-2}| \int_0^\pi \sin^{n-2}\phi\,\d\phi,
    \end{align*}
    where we use the recursive property of the Jacobian determinant.
\end{proof}
\subsection{Definition using Givens rotations}
Spherical coordinates can equivalently be defined using Givens rotations (see e.g. \cite[Chapter 5.1.8]{golubmatrix2013}). A Givens rotation for an angle $\phi \in [0,2\pi)$ and indices $i,j \leq n$ with $i \neq j$ is determined by the rotation matrix $G(i,j,\phi) \in \R^{n\times n}$
\begin{align*}
    G(i,j,\phi)_{k,l} = \begin{cases} 
    \cos(\phi) & \text{if } k = l = i \text{ or } k = l = j,\\
    1 & \text{if } k = l \neq i \text{ and } k = l \neq j,\\
    \sin(\phi) & \text{if } k = i, l = j, \\
    -\sin(\phi) & \text{if } k = j, l = i, \\
    0 & \text{otherwise.}
    \end{cases}
\end{align*}
Applying $G(i,j,\phi)^T$ to a vector $x \in R^n$ corresponds to a counterclockwise rotation of $x$ by the angle $\phi$ in the $(i,j)$-plane.
For a given vector of angles $\phi \in [0,\pi]^{n-2}\times [0,2\pi]$ we can thus construct the matrix 
\begin{align}\label{eq:givens}\begin{aligned}
    R(\phi) = G(n-1,n,\phi_{n-1}) \circ \hdots &\circ G(2,3,\phi_2) \\&\circ G(1,2,\phi_1) \circ G(2,3,\phi_2)^T \circ\hdots \circ G(n-1,n, \phi_{n-1})^T.
\end{aligned}
\end{align}
The rotation matrix $R(\phi)$ can be written as a 2-dimensional rotation of angle $\phi_1$ in the $(e_1, X_{n-1}(\phi_{\hat{1}}))$-plane, as the following lemma shows.
\begin{lemma}
Let $R(\phi)$ be the rotation matrix as described in \eqref{eq:givens}. Then it holds that 
\begin{align*}
    R(\phi) = U\,G(1,2,\phi_1)\,U^T, 
\end{align*}
with $UU^T = \text{Id}$,  $U_{1,\cdot} = e_1$ and $U_{2,\cdot} = (
    0,\,
    X_{n-1}(\phi_{\hat{1}}))^T$. 
\end{lemma}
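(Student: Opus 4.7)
My plan is to take
\[
U \defeq G(n-1,n,\phi_{n-1})\, G(n-2,n-1,\phi_{n-2}) \cdots G(2,3,\phi_2),
\]
the ``outer'' half of the product in~\eqref{eq:givens}. The identity $(A_1\cdots A_k)^T = A_k^T\cdots A_1^T$ identifies the right half of~\eqref{eq:givens} with $U^T$, so $R(\phi) = U\,G(1,2,\phi_1)\,U^T$ follows immediately by associativity, and $UU^T = \mathrm{Id}$ is clear since each Givens factor is orthogonal. Every factor $G(k,k+1,\phi_k)$ in $U$ has $k\geq 2$ and therefore leaves the first coordinate untouched (neither the row nor the column indexed by $1$ is affected), so $Ue_1 = e_1$; by orthogonality also $e_1^T U = e_1^T$, and $U$ has the block form $U = \operatorname{diag}(1, U')$ with some $U' \in \mathrm{O}(n-1)$.

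The heart of the argument is the identification of the second column $Ue_2$, which I would carry out by induction on $n$. In the base case $n = 3$ one has $U = G(2,3,\phi_2)$, and a direct computation gives $Ue_2 = (0,\cos\phi_2,\sin\phi_2)^T = (0, X_2(\phi_2))^T$. For the inductive step, I would factor $U = G(n-1,n,\phi_{n-1}) \cdot V$ with $V \defeq G(n-2,n-1,\phi_{n-2})\cdots G(2,3,\phi_2)$: the matrix $V$ is precisely the $U$-matrix in dimension $n-1$ (with angles $\phi_2,\ldots,\phi_{n-2}$), embedded in $\R^n$ by acting trivially on $e_n$. The inductive hypothesis then identifies $Ve_2$ with the corresponding $X_{n-2}$-vector, padded by a zero in the last coordinate. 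Left-multiplication by $G(n-1,n,\phi_{n-1})$ affects only coordinates $n-1$ and $n$, splitting the last nonzero entry of $Ve_2$ at position $n-1$ according to $e_{n-1} \mapsto \cos(\phi_{n-1})e_{n-1} + \sin(\phi_{n-1})e_n$. This telescoping exactly reproduces the recursion $X_n(\phi)_{\hat 1} = \sin(\phi_1)\,X_{n-1}(\phi_{\hat 1})$ recorded in Appendix~B, and hence $Ue_2 = (0, X_{n-1}(\phi_{\hat 1}))^T$ as claimed.

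The main technical obstacle I anticipate is careful bookkeeping of the $\pm\sin$ sign conventions in the Givens rotations, since depending on whether $G(k,k+1,\phi_k)$ or its transpose is applied, the coefficient along $e_{k+1}$ changes sign, and aligning these signs with the all-positive formula for $X_{n-1}(\phi_{\hat 1})$ requires a uniform convention throughout. A cleaner alternative that avoids the entry-by-entry induction is a purely geometric argument: $U\,G(1,2,\phi_1)\,U^T$ is a rotation by $\phi_1$ in the two-plane $\operatorname{span}\{Ue_1, Ue_2\}$, and since $Ue_1 = e_1$, the structural claim reduces to identifying this two-plane with $\operatorname{span}\{e_1,\,(0,X_{n-1}(\phi_{\hat 1}))^T\}$, which can be established by building up $Ue_2$ one Givens rotation at a time as in the induction above.
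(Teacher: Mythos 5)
Your proposal follows essentially the same route as the paper's proof: take $U$ to be the outer half of the product in \eqref{eq:givens}, obtain $UU^T=\Id$ and the factorization $R(\phi)=U\,G(1,2,\phi_1)\,U^T$ from orthogonality of the Givens factors, verify the base case $n=3$ directly, and induct on $n$ via the recursion $X_n(\phi)_{\hat 1}=\sin(\phi_1)\,X_{n-1}(\phi_{\hat 1})$ --- indeed you spell out the inductive step, which the paper dispatches with ``the proof follows from induction over $n$.'' The one point you must nail down is that you identify the second \emph{column} $Ue_2$, whereas the lemma asserts a formula for the second \emph{row} $U_{2,\cdot}$; since $U$ is orthogonal but not symmetric these differ precisely in the sign of the $\sin$ entries (for $n=3$ one has $G(2,3,\phi_2)e_2=(0,\cos\phi_2,-\sin\phi_2)^T$ while the second row is $(0,\cos\phi_2,\sin\phi_2)$), which is exactly the sign bookkeeping you flag and which must be resolved consistently with how the lemma is used in \cref{cor:rotationsplit}.
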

\begin{proof}
    For $n = 2$, the statement can be verified by inserting $U = \id$ and the definition of $R(\phi)$. For $n > 2$ we define
    \begin{align*}
        U = G(2,3,\phi_2) \circ\hdots \circ G(n-1,n, \phi_{n-1}).
    \end{align*}
    With this choice of $U$, $R(\phi)$ has the claimed form and $UU^T = \id$ due to the orthogonality of Givens matrices. It remains to show that the first two rows of $U$ fulfill $U_{1,\cdot} = e_1$ and $U_{2,\cdot} = (
    0,\,
    X_{n-1}(\phi_{\hat{1}}))^T$. For $n = 3$, $U$ reduces to
    \begin{align*}
        U = G(2,3,\phi_2) = \begin{pmatrix}
            1 & 0 & 0\\
            0 & \cos{\phi_2} & \sin{\phi_2}\\
            0 & -\sin{\phi_2} & \cos{\phi_2}
        \end{pmatrix},
    \end{align*}
    and clearly, $U_{1, \cdot}= e_1$ and $U_{2,\cdot} = (0,\cos{\phi_2},\sin{\phi_2})^T = (0,X_2(\phi_2))^T$. For $n > 3$, the proof follows from induction over $n$.
\end{proof}
\begin{cor}\label{cor:rotationsplit}
    Let $x = X_n(\phi)$, $\tilde{x} = (0,X_{n-1}(\phi_{\hat{1}}))$ then 
    \begin{align*}
        R(\phi)^Ty = y - (y\cdot e_1)\, e_1 - (y\cdot \tilde{x})\, \tilde{x}
        &+ (\cos(\phi_1)(y\cdot e_1) - \sin(\phi_1)  (y\cdot \tilde{x})) \,e_1 \\ &+
        (\sin(\phi_1)(y\cdot e_1) + \cos(\phi_1)  (y\cdot \tilde{x}))\, \tilde{x}.
    \end{align*}
    In particular, if $y\cdot e_1 = 0$ it holds that
    \begin{align*}
        R(\phi)^Ty = y & - (y\cdot \tilde{x})\, \tilde{x}
        +(y\cdot \tilde{x})\left( -\sin(\phi_1) \,e_1 + \cos(\phi_1)\, \tilde{x}\right).
    \end{align*}
\end{cor}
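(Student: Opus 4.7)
The plan is to apply the factorization $R(\phi) = U\,G(1,2,\phi_1)\,U^T$ from the preceding lemma and then unwind it coordinate-wise. Taking a transpose gives $R(\phi)^T = U\,G(1,2,\phi_1)^T\,U^T$, reducing the action of $R(\phi)^T$ on $y$ to a two-dimensional rotation in a frame adapted to $U$. From the stated conditions on $U$, together with the orthogonality $UU^T = \mathrm{Id}$ and the block structure this forces, one reads off $Ue_1 = e_1$ and $Ue_2 = \tilde{x}$; equivalently $U^Te_1 = e_1$ and $U^T\tilde{x} = e_2$.

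First I would introduce $z := U^Ty$ and compute its first two coordinates as
\[
z_1 = e_1\cdot U^Ty = (Ue_1)\cdot y = y\cdot e_1, \qquad z_2 = e_2\cdot U^Ty = (Ue_2)\cdot y = y\cdot\tilde{x}.
\]
Since $G(1,2,\phi_1)^T$ acts as the identity off the first two coordinates and realizes a rotation there in accordance with the orientation convention fixed in the paper, I would expand
\[
G(1,2,\phi_1)^Tz = z - z_1e_1 - z_2e_2 + \bigl(\cos(\phi_1)z_1 - \sin(\phi_1)z_2\bigr)e_1 + \bigl(\sin(\phi_1)z_1 + \cos(\phi_1)z_2\bigr)e_2.
\]
Applying $U$ to both sides and using the three identities $Uz = UU^Ty = y$, $Ue_1 = e_1$ and $Ue_2 = \tilde{x}$, together with the expressions already obtained for $z_1,z_2$, produces exactly the first identity claimed in the corollary. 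The special case $y\cdot e_1 = 0$ is then an immediate specialisation: setting this scalar to zero kills the term $(y\cdot e_1)\,e_1$ and the $\cos(\phi_1)(y\cdot e_1)$ contribution, and the remaining factor $y\cdot\tilde{x}$ regroups in the stated form.

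The whole argument is essentially linear-algebraic bookkeeping once the factorisation is in hand, so no serious obstacle is anticipated. The one point warranting care is reconciling the row/column data for $U$ stated in the preceding lemma with the identities $Ue_1 = e_1$, $Ue_2 = \tilde{x}$ that the computation actually uses; this is immediate from orthogonality together with the observation that an orthogonal matrix whose first row is $e_1^T$ must also have $e_1$ as its first column, forcing the block form $\mathrm{diag}(1, U')$, after which the second-column identity $Ue_2 = \tilde{x}$ follows directly from the stated data.
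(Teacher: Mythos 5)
Your overall strategy is the intended one (the paper states this corollary without proof, as a direct consequence of the preceding lemma), but the step you yourself flag as "the one point warranting care" is exactly where the argument breaks, and your resolution of it is incorrect. The lemma specifies the \emph{rows} of $U$: $U_{1,\cdot}=e_1$ and $U_{2,\cdot}=\tilde{x}$, which gives $U^Te_2=\tilde{x}$, equivalently $U\tilde{x}=e_2$ --- not $Ue_2=\tilde{x}$. Orthogonality plus the first row being $e_1^T$ does force the block form $U=\mathrm{diag}(1,U')$, but that identifies only the first column with the first row; it does not make the second column equal to the second row. Concretely, for $n=3$ the lemma's $U$ is $G(2,3,\phi_2)$, whose second row is $(0,\cos\phi_2,\sin\phi_2)=\tilde{x}$ while its second column is $(0,\cos\phi_2,-\sin\phi_2)^T\neq\tilde{x}$. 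Consequently your evaluation $z_2=(Ue_2)\cdot y=y\cdot\tilde{x}$ for $z=U^Ty$ is unjustified: $(U^Ty)_2$ is the second \emph{column} of $U$ dotted with $y$, which the lemma does not determine.

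What the row data does give you is $(Uy)_1=y\cdot e_1$ and $(Uy)_2=y\cdot\tilde{x}$, and running your two-dimensional rotation computation on $w=Uy$ (then mapping $e_1\mapsto e_1$, $e_2\mapsto\tilde{x}$ via $U^T$) reproduces the corollary's right-hand side as $U^TG(1,2,\phi_1)^TUy$, not as $UG(1,2,\phi_1)^TU^Ty=R(\phi)^Ty$. These two expressions genuinely differ unless $U$ is symmetric: for $n=3$ and $y=e_2$ the third component of $G_2G_1^TG_2^Te_2$ is $\cos\phi_2\sin\phi_2(1-\cos\phi_1)$, whereas the corollary's formula gives $\cos\phi_2\sin\phi_2(\cos\phi_1-1)$. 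So a correct proof must either supply the missing information about the columns of $U$ or, more honestly, sort out the transpose conventions in the definition of $R(\phi)$ and in the lemma (which, taken literally, are not mutually consistent); your writeup papers over precisely this bookkeeping, which is the entire content of the corollary.
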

With the above results we obtain
\begin{align*}
    X_n(\phi) = R(\phi)^T e_1,
\end{align*}
and since Givens matrices are orthonormal it also holds that \begin{align*}
     R(\phi) X_n(\phi) =e_1.
\end{align*}
We can therefore as well consider rotated spherical coordinates 
\begin{align*}
    X_n^{\theta}(\phi) = R(\theta)^T X_n(\phi)
\end{align*}
for a reference point $x = X_n(\theta)$, with the same Jacobian determinant as before, i.e.  $JX^{\theta}_n(\phi)  = JX_n(\phi)$.
\section{Proofs for \cref{sec:stationary}}\label{app:stationary}
\subsection{Proof of \cref{lem:stationaryuniform}}\label{app:stationuni}
\begin{namedthm*}{Lemma 4.7 (cont.)}[]
Let $n > 2$. The uniform distribution $\mu = \frac{1}{|\mathcal{S}^{n-1}|}\mathcal{H}^n$ is a stationary point of ${\cal E}$ if and only if all eigenvalues $\{\lambda_i\}_{i = 1}^n$ of $D$ have the same absolute value, i.e. $|\lambda_i| = \lambda$ for some $\lambda \in \R$.
\end{namedthm*}
\begin{proof}
The proof for $n > 2$ uses the same arguments as for $n = 2$, however, the rotation corresponding to a translation of the angle in two dimensions is technically more complicated. We use the notation and techniques from \cref{app:sphericalCoords} (spherical coordinates $X_n$ and rotations $R$).

Again, we first fix $x\in \S^{n-1}$ and consider the integral
\begin{align*}
   \int_{\S^{n-1}} e^{x\cdot Dy} P^\perp_x Dy \,\d\mathcal{H}^{n}(y) = (*).
\end{align*}
Similarly to the two-dimensional case, we choose $\phi \in [0,\pi]^{n-2} \times [0,2\pi]$ such that 
\begin{align*}
    X_n(\phi) = \frac{Dx}{\|Dx\|},
\end{align*}
and therefore also
\begin{align*}
    R(\phi)Dx = \|Dx\| R(\phi)R(\phi)^Te_1 = \|Dx\|e_1,
\end{align*}
where $e_1 = (1,0,\dots,0)^T \in \R^n$ denotes the first standard basis vector.
We rewrite the integral using  rotated spherical coordinates and substitute it into the above identity to obtain
\begin{align*}
     (*)= &\int_{[0,\pi]^{n-2} \times [0,2\pi]} e^{x\cdot DR(\phi)^TX_n(\theta)}P^\perp_xDR(\phi)^TX_n(\theta)\, JX_n(\theta)\,\d\theta \\ = &\int_{[0,\pi]^{n-2} \times [0,2\pi]} e^{\|Dx\|\,\cos(\theta_1)} (DR(\phi)^TX_n(\theta) - \|Dx\|\,\cos(\theta_1)\,x)\, JX_n(\theta)\,\d\theta,
\end{align*}
where $JX_n$ denotes the Jacobian determinant of $X_n$.
To reduce the above integral over the vector $\theta$ to an integral over only the first component $\theta_1$, we write
\begin{align*}
    R(\phi)^TX_n(\theta) &= \cos(\theta_1)\,R(\phi)^Te_1 + R(\phi)^T\begin{pmatrix}
        0\\ X_{n}(\theta)_{\hat{1}}
    \end{pmatrix}\\ &= \cos(\theta_1) \frac{Dx}{\|Dx\|} + \sin(\theta_1)\, R(\phi)^T\begin{pmatrix}
        0\\ X_{n-1}(\theta_{\hat{1}})
    \end{pmatrix},
\end{align*}
where the subscript $\hat{1}$ denotes that we neglect the first component. 
Inserting this into $(*)$ we get
\begin{align*}
    (*) &= \left(D^2x/\|Dx\| - \|Dx\|x\right) \int_0^{\pi} e^{\|Dx\|\,\cos(\theta_1)} \cos(\theta_1) \sin^{n-2}(\theta_1) \,\d\theta_1\\
    &+ \int_0^\pi e^{\|Dx\|\,\cos(\theta_1)} \sin^{n-1}(\theta_1) \,DR(\theta)^T\underbrace{\int_{S^{n-2}} \begin{pmatrix}
        0\\ z
    \end{pmatrix} \d\mathcal{H}^{n-1}(z)}_{=0} \,\d\theta_1\\
    & = C(n, \|Dx\|) \left(D^2x/\|Dx\| - \|Dx\|x\right)
\end{align*}
and due to the symmetry of sine and cosine we have that $C(n,\|Dx\|) > 0$ for any $n\geq2$, $\|Dx\| > 0$. We can thus deduce that $(*) = 0$ if and only if $x$ is an eigenvector of $D^2$, exactly as in the case $n = 2$. This holds true for $\mu$-almost all $x \in \mathcal{S}^{n-1}$ if and only if all eigenvalues of $D$ have the same absolute value, which then automatically yields $\d\erg{}(\mu, V) = 0$. 

Again, it remains to show that this is also  necessary. Without loss of generality, we assume $|\lambda_1|> |\lambda_2|$ and $\lambda_1$ and $\lambda_2$ to be the eigenvalues of largest, respectively second largest, absolute value corresponding to the eigenvectors $z_1$, respectively $z_2$. 

From here, the strategy is the exact same as in the two-dimensional case, which we restate here for completeness.  
The factor $\left(D^2x/\|Dx\| - \|Dx\|x\right) \cdot z_2$ is strictly negative on the set
\begin{align*}
    A = \{x \in \S^{n-1} \, | \, (x\cdot z_1) \in (|\lambda_2/\lambda_1|, 1),\, (x\cdot z_2) > 0\}.
\end{align*}
Since $\mu(A) > 0$ we can find a Lipschitz continuous $V$ such that $V \cdot z_1 = 0$ for $\mu$-a.e. on $\mathcal{S}^{n-1}$ and
\begin{align*}
    V(x)\cdot z_2 \begin{cases}
        > 0 &\text{for a.e. } x \in A, \\
        = 0 &\text{for a.e. } x \in \mathcal{S}^{n-1}\backslash A.
    \end{cases}
\end{align*}
For all such $V$ it holds that $\d\erg{}(\mu, V) > 0$, which concludes the proof.
\end{proof}

\subsection{Proof of \cref{lem:technicalx}}\label{app:stationtechx}
\begin{namedthm*}{Lemma 4.8 (cont.)}
Let $n \geq 2$, and $\mu_0 = \frac{1}{|\mathcal{S}^{n-1}|} \mathcal{H}^n$. Then it holds that
    \begin{align}
        \int_{\mathcal S^{n-1}} e^{x\cdot y} x \,\d\mu_0(x) = C_1\,y
    \end{align}
    for any $y \in \mathcal{S}^{n-1}$, where the constant $C_1$ is positive and depends only on the dimension $n$.
\end{namedthm*}
\begin{proof}
The proof for $n > 2$ goes along the lines of the proof for $n = 2$. However, the rotation corresponding to a translation of the angle in two dimensions technically more complicated in higher dimensions. For an introduction to rotated spherical coordinates used in this proof we refer the reader to \cref{app:sphericalCoords}.

We first fix $y \in \S^{n-1}$ and choose $\theta \in \R^{n-1}$ such that $y = X_n(\theta)$. We proceed to write the integral using rotated spherical coordinates $x = X^{\theta}_n(\phi)$ and obtain
   \begin{align*}
       &\int_{\mathcal S^{n-1}} e^{x\cdot y} x_j \,\d\mu_0(x) =\frac{1}{|\mathcal{ S}^{n-1}|} \int_{[0,\pi]^{n-2} \times [0,2\pi]} e^{X_n^\theta(\phi)\cdot X_n(\theta)} \left(X_n^\theta(\phi)\right)_i\, JX_n(\phi)\,\d\phi = (*).
   \end{align*}
Substituting the expressions for $X_n$ and $X_n^\theta$ yields 
   \begin{align*}
       X^\theta_n(\phi) \cdot X_n(\theta) &= R(\theta)^TX_n(\phi) \cdot X_n(\theta) =  X_n(\phi) \cdot R(\theta)X_n(\theta) = X_n(\phi) \cdot e_1 = \cos(\phi_1).
   \end{align*}
   Additionally, we note that we can write any $x = x_1e_1 + (0,x_2,\hdots,x_n)^T$ and see that
   \begin{align*}
    X_n^\theta(\phi) = R(\theta)^T X_n(\phi)
    = R(\theta)^T \cos(\phi_1) e_1 + R(\theta)^T \begin{pmatrix}0\\ X_{n}(\phi)_{\hat{1}}\end{pmatrix}&\\ = \cos(\phi_1) \, y + \sin(\phi_1) R(\theta)^T\begin{pmatrix}0\\ X_{n-1}(\phi_{\hat{1}})\end{pmatrix}&,
   \end{align*}
   where $e_1 = (1,0,\dots,0)^T \in \R^n$ denotes the first standard basis vector.
   Substituting the above equality into the integral we derive
   \begin{align*}
        (*) =& \frac{1}{|\mathcal{ S}^{n-1}|} \int_{[0,\pi]^{n-2}\times [0,2\pi]} 
        e^{\cos(\phi_1)} \left[\cos(\phi_1) y + \sin(\phi_1) R(\theta)^T\begin{pmatrix}0\\ X_{n-1}(\phi_{\hat{1}})\end{pmatrix}\right]_j \,JX_n(\phi)\,\d\phi \\
        =&y_i\, \frac{|\mathcal{ S}^{n-2}|}{|\mathcal{ S}^{n-1}|} \int_{0}^\pi 
        e^{\cos\phi} \cos\phi \sin^{n-2}\phi\,\d\phi\\ &+ \frac{1}{|\mathcal{ S}^{n-1}|} \int_0^\pi e^{\cos\phi} \sin^{n-1}\phi \left[R(\theta)^T \underbrace{\int_{\mathcal{S}^{n-2}} \begin{pmatrix}
            0 \\ z
        \end{pmatrix}  \,\d \mathcal{H}^{n-1}(z)}_{ = 0}\right]_j \, \d\phi.
   \end{align*}
   The proof now follows from choosing the constant
   \begin{align*}
       C_1 = \frac{|\mathcal{ S}^{n-2}|}{|\mathcal{ S}^{n-1}|} \int_{0}^\pi 
        e^{\cos\phi} \cos\phi \sin^{n-2}\phi\,\d\phi = \frac{|\mathcal{ S}^{n-2}|}{|\mathcal{ S}^{n-1}|} \int_0^{\pi/2} \sin^{n-2}\phi\cos\phi\sinh({\cos\phi})\,\d\phi,
   \end{align*}
   which is positive for all $n \geq 2$ since the function $t \mapsto t \sinh{t}$ is positive for $t > 0$ and
   both sine and cosine are positive for $\phi \in (0,\pi/2)$.
\end{proof}
\subsection{Proof of \cref{lem:technicalxx}}\label{app:stationtechxx}
\begin{namedthm*}{Lemma 4.9 (cont.)}
     Let $n \geq 2$, and $\mu_0 = \frac{1}{|\mathcal{S}^{n-1}|} \mathcal{H}^n$. Then for all $y \in \mathcal{S}^{n-1}$ and  $1 \leq i\leq n$ it holds that
    \begin{align}
        \int_{\mathcal S^{n-1}} e^{x\cdot y} x_i^2 \,\d\mu_0(x) = C_2\,y_i^2+ C_3,
    \end{align}
    where the constants $C_2$ and $C_3$ are positive and depend only on the dimension $n$.
\end{namedthm*}
\begin{proof}
    Using the same arguments as in the previous proof, we obtain 
    \begin{align}\label{eq:rotsquareint}\int_{0}^\pi e^{x\cdot y} x^2_j \,\d\mu_0(x) = \nonumber  y^2_j \,&\frac{|\mathcal{ S}^{n-2}|}{|\mathcal{ S}^{n-1}|}\int_{\S^{n-1}} e^{\cos\phi}\cos^2\phi\sin^{n-2}\phi\,\d\phi 
\\   + &\frac{1}{|\mathcal{ S}^{n-1}|} \int_0^\pi e^{\cos\phi} \sin^n\phi \int_{\mathcal{S}^{n-2}} \left[R(\theta)^T \begin{pmatrix}
            0 \\ z
        \end{pmatrix}\right]^2_j   \,\d \mathcal{H}^{n-1}(z)\,\d\phi,
    \end{align}
    where the mixed term containing $x_iy_i$ vanishes due to symmetry.
    Since the second term still depends on $y$ due to the rotation, we write $\tilde{y} = (0, X_{n-1}(\theta_{\hat{1}}))$ and decompose $\tilde{z} = (0, z)$ into its rotation-invariant and rotation-variant part. More precisely, we use \cref{cor:rotationsplit} to get
    \begin{align*}
        R(\theta)^T \tilde{z} =  \tilde{z} - (\tilde{y}\cdot\tilde{z})\tilde{y} + (\tilde{y}\cdot\tilde{z})\left[-\sin(\theta_1)\,e_1 +\cos(\theta_1)\,\tilde{y}\right]
    \end{align*}
    and thus 
    \begin{align*}
         \left[R(\theta)^T \tilde{z}\right]^2 = (\tilde{z}-(\tilde{y}\cdot\tilde{z})\tilde{y})^2 + (\tilde{y}\cdot\tilde{z})^2(\sin^2(\theta_1)\,e_1 + \cos^2(\theta_1)\,\tilde{y}^2) + 2 \cos(\theta_1)(\tilde{z}-(\tilde{y}\cdot\tilde{z})\,\tilde{y})(\tilde{y}\cdot \tilde{z})\,\tilde{y}.
    \end{align*}
    Making use of the trigonometric identity $\cos^2(\theta_1) + \sin^2(\theta_1) = 1$ we get
    \begin{align}\label{eq:rotsquare}
         \left[R(\theta)^T \tilde{z}\right]^2 &= \tilde{z}^2 + (\tilde{y}\cdot\tilde{z})^2\,e_1 + 2(\tilde{y}\cdot\tilde{z})^2\,\tilde{y}^2 - 2 (\tilde{y}\cdot\tilde{z})\,\tilde{z}\tilde{y}  + 2 \cos(\theta_1)(\tilde{z}-(\tilde{y}\cdot\tilde{z})\,\tilde{y})(\tilde{y}\cdot \tilde{z})\,\tilde{y}\nonumber \\ &
         - (\tilde{y}\cdot\tilde{z})^2\,(\cos^2(\theta_1)\,e_1 + \sin^2(\theta_1)\,\tilde{y}^2) \nonumber \\
         &= \tilde{z}^2 + (\tilde{y}\cdot\tilde{z})^2\,\left(e_1 - y^2\right)
          + 2 (\cos(\theta_1)-1)(\tilde{z}-(\tilde{y}\cdot\tilde{z})\,\tilde{y})(\tilde{y}\cdot \tilde{z})\,\tilde{y},
    \end{align}
    where in the last step we use the fact that $y^2 = \cos^2\theta_1 e_1 + \sin^2\theta_1\tilde{y}^2$.
    To prove that the integral over \eqref{eq:rotsquare} can be written as claimed, we observe that for all $j = 2,\hdots,n$
    \begin{align*}
       \int_{\S^{n-2}}\tilde{z}_j^2 \,\d\mathcal{H}^{n-1}(z) 
       =: \tilde{C},
    \end{align*}
    where $\tilde{C}$ is positive and depends only on $n$,
    and therefore also $
         \int_{\S^{n-2}}(\dpr{\tilde{z}}{\tilde{y}})^2\,\d\mathcal{H}^{n-1}(z)  = \tilde{C}\,\|\tilde{y}\|^2 = \tilde{C}$. With this we derive that
         \begin{align}\label{eq:firstpart}
             \left[\int_{\S^{n-2}} \tilde{z}^2 + (\tilde{y}\cdot\tilde{z})^2\,\left(e_1 - y^2\right) \,\d \mathcal{H}^{n-1}(z)\right]_j = \tilde{C}\left(1 - y_j^2\right)
         \end{align}
         for all $j = 1,\hdots,n$ and it remains to show that for any $1\leq j \leq n$
    \begin{align}\label{eq:intzero}
       \int_{\mathcal{S}^{n-2}} [(\tilde{z}-(\tilde{y}\cdot\tilde{z})\,\tilde{y})(\tilde{y}\cdot \tilde{z})\,\tilde{y}]_j\,\d\mathcal{H}^{n-1}(z) = 0
    \end{align}
    The case $j=1$ is trivial as $\tilde{y}_1=\tilde{z}_1 = 0$. For $2\leq j \leq n$, we write out the integrand and obtain
    \begin{align*}
        &[(\tilde{z}-(\tilde{y}\cdot\tilde{z})\,\tilde{y})(\tilde{y}\cdot \tilde{z})\,\tilde{y}]_j = \left(\sum_{k = 1}^n \tilde{z}_k\tilde{y}_k\right)\,\tilde{z}_j\tilde{y}_j - \left(\sum_{k,l = 1}^n \tilde{z}_k\tilde{z}_l\tilde{y}_k\tilde{y}_l\right)\tilde{y}_j^2\\
        =&  \left(\sum_{k = 1, k \neq j}^n \tilde{z}_j\tilde{z}_k\tilde{y}_j\tilde{y}_k\right)\, - \left(\sum_{k,l = 1, k\neq l}^n \tilde{z}_k\tilde{z}_l\tilde{y}_k\tilde{y}_l\right)\tilde{y}_j^2 + \left( \tilde{z}^2_j- (\dpr{\tilde{z}}{\tilde{y}})^2\right)\tilde{y}_j^2,
    \end{align*}
    where we can use the same argument as for \eqref{eq:firstpart} to show that the last summand integrates to zero. Since also $\int_{\mathcal{S}^{n-2}} \tilde{z}_j\tilde{z}_k\d\mathcal{H}^{n-1}(z) = 0$ for any $j \neq k$ we derive \eqref{eq:intzero}. Together with \eqref{eq:firstpart} and \eqref{eq:rotsquare} this yields
    \begin{align*}
         \int_{\S^{n-2}} \left[R(\theta)^T \tilde{z}\right]_j^2\,\d\mathcal{H}^{n-1}(z) &= \tilde{C}\left(1-y_j^2\right).
    \end{align*}
    The statement now follows from substituting the above into \eqref{eq:rotsquareint}, with constants given by
    \begin{align*}
        &C_2 = \frac{|\mathcal{ S}^{n-2}|}{|\mathcal{ S}^{n-1}|}\int_0^\pi e^{\cos\phi}\cos^2\phi\sin^{n-2}\phi\,\d\phi- C_3, \qquad 
        C_3 = \frac{\tilde{C}}{|\mathcal{ S}^{n-1}|} \int_0^\pi e^{\cos\phi}\sin^n\phi\,\d\phi. 
    \end{align*}


Since $\Tilde{C} > 0$ for all $n \geq 2$, it directly follows that $C_3 > 0$. To show that $C_2 > 0$ for all $n \geq 2$ we first show that $\Tilde{C} = |\S^{n-2}|/(n-1)$. For $n = 2$ this follows directly from $\Tilde{C} = |\S^0| = 2$. For $n > 2$ we have
\begin{align*}
    \tilde{C} = |\S^{n-3}| \int_0^{\pi} \cos^2\phi \sin^{n-3}\phi \,\d\phi = |\S^{n-3}| \int_0^{\pi} \sin^{n-3}\phi - \sin^{n-1}\phi\, \d \phi.
\end{align*}
Using integration by parts we further derive that
\begin{align*}
     \int_0^{\pi}\sin^{n-1}\phi\, \d \phi = (n-2)/(n-1) \sin^{n-3}\phi\, \d \phi.
\end{align*}
As shown in \cref{lem:recursiveSphere}, the recursive form of the Jacobian determinant of spherical coordinates yields that
\begin{align*}
    |\S^{n-2}| = |\S^{n-3}|\,\int_0^{\pi} \sin^{n-3}\phi \,\d\phi.
\end{align*}
Combining these equalities, we see that
\begin{align*}
    \Tilde{C} = (1- ((n-2)/(n-1))) |\S^{n-2}| = |\S^{n-2}|/(n-1),
\end{align*}
and therefore with integration by parts we get
\begin{align*}
    C_2 &= \frac{|\mathcal{ S}^{n-2}|}{|\mathcal{ S}^{n-1}|}\int_0^\pi e^{\cos\phi}\left[\cos^2\phi\sin^{n-2}\phi-\frac{1}{1-n}\sin^n\phi\right]\,\d\phi \\ &= \frac{|\mathcal{ S}^{n-2}|}{|\mathcal{ S}^{n-1}|}\int_0^\pi e^{\cos\phi}\sin^{n-2}\phi\cos
    \phi\left(\cos\phi-1\right)\,\d\phi.
\end{align*}
Due to the symmetry of sine and cosine we get 
\begin{align*}
    C_2 &= \frac{|\mathcal{ S}^{n-2}|}{|\mathcal{ S}^{n-1}|} \int_0^{\pi/2} e^{\cos\phi}\sin^{n-2}\phi\cos
    \phi\left(\cos\phi-1\right) + e^{-\cos\phi}\sin^{n-2}\phi\cos
    \phi\left(\cos\phi+1\right)\,\d\phi \\
    &= 2\frac{|\mathcal{ S}^{n-2}|}{|\mathcal{ S}^{n-1}|} \int_0^{\pi/2} \sin^{n-2}\phi \left( \cos(\phi)\cosh(\cos(\phi)) - \sinh(\cos(\phi))\right) > 0,
\end{align*}
where the positivity follows from the fact that the function $t \mapsto t \cosh(t) - \sinh(t)$ is positive for $t > 0$ and both sine and cosine are positive for $\phi \in (0,\pi/2)$. 
    \end{proof}

\end{document}